\documentclass{article}

\usepackage{mathtools}
\usepackage{graphicx}
\usepackage{amsmath}
\usepackage{amssymb}
\usepackage{amsthm} 
\usepackage{tikz}
\usetikzlibrary{arrows.meta, positioning}
\usetikzlibrary{decorations.pathmorphing}

\usepackage{pxfonts}
\usepackage{enumerate}
\usepackage{color}
\usepackage{mathdots}
\usepackage{sectsty}
\usepackage[hidelinks]{hyperref}
\usepackage{tikz-cd}
\usepackage{framed}
\usepackage{caption}
\usepackage{adjustbox}
\usepackage{bbold}
\allowdisplaybreaks
\usepackage[nottoc]{tocbibind}

\expandafter\def\expandafter\normalsize\expandafter{%
  \normalsize  
  \setlength\abovedisplayskip{2ex}
  \setlength\belowdisplayskip{2ex}
  \setlength\abovedisplayshortskip{1ex}
  \setlength\belowdisplayshortskip{1ex}
}
\usepackage{titlesec}
\titlespacing*{\section}
{0pt}{2.5ex plus 1ex minus .2ex}{3.3ex plus .2ex}
\titlespacing*{\subsection}
{0pt}{2.5ex plus 1ex minus .2ex}{3.3ex plus .2ex}

\sectionfont{\scshape\centering\fontsize{11}{14}\selectfont}
\subsectionfont{\scshape\fontsize{11}{14}\selectfont}
\usepackage{fancyhdr}

\usepackage{tikz}
\usetikzlibrary{decorations.markings}

\newenvironment{keywords}{}{}
\newtheorem{thm}{Theorem}[section]
\newtheorem{cor}[thm]{Corollary}
\newtheorem{lem}[thm]{Lemma}
\newtheorem{defn}[thm]{Definition}
\newtheorem{rmk}[thm]{Remark}

\newtheorem{prop}[thm]{Proposition}

\newtheorem{problem}[thm]{Riemann-Hilbert Problem}

\newtheorem*{claim*}{Claim}

\newcommand{\subjclass}[1]{\par\noindent\textbf{Mathematics Subject Classification (2020):} #1}

\newcommand\longvdots[1]{\raisebox{1em}{\rotatebox{-90}{\hbox to #1 {\dotfill}}}}

\newcommand{\defeq}{\stackrel{\mathrm{def}}{=} }

\renewcommand{\d}{{\mathrm d}}

\newcommand{\be}{\begin{equation}}
\newcommand{\ee}{\end{equation}}
\newcommand{\bea}{\begin{eqnarray}}
\newcommand{\eea}{\end{eqnarray}}
\newcommand{\bes}{\begin{equation*}}
\newcommand{\ees}{\end{equation*}}
\newcommand{\beas}{\begin{eqnarray*}}
\newcommand{\eeas}{\end{eqnarray*}}

\usepackage{authblk}

\def\i{{\textnormal{i}}}
\def\e{{\textnormal{e}}}

\newcommand\shorttitle{Joint moments of orthogonal and symplectic type}
\newcommand\authors{T. Assiotis, T. Bothner, J. P. Keating, N. Simm and F. Wei}

\begin{document}

\title{ \sc Painlev\'e representations for joint moments of characteristic
polynomials from the orthogonal and unitary symplectic groups}

\author{\small THEODOROS ASSIOTIS, 
THOMAS BOTHNER, JONATHAN P. KEATING, NICHOLAS SIMM and FEI WEI}

\date{}
\maketitle

\begin{abstract}

We study joint moments of characteristic polynomials and their
second derivatives for Haar-distributed matrices from the unitary
symplectic and orthogonal groups. We establish their large-\(N\)
asymptotics throughout the full admissible range of the moment
parameters and identify the limiting coefficients probabilistically
in terms of the first inverse power sum of the Bessel point process.
This extends the corresponding results of Assiotis, Gunes, Keating,
and Wei [Proc. Lond. Math. Soc., 132 (2026), e70136] to the full
admissible parameter range. Throughout this full range, we also express these limits in terms of
a distinguished solution of the
\(\sigma\)-Painlev\'e~III\(^{\prime}\) equation. Our approach reduces
the problem to singular inverse-power statistics of the Jacobi
unitary ensemble and combines their finite-\(N\)
\(\sigma\)-Painlev\'e~V representation with a refined
Riemann--Hilbert analysis yielding precise small-\(x\) asymptotics
and uniform derivative estimates for the finite-\(N\) correction
term. A key ingredient in the probabilistic identification is a
kernel-based proof that the rescaled first inverse power sum of
the Jacobi ensemble converges in distribution to its Bessel
point process counterpart.

~\\
\keywords{{\bf Keywords:} Random matrices; characteristic polynomials; Painlev\'e equations;
Riemann--Hilbert analysis}~\\

\subjclass{60B20, 34M55, 33E17.}
    
\end{abstract}

\tableofcontents

\providecommand{\keywords}[1]
{ \textbf{Keywords:} #1 }

\section{Introduction}
The field of random matrices has long-standing connections with Painlev\'e
equations and integrable systems. Jimbo et al. \cite{JMMS1980}
obtained a Painlev\'e~V representation of the sine-kernel Fredholm
determinant on a single interval. Tracy and Widom
\cite{TracyWidomAiry,TracyWidomBessel} related the Airy and Bessel
gap probabilities to Painlev\'e~II and Painlev\'e~III, respectively.
The GUE Tracy--Widom distribution also arises as a scaling limit
in integrable probability, as illustrated by the work of Borodin
and Corwin \cite{BorodinCorwinMacdonald};
see also \cite{CorwinKPZ}.
Forrester and Witte
\cite{ForresterWittePainleve1,forrester2002application}
used Okamoto's \(\tau\)-function theory to express various random matrix
averages in terms of Painlev\'e transcendents. See \cite{ItsAlexander} for further examples.
The Riemann--Hilbert method provides a framework for establishing
and analyzing such Painlev\'e connections. Precisely, the isomonodromic
deformation theory \cite{JM,FN} links Painlev\'e equations to
Riemann--Hilbert problems, while the Riemann--Hilbert
characterization of orthogonal polynomials by Fokas, Its, and
Kitaev \cite{FIK1992} connects this framework to Hankel determinants.
The Deift--Zhou nonlinear steepest descent method then enables
systematic asymptotic analysis \cite{DZ,Deift1999}.

In this paper, we use the
Riemann--Hilbert method to study Painlev\'e representations for joint
moments of characteristic polynomials and their derivatives for
Haar-distributed matrices from the orthogonal and unitary symplectic
groups. Specifically, we define
\begin{equation}\label{defofFN1}
F_N^{\mathrm{USp}}(s,h)
:=
\int_{\mathrm{USp}(2N)}
\lvert\varphi_A(0)\rvert^{s-h}
\lvert\varphi_A''(0)\rvert^h
\,\mathrm d\mu_{\mathrm{Haar}}(A)
\end{equation}
and
\begin{equation}\label{defofFN2}
F_N^{\mathrm{SO}}(s,h)
:=
\int_{\mathrm{SO}(2N)}
\lvert\varphi_A(0)\rvert^{s-h}
\lvert\varphi_A''(0)\rvert^h
\,\mathrm d\mu_{\mathrm{Haar}}(A),
\end{equation}
where \(\mu_{\mathrm{Haar}}\) denotes the normalized Haar measure
on the corresponding group.
Here, for \(A\in\mathrm{USp}(2N)\) (the unitary symplectic group)
or \(A\in\mathrm{SO}(2N)\) (the even orthogonal group),
\(\varphi_A\) denotes the characteristic polynomial evaluated on
the unit circle:
\[
\varphi_A(\theta)
\defeq
\det\!\left(\mathbf I-\e^{-\i\theta}A\right),
\qquad \theta\in\mathbb R.
\]
The primes in \eqref{defofFN1} and \eqref{defofFN2} denote
differentiation with respect to \(\theta\).

The study of such joint moments is motivated in part by the use
of random matrix theory to model the statistical behavior of
\(L\)-functions. This connection can be traced back to Montgomery's pair correlation
conjecture for the zeros of the Riemann zeta function
\cite{Montgomery1973} and Dyson's observation that the same
pair-correlation function arises for normalized eigenvalues of random
unitary matrices \cite{Dyson1962}. The work of Katz and Sarnak
\cite{KatzSarnak} developed a symmetry classification of
families of \(L\)-functions into unitary, unitary symplectic, and
orthogonal types, with each symmetry type corresponding to the
eigenvalue statistics of an appropriate classical compact group
and reflected in the statistical behavior of the low-lying zeros. This random matrix connection with families of \(L\)-functions was
further strengthened by the work of Keating and Snaith
\cite{keating2000randomzeta,keating2000random}, who used moments of
characteristic polynomials over the corresponding classical compact
groups to predict the leading-order asymptotics of central-value
moments in the corresponding \(L\)-function families. More refined conjectures, incorporating lower-order terms in the
moment asymptotics, were subsequently developed by Conrey,
Farmer, Keating, Rubinstein, and Snaith \cite{CFKRS1,CFKRS2}. The moments discussed above involve characteristic polynomials
themselves, without their derivatives. A natural direction for
further investigation is to consider joint moments involving both a
characteristic polynomial and its derivatives, corresponding on the
number-theoretic side to joint statistics of \(L\)-functions and
their derivatives. Such questions involving characteristic polynomials and their
low-order derivatives were introduced in the unitary setting by
Hughes \cite{Hughes2001}, while joint moments of the form \eqref{defofFN1} and
\eqref{defofFN2} for the unitary symplectic and orthogonal groups
were later studied by Altu\u{g},
Bettin, Petrow, Rishikesh, and Whitehead
\cite{altuug2014recursion}, and by Gunes
\cite{gunes2022characteristic}.

A further motivation for studying these joint moments comes from the
rich probabilistic, combinatorial, and integrable structures  that underlie various quantities
associated with characteristic polynomials. In the unitary setting, joint moments involving
derivatives are related to random variables arising from the ergodic
decomposition of Hua--Pickrell measures
\cite{assiotis2022joint, assiotis2021distinguished,AssiotisGunesKeatingWei2026,bothner2025asymptotic,bothner2026joint}. In the orthogonal and unitary symplectic
settings, analogous moment problems are connected with inverse power
sums of the Bessel point process
\cite{gunes2022characteristic,assiotis2025joint}. Related moment problems for
unitary matrices inside the unit disk can also be studied through
joint Gaussian limits of the logarithm of the characteristic
polynomial and its logarithmic derivative \cite{simm2026moments}.
Joint moment problems in the unitary setting also have connections
with symmetric functions, Kostka numbers, and integer matrix
enumeration; see, e.g.,
\cite{dehaye2008joint,barhoumi2020new,grover2026higher, akemann2026derivative,
snape2026combinatorial}. Alongside these probabilistic and combinatorial connections, representations in terms
of Painlev\'e functions reveal the integrable structure underlying
these moments and their asymptotic behavior, see, e.g., \cite{Forrester2006,Basor2019,Bailey2019,assiotis2022joint,assiotis2021distinguished, gunes2022characteristic, keating2024joint,alvarez2025moments,AssiotisGunesKeatingWei2026,forrester2026higher, assiotis2025joint, bothner2025asymptotic}.

In the unitary setting, the counterpart of the joint moments
\eqref{defofFN1} and \eqref{defofFN2} is
\begin{equation}\label{defofFN3}
F_N^{\mathrm{U}}(s,h)
:=
\int_{\mathrm{U}(N)}
\lvert\varphi_A(0)\rvert^{2s-2h}
\lvert\varphi_A'(0)\rvert^{2h}
\,\mathrm d\mu_{\mathrm{Haar}}(A).
\end{equation}
For \(A\in\mathrm{USp}(2N)\) or \(A\in\mathrm{SO}(2N)\), the
self-reciprocal symmetry of the characteristic polynomial implies
that its first derivative at the origin is determined by the
characteristic polynomial itself; hence the second derivative is the
first non-trivial derivative statistic in these settings. Painlev\'e representations for the joint moment (\ref{defofFN3}) have been
extensively studied. Forrester and Witte
\cite{Forrester2006} used the Hankel determinant expression for the
leading coefficient obtained by Conrey, Rubinstein, and Snaith
\cite{conrey2006moments} to establish its connection with a
\(\sigma\)-Painlev\'e~III\(^{\prime}\) function in the case
\(h=s\in\mathbb N\). This was subsequently extended by Basor et al. \cite{Basor2019} to general admissible integers $h$ and $s$ via an alternative Riemann–Hilbert analysis.  Their approach also gave an exact relationship (i.e. not just one for the leading order coefficient) between the joint moments and a \(\sigma\)-Painlev\'e~V function.  For the leading order coefficient, their results were obtained independently by Bailey et al. \cite{Bailey2019}. Later, Assiotis, Keating, and Warren \cite{assiotis2022joint} established
the existence of the limiting moments for real parameters in the full
admissible range and obtained a probabilistic representation, while
Assiotis, Bedert, Gunes, and Soor \cite{assiotis2021distinguished} provided a
\(\sigma\)-Painlev\'e~III\(^{\prime}\) representation for admissible integer
\(h\in \mathbb{N}\) and admissible real \(s\). Most recently, Bothner and Wei \cite{bothner2025asymptotic} obtained a
Painlev\'e representation for general admissible real exponents
\(h\) and real \(s\).

However, Painlev\'e representations for the corresponding joint
moments in the unitary symplectic and orthogonal settings,
\eqref{defofFN1} and \eqref{defofFN2}, have been much less studied.
Gunes \cite{gunes2022characteristic} obtained a Painlev\'e
representation for \eqref{defofFN1} in the unitary symplectic case,
while Assiotis, Gunes, Keating, and Wei \cite{assiotis2025joint} obtained
corresponding representations for both \eqref{defofFN1} and
\eqref{defofFN2} for integer \(h\) and real \(s\) in a restricted
admissible range.  To describe these restrictions more precisely and to facilitate
comparison with our main theorem, we recall the probabilistic
representation obtained in \cite{assiotis2025joint}.
The asymptotic behavior of
\eqref{defofFN1} and \eqref{defofFN2} is related to the first inverse
power sum of the Bessel point process on \(\mathbb R_{+}\). We
therefore recall the definition of this point process.
\begin{defn}\label{def:Bessel}Let $a>-1$. We define the Bessel point process on $\mathbb{R}_+$ with parameter $a$  to be the determinantal point process with correlation kernel $\mathsf{K}_a^{\mathrm{Bes}}$,
\begin{equation*}
    \mathsf{K}_a^{\mathrm{Bes}}(x,y) =\frac{\sqrt{x}J_{a+1}(\sqrt{x})J_{a}(\sqrt{y})-\sqrt{y}J_{a+1}(\sqrt{y})J_{a}(\sqrt{x})}{2(x-y)},
\end{equation*}
where $J_a$ denotes the Bessel function of the first kind with parameter $a$. It almost surely consists of a strictly increasing sequence\footnote{We note that the superscript $l$ in $\mathfrak{B}_a^l$ does not correspond to raising to the $l$-th power but rather denotes the $l$-th ordered point in the point process.} $0<\mathfrak{B}_a^1<\mathfrak{B}_a^2<\mathfrak{B}_a^3<\cdots$.
\end{defn}
\noindent Define the random variable $\mathfrak{e}(a)$ by
\begin{equation}\label{defofmathfrake2026102}
    \mathfrak{e}(a)\defeq\sum_{i\geq 1}\frac{1}{\mathfrak{B}_{a}^{i}},\ \ \ \ a>-1.
\end{equation}
This is the first inverse power sum of the Bessel point process and
is almost surely finite. Combining \cite[Theorem~1.3 and Proposition~2.5]
{assiotis2025joint}, one obtains the following probabilistic
representation in a restricted range of the parameters.

\begin{prop}[\cite{assiotis2025joint}]\label{limitcase}
Let \(\mathfrak e(\cdot)\) be the random variable defined in
\eqref{defofmathfrake2026102}. Let \( h \in \mathbb{R}_{>0} \). For \( s > -\tfrac{1}{2} \) and \( h < \lfloor s + \tfrac{3}{2} \rfloor' \), we have
\begin{align*}
\lim_{N\rightarrow \infty}\frac{F_{N}^{\mathrm{USp}}(s,h)}{N^{\frac{s(s+1)}{2}+2h}}=\frac{2^{\frac{s^2}{2}}G(1+s)\sqrt{\Gamma(1+s)}}{\sqrt{G(1+2s)\Gamma(1+2s)}}\mathbb{E}\left[\left(1+\frac{\mathfrak{e}(s+\frac{1}{2})}{2}\right)^{h}\right].
\end{align*}
In the case of the orthogonal group, for \( s > \tfrac{1}{2} \)  and \( h < \lfloor s + \tfrac{1}{2} \rfloor' \), we have
\begin{align*}
\lim_{N\rightarrow \infty}\frac{F_{N}^{\mathrm{SO}}(s,h)}{N^{\frac{s(s-1)}{2}+2h}}=\frac{2^{\frac{s^2}{2}}G(1+s)\sqrt{\Gamma(1+2s)}}{\sqrt{G(1+2s)\Gamma(1+s)}}\mathbb{E}\left[\left(1+\frac{\mathfrak{e}(s-\frac{1}{2})}{2}\right)^{h}\right].
\end{align*}
Here, \(G(z)\) denotes the Barnes \(G\)-function and for any positive real number $x$,  we define
\bea\label{definex}
\lfloor x \rfloor' =
\begin{cases}
\lfloor x \rfloor, & x \notin \mathbb{Z}, \\
x - 1, & x \in \mathbb{Z}.
\end{cases}
\eea
\end{prop}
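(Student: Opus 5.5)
The approach is to reduce both joint moments to a tilted Jacobi ensemble, rewrite the ratio $\varphi_A''(0)/\varphi_A(0)$ as an inverse power sum of eigenvalues, and then pass to the hard-edge limit where the Bessel point process appears. I treat $\mathrm{USp}(2N)$ in detail; $\mathrm{SO}(2N)$ is identical up to exponents.

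\textbf{Step 1: reduction to a Jacobi ensemble.} The eigenvalues of $A\in\mathrm{USp}(2N)$ are $\e^{\pm\i\theta_j}$, $j=1,\dots,N$. Hence
\[
\varphi_A(\theta)=\prod_{j}(1-\e^{\i(\theta_j-\theta)})(1-\e^{-\i(\theta_j+\theta)}),
\]
and $\varphi_A(0)=\prod_j 2(1-\cos\theta_j)\ge 0$. Differentiating $\log\varphi_A$ twice at $\theta=0$ and using the self-reciprocity (so that $\varphi_A'(0)/\varphi_A(0)=\i N$), I expect an exact identity of the form
\[
\frac{\varphi_A''(0)}{\varphi_A(0)}=-N^2-c\sum_j\frac{1}{1-\cos\theta_j}
\]
for an explicit constant $c>0$ (I expect $c=1/2$, to be checked). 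Both terms on the right have the same sign, so
\[
|\varphi_A''(0)|^h=|\varphi_A(0)|^h\bigl(N^2+c\sum_j (1-x_j)^{-1}\bigr)^h,\qquad x_j=\cos\theta_j .
\]
By the Weyl integration formula, the measure $|\varphi_A(0)|^s\,\d\mu_{\mathrm{Haar}}$, once normalized, becomes a Jacobi unitary ensemble on $[-1,1]$ with weight $(1-x)^{s+1/2}(1+x)^{1/2}$. For $\mathrm{SO}(2N)$ the weight is $(1-x)^{s-1/2}(1+x)^{-1/2}$. Consequently
\[
F_N^{\mathrm{USp}}(s,h)=F_N^{\mathrm{USp}}(s,0)\,\mathbb E^{\mathrm{JUE}}_N\Bigl[\bigl(N^2+c\textstyle\sum_j(1-x_j)^{-1}\bigr)^h\Bigr].
\]

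\textbf{Step 2: the prefactor.} The factor $F_N^{\mathrm{USp}}(s,0)$ is a Selberg integral. Its asymptotics $\sim N^{s(s+1)/2}\times(\text{Barnes }G\text{ constant})$ are the Keating--Snaith computation; this produces the stated prefactor.

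\textbf{Step 3: hard-edge limit.} Rescale near the hard edge by $1-x_j=y_j/(2N^2)$, or whatever scaling matches the kernel normalization of Definition~\ref{def:Bessel}. Standard Mehler--Heine/kernel asymptotics show that the points $y_j$ converge to the Bessel point process with parameter $a=s+\tfrac12$ (respectively $a=s-\tfrac12$ for $\mathrm{SO}(2N)$). Then
\[
N^{-2}\Bigl(N^2+c\sum_j(1-x_j)^{-1}\Bigr)=1+2c\sum_j y_j^{-1},
\]
and with the correct constants this becomes $1+\mathfrak e(a)/2$. To get convergence in distribution of $\sum_j y_j^{-1}$ to $\mathfrak e(a)$, split the sum at a large cutoff $y_j\le M$ versus $y_j>M$:
\begin{itemize}
\item the finitely many small points converge by the convergence of the point process;
\item the tail $\sum_{y_j>M}y_j^{-1}$ has small mean uniformly in $N$, estimated through the one-point density $\int_M^{\infty} y^{-1}K_N(y,y)\,\d y$ together with its bulk bound $K_N(y,y)\lesssim y^{-1/2}$.
\end{itemize}

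\textbf{Step 4 (main obstacle): convergence of the $h$-th moments.} Convergence in distribution must be upgraded to convergence of expectations, which requires uniform integrability. The dangerous contribution comes from the smallest point, whose density near $0$ behaves like $y^{a}$. Thus $\mathbb E[y_{\min}^{-h}]<\infty$ exactly when $h<a+1$, which is the origin of the admissible range. I would first try to bound $\mathbb E_N[(\sum_j y_j^{-1})^{h'}]$ uniformly in $N$ for some $h'>h$. For integer $h'$ the moment expands into $k$-point correlation integrals
\[
\int\prod_i y_i^{-m_i}\det[K_N(y_i,y_l)]\,\d y ,
\]
which can be controlled by Hadamard-type bounds on the determinant and the uniform kernel bound $K_N(y,y)\lesssim y^{a}$ near $0$. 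For each integrand to remain integrable, a conservative bound forces roughly $h'<\lfloor a+1\rfloor'$, matching the floor$'$ restriction in the statement; choosing an integer $h'\in(h,\lfloor a+1\rfloor']$ is possible precisely under that restriction. Finally, combining Steps 2--4 yields the claimed limit; the orthogonal case is verbatim with $a=s-\tfrac12$, and the condition $s>\tfrac12$ ensures $a>-1$ there.
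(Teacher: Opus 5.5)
You prove the restricted range by bounding integer moments directly, where the paper uses Riemann--Hilbert/Painlev\'e~V estimates, and your route works. However, your constants do not yet reproduce the stated formula $1+\mathfrak e(a)/2$, and that needs fixing.

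\textbf{Comparison with the paper.} The paper does not reprove this proposition; it quotes it from \cite{assiotis2025joint} and recovers it as a special case of Theorem~\ref{limitcase2}. Your Steps~1--3 are exactly the ingredients used there. Step~1 is Lemma~\ref{explicitformulaforfiniteN}, Step~2 is the Keating--Snaith asymptotics, and Step~3 is Proposition~\ref{lem:YN-convergence-all-a}, proved by the same truncation with the kernel bounds of Lemma~\ref{lem:kernel-tail-estimates}. One fix in your tail estimate: $K_N(x,x)\lesssim N/\sqrt{x}$ only holds for $x\le\frac14$. Near the other endpoint, use $\int_0^1K_N(x,x)\,\mathrm{d}x=N$, which makes that region contribute $O(1/N)$.

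The real difference is Step~4. The paper gets uniform integrability from $\sup_N\mathbb{E}_N^{(a,b)}[(\mathsf X_N^{a,b})^r]<\infty$ for real $r<a+1$. It does this via Lemma~\ref{expressionformoments} and uniform small-$z$ bounds on $\partial_z^n\mathbb{E}_N^{(a,b)}[\mathrm{e}^{-z\mathsf X_N^{a,b}}]$, taken from the $\sigma$-Painlev\'e~V representation and the Riemann--Hilbert analysis (Proposition~\ref{representation}, Remark~\ref{summarize}).

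Your route is far more elementary. Set $u_j=N^2x_j^{(N)}$ and expand $(\sum_ju_j^{-1})^m$ over distinct indices. The matrix $[K_N(x_i,x_l)]$ is positive semidefinite, so Hadamard's inequality gives $\det[K_N(x_i,x_l)]\le\prod_iK_N(x_i,x_i)$. Each term is then bounded by a product of one-point integrals $\int_0^1(N^2x)^{-p}K_N(x,x)\,\mathrm{d}x$ with $1\le p\le m$. Lemma~\ref{lem:kernel-tail-estimates} bounds these uniformly in $N$ exactly when $p<a+1$. An integer $m\in(h,a+1)$ exists iff $h<\lfloor a+1\rfloor'$, which is precisely the stated range.

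What the paper's machinery buys is the Painlev\'e~III$'$ description. For the convergence alone, your method is not tied to integers:
\begin{itemize}
\item Split $\sum_ju_j^{-1}=A+B$ at $u=1$. All integer moments of $B$ are uniformly bounded as above.
\item For $r>0$, $A^r\le\mathcal N^{\lceil r\rceil}\sum_{u_j<1}u_j^{-r}$ with $\mathcal N=\#\{j:u_j<1\}$.
\item Hadamard bounds the expectation of this by products of $\int_0^1u^{-r}\widehat K_N(u,u)\,\mathrm{d}u$ and $\int_0^1\widehat K_N(u,u)\,\mathrm{d}u$, where $\widehat K_N(u,u)=N^{-2}K_N(u/N^2,u/N^2)$. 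Both are uniformly finite for every real $r<a+1$.
\end{itemize}
So the same argument already reaches the full range $h<a+1$.

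\textbf{The constants.} Since $(\varphi_A'(0)/\varphi_A(0))^2=-N^2$ and $(\log\varphi_A)''(0)=-\sum_j(1-\cos\theta_j)^{-1}$, you get $c=1$, not $\frac12$. With $x_j^{(N)}=\sin^2(\theta_j/2)$ this reads $|\varphi_A''(0)|=|\varphi_A(0)|\,N^2\mathsf X_N^{a,b}$, as in Lemma~\ref{explicitformulaforfiniteN}.

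For the kernel exactly as displayed in Definition~\ref{def:Bessel} (Tracy--Widom normalization), your first guess $1-\cos\theta_j=y_j/(2N^2)$, i.e.\ $y_j=4N^2x_j^{(N)}$, is the correct hard-edge scaling. It yields the limit $1+2\mathfrak e(a)$, not $1+\mathfrak e(a)/2$. A first-moment check confirms this (take $a>1$, so your Step~4 gives convergence of the mean):
\begin{itemize}
\item $\mathbb{E}_N^{(a,b)}[\sum_j1/x_j^{(N)}]=N(N+a+b)/a$, so $\mathbb{E}_N^{(a,b)}[\mathsf X_N^{a,b}]\to1+\frac{1}{2a}$;
\item but $\int_0^\infty u^{-1}\mathsf K_a^{\mathrm{Bes}}(u,u)\,\mathrm{d}u=\frac{1}{4a}$, so $\mathbb E\,[1+\mathfrak e(a)/2]=1+\frac{1}{8a}$.
\end{itemize}
The stated form $1+\mathfrak e(a)/2$ holds for the Bessel process normalized so that $N^2x_j^{(N)}\to\mathfrak B_a^j$, i.e.\ with kernel $4\mathsf K_a^{\mathrm{Bes}}(4x,4y)$. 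That is the convention the paper actually uses in Section~\ref{distconv}. You should fix this normalization explicitly rather than appeal to ``the correct constants''.
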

Moreover, when \(h\in\mathbb N\) and \(s,h\) lie in the range
specified in Proposition~\ref{limitcase},
\cite[Theorem~1.6]{assiotis2025joint} gives a representation of
the corresponding limiting quantities in terms of a
Painlev\'e transcendent.  

\subsection{Main results}

The main purpose of the present paper is
therefore to settle the convergence problem for the joint moments
\eqref{defofFN1} and \eqref{defofFN2} throughout the full admissible
range of the parameters, and to establish a complete Painlev\'e
description of their limiting values over this entire range.

We now state our main results. More precisely,
Theorem~\ref{limitcase2} below extends the convergence and the
probabilistic limit representations in
Proposition~\ref{limitcase} to the full admissible range of the
moment parameters, while Theorem~\ref{maintheorem2} below expresses the
corresponding limiting quantities, equivalently the leading-order
coefficients of the joint moments, in terms of a distinguished
solution of the \(\sigma\)-form of the
Painlev\'e~III\(^{\prime}\) equation.

\begin{thm}\label{limitcase2}
Let \(\mathfrak e(\cdot)\) be the random variable defined in
\eqref{defofmathfrake2026102}. In the case of the unitary symplectic group,
for \( s > -\tfrac{3}{2} \) and  $h\in \mathbb{C}$ with \( \textnormal{Re}(h) <  s + \tfrac{3}{2} \), we have
\begin{align}\label{usp}
\lim_{N\rightarrow \infty}\frac{F_{N}^{\mathrm{USp}}(s,h)}{N^{\frac{s(s+1)}{2}+2h}}=\frac{2^{\frac{s^2}{2}}G(1+s)\sqrt{\Gamma(1+s)}}{\sqrt{G(1+2s)\Gamma(1+2s)}}\mathbb{E}\left[\Bigg(1+\frac{\mathfrak{e}(s+\frac{1}{2})}{2}\Bigg)^{h}\right].
\end{align}
In the case of the even orthogonal group, for \( s > -\tfrac{1}{2} \) and $h\in \mathbb{C}$ with \( \textnormal{Re}(h) < s + \tfrac{1}{2}  \), we have
\begin{align}\label{orthogonal}
\lim_{N\rightarrow \infty}\frac{F_{N}^{\mathrm{SO}}(s,h)}{N^{\frac{s(s-1)}{2}+2h}}=\frac{2^{\frac{s^2}{2}}G(1+s)\sqrt{\Gamma(1+2s)}}{\sqrt{G(1+2s)\Gamma(1+s)}}\mathbb{E}\left[\Bigg(1+\frac{\mathfrak{e}(s-\frac{1}{2})}{2}\Bigg)^{h}\right].
\end{align}
Here, \(F_{N}^{\mathrm{USp}}(s,h)\) and
\(F_{N}^{\mathrm{SO}}(s,h)\) are defined in
\eqref{defofFN1} and \eqref{defofFN2}, respectively, and
\(G(z)\) denotes the Barnes \(G\)-function.
\end{thm}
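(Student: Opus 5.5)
The plan is to reduce both moments to an expectation over a Jacobi unitary ensemble, then pass to the limit with uniform integrability supplied by explicit negative-moment bounds.

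\emph{Step 1: exact finite-$N$ reduction.} Using the Weyl integration formula for $\mathrm{USp}(2N)$ and $\mathrm{SO}(2N)$, the eigenangles $\pm\theta_j$ give
$$\varphi_A(\theta)=\prod_j (1-\e^{\i(\theta_j-\theta)})(1-\e^{-\i(\theta_j+\theta)}).$$
Self-reciprocity yields $\varphi_A''(0)/\varphi_A(0)=-\tfrac12\sum_j \csc^2(\theta_j/2)$ up to a real normalization. With the substitution $x_j=\sin^2(\theta_j/2)$ or $x_j=\cos\theta_j$, this becomes
$$F_N(s,h)=Z_N(s)\,\mathbb E_N^{\mathrm{JUE}}\Big[\Big(\sum_j x_j^{-1}\Big)^h\Big],$$
where $Z_N(s)$ is the pure moment $F_N(s,0)$ and the expectation is over a Jacobi unitary ensemble on $(0,1)$ with weight $x^{a}(1-x)^{b}$. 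In the USp case $a=s+\tfrac12$, $b=\tfrac12$; in the SO case $a=s-\tfrac12$, $b=-\tfrac12$. The constants $Z_N(s)$ are Selberg integrals, and they give the Barnes-$G$ prefactor and the power $N^{s(s\pm1)/2}$ by known asymptotics. The problem thus reduces to showing
$$\mathbb E\big[(N^{-2}\textstyle\sum_j x_j^{-1}\cdot c)^h\big]\to\mathbb E[(1+\mathfrak e(a)/2)^h],$$
with the correct scaling constant $c$. The "$1+$" comes from the $\varphi'$ contribution $\tfrac12\cdot 2N$ combined with the sum of $\cot^2$ terms: writing $\csc^2=1+\cot^2$ produces the constant piece of the $N^2$-scaled quantity.

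\emph{Step 2: convergence in distribution.} Under hard-edge scaling $x_j\mapsto 4N^2x_j$, the Jacobi kernel converges to $\mathsf K^{\mathrm{Bes}}_a$ locally uniformly, with exponential-type decay bounds. Write $\frac{1}{N^2}\sum_j x_j^{-1}$ as the sum over the scaled points $y_j$ of $4/y_j$. I will show that the tail $\sum_{y_j>M}$ is close in $L^1$ to its mean plus a fluctuation that vanishes as $M\to\infty$, uniformly in $N$. The mean is $\int_M^{\infty}y^{-1}K_N(y,y)\,dy$ and the variance is controlled by $\int\!\int$ of the kernel squared. The bulk density gives a deterministic contribution that, after subtracting and summing, produces exactly the constant $1$. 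The finite window is handled by continuity of the linear statistic of a determinantal process under kernel convergence, for example via Laplace transforms as Fredholm determinants. This gives $N^{-2}\sum_j x_j^{-1}\to c(1+\mathfrak e(a)/2)$ in distribution.

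\emph{Step 3: uniform integrability in the full range, the main obstacle.} Since the random variable is at least a positive constant, only large values matter for $\operatorname{Re} h>0$, and $\operatorname{Re} h\le 0$ is then trivial. Large values come from the smallest point. I need
$$\sup_N \mathbb E[X_N^{p}]<\infty\quad\text{for some } p>\operatorname{Re}h,$$
with $p<a+1$, which is exactly the threshold $s+\tfrac32$ (respectively $s+\tfrac12$). I would bound $\big(\sum_j y_j^{-1}\big)^p\lesssim y_1^{-p}+(\text{tail})^p$. The tail has uniformly bounded moments of all orders, by the variance and cumulant bounds above. For $y_1^{-p}$, I use the one-point density bound
$$\mathbb E[y_1^{-p}]\le\int_0^1 y^{-p}\rho_N(y)\,dy+C,$$
together with the uniform estimate $\rho_N(y)\le C y^{a}$ for $y\le 1$ on the hard-edge scale. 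This estimate follows from the Jacobi kernel via Mehler–Heine-type uniform bounds, or from the Riemann–Hilbert local Bessel parametrix. It gives integrability exactly when $p<a+1$.

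The delicate point is obtaining this density bound and the higher-cumulant tail bounds uniformly in $N$, including negative $a\in(-1,0)$. There the weight is singular, and for complex $h$ one must use $|X^h|=X^{\operatorname{Re}h}$. Vitali's theorem then gives convergence of the $h$-moments and hence both \eqref{usp} and \eqref{orthogonal}.
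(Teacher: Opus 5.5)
Your Steps 1--2 contain a genuine error. With eigenvalues $\e^{\pm\i\theta_j}$ one has $\varphi_A(\theta)=\e^{-\i N\theta}\prod_{j=1}^N 2(\cos\theta-\cos\theta_j)$, and the real product has vanishing derivative at $\theta=0$. So, exactly,
\[
\frac{\varphi_A''(0)}{\varphi_A(0)}=-N^2-\frac12\sum_{j=1}^N\frac{1}{\sin^2(\theta_j/2)}=-N^2\,\mathsf X_N^{a,b},\qquad x_j=\sin^2(\theta_j/2),
\]
which is Lemma \ref{explicitformulaforfiniteN}.

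\emph{Where the $1$ comes from.} The $1$ in $\mathsf X_N^{a,b}$ is the deterministic term $N^2=-\bigl(\varphi_A'(0)/\varphi_A(0)\bigr)^2$ produced by the phase factor. It is not a ``real normalization''. Writing $\csc^2=1+\cot^2$ only produces an additive $N/2=o(N^2)$. Hence your identity $F_N(s,h)=Z_N(s)\,\mathbb E[(\sum_jx_j^{-1})^h]$ is false.

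\emph{Step 2 cannot compensate.} The assertion that the bulk density ``produces exactly the constant $1$'' is wrong. Since $K_N(x,x)\le CNx^{-1/2}$ for $c_0/N^2\le x\le\frac14$ and $\int_0^1K_N(x,x)\,\d x=N$, the eigenvalues with $N^2x_j\ge c$ contribute at most $Cc^{-1/2}+4/N$ in mean to $N^{-2}\sum_jx_j^{-1}$ (see \eqref{tailbnd}). So the bulk contributes nothing in the limit, and $N^{-2}\sum_jx_j^{-1}\to\mathfrak e(a)$ with no additive constant (Proposition \ref{lem:YN-convergence-all-a}). The limit $c(1+\mathfrak e(a)/2)$ you claim is not what your truncation argument produces.

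The same confusion appears in Step 3. There you dispose of $\operatorname{Re}h\le0$ using a positive lower bound. That bound holds for $\mathsf X_N^{a,b}\ge1$, but not for $N^{-2}\sum_jx_j^{-1}$, which is only $\ge 1/N$.

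The repair is to start from the exact identity. Step 2 then only has to show $N^{-2}\sum_jx_j^{-1}\to\mathfrak e(a)$. Your hard-edge truncation does this with the same structure as the paper's proof of Proposition \ref{lem:YN-convergence-all-a}; note it is driven by the $u^{-1/2}$ decay of the kernel on the diagonal, not by any exponential decay.

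Step 3 needs a smaller repair. The bound $(\sum_jy_j^{-1})^p\lesssim y_1^{-p}+(\text{tail})^p$ fails, because the number $\mathcal N_M$ of points in $(0,M]$ is random and unbounded. Instead:
\begin{itemize}
\item use $\sum_{y_j\le M}y_j^{-1}\le\mathcal N_M\,y_1^{-1}$;
\item apply H\"older with $1<q<(a+1)/p$, which is where strictness of $p<a+1$ is used;
\item use uniform exponential moments of $\mathcal N_M$ and of the tail. These follow from $\mathbb E[\prod_j(1+f(x_j))]=\det(I+fK_N)\le\exp\bigl(\int_0^1 f(x)K_N(x,x)\,\d x\bigr)$ for $f\ge0$, together with the first-moment bounds.
\end{itemize}

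With these corrections your argument is valid, and it differs from the paper's in the uniform-integrability step. The paper obtains $\sup_N\mathbb E[(\mathsf X_N^{a,b})^r]<\infty$ for $r<a+1$ from the Laplace-transform formula of Lemma \ref{expressionformoments}. It combines this with uniform-in-$N$ small-$z$ bounds on $\frac{\d^n}{\d z^n}\mathbb E[\e^{-z\mathsf X_N^{a,b}}]$, obtained from the Riemann--Hilbert/Painlev\'e analysis (Proposition \ref{representation}, Remark \ref{summarize}). You need only the one-point kernel bounds of Lemma \ref{lem:kernel-tail-estimates}, which the paper already proves for the distributional step. Your route is therefore more elementary for Theorem \ref{limitcase2}. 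The paper's route produces the uniform Laplace-transform estimates that it reuses for dominated convergence in the proof of Theorem \ref{maintheorem2}.
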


The parameter ranges in Theorem~\ref{limitcase2} are precisely the
full admissible ranges. Indeed, Lemma~\ref{explicitformulaforfiniteN} below
reduces the two cases to Jacobi parameters
\[
(a,b)=\left(s+\frac12,\frac12\right),
\qquad
(a,b)=\left(s-\frac12,-\frac12\right),
\]
and the natural conditions \(a,b>-1\) and
\(\operatorname{Re}(h)<a+1\) give exactly the ranges stated in
Theorem~\ref{limitcase2}.

\begin{rmk}
Using the functional relation
$
G(s+1)=G(s)\Gamma(s),
$
we may rewrite the square of the prefactor in (\ref{usp}) as
\[\frac{G(1+s)^2\Gamma(1+s)}
     {G(1+2s)\Gamma(1+2s)}
=\frac{G(1+s)G(2+s)}
     {G(2+2s)}.
\]
Hence, for \(s>-3/2\), the prefactor in (\ref{usp}) is naturally understood as the
positive square root
\[
2^{s^2/2}
\sqrt{\frac{G(1+s)G(2+s)}
     {G(2+2s)}},
\]
with the apparent singularities at \(s=-\frac12\) and \(s=-1\)
in the original expression removed.
\end{rmk}

\begin{thm}\label{maintheorem2}
Let \(\mathfrak e(\cdot)\) be the random variable defined in
\eqref{defofmathfrake2026102}. 
Let \( h \in \mathbb{C} \). Define
\begin{equation}\label{defofn}
n=
\begin{cases}
0, & \operatorname{Re}(h)<0,\\[4pt]
\min\{m\in\mathbb N:m>\operatorname{Re}(h)\},
& \operatorname{Re}(h)\geq0.
\end{cases}
\end{equation}
In the case of the unitary symplectic group, for \( \mathrm{Re}(h) < s + \frac{3}{2} \) with $s>-\frac{3}{2}$, we have
\begin{align*}
\mathbb{E}\left[\Bigg(1+\frac{\mathfrak{e}(s+\frac{1}{2})}{2}\Bigg)^{h}\right]
=\frac{(-1)^{n}}{\Gamma(n-h)}\displaystyle\int_{0}^{\infty}z^{n-h-1}\frac{\mathrm{d}^{n}}{\mathrm{d}z^{n}}\left(\exp\left[\int_0^{z}\sigma\left(t;s+\frac{1}{2}\right)\frac{\mathrm{d} t}{t}\right]\right)\mathrm{d}z<+\infty.
\end{align*}
In the case of the even orthogonal group, for $\textnormal{Re}(h)<s+ \tfrac{1}{2}$ with $s>-\frac{1}{2}$, we have
\begin{align*}
\mathbb{E}\left[\Bigg(1+\frac{\mathfrak{e}(s-\frac{1}{2})}{2}\Bigg)^{h}\right]=
\frac{(-1)^{n}}{\Gamma(n-h)}\displaystyle\int_{0}^{\infty}z^{n-h-1}\frac{\mathrm{d}^{n}}{\mathrm{d}z^{n}}\left(\exp\left[\int_0^{z}\sigma\left(t;s-\frac{1}{2}\right)\frac{\mathrm{d} t}{t}\right]\right)\mathrm{d}z<+\infty.
\end{align*}
Here, for $a>-1$, the function $\sigma(t)=\sigma(t; a)$ satisfies the $\sigma$-Painlev\'e \textnormal{III}$'$ equation,
\begin{equation}\label{e42}
\bigl(t\sigma''(t)\bigr)^2
+4\bigl(\sigma'(t)+1\bigr)^2
\bigl(t\sigma'(t)-\sigma(t)\bigr)
-\left(
a\sigma'(t)+a+\frac12
\right)^2
=0,\ \ \ \ \ \ t>0,\ \ a>-1,
\end{equation}
and it has the explicit form
\begin{equation}\label{definitionofsigma}
\sigma(t; a)
=-\frac{\i}{2}\,\left[C_1(t)\right]_{12}
+ \frac{1-4a^2}{16}-t,
\end{equation}
where $\left[C_1(t)\right]_{12}$ is the \((1,2)\)-entry of \(C_{1}(t)=C_{1}(t;a)\) appearing in the unique solution of RHP \ref{modelproblem}, see \eqref{2.2}.
\end{thm}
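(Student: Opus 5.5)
The strategy is to write the expectation $\mathbb{E}[(1+\mathfrak e(a)/2)^h]$, with $a=s\pm\frac12$, as a Mellin-type transform of the Laplace transform of $\mathfrak e(a)$. We then identify that Laplace transform with the $\sigma$-Painlev\'e III$'$ tau function $\exp[\int_0^z\sigma(t;a)\,\d t/t]$.

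\textbf{Step 1 (Laplace transform as a Fredholm determinant).} Since the Bessel process is determinantal, for $z>0$ one has
\[
\mathbb{E}\big[e^{-z\,\mathfrak e(a)}\big]=\det\big(\mathbf I-(1-e^{-z/x})\mathsf K_a^{\mathrm{Bes}}\big)_{L^2(0,\infty)},
\]
which is well defined because $\sum 1/\mathfrak B^i_a<\infty$ almost surely. We would obtain the Painlev\'e identification of this quantity as the $N\to\infty$ limit of the finite-$N$ Jacobi statistic $\mathbb E[\exp(-t\sum_j 1/x_j)]$ (after rescaling), which has a $\sigma$-Painlev\'e V representation. Equivalently, we could go directly through the Riemann--Hilbert problem RHP \ref{modelproblem}: the differential identity $\frac{\d}{\d z}\log\det=\frac{1}{z}\big(-\frac{\i}{2}[C_1]_{12}+\frac{1-4a^2}{16}-z\big)$ together with the isomonodromy Lax pair yields \eqref{e42}. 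This gives
\[
\Phi(z):=\mathbb E\big[e^{-z\,\mathfrak e(a)/2}\big]=\exp\Big[\int_0^z\sigma(t;a)\,\frac{\d t}{t}\Big],
\]
where the factor $1/2$ is absorbed into the scaling conventions behind \eqref{definitionofsigma}. Integrability at $t=0$ requires $\sigma(t)=O(t^{\epsilon})$, which follows from small-$t$ asymptotics of the model RHP, or from $\Phi(0)=1$ together with $\Phi'(z)\to-\frac12\mathbb E[\mathfrak e]$ whenever that expectation is finite. In general only a weaker power-type rate is available, since the tail $\mathbb P(\mathfrak e>y)\asymp y^{-(a+1)}$ comes from the smallest point.

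\textbf{Step 2 (the Mellin identity).} For $X=\mathfrak e(a)/2\ge0$ and $\mathrm{Re}(n-h)>0$, Euler's Gamma integral gives
\[
(1+X)^{h-n}=\frac{1}{\Gamma(n-h)}\int_0^\infty z^{n-h-1}e^{-z(1+X)}\,\d z.
\]
Multiply by $(1+X)^n$ and note that $(-1)^n\frac{\d^n}{\d z^n}e^{-z(1+X)}=(1+X)^ne^{-z(1+X)}$. Taking expectations and using Tonelli (the integrand is nonnegative for real $h$; for complex $h$ bound by $|z^{n-h-1}|$) gives
\[
\mathbb E[(1+X)^h]=\frac{(-1)^n}{\Gamma(n-h)}\int_0^\infty z^{n-h-1}\frac{\d^n}{\d z^n}\big(e^{-z}\Phi(z)\big)\,\d z,
\]
provided $\mathbb E[(1+X)^{\mathrm{Re}\,h}]<\infty$. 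Differentiating under the expectation is justified because $\mathbb E[X^n e^{-zX}]<\infty$ for $z>0$. Finally, the factor $e^{-z}$ is exactly what the $-t$ term in \eqref{definitionofsigma} contributes, since $\int_0^z(-t)\,\d t/t=-z$. So the exponential of $\int\sigma\,\d t/t$ is $e^{-z}\Phi(z)$, and this is the stated formula.

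\textbf{Step 3 (finiteness and the choice of $n$).} We need $\mathbb E[(1+\mathfrak e(a)/2)^{\mathrm{Re}\,h}]<\infty$ precisely when $\mathrm{Re}\,h<a+1$. Since $\mathfrak e(a)\ge 1/\mathfrak B^1_a$, the relevant input is the tail bound $\mathbb P(\mathfrak e(a)>y)=O(y^{-(a+1)})$. We would derive it from the hard-edge density of the smallest Bessel point, which behaves like $x^a$ near $0$, combined with the bound that the remaining sum $\sum_{i\ge2}1/\mathfrak B^i_a$ has all moments below $a+1$. Alternatively it follows from Theorem~\ref{limitcase2}: finiteness of the limits there (uniform integrability of the Jacobi analogues) gives the moments, and Fatou gives the upper bound.

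The choice of $n$ in \eqref{defofn} ensures $\mathrm{Re}(n-h)>0$, so the Gamma integral converges at $z=0$. At $z=\infty$ convergence holds because $e^{-z}$ dominates all $z$-derivatives, which are bounded since the moments $\mathbb E[X^k e^{-zX}]$ decay in $z$.

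\textbf{Main obstacle.} The hard part is Step 1: identifying $\Phi$ with the specific RHP solution and proving the small-$t$ behaviour of $\sigma(t;a)$ uniformly enough to justify $\int_0^z$ and interchanging the $N\to\infty$ limit. I would first try the direct route: a differential identity for the Bessel Fredholm determinant with symbol $1-e^{-t/x}$. This symbol has an essential singularity at $x=0$, which is exactly the structure of the model RHP with $C_1(t)$. Then I would apply small-norm analysis as $t\to0$ to get $\sigma(t)\to0$ at a power rate.
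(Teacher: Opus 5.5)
There is a genuine gap, and it is in Step~1, which is where the content of the theorem lies. Everything beyond Euler's integral rests on the identity
\[
\mathbb E\Big[\mathrm e^{-z(1+\mathfrak e(a)/2)}\Big]=\exp\Big[\int_0^z\sigma(t;a)\,\frac{\mathrm dt}{t}\Big],\qquad z>0,
\]
with $\sigma$ built from $C_1$ of RHP~\ref{modelproblem}. Your proposal announces this identity but does not prove it. The ``direct route'' (a differential identity for $\det(\mathbf I-(1-\mathrm e^{-z/(2u)})\mathsf K_a^{\mathrm{Bes}})$ in terms of $[C_1]_{12}$, plus the Lax pair for \eqref{e42}) is asserted, not derived.

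The $N\to\infty$ route is what the paper does, and it needs two inputs you do not supply. First, you need $\mathsf Y_N^{a,b}\to\mathfrak e(a)$ in distribution for all $a>-1$ (Proposition~\ref{lem:YN-convergence-all-a}); otherwise the limit of the Jacobi Laplace transforms is not known to be the Bessel one. Hard-edge convergence of finitely many points does not control the full inverse power sum. Moreover, for $-1<a\le 0$ one has $\mathbb E[\mathfrak e(a)]=\infty$, so a truncation argument with kernel bounds is required. Second, you need the double-scaling asymptotics \eqref{361005} (equivalently \eqref{thelimitofsigmaNz}), which identify $\lim_N\sigma_N(z/(2N^2))$ with $\frac{1-4a^2-8v(z)}{16}$. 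It is the local parametrix with $x=2N^2t$ that fixes the normalization; the factor $\tfrac12$ is not ``absorbed into conventions''.

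Your bookkeeping is also inconsistent. Step~1 asserts $\Phi(z)=\exp\int_0^z\sigma\,\mathrm dt/t$, and your displayed differential identity carries the $-z$ term. Step~2 instead uses $\exp\int_0^z\sigma\,\mathrm dt/t=\mathrm e^{-z}\Phi(z)$, and only this version agrees with \eqref{defofsigma2}. Finally, \eqref{e42} itself needs an argument; the paper rescales \eqref{equationforfiniteN} by $z\mapsto t/(2N^2)$ and lets $N\to\infty$.

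Granted that identity, the rest of your argument is correct, and it is more economical than the paper's. The paper writes the \emph{finite-$N$} moment through Lemma~\ref{expressionformoments} and passes to the limit by dominated convergence, using the $N$-uniform small-$z$ bounds of Remark~\ref{summarize}. You instead apply Euler's integral and Tonelli directly to $Y=1+\mathfrak e(a)/2$. That needs only $\mathbb E[Y^{\operatorname{Re}h}]<\infty$, which your Fatou argument from Theorem~\ref{limitcase2} supplies; the alternative tail-bound route in your Step~3 is only sketched.

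Your small-$t$ worry is also unnecessary. Take $L(z)=\mathbb E[\mathrm e^{-zY}]$ and assume the identity on $(0,\infty)$. Then $\sigma=zL'/L<0$, and
\[
\int_\epsilon^z|\sigma(t)|\,\frac{\mathrm dt}{t}=\log L(\epsilon)-\log L(z)\to-\log L(z)\quad(\epsilon\downarrow0).
\]
So $\sigma/t$ is integrable at $0$ automatically. Nor does your scheme need any interchange of $N\to\infty$ with the $z$-integral.
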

In Proposition~\ref{prop:sigma-small-x}, we provide the boundary
asymptotics of \(\sigma(t;a)\) as \(t\downarrow0\). We remark that 
for $h \in \mathbb{N}$, it follows from \cite{gunes2022characteristic} and \cite{assiotis2025joint} that
\begin{align*}
\mathbb{E}\left[\Bigg(1+\frac{\mathfrak{e}(s+\frac{1}{2})}{2}\Bigg)^{h}\right]=(-1)^h \displaystyle\frac{\mathrm{d}^{h}}{\mathrm{d}z^{h}}
\exp\left[
\int_{0}^{z} \sigma\left(t;s+\frac{1}{2}\right)\, \frac{\mathrm{d}t}{t}
\right]\bigg|_{z=0}
\end{align*}
provided that $h<s+\frac{3}{2}$ and $s>-\frac{3}{2}$, 
and 
\begin{align*}
\mathbb{E}\left[\Bigg(1+\frac{\mathfrak{e}(s-\frac{1}{2})}{2}\Bigg)^{h}\right]=
(-1)^h \displaystyle\frac{\mathrm{d}^{h}}{\mathrm{d}z^{h}}
\exp\left[
\int_{0}^{z} \sigma\left(t;s-\frac{1}{2}\right)\, \frac{\mathrm{d}t}{t}
\right]\bigg|_{z=0}
\end{align*}
provided that $h<s+\frac{1}{2}$ and $s>-\frac{1}{2}$. In fact, these identities follow directly from
Theorem~\ref{maintheorem2} upon taking \(n=h+1\), together with
\eqref{defofsigma2} and the asymptotic results in
Lemma~\ref{asympoticsforv(x)forsmallx}.

We remark that, for the odd orthogonal case \(O^{-}(2N)\), results
analogous to Theorems~\ref{limitcase2} and~\ref{maintheorem2} can
be obtained by almost identical methods. To avoid excessive
length, we omit a detailed discussion of this case.

\subsection{Proof strategies}\label{strategy}
We first introduce the Jacobi ensemble notation that will be used in
the proofs of our main results. For \(a,b>-1\), let
\[
\bigl(x_1^{(N)},\ldots,x_N^{(N)}\bigr)
\]
denote the ordered eigenvalues of the Jacobi unitary ensemble with
joint probability density proportional to
\[
\prod_{j=1}^N x_j^a(1-x_j)^b
\prod_{1\leq i<j\leq N}(x_j-x_i)^2,
\qquad
0<x_1<\cdots<x_N<1.
\]
We denote expectation with respect to this ensemble by
\(\mathbb E_N^{(a,b)}[\cdot]\). The starting point for the proofs of our main results is
\cite[Proposition~3.1]{assiotis2025joint}, which reduces the
joint moments over the unitary symplectic and even orthogonal groups
to moments of the singular Jacobi statistic 
\begin{equation}\label{defofXN}
\mathsf{X}_{N}^{a,b}:=1+\frac{1}{2N^2}\sum_{j=1}^N\frac{1}{x_j^{(N)}}.
\end{equation}
More precisely, we have the following finite-\(N\) identities.
\begin{lem}\label{explicitformulaforfiniteN}
Let \(F_{N}^{\mathrm{USp}}(\cdot,\cdot)\) and
\(F_{N}^{\mathrm{SO}}(\cdot,\cdot)\) be defined in
\eqref{defofFN1} and \eqref{defofFN2}, respectively. For \(s>-\tfrac32\) and \(h\in\mathbb C\) satisfying
$
\textnormal{Re}(h)<s+\frac32,
$
we have
\begin{align*}
F_{N}^{\mathrm{USp}}(s,h)
&=
\left(
\int_{\mathrm{USp}(2N)}
\lvert\varphi_A(0)\rvert^{s}
\,\mathrm d\mu_{\mathrm{Haar}}(A)
\right)N^{2h}
\mathbb{E}_{N}^{(s+\frac12,\frac12)}
\left[
\Big(
\mathsf{X}_{N}^{(s+\frac{1}{2},\frac{1}{2})}
\Big)^{h}
\right]<+\infty.
\end{align*}
For \(s>-\tfrac12\) and \(h\in\mathbb C\) satisfying
$
\textnormal{Re}(h)<s+\frac12,
$
we have
\begin{align*}
F_{N}^{\mathrm{SO}}(s,h)
&=
\left(
\int_{\mathrm{SO}(2N)}
\lvert\varphi_A(0)\rvert^{s}
\,\mathrm d\mu_{\mathrm{Haar}}(A)
\right)
N^{2h}\mathbb{E}_{N}^{(s-\frac12,-\frac12)}
\left[
\Big(
\mathsf{X}_{N}^{(s-\frac{1}{2},-\frac{1}{2})}
\Big)^{h}
\right]<+\infty.
\end{align*}
Here $\mathsf{X}_{N}^{(a,b)}$ is defined in (\ref{defofXN}).
\end{lem}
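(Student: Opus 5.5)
We will use the Weyl integration formula for $\mathrm{USp}(2N)$ and $\mathrm{SO}(2N)$, followed by the change of variables $x_j=\sin^2(\theta_j/2)$, which maps both cases to a Jacobi ensemble. This is the route of \cite[Proposition~3.1]{assiotis2025joint}. The only new points are allowing complex $h$ and working in the full range of $s$; these enter only through finiteness, not through the algebra.

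\textbf{Step 1: the Weyl formula.} For $A\in\mathrm{USp}(2N)$ or $\mathrm{SO}(2N)$, the eigenvalues come in pairs $\e^{\pm\i\theta_j}$ with $\theta_j\in[0,\pi]$. Hence
\[
\varphi_A(\theta)=\prod_{j=1}^N\bigl(1-\e^{\i(\theta_j-\theta)}\bigr)\bigl(1-\e^{-\i(\theta_j+\theta)}\bigr).
\]
Write $\varphi_A(\theta)=\e^{-\i N\theta}Z_A(\theta)$, where $Z_A(\theta)=\prod_j 2(\cos\theta-\cos\theta_j)$ is real and even in $\theta$. Differentiating twice at $\theta=0$ and using $Z_A'(0)=0$ gives
\[
\varphi_A''(0)=Z_A''(0)-N^2Z_A(0).
\]
Logarithmic differentiation gives
\[
\frac{Z_A''(0)}{Z_A(0)}=-\sum_j\frac{1}{1-\cos\theta_j}.
\]
Setting $x_j=\sin^2(\theta_j/2)$, so that $1-\cos\theta_j=2x_j$, we obtain
\[
|\varphi_A''(0)|=|\varphi_A(0)|\,N^2\Bigl(1+\frac{1}{2N^2}\sum_j x_j^{-1}\Bigr)=|\varphi_A(0)|\,N^2\,\mathsf X_N.
\]
Here $\mathsf X_N>0$, so $\mathsf X_N^h$ is unambiguous for complex $h$. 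Therefore
\[
|\varphi_A(0)|^{s-h}|\varphi_A''(0)|^h=|\varphi_A(0)|^sN^{2h}\mathsf X_N^h.
\]

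\textbf{Step 2: identifying the Jacobi weights.} The Weyl densities are proportional to
\[
\prod_{i<j}(\cos\theta_i-\cos\theta_j)^2\prod_j\sin^2\theta_j
\quad\text{for USp},\qquad
\prod_{i<j}(\cos\theta_i-\cos\theta_j)^2
\quad\text{for SO}.
\]
We also have $|\varphi_A(0)|=\prod_j 2(1-\cos\theta_j)=\prod_j 4x_j$. Under $x=\sin^2(\theta/2)$:
\begin{itemize}
\item $\d\theta\propto x^{-1/2}(1-x)^{-1/2}\d x$;
\item $\sin^2\theta=4x(1-x)$;
\item $\cos\theta_i-\cos\theta_j=2(x_j-x_i)$.
\end{itemize}
For USp the weight becomes $x^{s}\cdot x(1-x)\cdot x^{-1/2}(1-x)^{-1/2}=x^{s+1/2}(1-x)^{1/2}$. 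For SO it becomes $x^{s-1/2}(1-x)^{-1/2}$. These are the claimed parameters.

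The normalization is fixed by comparing with the case $h=0$. That case gives exactly $\int|\varphi_A(0)|^s\,\d\mu$, which is finite iff the Jacobi partition function with parameters $(a,b)$ is finite, i.e. $a>-1$. This is precisely $s>-3/2$ for USp and $s>-1/2$ for SO. The ratio of the two integrals is then $N^{2h}\,\mathbb E_N^{(a,b)}[\mathsf X_N^h]$.

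\textbf{Step 3: finiteness (the main point needing care).} We have $|\mathsf X_N^h|=\mathsf X_N^{\mathrm{Re}\,h}$.
\begin{itemize}
\item If $\mathrm{Re}\,h\le0$, then $\mathsf X_N\ge1$ gives a bound by $1$.
\item If $\mathrm{Re}\,h>0$, use the elementary inequality $(1+\sum_j y_j)^{p}\le C_{N,p}\bigl(1+\sum_j y_j^{p}\bigr)$ for $y_j\ge0$. It then suffices to show $\mathbb E_N^{(a,b)}[x_1^{-p}]<\infty$ for $p=\mathrm{Re}\,h<a+1$.
\end{itemize}
By symmetry of the density, this reduces to integrability of $x^{a-p}$ near $0$ against the marginal one-point density. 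The other factors, namely the Vandermonde, $(1-x)^b$ with $b>-1$, and the remaining integrals, are bounded or integrable uniformly. Integrability of $x^{a-p}$ near $0$ holds exactly when $a-p>-1$.

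Conversely, the substitution is legitimate once both sides are known to be absolutely convergent. Tonelli for the nonnegative integrand $|\cdot|$ and then Fubini for the complex integrand make the identity rigorous for complex $h$.
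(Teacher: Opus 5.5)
Your proposal is correct and is essentially the argument behind \cite[Proposition~3.1]{assiotis2025joint}, which is all the paper invokes for this lemma: the Weyl integration formula, the identity $|\varphi_A''(0)|=|\varphi_A(0)|\,N^2\mathsf X_N$, and the substitution $x_j=\sin^2(\theta_j/2)$. Your Step~3 correctly supplies what the paper asserts without separate proof, namely the extension to complex $h$ and to the full range $\operatorname{Re}(h)<a+1$: you use $|\mathsf X_N^h|=\mathsf X_N^{\operatorname{Re}h}$ and $\mathsf X_N\geq 1$, and the bounded Vandermonde reduces finiteness to $\int_0^1 x^{a-\operatorname{Re}h}\,\mathrm dx<\infty$.
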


By Lemma~\ref{explicitformulaforfiniteN}, the joint moments
\eqref{defofFN1} and \eqref{defofFN2} can be written as the product
of the corresponding characteristic-polynomial moment and
\begin{equation}\label{thb1}
\mathbb E_N^{(a,b)}
\left[
(\mathsf X_N^{a,b})^h
\right],
\end{equation}
with the appropriate choices of \(a\) and \(b\).
Since
\begin{equation*}
	 x_j^{(N)}\in(0,1),
\end{equation*}
we have
$
\mathsf X_N^{a,b}\geq1$
almost surely. In particular, for \(h\in\mathbb C\), the power
\((\mathsf X_N^{a,b})^h\) is unambiguously defined using the real
logarithm. Equipped with Lemma~\ref{explicitformulaforfiniteN}, we then analyze \eqref{thb1} through the following representation in
terms of derivatives of the Laplace transform:
\begin{equation}\label{1005noon}
\mathbb{E}_{N}^{(a,b)}
\left[
(\mathsf{X}_N^{a,b})^h
\right]
=
\frac{(-1)^n}{\Gamma(n-h)}
\int_0^\infty
z^{n-h-1}
\frac{\mathrm d^n}{\mathrm d z^n}
\mathbb{E}_{N}^{(a,b)}
\left[
\e^{-z\mathsf{X}_N^{a,b}}
\right]
\,\mathrm dz,
\end{equation}
where \(n\) is chosen as in (\ref{defofn}). See Lemma~\ref{expressionformoments} for details on
\eqref{1005noon}. The above expression holds for any $h\in \mathbb{C}$ with $\textnormal{Re}(h)<a+1$.
Thus, the key object in our analysis is the Laplace transform
\begin{equation}\label{1005laplace}
\mathbb E_N^{(a,b)}
\left[
\e^{-z\mathsf X_N^{a,b}}
\right]
\end{equation}
and its derivatives. The main idea of our approach is to express this Laplace transform
in terms of a solution of the \(\sigma\)-Painlev\'e~V equation.
More precisely, the Laplace transform can be identified with the
normalized \(\tau\)-function associated with the corresponding
\(\sigma\)-Painlev\'e~V solution and hence admits an exponential
representation in terms of this solution. The finite-\(N\)
\(\sigma\)-Painlev\'e~V equation degenerates, as \(N\to\infty\),
to a \(\sigma\)-Painlev\'e~III\(^{\prime}\) equation.

To justify passing to the limit in the integral representation
\eqref{1005noon}, the main difficulty is to obtain sufficiently
precise and uniform estimates near \(z=0\) of the underlying Painlev\'e transcendent. For this purpose, we
use the Riemann--Hilbert method to derive a full small-\(z\)
asymptotic expansion of the associated Painlev\'e function, together
with uniform derivative estimates for the finite-\(N\) correction
term. The behavior for large \(z\) is considerably simpler: since
\(\mathsf X_N^{a,b}\geq1\) almost surely, the Laplace transform \eqref{1005laplace} and its derivatives
decay exponentially as \(z\to\infty\). Combining the estimates near
zero and infinity yields an integrable dominating function, allowing
us to apply the dominated convergence theorem. This proves the convergence of the joint moments throughout the full
admissible range of the parameters, as stated in
Theorem~\ref{limitcase2}, and identifies their limiting values,
equivalently their leading-order coefficients, in terms of a
particular solution of the
\(\sigma\)-Painlev\'e~III\(^{\prime}\) equation, as stated in
Theorem~\ref{maintheorem2}.

\subsection{Hankel determinant representation and Painlev\'e equations}

By \eqref{defofXN},
\[
\mathbb E_N^{(a,b)}
\left[
\e^{-z\mathsf X_N^{a,b}}
\right]
=
\e^{-z}
\mathbb E_N^{(a,b)}
\Bigg[
\e^{-\frac{z}{2N^2}
\sum_{j=1}^N\frac{1}{x_j^{(N)}}}
\Bigg].
\]
Thus, it suffices to study the Laplace transform of the first inverse
power sum of the Jacobi eigenvalues. For fixed \(N\), the same quantity admits the
following Hankel determinant representation,
\bea\label{Pollaczek}
\mathbb{E}_{N}^{(a,b)}
\Bigg[\mathrm{e}^{-z\sum_{j=1}^{N}\frac{1}{x_j^{(N)}}}\Bigg]
=
\frac{1}{\mathcal{Z}_{N}^{a,b}}
\det_{0\leq i,j\leq N-1}
\left(
\int_0^1 x^{i+j}\mathrm{e}^{-z/x}x^a(1-x)^b\,\mathrm{d}x
\right).
\eea
Here \(\mathcal Z_N^{a,b}\) denotes the normalizing
constant of the Jacobi unitary ensemble:
\bea\label{defofZ}
\mathcal{Z}_{N}^{a,b}=\frac{1}{N!}\prod_{j=1}^{N}\frac{\Gamma(a+j)\Gamma(b+j)\Gamma(1+j)}{\Gamma(a+b+N+j)}.
\eea
The Hankel determinant in~\eqref{Pollaczek} was studied
in~\cite{chen2010painleve,chen2019riemann} for $z\geq0$ and
$a,b>0$, where the weight \(w^{(a,b)}(x;z)\), defined by
\begin{equation}\label{defofw}
w^{(a,b)}(x;z)
:=
\e^{-z/x}x^{a}(1-x)^{b},
\end{equation}
is referred to as a Pollaczek--Jacobi-type weight. At $z=0$, it reduces to the classical shifted Jacobi
weight. For $z>0$, the factor $\mathrm{e}^{-z/x}$ causes the
weight to vanish faster than any algebraic power as
$x\downarrow0$, violating the Szeg\H{o} condition.
The terminology \textit{Pollaczek--Jacobi type} reflects this
strong endpoint decay, reminiscent of that of classical
Pollaczek weights.

Evaluating the moments in the right-hand side of \eqref{Pollaczek} in terms of confluent hypergeometric
functions and reversing the order of both rows and columns
gives the following representation for the rescaled Laplace
transform; see Chen and Dai~\cite[Eq.~(2.6) and the subsequent
moment formula]{chen2010painleve}:
\bea\label{1005night}
\mathbb{E}_{N}^{(a,b)}
\Bigg[\mathrm{e}^{-z
\sum_{j=1}^{N}\frac{1}{x_j^{(N)}}}\Bigg]
=
\mathrm{e}^{-Nz}
\frac{(\Gamma(b+1))^N}{\mathcal{Z}_{N}^{a,b}}
\det_{0\leq i,j\leq N-1}
\Bigg(
U\Big(b+1,-a-2N+2+i+j;z\Big)
\Bigg),
\eea
where \(U(a,b;z)\) denotes the confluent hypergeometric function of
the second kind.
Identity (\ref{1005night}) is also a special case of
Assiotis, Gunes, Keating and Wei~\cite[Proposition~4.1]{assiotis2025joint},
obtained by setting $n_2=\cdots=n_k=0$ in that proposition.

For fixed $N$, \eqref{1005night}, together with
the $\tau$-function theory of Forrester and
Witte~\cite[Section 3]{forrester2002application}, provides a
characterization of the Laplace transform in terms of the
Jimbo--Miwa--Okamoto $\sigma$-form of Painlev\'e~V.
That is, 
\bea\label{pathforrealz}
\mathbb{E}_{N}^{(a,b)}\left[ \mathrm{e}^{-z\sum_{j=1}^N \frac{1}{\mathsf{x}_j^{(N)}  } } \right] 
=\exp\left(\int_{0}^{z}\sigma_N(t)\frac{\mathrm{d}t}{t}\right),\ \ \ \ z\geq 0,
\eea
where
$\sigma_{N}(z)$ satisfies the following $\sigma$-Painlev\'e-V equation 
\begin{align}
\left( z\frac{\mathrm{d}^2 \sigma_N}{\mathrm{d}z^2} \right)^2
=& -4 \left( \frac{\mathrm{d}\sigma_N}{\mathrm{d}z} \right)^3 z + \big(a^2+2az+4bz+z^2+4\sigma_N\big) \left( \frac{\mathrm{d} \sigma_N}{\mathrm{d}z} \right)^2\nonumber \\
& +\big(-2(a+2b+z) \sigma_N +2N(a+z)(a+b+N) \big) \frac{\mathrm{d} \sigma_N}{\mathrm{d}z} \nonumber\\
& + \big(\sigma_N - N (a+b+N)\big)^2.\label{equationforfiniteN}
\end{align}
Chen and Dai~\cite{chen2010painleve} established the
Painlev\'e~V connection (\ref{pathforrealz}) using ladder operator techniques. Subsequently, Chen, Chen and Fan~\cite{chen2019riemann}
applied the Deift--Zhou steepest descent method to obtain
large-$N$ asymptotics for the determinant in \eqref{Pollaczek} and its associated
orthogonal polynomials, identifying a Painlev\'e~III'
description in a suitable double scaling limit.
We recall the following asymptotic results obtained from \cite[Theorem~7]{chen2019riemann}.

\begin{prop}\label{largezbehaviour}
Let \(a, b>0\), and define
\begin{equation}\label{36}
v(x)=v(x;a)
:=
\i\,[C_1(x)]_{12},\ \ \ \ x>0
\end{equation}
where \([C_1(x)]_{12}\) denotes the \((1,2)\)-entry of
\(C_1(x)=C_1(x;a)\) appearing in the unique solution of
RHP~\ref{modelproblem}; see \eqref{2.2}.
Then there exists \(z_0>0\) such that, uniformly for
\(z\in(0,z_0]\), as \(N\to\infty\),
\begin{equation}\label{361005}
\mathbb{E}_{N}^{(a,b)}
\left[
\e^{-\frac{z}{2N^2}
\sum_{j=1}^{N}\frac{1}{x_j^{(N)}}}
\right]
=
\exp\!\left(
\int_0^z
\frac{1-4a^2-8v(x)}{16x}\,\d x
\right)
\left[
1+\mathcal{O}_{a,b}(N^{-1})
\right].
\end{equation}
\end{prop}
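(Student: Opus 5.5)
The natural route is a differential identity in \(z\) combined with a Deift--Zhou steepest descent analysis of the orthogonal polynomials for the weight \(w^{(a,b)}(x;s)\) of \eqref{defofw}, taken at the rescaled parameter \(s=z/(2N^2)\). By \eqref{Pollaczek} the left side of \eqref{361005} equals \(D_N(s)/D_N(0)\), with \(D_N(s)\) the Hankel determinant built on \(w^{(a,b)}(\cdot;s)\). Since \(\partial_s w=-x^{-1}w\), we have
\[
\partial_s\log D_N(s)=-\int_0^1\frac{1}{x}\sum_{k=0}^{N-1}\frac{p_k(x;s)^2}{h_k(s)}\,w^{(a,b)}(x;s)\,\d x .
\]
Via Christoffel--Darboux, this is a contour integral of entries of \(Y^{-1}Y'\), where \(Y\) is the Fokas--Its--Kitaev solution. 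Deforming the contour around \(x=0\) turns it into a residue-type expression in the local behaviour of \(Y\) near the origin. The whole task is therefore to compute \(Y\) near \(0\) to relative accuracy \(O(N^{-1})\), uniformly for \(s=z/(2N^2)\) with \(z\in(0,z_0]\).

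For the steepest descent I will use the standard Jacobi equilibrium \(g\)-function on \([0,1]\) (the perturbation \(\e^{-s/x}\) is only felt in a shrinking neighbourhood of \(0\)). Then open lenses around \((0,1)\) and use the usual outer parametrix built from the Szeg\H{o} function of \(x^a(1-x)^b\). Near \(x=1\), the standard Bessel parametrix of order \(b\) applies. Near \(x=0\), I will build the local parametrix from the model problem RHP~\ref{modelproblem}, composed with a conformal map \(f(x)=N^2\phi(x)\) with \(\phi(x)\sim 4x\). In this variable the factor \(\e^{-s/x}\) becomes \(\e^{-t/\zeta}\) with \(t\) a fixed multiple of \(z\), which is exactly the essential singularity built into the model problem. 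A matching condition of the form \(I+O(N^{-1})\) on the boundary of a fixed disk then gives a small-norm problem \(R=I+O(N^{-1})\).

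Substituting back into the differential identity, the leading term comes from the \(\zeta^{-1}\)-coefficient \(C_1\) of the model solution. After the chain rule \(s\mapsto z\), this yields
\[
\partial_z\log\mathbb E_N\Big[\cdots\Big]=\frac{1-4a^2-8v(z)}{16z}+E_N(z).
\]
Integrating from \(0\) to \(z\) and using that both sides equal \(1\) at \(z=0\) gives \eqref{361005}, provided \(\int_0^z|E_N|\,\d x=O(N^{-1})\).

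The main obstacle is uniformity as \(z\downarrow0\). There the model problem degenerates toward the Bessel problem, and a naive error of size \(O(N^{-1})/z\) is not integrable. To handle this, I would use the small-\(t\) behaviour of the model RHP, in which it converges to the Bessel parametrix of order \(a\). This gives \(v(t)\to(1-4a^2)/8\), so the leading integrand stays bounded. I would then estimate \(R\) by showing that the jump of \(R-I\) is \(O(N^{-1})\) uniformly in \(t\in(0,t_0]\), including \(t=0\), so that the correction \(E_N(z)\) is \(O(N^{-1})\) rather than \(O(N^{-1}z^{-1})\). A further point is that every constant must be independent of \(t\); for this, the restriction to \(z\le z_0\) and \(a,b>0\) can be used.
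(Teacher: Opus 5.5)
Your outline is viable. It shares its backbone with the argument behind this proposition, which the paper quotes from \cite[Theorem~7]{chen2019riemann} and re-derives in sharper form in Proposition~\ref{representation}. The shared ingredients are the Jacobi $g$-function, the Bessel parametrix at $1$, RHP~\ref{modelproblem} at $0$ with model parameter exactly $x=2N^2s=z$, and a small-norm problem for $\widetilde R$. One sign needs fixing: the map must satisfy $f_0(w)\sim-4w$, so that $(0,r)$ lands on $\Sigma_2$ and $\mathrm e^{-s/w}$ matches $\mathrm e^{2x/\xi}$ with $x>0$.

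The genuine difference is the differential identity, and it decides how the $1/z$ problem is met. The paper uses \eqref{August1}, $\sigma_N'=-(2N+a+b)Y_{12}(0)Y_{21}(0)-N$, which needs only $Y(0)$. The formula \eqref{def:Q} for $Q(x)$ gives $Y_{12}(0)Y_{21}(0)=\frac N2v'(x)+\mathcal O(1)$. Integrating from $0$, where $\sigma_N$ vanishes, then gives $\sigma_N(z/(2N^2))=\frac{1-4a^2-8v(z)}{16}+\mathcal O(z/N)$, so the division by $z$ is harmless. The price is small-$x$ control of $C_1'$, i.e.\ of $v',q',\mathfrak t'$.

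Your identity is first order. Since $w^{(a,b)}[Y^{-1}Y']_{21}$ is exactly the jump of $[Y^{-1}Y']_{22}$ across $(0,1)$, your contour argument gives $\partial_s\log D_N=-[Y^{-1}Y']_{22}(0)$. This contains no $1/z$, but it involves the spectral derivative of $Y$ at $0$. Write $Y_+=A\left(\begin{smallmatrix}1&f\\0&1\end{smallmatrix}\right)D$ on $(0,r)$, with $A$ analytic and $D$ diagonal. Then the $(2,2)$-entry never sees $\partial_\xi f$, only $f(0;x)=\frac{\i}{2\pi}\int_0^\infty u^{a-1}\mathrm e^{-u-2x/u}\,\mathrm du$, which stays bounded as $x\downarrow0$ precisely because $a>0$. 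So uniform bounds on $\widetilde R$ and on $\widetilde\Psi(0;x)$, $\partial_\xi\widetilde\Psi(0;x)$ do give $E_N=\mathcal O(N^{-1})$.

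Two points in your sketch are asserted rather than shown:
\begin{enumerate}
\item The leading term is local data at $\xi=0$, namely $2\big([\widetilde\Psi^{-1}\partial_\xi\widetilde\Psi]_{22}+f\,[\widetilde\Psi^{-1}\partial_\xi\widetilde\Psi]_{21}\big)(0;x)$, not the coefficient $C_1$ at $\xi=\infty$. Equating it with $\frac{1-4a^2-8v(x)}{16x}$ requires a Lax-pair/$\tau$-function identity for the model problem, which is the analogue of the paper's use of $Q(x)$. This is where the real work of your route sits.
\item Boundedness of that integrand requires the rate $v(x)-v(0)=\mathcal O(x)$ (Lemma~\ref{asyzero}), not merely $v(x)\to v(0)$.
\end{enumerate}

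On the other hand, your source of uniformity is more direct than the paper's. We have $\Psi=R\Theta$ exactly for $|\xi|>\epsilon$, with $R$ solving a small-norm problem on a fixed circle. We also have $\widetilde\Psi=R\,\mathring\Theta\left(\begin{smallmatrix}1&\mathring f-f\\0&1\end{smallmatrix}\right)$, with $\mathring f-f$ entire and independent of $x$. Hence both the large-$\xi$ remainder and the local data at $0$ are controlled uniformly in $x\in(0,x_0]$; for the remainder this holds up to the jump rays once it is checked for the explicit $\widehat\Theta$. The paper instead routes this through the $x$-Lax equation, Gronwall and Phragm\'en--Lindel\"of (Lemmas~\ref{lem:uniform-R2-fixed-x} and~\ref{lem:uniform-R2-zero-order}). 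It needs that machinery anyway for the higher $x$-derivative bounds in Proposition~\ref{representation}.
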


\subsection{Finite-$N$ representation and uniform derivative estimates}\label{subsection4}

Proposition~\ref{largezbehaviour} identifies the leading
Painlev\'e contribution to the Laplace transform up to a relative
error of order \(N^{-1}\). For the subsequent analysis, however, we
need a more precise finite-\(N\) representation, together with
uniform control of derivatives of the subleading term in \eqref{361005} with respect
to the scaling variable. The following Proposition \ref{representation} provides such a
representation and gives uniform bounds for arbitrary fixed
derivatives of the finite-\(N\) subleading correction near the origin.

We note that, although the derivation of \eqref{361005} in
\cite{chen2019riemann} is stated under the stronger assumption $a,b>0$, the result remains valid throughout the natural Jacobi range
\(a>-1\) and \(b>-1\) for \(z>0\). Indeed, the factor
\(\e^{-z/x}\) provides exponential decay as \(x\downarrow0\), so
that no additional restriction \(a>0\) is required for the
integrability of the perturbed weight at the origin. The condition
\(a>-1\) is inherited from the underlying Jacobi ensemble, while
integrability at \(x=1\) requires \(b>-1\).

\begin{prop}\label{representation}
Let $a,b>-1$. Let $v(x)$ be given as in \eqref{36}. Then we have for $z\in (0,\infty)$,
\begin{align*}
\mathbb{E}_{N}^{(a,b)}\left[\mathrm{e}^{-\frac{z}{2N^2}\sum_{j=1}^{N}\frac{1}{x_{j}^{(N)}}}\right]
=\exp\left(\int_{0}^{z}\frac{1-4a^2-8v(x)}{16x}\d x\right)\exp\left(\int_{0}^{z}\frac{\int_{0}^{x}f_{N,a,b}(t)\d t}{x}\d x\right),
\end{align*}
where there is an \(N_0=N_0(a)\in\mathbb N\) such that, for every
\(N\geq N_0\), the function
$
x\longmapsto f_{N,a,b}(x)
$
is smooth on \((0,\infty)\). Moreover, for every fixed
\(j\in\mathbb Z_{\geq0}\), there exists a constant
\(x_0=x_0(a,j)\in(0,1)\) such that
\begin{equation}\label{eq:Rtilde-at-zero-derivative4}
\frac{\d^j}{\d x^j}f_{N,a,b}(x)
=
\mathcal O_{a,b,j}
\left(
\frac{\rho_{a,j+1}(x)}{N}
\left(1+\frac{1}{N}\right)
\right),
\end{equation}
uniformly for $x\in(0,x_0]$ and for all $N\geq N_0$.
Here, \(\rho_{a,j}(x)\) is defined in \eqref{definieofpho} for
\(j\in\mathbb Z_{\geq0}\).
\end{prop}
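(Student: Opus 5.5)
Fix $a,b>-1$ and let $L_N(z)$ denote the left-hand side, i.e. the Laplace transform at the scaled variable $z/(2N^2)$. The statement is essentially the definition of $f_{N,a,b}$ plus a derivative estimate, so I split it into a representation step and a Riemann--Hilbert step. For the representation, \eqref{pathforrealz} with $z$ replaced by $z/(2N^2)$ gives
\[
\log L_N(z)=\int_0^z \sigma_N\Big(\frac{x}{2N^2}\Big)\frac{\d x}{x}.
\]
I would then set $\widehat\sigma_N(x):=\sigma_N(x/(2N^2))$ and define $F_N(x):=\widehat\sigma_N(x)-\frac{1-4a^2-8v(x)}{16}$. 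If $F_N(0)=0$ and $F_N$ is smooth on $(0,\infty)$, then $f_{N,a,b}:=F_N'$ gives exactly the claimed product formula, since $F_N(x)=\int_0^x f_{N,a,b}$. Smoothness of $F_N$ on $(0,\infty)$ comes from two facts: $L_N$ is smooth and positive, so $\widehat\sigma_N=x\,(\log L_N)'$ is smooth; and $v$ is smooth wherever RHP~\ref{modelproblem} is solvable, which holds for all $x>0$ by a vanishing lemma. The value $F_N(0)=0$ requires $\widehat\sigma_N(0)=0$, which is clear from $L_N(0)=1$, together with $v(0^+)=(1-4a^2)/8$, read off from the small-$x$ expansion of the model problem. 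Away from the origin no estimate is needed.

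The substance is the bound \eqref{eq:Rtilde-at-zero-derivative4}, which I would obtain by a refined Deift--Zhou analysis of the orthogonal polynomials for the weight $w^{(a,b)}(x;t)$ with $t=x/(2N^2)$, following \cite{chen2019riemann} but tracking errors more carefully. The steps are:
\begin{enumerate}
\item Apply the $g$-function normalization and open lenses.
\item Build the global parametrix and the Bessel parametrix at $1$.
\item Build the local parametrix at $0$ from the model problem RHP~\ref{modelproblem}, evaluated at $x$.
\end{enumerate}
The small-norm residual $R$ then satisfies $R=I+R^{(1)}/N+\mathcal O(N^{-2})$. Next I would express $\widehat\sigma_N$ through the RH data, e.g. via $\partial_t\log D_N$ as a residue or the coefficient $Y_1$, and identify the leading term with $(1-4a^2-8v)/16$, so that $F_N$ is a combination of entries of $R-I$ and their couplings with $C_1(x)$ and $\Psi(x)$.

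Because everything is analytic in $x$ in a complex sector, the $x$-derivatives of $F_N$ can be controlled by Cauchy estimates on discs of radius proportional to $x$. This is what produces the weight $\rho_{a,j+1}(x)$: each derivative costs a factor $1/x$ on top of the size of $R-I$ near $x=0$. The extra $(1+1/N)$ factor absorbs the second-order term.

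The main obstacle is uniformity as $x\downarrow0$. There the model problem degenerates toward the Bessel parametrix, its jump at the origin is not uniformly close to the identity, and $\|R-I\|$ must be shown to be $\mathcal O(\rho/N)$ with the correct power of $x$, which depends on $a$ and involves logarithms when $a\in\mathbb Z$. My first attempt would be a further transformation that factors out the explicit Bessel-type behaviour of $\Psi(\zeta;x)$ as $x\to0$. I would then expand the matching jump $P^{(0)}N^{-1}$ on the boundary circle in powers of $x$ and $1/N$, and bound the resulting error using the small-$x$ asymptotics of $C_1(x)$ and of the model solution that the paper derives. The derivatives then follow by applying Cauchy estimates on $x$-discs, and $N_0(a)$ is the threshold ensuring solvability of the small-norm problem.
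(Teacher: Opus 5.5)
Your representation step is fine: $f_{N,a,b}:=F_N'$ is the same function the paper obtains from its explicit formula. One small correction: $\sigma_N(0^+)=0$ does not follow from $L_N(0)=1$ alone. For $a\le 0$ one has $\mathbb{E}_N^{(a,b)}[\sum_j 1/x_j]=\infty$, so $(\log L_N)'$ blows up at $0$; use $tYe^{-tY}\le e^{-1}$ and dominated convergence instead.

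The genuine gap is the derivative bound. Cauchy estimates on discs $D(x,cx)$ turn $|f_{N,a,b}|\le C\rho_{a,1}/N$ into
$|f_{N,a,b}^{(j)}(x)|\le C_j x^{-j}\rho_{a,1}(x)/N=C_j(x^{-j}+x^{a-j})/N$. This is comparable to $\rho_{a,j+1}(x)/N=(1+x^{a-j})/N$ only when $-1<a\le 0$. For $a>0$ and $j\ge1$ you lose a factor $x^{-\min\{a,j\}}$. A Cauchy estimate cannot see that the analytic part of the $x$-dependence is not penalized by differentiation. For example, $f_{N,a,b}$ contains $-\frac{a+b}{4N}v'(x)$, whose $j$-th derivative is $\mathcal O(1/N)$ for $a>j$ by Lemma~\ref{asympoticsforv(x)forsmallx}, while a disc of radius $cx$ only certifies $\mathcal O(x^{-j}/N)$.

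This loss is not cosmetic. With your bound, $\frac{\mathrm d^n}{\mathrm dz^n}\mathbb{E}_N^{(a,b)}[e^{-z\mathsf X_N^{a,b}}]$ acquires a term $\mathcal O(z^{1-n}/N)$, and $z^{n-r-1}z^{1-n}=z^{-r}$ is not integrable at $0$ for $r\ge 1$. So the uniform integrability argument in the proof of Theorem~\ref{limitcase2} gives nothing for $a>0$ and $1\le \mathrm{Re}(h)<a+1$. Separately, you assert but do not prove analyticity in a complex $x$-sector, with bounds uniform in $N$, for both the finite-$N$ problem (whose weight $e^{-t/w}$ becomes complex) and the model problem.

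What you need is a way to isolate the non-analytic $x^{a+1}$ (resp.\ $x^{a+1}\ln x$) dependence before differentiating. The paper gets this by differentiating exactly in $x$ rather than using Cauchy estimates:
\begin{itemize}
\item In the small-$x$ problem, $J_R$ depends on $x$ through convergent series in $x^k$ and $x^{k+\ell a}$ (Proposition~\ref{appendix lemma}). Hence $\partial_x^kJ_R=\mathcal O(\max\{1,x^{a+1-k}\})$, and the differentiated Neumann series gives $v^{(k)}=\mathcal O(\rho_{a,k})$.
\item The remainder $\mathcal R_2(\xi;x)$ in the matching jump on $\partial U(0,r)$ is differentiated via the $x$-Lax equation $\partial_x\mathcal R_2=\xi^{-1}C_1'\mathcal R_2+\xi^{-2}C_1'C_1$. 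Each derivative therefore costs a factor $C_1^{(\ell)}=\mathcal O(\rho_{a,\ell})$, not $1/x$.
\item The singular integral equation for $\widetilde R_-$ is then differentiated inductively.
\item Finally, $Y(0;t)=4^{-N\sigma_3}\widetilde R(0,x)E_0(0)Q(x)d_N(t)^{\sigma_3}$ expresses $f_{N,a,b}$ algebraically through $v',q',\mathfrak t'$ and $\widetilde R(0,x)-\mathbf I$.
\end{itemize}
Your idea of factoring out the Bessel model ($\Psi=R\Theta$ for $|\xi|>\epsilon$) could replace the paper's Phragm\'en--Lindel\"of/Gronwall argument for uniform-in-$x$ control of $\mathcal R_2$. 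That only works if you then take $x$-derivatives through $\partial_x^kJ_R$, not through Cauchy estimates in $x$.
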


The proof of Proposition~\ref{representation} is given in
Section~\ref{proofs}. A key technical feature in the proof
is the uniform control of parameter derivatives as $x\downarrow0$. Indeed, after normalizing
the model solution in RHP \ref{modelproblem} at infinity as in \eqref{eq:def-widehat-Psi} and in \eqref{eq:def-R2} , we write
\[
\widehat{\Psi}(\xi,x)
=
\mathbf I+\frac{C_1(x)}{\xi}+\mathcal R_2(\xi,x).
\]
For each fixed $x>0$, the large-$\xi$ asymptotics of RHP \ref{modelproblem} only gives
\[
\mathcal R_2(\xi,x)=\mathcal O_x(|\xi|^{-2}),
\qquad |\xi|\to\infty,
\]
and such a fixed-$x$ estimate does not by itself justify
differentiation of the remainder with respect to $x$, nor does it
provide estimates that are uniform as $x\downarrow0$. To overcome this difficulty, we use the $x$-part of the Lax pair, cf. \cite[$(1.14)$]{xu2014critical},
\[
\frac{\partial}{\partial x}\widehat{\Psi}(\xi,x)
=
\frac{1}{\xi}C_1'(x)\widehat{\Psi}(\xi,x),
\]
which yields the exact differential equation
\[
\frac{\partial}{\partial x}\mathcal R_2(\xi,x)
=
\frac{1}{\xi}C_1'(x)\mathcal R_2(\xi,x)
+
\frac{1}{\xi^2}C_1'(x)C_1(x).
\]
This identity, together 
with uniform control up to the jump rays, allows us to prove inductively the estimates for $\mathcal R_2(\xi,x)$ and its derivatives which remain valid in the
small-$x$ regime. These estimates are then propagated through the
local matching conditions and the singular-integral formulation of
the small-norm Riemann--Hilbert problem, ultimately yielding the
uniform derivative bounds for the finite-$N$ correction
$f_{N,a,b}$ in \eqref{eq:Rtilde-at-zero-derivative4}; see
Section~\ref{esonerrormatrix} for details.

\subsection{Small-$x$ asymptotics and a refined local Riemann--Hilbert analysis}\label{subsection5}

In order to analyze the more delicate behavior of
\[
\mathbb{E}_{N}^{(a,b)}
\left[
\e^{-\frac{z}{2N^2}
\sum_{j=1}^{N}\frac{1}{x_{j}^{(N)}}}
\right]
\]
near \(z=0\), Proposition~\ref{representation} shows that we need
a precise description of the associated Painlev\'e function
\(v(z)\) near the origin. Since, by \eqref{36} and
\eqref{definitionofsigma},
\begin{equation}\label{defofsigma2}
\sigma(z;a)
=
\frac{1-4a^2-8v(z)}{16}-z,
\end{equation}
This question is equivalent to analyzing the behavior of
\(\sigma(z;a)\) as \(z\downarrow0\). For this purpose, the leading-order asymptotics are not sufficient,
and we require a more precise small-\(z\) description. In particular,
we derive an asymptotic expansion to arbitrary order, treating the
algebraic and logarithmic contributions separately according as
\(a\notin\mathbb Z_{\geq0}\) or \(a\in\mathbb Z_{\geq0}\).
The result is summarized in the next proposition.

\begin{prop}\label{smallasyanlysis}
Let \(a>-1\). Then there exists \(x_{0}=x_{0}(a)\in(0,1)\) such that for \(0<x\leq x_{0}<1\) and any integer \(\mathrm{M}\geq 0\), the following asymptotic expansions hold. For \(a\notin \mathbb{Z}_{\geq 0}\),
\begin{align*}
\sigma(x;a)
=
\sum_{k=1}^{\mathrm{M}+1}\mathsf{b}_{k}(a)x^k
+\sum_{\ell=1}^{\mathrm{M}+1}\sum_{k=\ell}^{\mathrm{M}+1}
\mathsf{b}_{k,\ell}(a)x^{k+\ell a}
+\mathcal{O}_{\mathrm{M},a}\left(
\max\Big\{
x^{(a+1)(\mathrm{M}+2)},
x^{\mathrm{M}+2}
\Big\}
\right),
\end{align*}
and for \(a\in\mathbb{Z}_{\geq 0}\),
\begin{align*}
\sigma(x;a)
=&\,\sum_{k=1}^{\mathrm{M}+1}\mathsf{b}_{k}(a)x^k
+\sum_{\ell=1}^{\mathrm{M}+1}\sum_{k=\ell}^{\mathrm{M}+1}
\mathsf{b}_{k,\ell}(a)(\ln x)^{\ell}x^{k+\ell a}\\
&\hspace{1.5cm}
+\mathcal{O}_{\mathrm{M},a}\left(
\max\Big\{
|\ln x|^{M+2}x^{a+\mathrm{M}+2},
x^{\mathrm{M}+2}
\Big\}
\right),
\end{align*}
where the coefficients \(\mathsf{b}_{j}(a)\) and \(\mathsf{b}_{j,\ell}(a)\) depend only on \(a\).
\end{prop}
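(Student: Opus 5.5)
To prove Proposition~\ref{smallasyanlysis}, I would work directly with the model Riemann--Hilbert problem RHP~\ref{modelproblem}, which defines $C_1(x)$ and hence $\sigma(x;a)$ through \eqref{definitionofsigma}. The idea is to treat small $x$ as a perturbation of an explicitly solvable Bessel-type parametrix. As $x\downarrow0$, after a rescaling $\xi\mapsto x\xi$ (or the equivalent scaling adapted to the essential singularity $\e^{-x/\xi}$-type factor), the jump structure of the model problem splits into two regions. Away from the origin, the solution is close to a Bessel model $\Phi^{\mathrm{Bes}}_a$ built from $I_a,K_a$. Near the origin, the essential singularity produces a local problem whose solution has explicit behaviour in terms of $\xi^{a}$-type monodromy.

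\textbf{Approximate solution and small-norm error.} I would build a global approximation $P(\xi)$ by combining the Bessel model outside a shrinking disk of radius $\sim x$ with a local parametrix inside it. The local parametrix is constructed from the confluent hypergeometric/Bessel functions solving the $x=0$ problem. The error $R=\Psi P^{-1}$ then satisfies a small-norm RH problem, with jump $\mathbf I+\mathcal O(x^{a+1})$ on the circle.

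The essential point is that this jump has a \emph{convergent} expansion in the two small quantities $x$ and $x^{1+a}$. The factor $x^{a}$ enters through the mismatch between $\xi^{a\sigma_3}$ normalizations at the two scales, so the jump is of the form $\mathbf I+\sum x^{k}(x^{1+a})^{\ell}J_{k,\ell}(\xi)$. Iterating the Neumann series for $R$ to order $\mathrm M+1$ then produces an expansion of $[C_1(x)]_{12}$ in monomials $x^{k+\ell a}$. The remainder is bounded by the $(\mathrm M+2)$-nd power of the jump size, namely $\max\{x^{(a+1)(\mathrm M+2)},x^{\mathrm M+2}\}$. The constraint $k\ge\ell$ reflects that every power $x^{\ell a}$ arrives with at least $x^{\ell}$. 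Inserting this expansion into \eqref{definitionofsigma} gives the claimed expansion; the constant term $\frac{1-4a^2}{16}$ and the $-x$ cancel against the $x^0$ contribution, which is why the sum starts at $k=1$.

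\textbf{Integer $a$.} The case $a\in\mathbb Z_{\ge0}$ I would treat as a limit. When $a$ is an integer, the connection coefficients of the local parametrix (ratios like $\Gamma(-a)$ in the mismatch of $I_a$ and $I_{-a}$ bases) become singular. The correct local parametrix then involves $K_a$ with logarithmic terms, so each power $x^{\ell a}$ is accompanied by polynomials in $\ln x$ of degree $\ell$.

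The alternative is to note that the coefficients $\mathsf b_{k,\ell}(a)$ are meromorphic in $a$. In that approach, at integer $a$ the poles of $\mathsf b_{k,\ell}$ cancel pairwise with those of $\mathsf b_{k+\ell a,0}$-type terms, since $x^{k+\ell a}$ collides with integer powers. The cancellation produces the $(\ln x)^\ell$ factors, and the uniformity of the small-norm estimate in $a$ on compacts passes the error bound to the limit with an extra $|\ln x|^{\mathrm M+2}$.

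\textbf{Main obstacle and cross-check.} I expect the main obstacle to be establishing uniformity of the remainder to \emph{arbitrary} order $\mathrm M$. This requires showing that the jump of $R$ genuinely admits a two-variable expansion with uniformly controlled remainder on the shrinking circle, including near the jump rays, rather than merely a leading-order bound.

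As a consistency check, and potentially as an alternative route for fixing the coefficients, I would substitute the ansatz into the $\sigma$-PIII$'$ equation \eqref{e42}. This recursively determines $\mathsf b_k$ and $\mathsf b_{k,\ell}$ from finitely many free data, $\mathsf b_1$ and $\mathsf b_{1,1}$, which are themselves fixed by the RH leading-order computation. This confirms that the coefficients depend only on $a$.
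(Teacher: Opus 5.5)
Your overall architecture is the right one: compare with the Bessel model $\Theta$, solve a small-norm problem, truncate the Neumann series at order $\mathrm M$ with a remainder of size $\|J-\mathbf I\|^{\mathrm M+2}$, and read off $[C_1(x)]_{12}$. The gap is that the approximation you describe cannot give a small jump, and the idea that makes the argument work is missing.

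The problem is the matching scale. On a circle $|\xi|=cx$ the ratio $x/\xi$ is of order one. By condition (d) of RHP~\ref{modelproblem} and the local form of $\Theta$, near the origin one has $\Psi\Theta^{-1}=\widetilde{\Psi}\,U\,\mathring{\Theta}^{-1}$. Here $U$ is upper triangular with diagonal entries $\mathrm{e}^{\pm x/\xi}$, and $\widetilde{\Psi},\mathring{\Theta}$ are analytic and invertible in a fixed neighbourhood of the origin. Now $\frac{1}{2\pi\mathrm{i}}\oint_{|\xi|=cx}\mathrm{e}^{x/\xi}\,\mathrm{d}\xi=x$, while analytic functions integrate to zero. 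Hence $\mathrm{e}^{x/\xi}$ cannot be approximated on that circle by an analytic function to accuracy better than about $1/c$. So $\Psi$ stays an $\mathcal O(1)$ distance from the Bessel model on $|\xi|\sim x$, and no local parametrix inside a disk of radius $\sim x$ can produce a jump $\mathbf I+\mathcal O(x^{a+1})$ on its boundary.

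Moreover, the jump entries on such a circle are functions of $\xi/x$. The expansion $\mathbf I+\sum x^k(x^{1+a})^\ell J_{k,\ell}(\xi)$ that your bookkeeping relies on therefore does not exist there. An intermediate radius $x\ll r\ll 1$ would restore smallness, but the errors would then come in powers of $x/r$ and $r$ rather than the monomials $x^{k+\ell a}$. Likewise, functions ``solving the $x=0$ problem'' cannot serve as the local parametrix, since they lack the essential singularity $\mathrm{e}^{\frac{x}{\xi}\sigma_3}$.

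What you are missing is that the refined condition (d) already isolates the entire singular structure of $\Psi$ at the origin in the explicit scalar Cauchy transform $f(\xi;x,a)$. So no rescaling and no new special-function model are needed. The paper takes $\Theta_0=\mathring{\Theta}\bigl(\begin{smallmatrix}1&\mathring{f}\\0&1\end{smallmatrix}\bigr)\xi^{\frac{a}{2}\sigma_3}\mathrm{e}^{\frac{x}{\xi}\sigma_3}(\cdots)$. Then $\Psi\Theta_0^{-1}$ is analytic in the whole fixed disk $|\xi|<\epsilon$, and the matching with $\Theta$ happens on the fixed circle $|\xi|=\epsilon$.

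On that circle $\mathrm{e}^{x/\xi}$ is an entire power series in $x$. Proposition~\ref{appendix lemma}, a Mellin--Barnes residue computation, expands $\mathring{f}$ into convergent series in $x^n$ and $x^{1+a+m}$. For $a\in\mathbb Z_{\geq0}$ the $\ln x$ factors come from the double poles of $\Gamma(s)\Gamma(1+a+s)$. Hence each entry of $J_R-\mathbf I$ has the form $\sum_{k\geq1}c_k\,\phi(\xi)\xi^{-k}x^k+x^a\sum_{k\geq1}d_k\,\chi(\xi)\xi^{-k}x^k$, with $\phi,\chi$ analytic in the disk. Truncating the Neumann series and applying Cauchy's formula term by term then yields exactly the claimed monomials. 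From that point on your argument coincides with the paper's.

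Your limiting argument for integer $a$ also needs more than you state. The non-integer coefficients carry poles such as $1/\sin(\pi a)$. The truncation $k\leq\mathrm M+1$ does not always retain both members of a colliding pair. For example, with $\mathrm M=0$ and $a\to1$, the pole of $\mathsf{b}_{1,1}(a)x^{1+a}$ is cancelled only by an $x^2$ term that sits in the remainder. So the error constant is not uniform in $a$ unless you reorganize the truncation. Using the integer-$a$ formula of Proposition~\ref{appendix lemma} directly avoids this.
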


The proof of Proposition~\ref{smallasyanlysis} is given in
Section~\ref{section6}.
A key methodological refinement in the proof is a more precise resolution of the
local structure in RHP \ref{modelproblem} at the origin. In
\cite[(2.3)]{xu2014critical}, the same local behavior is described only as
\begin{equation}\label{thb2}
Q(x)\bigl(\mathbf I+\mathcal O(\xi)\bigr),
\qquad \xi\to0.
\end{equation}
For our purposes, we refine \eqref{thb2} explicitly into an
analytic matrix factor and an upper triangular matrix factor whose
off-diagonal entry is given by the scalar Cauchy transform
$f(\xi;x,a)$ defined in \eqref{Cauchy transform}. See Remark~\ref{refinementexplaination} for a detailed explanation. This refinement is crucial for our higher-order analysis. It allows
the singular and non-analytic behavior at the origin to be absorbed
into the local parametrix \(\Theta_0\), defined in (\ref{mp3}), so that the associated ratio function in the small-norm
Riemann--Hilbert problem~\ref{SN3} is analytic at the origin, while
the remaining mismatch is confined to the matching circle, with jump
\[
J_R(\xi)
=
\Theta_0(\xi;x,a)\Theta(\xi;a)^{-1}
\]
defined in (\ref{defofJR}).
By expanding the jump matrix to arbitrary order and applying the
standard small-norm Riemann--Hilbert analysis recursively, we obtain
the corresponding asymptotic expansion of the ratio function, which
ultimately yields the full small-$x$ asymptotic expansion of
\(\sigma(x;a)\) stated in Proposition~\ref{smallasyanlysis}.  We remark that all the coefficients in the asymptotic expansions of
Proposition~\ref{smallasyanlysis} can, in principle, be determined
iteratively. Moreover, by computing the first few coefficients, we
obtain the following explicit boundary behavior of
\(\sigma(x;a)\) as \(x\downarrow0\). This is a direct consequence of
Lemma~\ref{asyzero}.
\begin{prop}\label{prop:sigma-small-x}
Let \(a>-1\), and let \(\sigma(x;a)\) be defined by
\eqref{definitionofsigma}. Then, as \(x\downarrow0\),
\begin{equation}\label{eq:sigma-small-x}
\sigma(x;a)
=
\begin{cases}
\displaystyle
-\frac{1+2a}{2a}\,x
+
\frac{\pi}{\sin(\pi a)}
\frac{x^{a+1}2^{-a-1}}
{\Gamma^2(1+a)\Gamma(2+a)}
+
\mathcal O\!\left(
\max\{x^2,x^{2a+2}\}
\right),
&
a\notin\mathbb Z_{\geq0},
\\[1.2em]
\displaystyle
-\frac{1+2a}{2a}\,x
+
\frac{(-1)^a x^{a+1}2^{-a-1}\ln x}
{\Gamma^2(1+a)\Gamma(2+a)}
+
\mathcal O(x^2),
&
a\in\mathbb Z_{\geq1},
\\[1.2em]
\displaystyle
\frac{x}{2}
\left[
\ln\!\left(\frac{x}{2}\right)
+3\gamma_{\mathrm E}-3
\right]
+
\mathcal O\!\left(x^2|\ln x|^2\right),
&
a=0,
\end{cases}
\end{equation}
where $\gamma_{\mathrm E}$ is the Euler-Mascheroni constant.
\end{prop}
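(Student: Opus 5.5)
The plan is to read off the leading coefficients in Proposition~\ref{smallasyanlysis}, working directly with the local Riemann--Hilbert analysis at the origin described in Subsection~\ref{subsection5}. By \eqref{definitionofsigma}, $\sigma(x;a)=-\frac{\i}{2}[C_1(x)]_{12}+\frac{1-4a^2}{16}-x$, so it suffices to compute $[C_1(x)]_{12}$ up to an error $\mathcal O(\max\{x^2,x^{2a+2}\})$ (respectively $\mathcal O(x^2)$ or $\mathcal O(x^2|\ln x|^2)$). The solution of RHP~\ref{modelproblem} will be approximated for small $x$ as follows. Outside a fixed circle we use the $x=0$ Bessel-type parametrix $\Theta(\xi;a)$. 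Inside it we use the refined local parametrix $\Theta_0(\xi;x,a)$, which carries the upper-triangular factor whose off-diagonal entry is the Cauchy transform $f(\xi;x,a)$. The ratio function then solves the small-norm problem RHP~\ref{SN3} with jump $J_R=\Theta_0\Theta^{-1}$ on the circle.

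First I expand $J_R-\mathbf I$ in $x$ on the circle. It has two kinds of contributions.
\begin{enumerate}[(i)]
\item An analytic part, linear in $x$, coming from the $x$-dependence of the exponential/essential factor. This produces the term $\mathsf b_1(a)x$.
\item A part coming from $f(\xi;x,a)$, whose size is governed by $x^{a+1}$. After expanding the Cauchy integral, this gives $x^{a+1}$ times an explicit Bessel-type constant. When $a\in\mathbb Z_{\ge0}$, the Gamma-function poles collide and produce $x^{a+1}\ln x$ instead.
\end{enumerate}

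Next, a single iteration of the small-norm problem gives $R=\mathbf I+\frac{1}{2\pi\i}\oint \frac{J_R-\mathbf I}{s-\xi}\,\d s+\mathcal O(\|J_R-\mathbf I\|^2)$. The quadratic error is $\mathcal O(x^2+x^{2a+2})$, up to logarithms in the integer case. $C_1(x)$ is then $C_1^{\Theta}$ plus the $\xi^{-1}$ coefficient of $R$, which is a residue computation. This is exactly the $\mathrm M=0$ truncation of Proposition~\ref{smallasyanlysis}, so I only need the explicit values of $\mathsf b_1(a)$ and $\mathsf b_{1,1}(a)$. I expect these to be packaged in Lemma~\ref{asyzero}, which I would cite or prove along these lines.

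To compute the constants, the $x$-linear coefficient $-\frac{1+2a}{2a}$ can be cross-checked independently. Substitute $\sigma=\beta x+\dots$ into \eqref{e42}. At order $x^0$ the equation reads $4(\beta+1)^2\cdot0-(a\beta+a+\frac12)^2=0$, so $\beta=-(a+\frac12)/a$, in agreement. Also $\sigma(0)=0$ is forced by the constant terms of \eqref{definitionofsigma}. The $x^{a+1}$ coefficient lies in the kernel of the linearized equation (the free parameter of the one-parameter family), so it cannot be fixed from the ODE and must come from the Riemann--Hilbert residue. I would evaluate it via the Mellin-type integral $\int_0^\infty t^{a}e^{-x/t}\cdots$ hidden in $f$. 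This gives $\Gamma(-a-1)$-type factors, which become $\frac{\pi}{\sin\pi a}\frac{1}{\Gamma^2(1+a)\Gamma(2+a)}2^{-a-1}$ by the reflection formula. For integer $a\ge1$, the limit $a\to$ integer of the combination $\beta x+c(a)x^{a+1}$ (with the compensating polynomial term) produces $(-1)^a\ln x$ with the same prefactor.

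The main obstacle is the case $a=0$. There $\beta$ blows up, and the $x$ and $x^{a+1}$ terms merge into $\frac{x}{2}\ln x$ plus a constant involving $3\gamma_{\mathrm E}$. I would handle it by a careful limit $a\to0$ of the noninteger formula: expand $-\frac{1+2a}{2a}x+\frac{\pi}{\sin\pi a}\frac{2^{-a-1}x^{a+1}}{\Gamma^2(1+a)\Gamma(2+a)}$ to order $a^0$. Using $\Gamma'(1)=-\gamma_{\mathrm E}$ and $\Gamma'(2)=1-\gamma_{\mathrm E}$, this gives $\frac{x}{2}[\ln(x/2)+3\gamma_{\mathrm E}-3]$. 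Uniformity of the Riemann--Hilbert error in $a$ near $0$ justifies the interchange, since the error term $\max\{x^2,x^{2a+2}\}$ stays $\mathcal O(x^2|\ln x|^2)$.
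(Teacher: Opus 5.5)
\textbf{Gap: the integer cases.} For $a\notin\mathbb Z_{\geq0}$ your plan is the paper's. One Neumann iteration of RHP~\ref{SN3}, the residue of $J_R-\mathbf I$ at $\xi=0$, and conjugation by $\mathring{\Theta}(0;a)$ is exactly the proof of Lemma~\ref{asyzero}, and the proposition then follows from $\sigma=\frac{1-4a^2-8v}{16}-x$. If you simply cite Lemma~\ref{asyzero} for all three cases, you have the paper's proof.

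The gap is your own treatment of $a=n\in\mathbb Z_{\geq0}$ as a limit $a\to n$ of the non-integer formula. The constant hidden in $\mathcal O(\max\{x^2,x^{2a+2}\})$ is built from $\mathring{\Theta}(\xi;a)$, $\mathring f$ and $m(\xi;x,a)$, all of which carry $1/\sin(\pi a)$. It is not uniform in $a$, and it cannot be.
\begin{itemize}
\item For $n\geq1$: at fixed $x$, the explicit term $\frac{\pi}{\sin(\pi a)}\frac{2^{-a-1}x^{a+1}}{\Gamma^2(1+a)\Gamma(2+a)}$ diverges as $a\to n$. Meanwhile $\sigma(x;a)$, which is continuous in $a$, and $-\frac{1+2a}{2a}x$ stay bounded, so the remainder must diverge too. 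The compensating term $\mathsf b_{n+1}(a)x^{n+1}$ sits inside the $\mathcal O(x^2)$. Isolating its pole and bounding what remains uniformly in $a$ is exactly the work your sketch skips.
\item For $n=0$: the two poles do cancel, and your formal expansion correctly gives $\frac x2[\ln\frac x2+3\gamma_{\mathrm E}-3]$. The remainder is still non-uniform. Insert $\sigma=\frac x2\ln x+cx+d\,x^2(\ln x)^2+\cdots$ into \eqref{e42} with $a=0$ and match the $x(\ln x)^2$ terms. This gives $2d+\frac12=0$, so the remainder at $a=0$ contains $-\frac14x^2(\ln x)^2$. A bound $C\max\{x^2,x^{2a+2}\}$ uniform in $0<|a|<\delta$ would, on letting $a\to0$, force an $\mathcal O(x^2)$ remainder at $a=0$, a contradiction.
\end{itemize}
So $\mathsf b_2(a)$, $\mathsf b_{2,1}(a)$, $\mathsf b_{2,2}(a)$ have poles at $a=0$ that cancel only in combination. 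Your ``uniformity of the Riemann--Hilbert error in $a$ near $0$'' is false in the form $C\max\{x^2,x^{2a+2}\}$. Any correct uniform version, of size $x^2(\ln x)^2$, would require tracking these cancellations, and the proposal does not do this.

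\textbf{How the paper avoids it.} The paper never passes to the limit in $a$. Lemma~\ref{asyzero} treats integer $a$ directly, using $g(\xi;a)=\frac{\i}{2\pi}(-1)^{a+1}\ln\xi$, the integer-$a$ forms of $\mathring{\Theta}$ and $\mathring f$, and the double-pole residue evaluation of the Mellin integral for $f$ in Proposition~\ref{appendix lemma}. This yields $m(0;x,0)=\frac{\i}{\pi}[\ln(2x)-2\psi(2)+1-\gamma_{\mathrm E}+\mathcal O(x|\ln x|)]$, and $m(0;x,a)=\frac{\i}{\pi}[\ln(2x)\frac{(-2x)^a}{(a+1)!}+\mathcal O(x)]$ for $a\in\mathbb N$. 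Conjugating by the explicit $\mathring{\Theta}(0;a)$ then gives, for $-\i x[\mathring{\Theta}(0)\{\cdots\}\mathring{\Theta}(0)^{-1}]_{12}$, the value $x[\ln\frac x2+3\gamma_{\mathrm E}-1]$ when $a=0$. When $a\in\mathbb N$ it gives $-\frac xa+\frac{(-1)^ax^{a+1}\ln x}{2^a\Gamma^2(1+a)\Gamma(2+a)}+\mathcal O(x^2)$. The second-iteration error $\mathcal O(\max\{x^2,x^{2a+2}|\ln x|^2\})$ supplies the stated remainders $\mathcal O(x^2)$ and $\mathcal O(x^2|\ln x|^2)$. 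Replace your limiting argument with this direct computation, or simply cite the lemma.
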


These boundary asymptotics are used to derive the small-\(x\)
estimates for the Painlev\'e function and its derivatives in
Proposition~\ref{prop:q-t-derivative-estimates}, which in turn are
used in the proof of Proposition~\ref{representation}.

\subsection{Full-range moment convergence and probabilistic identification}
Using the representation of the Laplace transform \eqref{pathforrealz} in terms of the
corresponding Painlev\'e~V solution obtained in
Subsection~\ref{subsection4}, together with the small-\(x\)
asymptotic analysis of \(v(x)\) and its derivatives in
Subsection~\ref{subsection5} and the uniform estimates for the
derivatives of the finite-\(N\) correction term near the origin in
Proposition~\ref{representation}, we obtain the required small-\(z\)
asymptotic estimates for the Laplace transform (\ref{1005laplace}) and its derivatives;
see Remark~\ref{summarize} for details. Following the strategy
outlined in Subsection~\ref{strategy}, these estimates allow us to
establish the convergence of moments throughout the full
admissible range, as stated in Theorem~\ref{limitcase2}, and to
identify their limiting values in terms of a particular solution of
the \(\sigma\)-Painlev\'e~III\(^{\prime}\) equation, as stated in
Theorem~\ref{maintheorem2}.

To complete the probabilistic representation in
Theorem~\ref{limitcase2}, we further identify these limiting values
with moments of the corresponding Bessel point-process statistic. An important ingredient for this probabilistic identification is
the convergence in distribution of the rescaled inverse power sum
\begin{equation}\label{defofYN}
\mathsf Y_N^{a,b}
:=
\frac{1}{N^2}
\sum_{j=1}^{N}
\frac{1}{x_j^{(N)}}.
\end{equation}
For \(a>0\), the convergence
\[
\mathsf Y_N^{a,b}
\overset{\mathrm d}{\longrightarrow}
M(a,b),\ \ \ \ \textnormal{as}\ \ N\rightarrow\infty,
\]
to a non-negative random variable \(M(a,b)\) was established in
\cite{gunes2022characteristic}, where it was conjectured that
\(M(a,b)\) has the same distribution as
\[
\mathfrak e(a)
=
\sum_{i\geq1}\frac{1}{\mathfrak B_a^i}
\]
defined in (\ref{defofmathfrake2026102}).
This identification was subsequently established in
\cite[Proposition~2.5]{AssiotisGunesKeatingWei2026}.
In the present paper, we extend this convergence in distribution of $\mathsf{Y}_N^{a,b}$ to
the full range $a,b>-1$. A new difficulty arises when
\(-1<a\leq0\). Indeed, the hard-edge convergence of any fixed number of rescaled
Jacobi eigenvalues remains valid throughout this range; see, for example,
\cite{holcomb2012edge,li2022operator}. More precisely, for every fixed
\(\ell\in\mathbb N\),
\[
\left(
N^2x_1^{(N)},\ldots,N^2x_\ell^{(N)}
\right)
\overset{\mathrm d}{\longrightarrow}
\left(
\mathfrak B_a^1,\ldots,\mathfrak B_a^\ell
\right),
\qquad N\to\infty.
\]
However, this convergence alone does not control the full inverse
power sum. Moreover, the uniform
first-moment bound used in the case \(a>0\) is no longer available
when \(-1<a\leq0\). We overcome this difficulty by a truncation
argument, separating the contribution from eigenvalues in an
\(\mathcal{O}(N^{-2})\)-neighborhood of the hard edge from the remaining
tail and establishing a bound for the latter that is uniform in
\(N\). As a consequence, we obtain the following distributional convergence
result; the proof is given in Section~\ref{distconv}.

\begin{prop}\label{lem:YN-convergence-all-a}
Let \(a,b>-1\). Let \(\mathsf{Y}_{N}^{a,b}\) and \(\mathfrak e(a)\) be as in
\eqref{defofYN} and \eqref{defofmathfrake2026102}, respectively. Then
\[
\mathsf Y_N^{a,b}
\overset{\mathrm d}{\longrightarrow}
\mathfrak e(a),
\qquad N\to\infty.
\]
\end{prop}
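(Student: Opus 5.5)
We prove Proposition~\ref{lem:YN-convergence-all-a} by truncation at the hard edge: finitely many smallest eigenvalues are handled by the known hard-edge limit, and the remaining tail is shown to be uniformly small in \(L^1\). Fix a cutoff \(K>0\) and split
\[
\mathsf Y_N^{a,b}
=
\underbrace{\sum_{j} \frac{\mathbb 1\{N^2x_j^{(N)}\le K\}}{N^2x_j^{(N)}}}_{=:\,\mathsf Y_N^{\le K}}
\;+\;
\underbrace{\sum_{j} \frac{\mathbb 1\{N^2x_j^{(N)}> K\}}{N^2x_j^{(N)}}}_{=:\,\mathsf Y_N^{> K}},
\]
and correspondingly \(\mathfrak e(a)=\mathfrak e^{\le K}+\mathfrak e^{>K}\). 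By the standard approximation lemma (Billingsley, Thm.~3.2), it suffices to show three things:
\begin{enumerate}[(i)]
\item \(\mathsf Y_N^{\le K}\overset{\mathrm d}{\to}\mathfrak e^{\le K}\) for each fixed \(K\) outside a countable set;
\item \(\mathfrak e^{>K}\to0\) in probability as \(K\to\infty\);
\item \(\lim_{K\to\infty}\limsup_N\mathbb P(\mathsf Y_N^{>K}>\varepsilon)=0\) for every \(\varepsilon>0\).
\end{enumerate}

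\textbf{Step (i).} The number of rescaled points in \([0,K]\) is tight, and converges in distribution, since its law is governed by the Fredholm determinant of the rescaled Jacobi kernel on \([0,K]\). This kernel converges to \(\mathsf K_a^{\mathrm{Bes}}\) in trace norm on \([0,K]\); this holds for all \(a,b>-1\) because, near \(0\), the kernel carries the integrable factor \(x^{a}\) with \(a>-1\). Consequently, for \(\eta>0\) we may choose \(\ell\) such that \(\mathbb P(\#\{N^2x_j\le K\}>\ell)<\eta\) uniformly in \(N\), and likewise for the Bessel process. On the complementary event, \(\mathsf Y_N^{\le K}=F_K(N^2x_1,\dots,N^2x_\ell)\), where
\[
F_K(y_1,\dots,y_\ell)=\sum_{i\le\ell} y_i^{-1}\mathbb 1\{y_i\le K\}
\]
is continuous off the null set \(\{y_i=K\}\). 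The hard-edge convergence of the first \(\ell\) points quoted above, together with the continuous mapping theorem, then yields (i). We also use that \(\mathfrak B_a^1>0\) almost surely, so no mass is lost at \(0\).

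\textbf{Steps (ii) and (iii).} Both follow from a first-moment bound via the one-point function \(\rho_N(y)=N^{-2}K_N^{(a,b)}(y/N^2,y/N^2)\):
\[
\mathbb E\big[\mathsf Y_N^{>K}\big]=\int_K^{N^2}\frac{\rho_N(y)}{y}\,\mathrm dy .
\]
The required uniform estimate is
\[
\rho_N(y)\le C\,y^{-1/2}\qquad\text{for } K\le y\le N^2 ,
\]
with \(C\) independent of \(N\). It reflects the arcsine density \(\propto(x(1-x))^{-1/2}\) of the Jacobi ensemble at macroscopic scale and the square-root behaviour \(\rho^{\mathrm{Bes}}(y)\sim(2\pi)^{-1}y^{-1/2}\) at large \(y\). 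Given this bound,
\[
\mathbb E\big[\mathsf Y_N^{>K}\big]\le C\int_K^{\infty}y^{-3/2}\,\mathrm dy=2C\,K^{-1/2},
\]
uniformly in \(N\), and Markov's inequality gives (iii). The same computation with \(\rho^{\mathrm{Bes}}\) gives (ii). Note that only \(y>K\) enters, so the possible blow-up \(y^{a}\) of the density at \(0\) when \(a\le0\) is irrelevant; this is precisely why the truncation circumvents the failure of the global first-moment bound for \(-1<a\le0\).

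\textbf{Main obstacle.} The key difficulty is the uniform density bound for \(K\le y\le N^2\). The region near \(x=1\) (where \(y\approx N^2\)) is harmless: there \(1/y\approx N^{-2}\) and the total mass is at most \(N\), so it contributes only \(O(N^{-1})\). For the bulk and the transition region, I would first try the Christoffel--Darboux formula combined with the classical uniform (Hilb/Darboux-type) asymptotics of Jacobi polynomials. On \(\theta\in[c/N,\pi-c/N]\) with \(x=\sin^2(\theta/2)\), these give \(|P_N^{(a,b)}|\lesssim N^{-1/2}\theta^{-a-1/2}\). This yields \(K_N(x,x)\lesssim N x^{-1/2}(1-x)^{-1/2}\) times the weight-normalised factors, which translates exactly into \(\rho_N(y)\lesssim y^{-1/2}\). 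If this route proves too delicate, the alternative is to reuse the Riemann--Hilbert parametrices (Bessel local parametrix matched to the outer solution), which give the uniform estimate for the diagonal kernel on the same overlapping regions.
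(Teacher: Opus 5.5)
Your proposal is correct and follows essentially the paper's route: truncation at a rescaled cutoff, an $N$-uniform first-moment bound for the tail from $K_N(x,x)\le CNx^{-1/2}$ on $[K/N^2,1/4]$ together with the total-mass bound $\int_0^1K_N(x,x)\,\mathrm dx=N$ near $x=1$, and hard-edge convergence of finitely many points combined with control of the number of points below the cutoff. The differences are minor. You close with Billingsley's approximation theorem instead of Laplace transforms, and you avoid the paper's lower $\varepsilon$-cutoff by using $\mathfrak B_a^1>0$ almost surely in the continuous mapping step. Note also that your density bound $\rho_N(y)\le Cy^{-1/2}$ cannot hold all the way up to $y=N^2$, so the separate treatment of $x$ near $1$ in your last paragraph is genuinely needed. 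Finally, tightness of the count follows more simply from $\mathbb P(\#\{N^2x_j^{(N)}\le K\}>\ell)=\mathbb P(N^2x_{\ell+1}^{(N)}\le K)$ and the hard-edge limit than from the unproved trace-norm convergence you invoke.
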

Proposition~\ref{lem:YN-convergence-all-a} gives, as $N\rightarrow \infty$,
\[
\mathsf X_N^{a,b}
\overset{\mathrm d}{\longrightarrow}
1+\frac{\mathfrak e(a)}{2}\ \ \ \ \ \forall\,a,b>-1.
\]
Together with the uniform integrability furnished by the
small-\(z\) estimates for the Laplace transform and its derivatives,
the latter distributional convergence yields the probabilistic
identification
\begin{equation}\label{goal0928}
\mathbb{E}_{N}^{(a,b)}
\left[
\left(\mathsf{X}_N^{a,b}\right)^h
\right]
\longrightarrow
\mathbb{E}
\left[
\left(
1+\frac{\mathfrak e(a)}{2}
\right)^h
\right],
\qquad N\to\infty,
\end{equation}
for $a,b>-1$, and $h \in \mathbb{C}$ with
\(\operatorname{Re}(h)<a+1\).

\section*{Acknowledgments}

JPK acknowledges support from EPSRC grant EP/Z535990/1.

\medskip
\noindent\textbf{Open Access.}
For the purpose of Open Access, the authors have applied a
CC BY public copyright licence to any Author Accepted Manuscript
(AAM) version arising from this submission.

\medskip
\noindent\textbf{Use of AI.}
ChatGPT was used to assist with correcting grammatical errors,
identifying typographical errors, and improving the clarity and
presentation of the manuscript.

\section{A Refined Model Riemann--Hilbert Problem for the Small-Norm Analysis}\label{modelRHP}

In this section, we recall the model Riemann--Hilbert problem from
\cite[Section~2]{xu2014critical}, with a refinement of the local
condition at the origin. Unlike the formulation in
\cite[Equation~(2.3)]{xu2014critical}, we explicitly retain the
non-analytic term that ensures compatibility with the prescribed
jump condition on \(\Sigma_2\), as explained in Remark
\ref{refinementexplaination}. In particular, in condition~(d) below,
we make this term explicit through the function \(f(\xi;x,a)\),
whereas in the previous formulation \eqref{mp2} it was absorbed into
an \(\mathcal{O}(\xi)\)-term. This refined formulation will be useful in the
construction of the local approximation \(\Theta_0\), which in turn
enters the small-norm Riemann--Hilbert problem RHP~\ref{SN3}.

\begin{problem}\label{modelproblem}
Let $x\in(0,\infty)$ and $a>-1$. Find $\Psi(\xi)=\Psi(\xi;x,a)\in\mathbb{C}^{2\times 2}$ with the following properties:
\begin{enumerate}
\item[$(a)$] $\Psi(\xi)$ is analytic in $\mathbb{C} \setminus \bigcup_{j=1}^3 \Sigma_j$, where 
$\Sigma_1=\left\{\zeta\in\mathbb{C}:\arg\zeta=\frac{2\pi}{3}\right\},
\Sigma_2=\left\{\zeta\in\mathbb{C}:\arg\zeta=\pi\right\}$,
$
\Sigma_3=\left\{\zeta\in\mathbb{C}:\arg\zeta=-\frac{2\pi}{3}\right\}.
$
\item[$(b)$] $\Psi(\xi)$ satisfies the jump condition
\[
\Psi_+(\xi) = \Psi_-(\xi)
\begin{cases}
\begin{pmatrix}
1 & 0 \\
\mathrm{e}^{\pi \i a} & 1
\end{pmatrix}, & \xi \in \Sigma_1, \\[0.8em]

\begin{pmatrix}
0 & 1 \\
-1 & 0
\end{pmatrix}, & \xi \in \Sigma_2, \\[0.8em]

\begin{pmatrix}
1 & 0 \\
\mathrm{e}^{-\pi \i a} & 1
\end{pmatrix}, & \xi \in \Sigma_3.
\end{cases}
\]

\item[$(c)$] The asymptotic behavior of $\Psi(\xi)$ at infinity is
\begin{equation}\label{2.2}
\Psi(\xi, x;a)
=
\left[
\textbf{I} + \frac{C_1(x)}{\xi} + \mathcal{O}\big(\xi^{-2}\big)
\right]
\xi^{-\frac{1}{4}\sigma_3}
\frac{\textbf{I} + \i\sigma_1}{\sqrt{2}}
\mathrm{e}^{\xi^{\frac{1}{2}}\,\sigma_3},
\quad
\arg \xi \in (-\pi,\pi),\ \xi \to \infty,
\end{equation}
where $C_1(x)$ is a matrix independent of $\xi$;

\begin{figure}[ht]
\centering

\begin{tikzpicture}[scale=2]

\usetikzlibrary{decorations.markings}

\tikzset{
  contour/.style={
    thick,
    postaction={decorate},
    decoration={
      markings,
      mark=at position 0.6 with {\arrow{>}}
    }
  }
}

\draw[contour] ({cos(120)},{sin(120)}) -- (0,0)
node[pos=0.2,right] {$\Sigma_1$};
\draw[contour] (-1,0) -- (0,0)
node[pos=0.3,above] {$\Sigma_2$};
\draw[contour] ({cos(240)},{sin(240)}) -- (0,0)
node[pos=0.2,right] {$\Sigma_3$};

\draw[densely dotted] (0,0) -- (1.2,0);

\node at (0,0) [below right] {$0$};

\node at (0.60,0.35) {$\Omega_1$};
\node at (-0.45,0.35) {$\Omega_2$};
\node at (-0.45,-0.35) {$\Omega_3$};
\node at (0.60,-0.35) {$\Omega_4$};
\end{tikzpicture}
\caption{Contour configuration and sector decomposition for the model RH problem for $\Psi$ in the $\xi$-plane. Both $\Omega_2$ and $\Omega_3$ have opening angle $\pi/3$.}\label{fig1}
\end{figure}
\item[$(d)$] The behavior of $\Psi(\xi)$ at $\xi = 0$ is of the form
\[
\Psi(\xi; x,a)
=\widetilde{\Psi}(\xi;x,a)\begin{pmatrix}1 & f(\xi;x,a)\\ 0 & 1\end{pmatrix}
\mathrm{e}^{\frac{x}{\xi}\sigma_3}
\xi^{\frac{a}{2}\sigma_3}
\begin{cases}
\textbf{I}, & \xi \in \Omega_1 \cup \Omega_4, \\[0.6em]
\begin{pmatrix}
1 & 0 \\
- \mathrm{e}^{\pi \i a} & 1
\end{pmatrix}, & \xi \in \Omega_2, \\[0.8em]
\begin{pmatrix}
1 & 0 \\
\mathrm{e}^{-\pi \i a} & 1
\end{pmatrix}, & \xi \in \Omega_3,
\end{cases}
\]
for $\arg \xi \in (-\pi,\pi)$ as $\xi \to 0$, where $\Omega_1$-$\Omega_4$ are illustrated in Figure~\ref{fig1}. Here, $\xi\mapsto\widetilde{\Psi}(\xi;x,a)$ is analytic at $\xi=0$ with $\det\widetilde{\Psi}(\xi;x,a)=1$, $f(\xi;x,a)$ denotes the function
\begin{equation}\label{Cauchy transform}
	f(\xi;x,a):=\frac{\e^{-\xi}}{2\pi\i}\int_{-\infty}^0(-u)^a\e^{u+\frac{2x}{u}}\frac{\d u}{u-\xi},\ \ \ \xi\in\mathbb{C}\setminus(-\infty,0],
\end{equation}
and $\sigma_j$ are the Pauli matrices given by
\[
\sigma_1 =
\begin{pmatrix}
0 & 1 \\
1 & 0
\end{pmatrix}, \quad
\sigma_2 =
\begin{pmatrix}
0 & -\mathrm{i} \\
\mathrm{i} & 0
\end{pmatrix}, \quad
\sigma_3 =
\begin{pmatrix}
1 & 0 \\
0 & -1
\end{pmatrix}.
\]
\end{enumerate}
\end{problem}

\begin{rmk}\label{refinementexplaination}
RHP \ref{modelproblem} is slightly different from the corresponding RHP in \cite{xu2014critical} in that its constraints (d) and (b) are consistent with one another - which is not the case for what is written in \cite[page $1266$]{xu2014critical}. Indeed, we have
\begin{equation*}
	f_+(\xi;x,a)-f_-(\xi;x,a)=(-\xi)^a\e^{\frac{2x}{\xi}},\ \ \ \xi<0,
\end{equation*}
and so the right-hand side in constraint (d) of RHP \ref{modelproblem} for $\Psi(\xi;x,a)$ produces exactly the jumps near $\xi=0$ as required by condition (b) in RHP \ref{modelproblem}. What's more, as $\xi\rightarrow 0$ and $\xi\notin \Sigma_2$, for fixed $x>0$ and $a>-1$,
\begin{align}
	f(\xi;x,a)=&\,\frac{\i}{2\pi}\e^{-\xi}\int_0^{\infty}v^a\e^{-v-\frac{2x}{v}}\frac{\d v}{v+\xi}\nonumber\\
	=&\,\frac{\i}{2\pi}\e^{-2\xi}\int_0^1v^a\e^{-\frac{2x}{v}}\frac{\d v}{v+\xi}+(\textnormal{analytic at}\ \xi=0)\nonumber\\
	=&\,\frac{\i}{2\pi}\e^{-2\xi}(2x)^a\int_{2x}^{\infty}t^{-a}\e^{-t}\,\bigg(\frac{1}{t}-\frac{1}{t+\frac{2x}{\xi}}\bigg)\,\d t+(\textnormal{analytic at}\ \xi=0)\nonumber\\
	=&\,-\frac{\i}{2\pi}\e^{-2\xi}(2x)^{a-1}\xi\Big(\Gamma(1-a,2x)+\mathcal{O}(\xi)\Big)+(\textnormal{analytic at}\ \xi=0),\label{mp1}
\end{align}
in terms of the incomplete Gamma function $\Gamma(a,z)$. It is important to notice that the first summand in \eqref{mp1} is not analytic at $\xi=0$. Still, using \eqref{mp1} in constraint (d), we have at once, as $\xi\rightarrow 0$ and $\xi\notin\Sigma_1\cup\Sigma_2\cup\Sigma_3$, for fixed $x>0$ and $a>-1$,
\begin{equation}\label{mp2}
	\Psi(\xi;x,a)=\check{\Psi}(\xi;x,a)\big(\textbf{I}+\mathcal{O}(\xi)\big)\mathrm{e}^{\frac{x}{\xi}\sigma_3}
\xi^{\frac{a}{2}\sigma_3}
\begin{cases}
\textbf{I}, & \xi \in \Omega_1 \cup \Omega_4, \\[0.6em]
\begin{pmatrix}
1 & 0 \\
- \mathrm{e}^{\pi \i a} & 1
\end{pmatrix}, & \xi \in \Omega_2, \\[0.8em]
\begin{pmatrix}
1 & 0 \\
\mathrm{e}^{-\pi \i a} & 1
\end{pmatrix}, & \xi \in \Omega_3,
\end{cases},
\end{equation}
with $\xi\mapsto\check{\Psi}(\xi;x,a)$ analytic at $\xi=0$, but $\mathcal{O}(\xi)$ not analytic at $\xi=0$. Constraint \eqref{mp2} is constraint \cite[$(2.3)$]{xu2014critical} with $Q(x)\equiv\check{\Psi}(0;x,a)$. The reader will now realize why constraint (d) in RHP \ref{modelproblem} is more precise than \eqref{mp2}. Still, constraint (d) in RHP \ref{modelproblem} implies \eqref{mp2}, so the workings in \cite[Sections $2.1-2.3$]{xu2014critical} apply verbatim to our RHP \ref{modelproblem}.
\end{rmk}
Constraints (a)-(c) in RHP \ref{modelproblem} are in fact satisfied by the following model function, which constitutes a certain $x\downarrow 0$ limiting version of the solution of RHP \ref{modelproblem}, compare \cite[Section $5.1$]{xu2014critical}. Precisely, if $I_a(\xi)$ and $K_a(\xi)$ denote the modified Bessel functions of index $a>-1$, defined with their principal branch, and $I'_a(\xi),K'_a(\xi)$ are their derivatives, then
\begin{equation}\label{mp2a}
	\Theta(\xi;a):=\begin{pmatrix}1 & 0\\ \frac{i}{8}(4a^2+3)&1\end{pmatrix}\pi^{\frac{\sigma_3}{2}}\begin{pmatrix}I_a(\xi^{\frac{1}{2}}) & \frac{\i}{\pi}K_a(\xi^{\frac{1}{2}})\\ \i\pi\xi^{\frac{1}{2}}I_a'(\xi^{\frac{1}{2}}) & -\xi^{\frac{1}{2}}K_a'(\xi^{\frac{1}{2}})\end{pmatrix}\begin{cases}
\textbf{I}, & \textnormal{arg}\,\xi\in(-\frac{2\pi}{3},\frac{2\pi}{3}), \\[0.6em]
\begin{pmatrix}
1 & 0 \\
- \mathrm{e}^{\pi \i a} & 1
\end{pmatrix}, & \textnormal{arg}\,\xi \in(\frac{2\pi}{3},\pi), \\[0.8em]
\begin{pmatrix}
1 & 0 \\
\mathrm{e}^{-\pi \i a} & 1
\end{pmatrix}, & \textnormal{arg}\,\xi \in(-\pi,-\frac{2\pi}{3}),
\end{cases}
\end{equation}
satisfies conditions (a) and (b) in RHP \ref{modelproblem}, it obeys the normalization
\begin{equation}\label{mp2b}
	\Theta(\xi;a)=\left\{\textbf{I}+\frac{4a^2-1}{128\xi}\begin{pmatrix}4a^2-9 & 16\i\\ \frac{\i}{12}(4a^2-9)(4a^2-13) & 9-4a^2\end{pmatrix}+\mathcal{O}\big(\xi^{-2}\big)\right\}\xi^{-\frac{1}{4}\sigma_3}\frac{\textbf{I}+\i\sigma_1}{\sqrt{2}}\e^{\xi^{\frac{1}{2}}\sigma_3},
\end{equation}
near $\xi=\infty,\xi\notin\Sigma_1\cup\Sigma_2\cup\Sigma_3$ for $\textnormal{arg}\,\xi\in(-\pi,\pi)$. Define
\[
g(\xi;a)
:=
\begin{cases}
-\dfrac{\i}{2\sin(\pi a)},
& a\notin\mathbb Z,
\\[1em]
\dfrac{\i}{2\pi}(-1)^{a+1}\ln \xi,
& a\in\mathbb Z_{\geq0}.
\end{cases}
\]
Then, in a vicinity of \(\xi=0\), with \(\arg\xi\in(-\pi,\pi)\),
\begin{equation*}
\Theta(\xi;a)
=
\mathring{\Theta}(\xi;a)
\xi^{\frac{a}{2}\sigma_3}
\begin{pmatrix}
1 & g(\xi;a)\\
0 & 1
\end{pmatrix}
\begin{cases}
\mathbf I,
&
\arg\xi\in
\left(-\dfrac{2\pi}{3},\dfrac{2\pi}{3}\right),
\\[0.8em]
\begin{pmatrix}
1 & 0\\
-\e^{\pi\i a} & 1
\end{pmatrix},
&
\arg\xi\in
\left(\dfrac{2\pi}{3},\pi\right),
\\[1em]
\begin{pmatrix}
1 & 0\\
\e^{-\pi\i a} & 1
\end{pmatrix},
&
\arg\xi\in
\left(-\pi,-\dfrac{2\pi}{3}\right).
\end{cases}
\end{equation*}
Here, \(\xi\mapsto\mathring{\Theta}(\xi;a)\) is analytic at \(\xi=0\)
and is given by
\begin{align*}
a\notin\mathbb Z:\qquad
\mathring{\Theta}(\xi;a)
&=
\begin{pmatrix}
1 & 0\\
\dfrac{\i}{8}(4a^2+3) & 1
\end{pmatrix}
\pi^{\sigma_3/2}
\begin{pmatrix}
I_{11}(\xi;a)
&
\dfrac{\i}{2\sin(\pi a)}I_{11}(\xi;-a)
\\[1ex]
I_{21}(\xi;a)
&
\dfrac{\i}{2\sin(\pi a)}I_{21}(\xi;-a)
\end{pmatrix},
\\[2ex]
a\in\mathbb Z_{\geq0}:\qquad
\mathring{\Theta}(\xi;a)
&=
\begin{pmatrix}
1 & 0\\
\dfrac{\i}{8}(4a^2+3) & 1
\end{pmatrix}
\pi^{\sigma_3/2}
\begin{pmatrix}
I_{11}(\xi;a)
&
I_{12}(\xi;a)
\\[1ex]
I_{21}(\xi;a)
&
I_{22}(\xi;a)+(-\xi)^a I_{11}(\xi;a)
\end{pmatrix}
\\
&\qquad\times
\begin{pmatrix}
1 & \dfrac{\i}{\pi}(-\xi)^a\log 2\\
0 & 1
\end{pmatrix}.
\end{align*}
in terms of the entire functions
\begin{equation*}	
	\begin{cases}\displaystyle I_{11}(\xi;a):=\frac{1}{2^a}\sum_{k=0}^{\infty}\frac{(\xi/4)^k}{k!\Gamma(a+k+1)},&\smallskip\\ \displaystyle I_{21}(\xi;a):=\frac{\i\pi}{2^a}\sum_{k=0}^{\infty}\frac{(2k+a)(\xi/4)^k}{k!\Gamma(a+k+1)},&\end{cases} 
\end{equation*}
and, for \(a\in\mathbb Z_{\geq0}\),
\begin{equation*}
\begin{aligned}
I_{12}(\xi;a)
:=
\frac{\i}{2\pi}
\Bigg[
&2^a
\sum_{k=0}^{a-1}
\frac{\Gamma(a-k)}{k!}
\left(\frac{-\xi}{4}\right)^k
+
\left(\frac{-\xi}{2}\right)^a
\sum_{k=0}^{\infty}
\bigl(\psi(k+1)+\psi(a+k+1)\bigr)
\frac{(\xi/4)^k}{k!(a+k)!}
\Bigg],
\end{aligned}
\end{equation*}
and
\begin{equation*}
\begin{aligned}
I_{22}(\xi;a)
:=
-\frac12
\Bigg[
&2^a
\sum_{k=0}^{a-1}
\frac{\Gamma(a-k)}{k!}
(2k-a)
\left(\frac{-\xi}{4}\right)^k
\\
&\quad+
\left(\frac{-\xi}{2}\right)^a
\sum_{k=0}^{\infty}
\bigl(\psi(k+1)+\psi(a+k+1)\bigr)
\frac{(2k+a)(\xi/4)^k}{k!(a+k)!}
\Bigg].
\end{aligned}
\end{equation*}
 Here $\psi(z)$ is the digamma function. As seen from the above, $\Theta(\xi;a)$ does not match up with condition (d) in RHP \ref{modelproblem}, still the model function $\Theta(\xi;a)$ is useful in the $x\downarrow 0$ asymptotic analysis of RHP \ref{modelproblem} as shown in \cite[Section $5$]{xu2014critical}. In order to correct the mismatch between $\Theta(\xi;a)$ and $\Psi(\xi;x,a)$ near $\xi=0$, one considers the following additional model function: let $\xi\in\mathbb{D}_{\epsilon}(0)\setminus\bigcup_{j=1}^3\Sigma_j$ and keep $a>-1,x>0$ fixed. Define
\begin{equation}\label{mp3}
	\Theta_0(\xi;x,a):=\mathring{\Theta}(\xi;a)\begin{pmatrix}1 & \mathring{f}(\xi;x,a)\\ 0 & 1\end{pmatrix}\xi^{\frac{a}{2}\sigma_3}\e^{\frac{x}{\xi}\sigma_3}\begin{cases}
\textbf{I}, & \textnormal{arg}\,\xi\in(-\frac{2\pi}{3},\frac{2\pi}{3}), \\[0.6em]
\begin{pmatrix}
1 & 0 \\
- \mathrm{e}^{\pi \i a} & 1
\end{pmatrix}, & \textnormal{arg}\,\xi \in(\frac{2\pi}{3},\pi), \\[0.8em]
\begin{pmatrix}
1 & 0 \\
\mathrm{e}^{-\pi \i a} & 1
\end{pmatrix}, & \textnormal{arg}\,\xi \in(-\pi,-\frac{2\pi}{3}),
\end{cases}
\end{equation}
with the entire function $\xi\mapsto\mathring{\Theta}(\xi;a)$ from before and the function
\begin{equation*}
	\mathring{f}(\xi;x,a):=f(\xi;x,a)-\frac{\i}{2}\begin{cases}\displaystyle\frac{\gamma^{\ast}(-a,\xi)}{\sin(\pi a)},&a\notin\mathbb{Z}_{\geq 0}\smallskip\\ \frac{1}{\pi}\big[(-\xi)^a\psi(a+1)-\sum_{\substack{k=0\\ k\neq a}}^{\infty}\frac{a!(-\xi)^k}{k!(k-a)}\big],&a\in\mathbb{Z}_{\geq 0}\end{cases};\ \ \ \gamma^{\ast}(a,z):=\frac{z^{-a}}{\Gamma(a)}\gamma(a,z),
\end{equation*}
with $f(\xi;x,a)$ as in condition (d) of RHP \ref{modelproblem}. Evidently, \eqref{mp3} satisfies conditions (a),(b) and (d) in RHP \ref{modelproblem}\footnote{One must recall that $z\mapsto\gamma^{\ast}(a,z)$ is an entire function.}. 
Crucially, for any fixed $a>-1$ and $x>0$, the map
\begin{equation*}
    \xi\mapsto \Psi(\xi;x,a)\Theta_0(\xi;x,a)^{-1}
\end{equation*}
is analytic at $\xi=0$.
It thus remains to compare $\Theta_0(\xi;x,a)$ and $\Theta(\xi;x,a)$ asymptotically as $x\downarrow 0$, for $|\xi|=\epsilon>0$ fixed, with $\epsilon>0$ sufficiently small. The key underlying expansion of $f(\xi;x,a)$ is worked out below.
\begin{prop}\label{appendix lemma} Let $a>-1$ be fixed, and suppose $\xi\in\mathbb{C}$ has modulus $|\xi|=\delta\in(0,1)$. Then for any $x>0$, using principal branches for all fractional exponents,
\begin{equation*}
	f(\xi;x,a)\stackrel{a\notin\mathbb{Z}_{\geq 0}}{=}-\frac{\i}{2\sin(\pi a)}\left[\sum_{n=0}^{\infty}\frac{(2x)^n}{n!}\xi^{a-n}\frac{\Gamma(n-a,\xi)}{\Gamma(n-a)}-\sum_{m=0}^{\infty}\frac{(2x)^{1+a+m}}{m!}\xi^{-1-m}\frac{\Gamma(1+m,\xi)}{\Gamma(2+a+m)}\right],
\end{equation*}
provided $\xi\neq-\delta$. For $\xi=|\delta|\e^{\pm\i\pi}$ the principal branch limiting values in the first series must be taken. In addition, using principal branches for $\xi\mapsto\ln\xi$ and for the exponential integral $\xi\mapsto E_1(\xi)$,
\begin{align*}
	f(\xi;x,a)\stackrel{a\in\mathbb{Z}_{\geq 0}}{=}\frac{\i}{2\pi}&\,\Bigg[\sum_{n=0}^a\frac{(-2x)^n}{n!}(-\xi)^{a-n}\big(\psi(a-n+1)-\ln\xi\big)-\sum_{n=0}^a\frac{(-2x)^n}{n!}\sum_{\substack{k=0\\ k\neq a-n}}^{\infty}\frac{(a-n)!(-\xi)^k}{k!(k+n-a)}\\
	&+(-1)^{a+1}\sum_{m=1}^{\infty}\frac{(2x)^{a+m}}{(a+m)!}\frac{\xi^{-m}}{(m-1)!}\Gamma(m,\xi)\Bigg(2\psi(a+m+1)-\sum_{k=m}^{a+m}\frac{1}{k}-\ln(2x)\\
	&-(m-1)!\Bigg[\frac{E_1(\xi)}{\Gamma(m,\xi)}+\sum_{k=1}^{m-1}\frac{\Gamma(k,\xi)}{\Gamma(m,\xi)k!}\Bigg]\Bigg),
\end{align*}
provided $\xi\neq-\delta$. For $\xi=|\delta|\e^{\pm\i\pi}$ the principal branch limiting values in $\xi\mapsto\ln\xi$ and in $\xi\mapsto E_1(\xi)$ must be taken.
Here $\psi(z)$ is the digamma function and
\begin{equation}\label{defofE1}
E_1(z):=\int_z^{\infty}\e^{-t}\frac{\d t}{t},\ z\in\mathbb{C}\setminus(-\infty,0].
\end{equation}

\end{prop}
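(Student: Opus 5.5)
We plan to prove the expansion of $f(\xi;x,a)$ in Proposition~\ref{appendix lemma} by expanding $\e^{2x/u}$ in the Cauchy integral \eqref{Cauchy transform}. Since this expansion cannot be done term by term on the whole half-line (the individual terms are not integrable at $u=0$), we first split the contour and then recombine.

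Substitute $u=-v$ to get
\[
f(\xi;x,a)=\frac{\i}{2\pi}\e^{-\xi}\int_0^\infty v^a\e^{-v-2x/v}\frac{\d v}{v+\xi}.
\]
Next we consider the analytic continuation in $a$. For $\operatorname{Re}a$ large, a naive termwise expansion $\e^{-2x/v}=\sum_n(-2x)^n v^{-n}/n!$ fails at $v=0$. We therefore use a Mellin--Barnes representation instead:
\[
\e^{-2x/v}=\frac{1}{2\pi\i}\int_{c-\i\infty}^{c+\i\infty}\Gamma(s)(2x/v)^{-s}\,\d s,\qquad c>0.
\]
Interchanging integrals gives
\[
f=\frac{\i\,\e^{-\xi}}{2\pi}\frac{1}{2\pi\i}\int\Gamma(s)(2x)^{-s}\,\Phi(a+s;\xi)\,\d s,
\qquad
\Phi(\beta;\xi)=\int_0^\infty v^{\beta}\e^{-v}\frac{\d v}{v+\xi}.
\]
The function $\Phi$ is explicit:
\[
\Phi(\beta;\xi)=\Gamma(1+\beta)\,\e^{\xi}\xi^{\beta}\,\Gamma(-\beta,\xi),
\]
which is the standard Stieltjes transform of the gamma density, an incomplete gamma (Tricomi $U$) function. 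Equivalently, using the reflection formula,
\[
\Phi=\frac{\pi}{\sin(\pi\beta)}\,\xi^{\beta}\e^{\xi}\Big[\frac{\Gamma(-\beta,\xi)}{\Gamma(-\beta)}\Big]\cdot(\text{sign}).
\]
The factor $\e^{-\xi}$ then cancels.

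Then we close the $s$-contour to the left and collect residues. The poles of $\Gamma(s)$ at $s=-n$ produce the first series, after using $\Gamma(1+a-n)\Gamma(n-a)=\pi/\sin(\pi(a-n))$ to extract $-\i/(2\sin\pi a)$ and the factor $\xi^{a-n}\Gamma(n-a,\xi)/\Gamma(n-a)$. The poles of $\Gamma(1+a+s)$ at $s=-1-a-m$ produce the second series with $\xi^{-1-m}\Gamma(1+m,\xi)/\Gamma(2+a+m)$ and weights $(2x)^{1+a+m}/m!$; the sign comes from the residue of $\Gamma(1+a+s)$ combined with $\Gamma(-1-a-m)$ through reflection. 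Convergence of both series for $|\xi|=\delta<1$ follows from Stirling bounds together with $|\Gamma(k,\xi)|\le \Gamma(k)+O(1)$. The decay of the shifted contour integral is justified by Stirling as well, since $\Phi$ grows at most like $\Gamma(1+\operatorname{Re}\beta)$ and the factor $(2x)^{-s}$ is fine because the resulting series are entire in $x$.

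The case $a\in\mathbb Z_{\ge0}$ needs separate handling, because there the two pole families collide: for $n\le a$ and $m\ge1$, shifted appropriately, the poles become double. We will either compute the double residues directly, or take the limit $a\to$ integer in the first formula, expanding $\Gamma(n-a,\xi)/\Gamma(n-a)$ and $1/\sin(\pi a)$ to first order. The $\ln\xi$, digamma and $\ln(2x)$ terms arise from derivatives of $\xi^{a-n}$, $\Gamma$ and $(2x)^{a+m}$. The terms with $\Gamma(m,\xi)$, $E_1(\xi)$ and $\Gamma(k,\xi)$ arise from differentiating $\Gamma(n-a,\xi)$ in its first argument near non-positive integers, via the recurrence $\Gamma(-k,\xi)$ expressed through $E_1(\xi)$ and finite sums.

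This integer limit is the main obstacle: the bookkeeping of the harmonic sums $\sum_{k=m}^{a+m}1/k$ and of the polynomial part $\sum_{k\neq a-n}$ is delicate. We would verify it by checking that both sides have the same jump $(-\xi)^a\e^{2x/\xi}$ across $\xi<0$, as in Remark~\ref{refinementexplaination}, and the same behavior as $x\downarrow0$. Finally, the boundary values at $\xi=-\delta$ follow by continuity of the principal-branch limits.
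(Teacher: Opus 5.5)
Your treatment of $a\notin\mathbb{Z}_{\geq0}$ is the paper's proof. You use the same Mellin--Barnes representation of $\e^{-2x/v}$ and the same closed form $\Phi(\beta;\xi)=\Gamma(1+\beta)\e^{\xi}\xi^{\beta}\Gamma(-\beta,\xi)$, which reduces $f$ to $\frac{1}{(2\pi)^2}\int_{c-\i\infty}^{c+\i\infty}\Gamma(s)\Gamma(1+a+s)(2x)^{-s}\xi^{a+s}\Gamma(-a-s,\xi)\,\d s$. You then collect the same two families of simple residues. Two corrections are needed there:
\begin{enumerate}
\item[(i)] The reflection identity is $\Gamma(1+a-n)\Gamma(n-a)=\pi/\sin(\pi(n-a))$, not $\pi/\sin(\pi(a-n))$. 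With your version the first series comes out with the wrong sign.
\item[(ii)] $\Phi$ is not controlled by $\Gamma(1+\operatorname{Re}\beta)$ as $\operatorname{Re}\beta\to-\infty$. There $\Phi\approx-\pi\e^{\xi}\xi^{\beta}/\sin(\pi\beta)$ grows like $\delta^{\operatorname{Re}\beta}$. What makes the shifted integrals vanish is the factorial decay of $\Gamma(s)$ against the geometric growth of $(2x)^{-s}\xi^{a+s}$, plus the $\e^{-\pi|\operatorname{Im}s|/2}$ decay on vertical lines. The paper leaves this step implicit as well.
\end{enumerate}

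The genuine gap is the case $a\in\mathbb{Z}_{\geq0}$, which is half of the statement. You leave it as a plan, and the sketch misplaces where the terms come from.
\begin{enumerate}
\item[(a)] The collisions are not at $n\le a$. The poles $s=0,-1,\dots,-a$ stay simple and produce the finite sums over $n=0,\dots,a$. In those terms $\Gamma(n-a,\xi)$ is only \emph{evaluated} at a non-positive integer, via
$\Gamma(n-a,\xi)=\frac{(-1)^{a-n}}{(a-n)!}\bigl(\psi(a-n+1)-\ln\xi\bigr)-\xi^{n-a}\sum_{k\neq a-n}\frac{(-\xi)^k}{k!(k+n-a)}$.
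This is where $\ln\xi$ and $\psi(a-n+1)$ come from.
\item[(b)] The double poles are at $s_m=-a-m$, $m\geq1$. Their residues need the Laurent expansion
$\Gamma(s)\Gamma(1+a+s)=\frac{(-1)^{a+1}}{(a+m)!(m-1)!}\bigl\{(s-s_m)^{-2}+\bigl(2\psi(a+m+1)-\sum_{k=m}^{a+m}\frac1k\bigr)(s-s_m)^{-1}+\mathcal{O}(1)\bigr\}$.
They also need the linear Taylor coefficient $\ln(\xi/2x)-\partial_\alpha\Gamma(\alpha,\xi)|_{\alpha=m}/\Gamma(m,\xi)$ of $(2x)^{-s}\xi^{a+s}\Gamma(-a-s,\xi)$ at $s_m$.
\item[(c)] The $E_1(\xi)$ and $\Gamma(k,\xi)$ terms come from $\partial_\alpha\Gamma(\alpha,\xi)$ at the \emph{positive} integer $\alpha=m$, namely $\Gamma(m,\xi)\ln\xi+(m-1)!\bigl[E_1(\xi)+\sum_{k=1}^{m-1}\Gamma(k,\xi)/k!\bigr]$. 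Its $\ln\xi$ cancels the one from $\xi^{a+s}$, which is why no $\ln\xi$ survives in the $m$-series. These terms do not arise from differentiating $\Gamma(n-a,\xi)$ near non-positive integers.
\item[(d)] Your proposed check cannot replace this computation. Equal jumps across $\xi<0$ only show that the difference of the two sides is analytic in $0<|\xi|<1$. You would still have to control its singularity at $\xi=0$ and show the difference vanishes identically. Agreement of the leading $x\downarrow0$ behaviour does neither.
\end{enumerate}
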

\begin{proof} Suppose $|\xi|=\delta\in(0,1)$ lies off the negative real axis. Using, valid for any $z\in\mathbb{C}$ with $\textnormal{Re}(z)>0$ and $\textnormal{arg}\,z\in(-\frac{\pi}{2},\frac{\pi}{2})$,
\begin{equation*}
	\e^{-z}=\frac{1}{2\pi\i}\int_{c-\i\infty}^{c+\i\infty}\Gamma(s)z^{-s}\d s,\ \ \ c>0,
\end{equation*}
we have
\begin{equation*}
	f(\xi;x,a)=-\frac{\e^{-\xi}}{2\pi\i}\int_0^{\infty}v^a\e^{-v-\frac{2x}{v}}\frac{\d v}{v+\xi}=\frac{\e^{-\xi}}{(2\pi)^2}\int_{c-\i\infty}^{c+\i\infty}\Gamma(s)(2x)^{-s}\left[\int_0^{\infty}v^{a+s}\e^{-v}\frac{\d v}{v+\xi}\right]\d s,
\end{equation*}
where the innermost integral converges because $\textnormal{Re}(a+s)=a+c>-1$ by assumption. We can evaluate the same in terms of the incomplete Gamma function $\Gamma(a,z)$,
\begin{equation*}
	\int_0^{\infty}v^{a+s}\e^{-v}\frac{\d v}{v+\xi}=\Gamma(1+a+s)\int_0^{\infty}\e^{-t\xi}(1+t)^{-a-s-1}\d t=\e^{\xi}\xi^{a+s}\Gamma(1+a+s)\Gamma(-a-s,\xi).
\end{equation*}
The last evaluation holds initially for $\textnormal{Re}\,\xi>0$, but by continuation eventually for $|\xi|=\delta\in(0,1)$ off the negative real axis. So far,
\begin{equation}\label{mp4}
	f(\xi;x,a)=\frac{1}{(2\pi)^2}\int_{c-\i\infty}^{c+\i\infty}\Gamma(s)\Gamma(1+a+s)(2x)^{-s}\xi^{a+s}\Gamma(-a-s,\xi)\d s,
\end{equation}
where $\mathbb{C}\ni s\mapsto (2x)^{-s}\xi^{a+s}\Gamma(-a-s,\xi)$ is entire, for all $\xi\in\mathbb{C}\setminus(-\infty,0]$ and $x>0,a>-1$. On the other hand, 
\begin{equation}\label{mp5}
	s\mapsto\Gamma(s)\Gamma(1+a+s)
\end{equation}
has poles at $s=-n,n\in\mathbb{Z}_{\geq 0}$ and at $s=-1-a-m,m\in\mathbb{Z}_{\geq 0}$. And those poles do not coincide precisely for $a\notin\mathbb{Z}$, i.e. for $a\notin\mathbb{Z}$, the integrand in \eqref{mp4} has only simple poles. In that case, residue theorem yields the explicit formula (using Euler's reflection formula en route)
\begin{equation*}
	f(\xi;x,a)=-\frac{\i}{2\sin(\pi a)}\left[\sum_{n=0}^{\infty}\frac{(2x)^n}{n!}\xi^{a-n}\frac{\Gamma(n-a,\xi)}{\Gamma(n-a)}-\sum_{m=0}^{\infty}\frac{(2x)^{1+a+m}}{m!}\xi^{-1-m}\frac{\Gamma(1+m,\xi)}{\Gamma(2+a+m)}\right],
\end{equation*}
as claimed above. If $\xi=-\delta<0$, then by convergence properties of the underlying series and the known connection formulas, for $\Gamma(a,z\e^{\pm\i\pi})$,
\begin{align*}
	f_{\pm}(-\delta;x,a)=&\,\lim_{\epsilon\downarrow 0}f(-\delta\pm\i\epsilon;x,a)=-\frac{\i}{2\sin(\pi a)}\Bigg[\delta^a\e^{\pm\i\pi a}\e^{-\frac{2x}{\delta}}-\sum_{n=0}^{\infty}\frac{(2x)^n}{n!}\gamma^{\ast}(n-a,-\delta)\\
	&-\sum_{m=0}^{\infty}\frac{(2x)^{1+a+m}}{m!}\xi^{-1-m}\frac{\Gamma(1+m,-\delta)}{\Gamma(2+a+m)}\Bigg].
\end{align*}
In the case $a\in\mathbb{Z}_{\geq 0}$, for $|\xi|=\delta\in(0,1)$ off the negative real axis, the first $a+1$ poles at $s=0,-1,\ldots,-a$ of \eqref{mp5} are again simple and we obtain from residue theorem
\begin{align*}
	f(\xi;x,a)=\frac{\i}{2\pi}&\,\sum_{n=0}^a\frac{(-2x)^n}{n!}\xi^{a-n}\Gamma(1+a-n)\Gamma(n-a,\xi)\\
	&+\frac{1}{(2\pi)^2}\int_{c'-\i\infty}^{c'+\i\infty}\Gamma(s)\Gamma(1+s+a)(2x)^{-s}\xi^{a+s}\Gamma(-a-s,\xi)\d s,\ \ c'\in(-1-a,-a)
\end{align*}
The remaining poles of \eqref{mp5} at $s=s_m:=-a-m,m\in\mathbb{N}$ are double poles and so we require the leading order Laurent expansion
\begin{equation*}
	\Gamma(s)\Gamma(1+a+s)=\frac{(-1)^{a+1}}{(m+a)!(m-1)!}\left\{\frac{1}{(s-s_m)^2}+\frac{1}{s-s_m}\left(2\psi(a+m+1)-\sum_{k=m}^{a+m}\frac{1}{k}\right)+\mathcal{O}(1)\right\},
\end{equation*}
valid for $0<|s-s_m|<\epsilon$, and the leading order Taylor expansion
\begin{equation*}
	(2x)^{-s}\xi^{a+s}\Gamma(-a-s,\xi)=(2x)^{a+m}\xi^{-m}\Gamma(m,\xi)\left\{1+(s-s_m)\bigg(\ln\bigg(\frac{\xi}{2x}\bigg)-\frac{\Gamma'(m,\xi)}{\Gamma(m,\xi)}\bigg)+\mathcal{O}\big((s-s_m)^2\big)\right\},
\end{equation*}
valid for $|s-s_m|<\epsilon$ and $|\xi|=\delta\in(0,1)$ off the negative real axis, with fixed $x>0$. Here, $\Gamma'(a,z)=\partial_a\Gamma(a,z)$ and therefore
\begin{align*}
	f(\xi;x,&\,a)=\frac{\i}{2\pi}\Bigg[\sum_{n=0}^a\frac{(-2x)^n}{n!}\xi^{a-n}(a-n)!\,\Gamma(n-a,\xi)\\
	&+(-1)^{a+1}\sum_{m=1}^{\infty}\frac{(2x)^{a+m}}{(a+m)!}\frac{\xi^{-m}}{(m-1)!}\Gamma(m,\xi)\Bigg(2\psi(a+m+1)-\sum_{k=m}^{a+m}\frac{1}{k}+\ln\bigg(\frac{\xi}{2x}\bigg)-\frac{\Gamma'(m,\xi)}{\Gamma(m,\xi)}\Bigg)\Bigg],
\end{align*}
as claimed above, since
\begin{equation*}
	\Gamma(n-a,\xi)=\frac{(-1)^{a-n}}{(a-n)!}\big(\psi(a-n+1)-\ln\xi\big)-\xi^{n-a}\sum_{\substack{k=0\\ k\neq a-n}}^{\infty}\frac{(-\xi)^k}{k!(k+n-a)},
\end{equation*}
and since
\begin{equation*}
	\Gamma'(m,\xi)=\Gamma(m,\xi)\ln\xi+(m-1)!\left[E_1(\xi)+\sum_{k=1}^{m-1}\frac{\Gamma(k,\xi)}{k!}\right]
\end{equation*}
with $E_{1}(z)$ defined as in (\ref{defofE1}). If $\xi=-\delta<0$, then taking the appropriate limiting values is straightforward, for all multivalued functions are outside the relevant series, and $\xi\mapsto\Gamma(n,\xi)$ is entire as soon as $n\in\mathbb{N}$. The proof is now complete.
 \end{proof}
 As a simple consequence of Proposition \ref{appendix lemma} we record the following leading order estimates
 \begin{cor}\label{impmatch} Let $a>-1$ be fixed and choose $|\xi|=\delta\in(0,1)$. Then, as $x\downarrow 0$,
 \begin{equation*}
 	f(\xi;x,a)=-\frac{\i}{2\sin(\pi a)}\Big[\xi^a-\gamma^{\ast}(-a,\xi)+\mathcal{O}(x)+\mathcal{O}\big(x^{1+a}\big)\Big],\ \ a\notin\mathbb{Z}_{\geq 0}
\end{equation*}
and
\begin{equation*}
	f(\xi;x,a)=\frac{\i}{2\pi}\bigg[(-\xi)^a\big(\psi(a+1)-\ln\xi\big)-\sum_{\substack{k=0\\ k\neq a}}^{\infty}\frac{a!(-\xi)^k}{k!(k-a)}+\mathcal{O}(x)+\mathcal{O}\big(x^{1+a}|\ln x|\big)\bigg],\ \ a\in\mathbb{Z}_{\geq 0}.
\end{equation*}
Throughout the appropriate principal branch values must be chosen when $\xi=\delta\e^{\pm\i\pi}$.
 \end{cor}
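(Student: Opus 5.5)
The plan is to let $x\downarrow0$ directly in the two convergent series of Proposition~\ref{appendix lemma}, with $|\xi|=\delta\in(0,1)$ fixed. In each case the leading term comes from a single summand. Every other summand will be bounded uniformly on the circle $|\xi|=\delta$, including the principal-branch boundary values at $\xi=\delta\e^{\pm\i\pi}$, by a constant times a power of $x$ times a summable sequence.

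\emph{Case $a\notin\mathbb Z_{\geq0}$ (so $a\in(-1,\infty)\setminus\mathbb Z$).}
\begin{itemize}
\item The $n=0$ summand of the first series is $\xi^{a}\Gamma(-a,\xi)/\Gamma(-a)$. Write $\Gamma(-a,\xi)=\Gamma(-a)-\gamma(-a,\xi)$ and use $\gamma^{\ast}(-a,\xi)=\xi^{a}\gamma(-a,\xi)/\Gamma(-a)$. This summand is then exactly $\xi^a-\gamma^\ast(-a,\xi)$, which is the claimed leading term.
\item For $n\geq1$ the same decomposition gives
\[
\xi^{a-n}\frac{\Gamma(n-a,\xi)}{\Gamma(n-a)}=\xi^{a-n}-\gamma^\ast(n-a,\xi).
\]
We have $|\xi^{a-n}|\leq\delta^{a-n}$. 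Since $\gamma^\ast(b,z)=\frac{1}{\Gamma(b)}\int_0^1t^{b-1}\e^{-zt}\,\d t$, it is bounded by $\e^{\delta}/\Gamma(b+1)$ when $b>0$.
\item Summing $(2x)^n/n!$ against these bounds over $n\geq1$ gives at most $(\e^{2x/\delta}-1)\delta^{a}+C(\e^{2x}-1)$, which is $\mathcal O(x)$.
\item In the second series, $\Gamma(1+m,\xi)=m!\,\e^{-\xi}\sum_{k=0}^m\xi^k/k!$, so $|\Gamma(1+m,\xi)|\leq m!\,\e^{2\delta}$. The whole second series is therefore at most
\[
C(2x)^{1+a}\delta^{-1}\sum_{m\geq0}\frac{(2x/\delta)^m}{\Gamma(2+a+m)}=\mathcal O(x^{1+a}).
\]
\end{itemize}
These bounds are uniform in $\arg\xi$. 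The expressions extend continuously to the boundary values on the negative axis, so the estimate also holds there with the principal-branch limits.

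\emph{Case $a\in\mathbb Z_{\geq0}$.}
\begin{itemize}
\item The $n=0$ summands of the first two finite sums give exactly $(-\xi)^a(\psi(a+1)-\ln\xi)-\sum_{k\neq a}\frac{a!(-\xi)^k}{k!(k-a)}$.
\item The summands with $1\leq n\leq a$ form a finite collection. Each is a bounded function of $\xi$ on the circle: $\ln\xi$ is bounded for principal branches, and the power series in $\xi$ converge uniformly. Each carries a factor $x^n$, so together they are $\mathcal O(x)$.
\item For the $m$-series, first multiply through by $\Gamma(m,\xi)$, so that the summand is
\[
\frac{(2x)^{a+m}\xi^{-m}}{(a+m)!(m-1)!}\Big[\Gamma(m,\xi)\big(2\psi(a+m+1)-\textstyle\sum_{k=m}^{a+m}\tfrac1k-\ln(2x)\big)-(m-1)!\big(E_1(\xi)+\textstyle\sum_{k=1}^{m-1}\tfrac{\Gamma(k,\xi)}{k!}\big)\Big].
\]
\item Bound the pieces as follows. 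We have $|\Gamma(m,\xi)|\leq(m-1)!\,\e^{2\delta}$ and $|\Gamma(k,\xi)|/k!\leq \e^{2\delta}/k$. The function $E_1(\xi)$ is bounded on the circle together with its principal limits, since $E_1(\xi)=-\gamma_{\mathrm E}-\ln\xi+\text{entire}$. Finally $\psi(a+m+1)=\mathcal O(\ln m)$.
\item Hence the bracket is $\mathcal O\big((m-1)!(\ln m+|\ln x|+1)\big)$. The $m$-sum is then bounded by
\[
Cx^{a+1}\sum_{m\geq1}(2x/\delta)^{m-1}\frac{\ln m+|\ln x|+1}{(a+m)!},
\]
which is $\mathcal O(x^{1+a}|\ln x|)$.
\end{itemize}

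The only delicate step is this uniform control of the $m$-series in the integer case. There the apparent ratio $\Gamma(k,\xi)/\Gamma(m,\xi)$ and the factor $\ln(2x)$ must be handled by first clearing the denominator $\Gamma(m,\xi)$, as above, and then using factorial-type bounds that are uniform on $|\xi|=\delta$. The rest of the argument is bookkeeping with the incomplete Gamma identities.
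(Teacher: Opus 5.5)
Your proposal is correct and follows the paper's route: the paper treats the corollary as an immediate consequence of Proposition~\ref{appendix lemma}. As you do, the $n=0$ summands give the leading terms, and the remaining summands are bounded, uniformly on $|\xi|=\delta$, by $\mathcal{O}(x)$ and $\mathcal{O}(x^{1+a})$ (respectively $\mathcal{O}(x^{1+a}|\ln x|)$ in the integer case). One small fix: for non-integer $a>1$, the finitely many summands with $1\le n<a$ have $n-a<0$, so your integral bound for $\gamma^{\ast}(n-a,\xi)$ does not apply to them; since $\gamma^{\ast}$ is entire, these terms are still bounded on the circle and contribute $\mathcal{O}(x)$.
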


\section{Small-\(x\) Analysis for \(\sigma(x;a)\)}
\label{section6}
To deduce the conclusion of Theorem \ref{maintheorem2}, we require a more delicate analysis of \(\sigma(x;a)\) as \(x\downarrow 0\). To this end, we now study the Riemann-Hilbert Problem \ref{modelproblem} in the limit $x\downarrow 0$. We begin by defining, for any $x>0$ and $a>-1$,
\begin{equation}\label{R0def}
R(\xi;x,a):=\Psi(\xi;x,a)
\begin{cases}
	\Theta_0(\xi;x,a)^{-1}, & |\xi|<\epsilon,\\[6pt]
	\Theta(\xi;a)^{-1}, & |\xi|>\epsilon,
\end{cases}
\end{equation}
where $\Psi(\xi,x;a)$ is the solution of RHP \ref{modelproblem}, and $\Theta_0(\xi;x,a),\Theta(\xi;a)$ are defined in Section \ref{modelRHP}, see \eqref{mp2a} and \eqref{mp3}. Throughout, $\epsilon\in(0,1)$ is kept fixed.
The matrix-valued function \(R(\xi)\) defined in \eqref{R0def} solves the following Riemann--Hilbert problem.
\begin{problem}[Small norm problem]\label{SN3} Let $a>-1$ and $x>0$. The function $R(\xi)=R(\xi;x,a)\in\mathbb{C}^{2\times 2}$ defined in \eqref{R0def} has the following properties:
\begin{enumerate}
	\item[(1)] $\xi\mapsto R(\xi)$ is analytic for $\xi\in\mathbb{C}\setminus\Sigma_R$ with $\Sigma_R=\partial\mathbb{D}_{\epsilon}(0)$ shown in Figure \ref{fig9x}. On $\Sigma_R$, $R(\xi)$ admits continuous boundary values $R_{\pm}(\xi)$ as one approaches $\Sigma_R$ from either side of $\mathbb{C}\setminus\Sigma_R$ non-tangentially.
	\begin{figure}[tbh]
    \centering
	\begin{tikzpicture}[xscale=0.6,yscale=0.6]
	        \draw [->] (-5,0) -- (2,0) node [right] {$\footnotesize{\textnormal{Re}\,\xi}$};
		\draw [->] (-2.5,-1.8) -- (-2.5,1.8) node [above] {$\footnotesize{\textnormal{Im}\,\xi}$};
	\draw [thick, color=red, fill=white, decoration={markings, mark=at position 0.07 with {\arrow{<}}}, decoration={markings, mark=at position 0.4 with {\arrow{<}}}, decoration={markings, mark=at position 0.7 with {\arrow{<}}},postaction={decorate}] (-2.5,0) circle [radius=0.6];
	\draw[fill, color=blue!60!black](-2.5,0) circle [radius=0.04];
\end{tikzpicture}
\caption{The oriented jump contour $\Sigma_R$ for $R(\xi)$, in the complex $\xi$-plane drawn in \textcolor{red}{red}. The point $\xi=0$ is colored in \textcolor{blue!60!black}{blue}.}
\label{fig9x}
\end{figure}
	\item[(2)] On $\Sigma_R$ we record the jump condition $R_+(\xi)=R_-(\xi)J_R(\xi)$ with $J_R(\xi)=J_R(\xi;x,a)$ given as
	\begin{equation}\label{defofJR}
		J_{R}(\xi)=\Theta_0(\xi;x,a)\Theta(\xi;a)^{-1},\ \ \ \xi\in\Sigma_R.
	\end{equation}
	By construction, $R(\xi)$ is analytic in $\xi\notin \Sigma_{R}$.
	\item[(3)] As $\xi\rightarrow\infty$, $R(\xi)\rightarrow I$.
\end{enumerate}
\end{problem}
Indeed, the absence of jumps in the ratio \eqref{R0def} on $\Sigma_1\cup\Sigma_2\cup\Sigma_3$ follows at once from our discussion below \eqref{mp2a} and \eqref{mp3}. The jump on $\Sigma_R$ is also immediate and near $\xi=0$, we have at once that
\begin{equation*}
	R(\xi;x,a)=\widetilde{\Psi}(\xi;x,a)\begin{pmatrix}1 & f(\xi;x,a)-\mathring{f}(\xi;x,a)\\ 0 & 1\end{pmatrix}\mathring{\Theta}(\xi;a)^{-1},\ \ \ \xi\rightarrow 0.
\end{equation*}
Consequently, $\xi\mapsto R(\xi)$ is bounded at $\xi=0$, and so analytic at $\xi=0$, i.e. $R(\xi)$ has no jumps in the interior of $\Sigma_R$. Lastly, the normalization at $\xi=\infty$ is also immediate for, as $\xi\rightarrow\infty$, by construction \eqref{mp2a} and \eqref{2.2}
\begin{equation*}
	R(\xi)=\left[\textbf{I}+\frac{C_1(x)}{\xi}+\mathcal{O}\big(\xi^{-2}\big)\right]\left[\textbf{I}-\frac{4a^2-1}{128\xi}\begin{pmatrix}4a^2-9 & 16\i\\ \frac{\i}{12}(4a^2-9)(4a^2-13) & 9-4a^2\end{pmatrix}+\mathcal{O}\big(\xi^{-2}\big)\right]\sim\textbf{I}.
\end{equation*}
The fact that RHP \ref{SN3} constitutes a small-norm problem as $x\downarrow 0$ is made precise below.
\begin{prop}\label{proponDelta}
Let $\epsilon\in(0,1)$ be fixed. For any $a>-1$, there exist constants $c_{a}>0$ and
$x_{0}=x_{0}(a)\in(0,1)$ such that, for $0<x\le x_{0}$,

\begin{equation}\label{less1}
\|J_R(\cdot;x,a)-\mathbf I\|_{L^2(\Sigma_R)\cap L^\infty(\Sigma_R)}
\leq
c_a
\begin{cases}
\max\{x,x^{a+1}\},
& a\notin\mathbb Z_{\geq0},
\\[1ex]
\max\{x,x^{a+1}|\ln x|\},
& a\in\mathbb Z_{\geq0}.
\end{cases}
\end{equation}
Moreover, for any integer $k\geq 1$, there is a constant $\widetilde{c}_{a,k}>0$ such that

\begin{equation}\label{less1b}
\left\|
\frac{\d^k}{\d x^k}J_R(\cdot;x,a)
\right\|_{L^2(\Sigma_R)\cap L^\infty(\Sigma_R)}
\leq
\widetilde c_{a,k}
\begin{cases}
\displaystyle
\max\{1,x^{a+1-k}\},
& a\notin\mathbb Z_{\geq0},
\\[1ex]
\displaystyle
\max\{1,x^{a+1-k}|\ln x|\},
& a\in\mathbb Z_{\geq0}.
\end{cases}
\end{equation}
holds for any $x$ with $0<x\leq x_{0}$.
\end{prop}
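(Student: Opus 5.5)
Write $J_R-\mathbf I$ explicitly. On $\Sigma_R$ (where $|\xi|=\epsilon$) we have, from \eqref{mp3} and the factorization of $\Theta$ near the origin, that the trailing piecewise-constant matrices coincide and cancel in $\Theta_0\Theta^{-1}$. This leaves
\[
J_R(\xi;x,a)=\mathring{\Theta}(\xi;a)\begin{pmatrix}1&\mathring f(\xi;x,a)\\0&1\end{pmatrix}\xi^{\frac a2\sigma_3}\e^{\frac{x}{\xi}\sigma_3}\begin{pmatrix}1&-g(\xi;a)\\0&1\end{pmatrix}\xi^{-\frac a2\sigma_3}\mathring{\Theta}(\xi;a)^{-1}.
\]
Commuting $\xi^{\frac a2\sigma_3}$ through the upper triangular factors turns the middle product into
\[
\begin{pmatrix}\e^{x/\xi}&\e^{x/\xi}\big(\mathring f-\xi^{a}\e^{-2x/\xi}g\big)\\0&\e^{-x/\xi}\end{pmatrix}.
\]
Since $\mathring\Theta$ and $\mathring\Theta^{-1}$ are analytic, independent of $x$, and bounded on $|\xi|=\epsilon$ (recall $\det\mathring\Theta=1$, which follows from $\det\Theta=1$), it suffices to bound two scalar quantities uniformly on the circle:
\begin{itemize}
\item $\e^{\pm x/\xi}-1$, which is $\mathcal O(x/\epsilon)$;
\item $D(\xi;x,a):=\mathring f(\xi;x,a)-\xi^{a}\e^{-2x/\xi}g(\xi;a)$, up to the harmless factor $\e^{x/\xi}$.
\end{itemize}
Both $L^\infty$ and $L^2$ norms then follow, because $\Sigma_R$ has finite length.

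For \eqref{less1}, we use Corollary~\ref{impmatch}. When $a\notin\mathbb Z_{\geq0}$, we have $g=-\frac{\i}{2\sin\pi a}$, and by the definition of $\mathring f$,
\[
\mathring f=f+\frac{\i}{2\sin\pi a}\gamma^\ast(-a,\xi)=-\frac{\i}{2\sin\pi a}\big[\xi^a+\mathcal O(x)+\mathcal O(x^{1+a})\big].
\]
Hence $D=-\frac{\i}{2\sin\pi a}\xi^a(1-\e^{-2x/\xi})+\mathcal O(x)+\mathcal O(x^{1+a})=\mathcal O(\max\{x,x^{1+a}\})$. When $a\in\mathbb Z_{\geq0}$, the subtracted terms in $\mathring f$ cancel the $\psi(a+1)$ term and the series in Corollary~\ref{impmatch}. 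What remains is $-\frac{\i}{2\pi}(-\xi)^a\ln\xi$, which matches $\xi^ag=\frac{\i}{2\pi}(-1)^{a+1}\xi^a\ln\xi$ since $(-\xi)^a=(-1)^a\xi^a$ for integer $a$. This gives the logarithmic version of the bound. One point needs checking: the error terms in Corollary~\ref{impmatch} must be uniform in $\xi$ on the circle, including the boundary values at $\xi=-\epsilon$. This follows from the series in Proposition~\ref{appendix lemma}, as explained in the next paragraph.

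For \eqref{less1b}, we differentiate in $x$. The derivatives of $\e^{\pm x/\xi}$ and of $\xi^a\e^{-2x/\xi}g$ are $\mathcal O_k(1)$ uniformly on $|\xi|=\epsilon$ and for $x\le1$. For $f$ itself we use the explicit series of Proposition~\ref{appendix lemma}, differentiating term by term; note that $\mathring f-f$ does not depend on $x$. For $a\notin\mathbb Z_{\geq0}$:
\begin{itemize}
\item the first series is $\sum_n\frac{(2x)^n}{n!}c_n(\xi)$, with $c_n(\xi)=\xi^{a-n}\Gamma(n-a,\xi)/\Gamma(n-a)$;
\item the second series is $x^{1+a}\sum_m\frac{(2x)^m}{m!}d_m(\xi)$, with $d_m(\xi)=2^{1+a}\xi^{-1-m}\Gamma(1+m,\xi)/\Gamma(2+a+m)$.
\end{itemize}
Using the bounds $|\Gamma(1+m,\xi)|\le m!\,\e^{\epsilon}$ and $|\xi^{a-n}\Gamma(n-a,\xi)|\le C\,\Gamma(n-a)\epsilon^{a-n}\e^{\epsilon}$ for $n>a$ (the finitely many $n<a$ are handled separately), both series converge geometrically for $x<\epsilon/4$, say. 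Differentiating $k$ times therefore gives $\mathcal O(1)+\mathcal O(x^{1+a-k})$. In the integer case, the extra factor $\ln(2x)$ in the second series produces $\mathcal O(x^{a+1-k}|\ln x|)$, while the terms carrying $1/x$ from derivatives of $\ln x$ are of the same or smaller order. Since $J_R$ is a fixed analytic conjugation of the middle product, Leibniz' rule then yields \eqref{less1b}, after shrinking $x_0$ if necessary so that $x_0<\epsilon/4$.

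I expect the main obstacle to be the uniformity of these series bounds on the whole circle, and in particular at the cut $\xi=-\epsilon$. There the limiting values of $\xi^{a-n}\Gamma(n-a,\xi)$ have to be controlled. I would first write $\Gamma(n-a,\xi)=\Gamma(n-a)-\gamma(n-a,\xi)$ and use that $\xi^{a-n}\gamma(n-a,\xi)/\Gamma(n-a)=\gamma^\ast(n-a,\xi)$ is entire and bounded by $\e^{\epsilon}/\Gamma(n-a+1)$. The remaining piece is then simply $\xi^{a-n}$, whose boundary values have modulus $\epsilon^{a-n}$. In particular, the $n=0$ term equals $\xi^a-\gamma^\ast(-a,\xi)$, which reproduces the leading term of Corollary~\ref{impmatch}.
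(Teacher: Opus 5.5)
Your proposal is correct and follows the paper's proof. Both write $J_R$ as the $\mathring{\Theta}$-conjugate of the explicit upper-triangular middle factor and read off \eqref{less1} from Corollary~\ref{impmatch}; your term-by-term differentiation of the series in Proposition~\ref{appendix lemma} supplies the detail behind \eqref{less1b} that the paper leaves as ``follows similarly''.

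Two harmless slips. First, the off-diagonal entry of the middle factor is $\e^{-x/\xi}\bigl(\mathring f-\xi^{a}\e^{2x/\xi}g\bigr)$, not $\e^{x/\xi}\bigl(\mathring f-\xi^{a}\e^{-2x/\xi}g\bigr)$. Second, the definition reads $\mathring f=f-\frac{\i}{2\sin(\pi a)}\gamma^{\ast}(-a,\xi)$. Neither affects your estimates, since $\e^{\pm 2x/\xi}-1=\mathcal O(x)$ uniformly on $|\xi|=\epsilon$, and your subsequent expression for $\mathring f$ is the correct one.
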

\begin{proof} By \eqref{mp3} and the conditions listed below \eqref{mp2a}, for $a\notin\mathbb{Z}_{\geq 0}$ and $|\xi|=\epsilon$ with $\epsilon\in(0,1)$ sufficiently small but fixed,
\begin{align*}
	J_R(\xi;x,a)=&\,\,\mathring{\Theta}(\xi;a)\begin{pmatrix}1 & \mathring{f}(\xi;x,a)\\ 0 & 1\end{pmatrix}\e^{\frac{x}{\xi}\sigma_3}\begin{pmatrix}1 & \frac{\i\xi^a}{2\sin(\pi a)}\\ 0 & 1\end{pmatrix}\mathring{\Theta}(\xi;a)^{-1}\\
	=&\,\,\mathring{\Theta}(\xi;a)\begin{pmatrix}1 & \mathring{f}(\xi;x,a)\\ 0 & 1\end{pmatrix}\left\{\begin{pmatrix}1 & \frac{\i\xi^a}{2\sin(\pi a)}\\ 0 & 1\end{pmatrix}+\mathcal{O}(x)\right\}\mathring{\Theta}(\xi;a)^{-1}\\
	=&\,\,\mathring{\Theta}(\xi;a)\left\{\begin{pmatrix}1&\mathcal{O}(x)+\mathcal{O}(x^{1+a})\\ 0 & 1\end{pmatrix}+\mathcal{O}(x)\right\}\mathring{\Theta}(\xi;a)^{-1},
\end{align*}
where the third equality follows from Corollary \ref{impmatch}. Quite similarly, for $a\in\mathbb{Z}_{\geq 0}$,
\begin{align*}
	J_R(\xi;x,a)=&\,\,\mathring{\Theta}(\xi;a)\begin{pmatrix}1 & \mathring{f}(\xi;x,a)\\ 0 & 1\end{pmatrix}\e^{\frac{x}{\xi}\sigma_3}\begin{pmatrix}1 & \frac{\i}{2\pi}(-\xi)^a\ln\xi\\ 0 & 1\end{pmatrix}\mathring{\Theta}(\xi;a)^{-1}\\
	=&\,\,\mathring{\Theta}(\xi;a)\begin{pmatrix}1 & \mathring{f}(\xi;x,a)\\ 0 & 1\end{pmatrix}\left\{\begin{pmatrix}1 & \frac{\i}{2\pi}(-\xi)^a\ln\xi\\ 0 & 1\end{pmatrix}+\mathcal{O}(x)\right\}\mathring{\Theta}(\xi;a)^{-1}\\
	=&\,\,\mathring{\Theta}(\xi;a)\left\{\begin{pmatrix}1&\mathcal{O}(x)+\mathcal{O}(x^{1+a}|\ln x|)\\ 0 & 1\end{pmatrix}+\mathcal{O}(x)\right\}\mathring{\Theta}(\xi;a)^{-1},
\end{align*}
using once more Corollary \ref{impmatch}. The above establishes \eqref{less1} and the derivative estimate \eqref{less1b} follows similarly.
\end{proof}
Seeing that RHP \ref{SN3} is indeed a small norm problem as $x\downarrow 0$ by \eqref{less1} we can thus solve the same iteratively, for small $x$, by general theory \cite{DZ,deift1999strong}.

\begin{lem}\label{integralrepresentation}
Let $\epsilon\in(0,1)$ be fixed. For any $a>-1$, there exist constants
$c=c(a)>0$ and $x_0=x_0(a)\in(0,1)$ such that RHP \ref{SN3} is uniquely solvable for all $0<x\le x_0$. Moreover, its solution admits the integral representation
\begin{equation}\label{z1}
R(\xi)
=
\mathbf{I}
+\frac{1}{2\pi\i}
\int_{\Sigma_R}
R_-(\mu)\bigl(J_R(\mu)-\mathbf{I}\bigr)
\frac{\d\mu}{\mu-\xi},
\qquad
\xi\in\mathbb{C}\setminus\Sigma_R,
\end{equation}
where
\begin{equation}\label{estimateonR-}
\|R_-(\cdot;x,a)-\mathbf{I}\|_{L^2(\Sigma_R)}
\le
c
\begin{cases}
\max\{x,x^{a+1}\},
& a\notin\mathbb Z_{\ge0},
\\[0.3em]
\max\{x,x^{a+1}|\ln x|\},
& a\in\mathbb Z_{\ge0},
\end{cases}
\qquad
\forall\,0<x\le x_0.
\end{equation}
\end{lem}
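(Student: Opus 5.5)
We follow the standard small-norm argument of Deift--Zhou, using Proposition~\ref{proponDelta} as the only analytic input. Write $\eta(x):=\max\{x,x^{a+1}\}$ for $a\notin\mathbb Z_{\geq0}$ and $\eta(x):=\max\{x,x^{a+1}|\ln x|\}$ for $a\in\mathbb Z_{\geq0}$; in both cases $\eta(x)\to0$ as $x\downarrow0$ because $a+1>0$. The contour $\Sigma_R=\partial\mathbb D_\epsilon(0)$ is a fixed circle, so the Cauchy projector $C_-$, $(C_-h)(\xi)=\lim_{\xi'\to\xi,\ \xi'\text{ outside}}\frac{1}{2\pi\i}\int_{\Sigma_R}\frac{h(\mu)}{\mu-\xi'}\d\mu$, is bounded on $L^2(\Sigma_R)$ with a norm $\|C_-\|$ depending only on $\epsilon$. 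We then define $C_{J}h:=C_-\big(h(J_R-\mathbf I)\big)$. This gives
\[
\|C_J\|_{L^2\to L^2}\le\|C_-\|\,\|J_R-\mathbf I\|_{L^\infty(\Sigma_R)}\le\|C_-\|\,c_a\,\eta(x)
\]
by \eqref{less1}. We choose $x_0(a)$ so small that this bound is at most $\tfrac12$ for $0<x\le x_0$. The operator $\mathbf 1-C_J$ is then invertible on $L^2(\Sigma_R)$ through its Neumann series, and the inverse has norm at most $2$.

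Next we construct the solution. Let $\mu\in\mathbf I+L^2(\Sigma_R)$ be the unique solution of
\[
\mu=\mathbf I+C_-\big(\mu(J_R-\mathbf I)\big).
\]
Equivalently, $\mu-\mathbf I=(\mathbf 1-C_J)^{-1}C_J\mathbf I$, where $C_J\mathbf I=C_-(J_R-\mathbf I)$ makes sense because $J_R-\mathbf I\in L^2(\Sigma_R)$. This gives
\[
\|\mu-\mathbf I\|_{L^2}\le 2\|C_-\|\,\|J_R-\mathbf I\|_{L^2(\Sigma_R)}\le 2\|C_-\|c_a\,\eta(x),
\]
again by \eqref{less1}. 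We then define $R(\xi)$ by the right-hand side of \eqref{z1} with $R_-$ replaced by $\mu$. By the Plemelj--Sokhotski formulas $R_+-R_-=\mu(J_R-\mathbf I)$, and the boundary value from the minus side is $R_-=\mathbf I+C_-(\mu(J_R-\mathbf I))=\mu$. Hence $R_+=R_-J_R$ and $R\to\mathbf I$ at infinity. This proves existence, the representation \eqref{z1}, and the estimate \eqref{estimateonR-} with $c=2\|C_-\|c_a$. The boundary values are continuous because $J_R$ is real-analytic on the circle: for fixed $x>0$ it is built from $\Theta_0$ and $\Theta$, which are smooth on $|\xi|=\epsilon$ apart from jumps on the rays, and those jumps cancel in $\Theta_0\Theta^{-1}$.

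For uniqueness we use Liouville. We have $\det J_R=1$, since $\det\Theta_0=\det\Theta=1$. Any solution $R$ therefore has $\det R$ entire and tending to $1$, so $\det R\equiv1$ and $R$ is invertible. If $R_1,R_2$ are two solutions, $R_1R_2^{-1}$ has no jump on $\Sigma_R$, is entire, and tends to $\mathbf I$, so it equals $\mathbf I$.

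\textbf{Main obstacle.} The one step that needs care is checking that the $\mathcal O$-terms in Proposition~\ref{proponDelta} are genuinely uniform in $\xi\in\Sigma_R$, including the points $\xi=-\epsilon$, so that both the $L^\infty$ and the $L^2$ norms are controlled by $c_a\eta(x)$. This is already asserted in \eqref{less1}, and we take it as given; the rest is routine functional analysis on a fixed compact contour.
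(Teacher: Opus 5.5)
Your argument is the paper's: the same operator $\mathcal C_J[f]=\mathcal C_-[f(J_R-\mathbf I)]$, made a contraction on $L^2(\Sigma_R)$ by \eqref{less1}, the Neumann series for $R_-$, and the resulting $L^2$ bound; in addition you write out the existence (Plemelj) and uniqueness (Liouville) steps, which the paper leaves to general theory and to the explicit construction \eqref{R0def}. The one slip is a convention: $\Sigma_R$ is oriented clockwise (Figure~\ref{fig9x}), so the minus side is the interior and $\mathcal C_-$ must be the boundary value from inside --- with your ``outside'' convention the function in \eqref{R0def} satisfies the jump relation with $J_R^{-1}$ rather than $J_R$ --- but this changes none of the estimates.
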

\begin{proof} 
Estimate \eqref{estimateonR-} follows from standard norm estimates of Cauchy singular integral operators. For completeness and for later reference, we briefly provide its derivation.
Let
\[
\mathcal C_{\pm}:
L^2\!\left(\Sigma_R;\mathbb C^{2\times2}\right)
\longrightarrow
L^2\!\left(\Sigma_R;\mathbb C^{2\times2}\right)
\]
denote the Cauchy operators defined by
\bea\label{eq:Cauchy-projections}
\mathcal{C}_{\pm}[f](\xi)
=
\lim_{\substack{\xi'\to\xi\\ \xi'\in \mathrm{side}\,\pm}}
\frac{1}{2\pi\i}
\int_{\Sigma_R}
f(\mu)\frac{\d\mu}{\mu-\xi'},
\qquad \xi\in\Sigma_R,
\eea
where ``side $+$'' denotes the exterior of $\Sigma_R$ and ``side $-$'' its interior. By the Sokhotski--Plemelj formula,
\beas
\mathcal{C}_{+}[f]-\mathcal{C}_{-}[f]
=
f,
\qquad \text{a.e. on}\ \ \Sigma_R,
\eeas
and it is well known that $\mathcal{C}_{\pm}$ are bounded linear operators on $L^2\!\left(\Sigma_R;\mathbb C^{2\times2}\right)$. By \eqref{z1},
\beas
R_-(\xi)
=
\mathbf I+\mathcal{C}_{-}\!\left[R_-(J_R-\mathbf I)\right](\xi).
\eeas
For convenience, introduce the operator
\bea\label{defofcg}
\mathcal{C}_{J}[f]
:=
\mathcal{C}_{-}\!\left[f(J_R-\mathbf I)\right],\ \ \ f\in L^2\big(\Sigma_R;\mathbb{C}^{2\times 2}\big).
\eea
Then $\mathcal{C}_{J}$ is a bounded operator on $L^2\!\left(\Sigma_R;\mathbb C^{2\times2}\right)$. Indeed, in operator norm on $L^2\!\left(\Sigma_R;\mathbb C^{2\times2}\right)$,
\bea\label{normofcg}
\|\mathcal{C}_{J}\|
\le
\|\mathcal{C}_{-}\|
\,\|J_R(\cdot;x,a)-\mathbf I\|_{L^\infty(\Sigma_R)}
\stackrel{\eqref{less1}}{\le}
\frac12,
\eea
provided that $x$ is sufficiently small. Consequently,
\begin{align}\label{neumann}
R_-(\xi)
=
(I-\mathcal{C}_{J})^{-1}[\mathbf{I}](\xi)
=
\sum_{n=0}^{\infty}
	\mathcal{C}_{J}^{\,n}[\mathbf I](\xi),
\end{align}
where $\mathcal{C}_{J}^{\,n}$ denotes the $n$-fold composition of $\mathcal{C}_{J}$, and the Neumann series converges in operator norm on $L^2\!\left(\Sigma_R;\mathbb C^{2\times2}\right)$. It therefore follows from \eqref{normofcg} that there exists a constant $c=c(\Sigma_R)>0$ such that
\bea\label{0827}
\|R_-(\cdot;x,a)-\mathbf I\|_{L^2(\Sigma_R)}
=
\left\|
\sum_{n=1}^{\infty}
\mathcal{C}_{J}^{\,n}[\mathbf I]
\right\|_{L^2(\Sigma_R)}
\le
c\,
\|J_R(\cdot;x,a)-\mathbf I\|_{L^\infty(\Sigma_R)}
\eea
for all sufficiently small $x$. Estimate \eqref{estimateonR-} now follows immediately from \eqref{less1}.
\end{proof}
\begin{prop}
Let $a>-1$. Let $v(x)=v(x;a)$ be as in (\ref{36}). Then
\begin{equation}\label{defforv2}
v(x)=\frac{1}{8}(1-4a^2)-
\left[\frac{1}{2\pi}
\int_{\Sigma_{R}}
R_{-}(u)
\bigl(J_{R}(u)-\textbf{I}\bigr)\d u \right]_{12}.
\end{equation}
\end{prop}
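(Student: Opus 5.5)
The plan is to compare the large-$\xi$ expansions of $\Psi$, $\Theta$ and $R$ and read off $C_1(x)$ from the $\xi^{-1}$ coefficient of $R$. By \eqref{R0def}, for $|\xi|>\epsilon$ we have $\Psi(\xi;x,a)=R(\xi;x,a)\Theta(\xi;a)$. We then insert the normalization \eqref{2.2} of $\Psi$ and the normalization \eqref{mp2b} of $\Theta$. Both carry the same right factor $\xi^{-\frac14\sigma_3}\frac{\mathbf I+\i\sigma_1}{\sqrt2}\e^{\xi^{1/2}\sigma_3}$, valid in the same sectors $\arg\xi\in(-\pi,\pi)$ off the jump rays, so this factor cancels. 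We are left with an identity between matrices that are analytic near infinity:
\[
\mathbf I+\frac{C_1(x)}{\xi}+\mathcal O(\xi^{-2})=R(\xi)\left[\mathbf I+\frac{\Theta_1}{\xi}+\mathcal O(\xi^{-2})\right],\qquad \Theta_1:=\frac{4a^2-1}{128}\begin{pmatrix}4a^2-9&16\i\\ \ast&9-4a^2\end{pmatrix}.
\]

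Next, expand $R$ at infinity using the integral representation \eqref{z1} from Lemma \ref{integralrepresentation}. Since $\Sigma_R$ is compact, we may use $\frac{1}{\mu-\xi}=-\frac1\xi-\frac{\mu}{\xi^2}-\cdots$ uniformly for $\mu\in\Sigma_R$, and $R_-(J_R-\mathbf I)\in L^1(\Sigma_R)$ because $R_-\in L^2$ and $J_R-\mathbf I\in L^\infty$. This gives
\[
R(\xi)=\mathbf I+\frac{R_1}{\xi}+\mathcal O(\xi^{-2}),\qquad R_1=-\frac{1}{2\pi\i}\int_{\Sigma_R}R_-(\mu)\bigl(J_R(\mu)-\mathbf I\bigr)\,\d\mu .
\]
Matching the $\xi^{-1}$ terms then yields $C_1(x)=R_1+\Theta_1$.

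Now take the $(1,2)$ entry. We have $[\Theta_1]_{12}=\frac{16\i(4a^2-1)}{128}=\frac{\i(4a^2-1)}{8}$. By \eqref{36},
\[
v(x)=\i[C_1(x)]_{12}=\i[R_1]_{12}+\i\cdot\frac{\i(4a^2-1)}{8}=\frac{1-4a^2}{8}-\frac{1}{2\pi}\left[\int_{\Sigma_R}R_-(u)\bigl(J_R(u)-\mathbf I\bigr)\d u\right]_{12},
\]
since $\i\cdot\bigl(-\frac{1}{2\pi\i}\bigr)=-\frac{1}{2\pi}$. This is exactly \eqref{defforv2}.

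The main point needing care is that all of this is done for $0<x\le x_0$, where Lemma \ref{integralrepresentation} guarantees solvability. To extend to all $x>0$, there are two options. One is to note that $\Sigma_R$ can be taken as any circle, so that $R$ defined by \eqref{R0def} is always a solution with bounded boundary values. Then \eqref{z1} holds by Sokhotski–Plemelj together with Liouville applied to $R-\mathbf I-$(Cauchy integral), which needs only the existence of $\Psi$ and not smallness; the expansion step is unchanged. Alternatively, one can restrict the statement to the small-$x$ regime, which is where it is used. I would use the first route: it needs only the continuity of the boundary values of $R$ (property (1) in RHP \ref{SN3}) and the decay $R\to\mathbf I$ at infinity. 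A second check is that the $\mathcal O(\xi^{-2})$ in \eqref{2.2} and \eqref{mp2b} hold uniformly in the sectors, so the cancellation of the common factor is legitimate. This is immediate since both are stated for $\arg\xi\in(-\pi,\pi)$.
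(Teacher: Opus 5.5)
Your proposal is correct and is essentially the paper's proof. The paper also reads off $C_1(x)$ as the $\xi^{-1}$-coefficient of $R(\xi)\bigl(\mathbf{I}+\Theta_1/\xi+\mathcal{O}(\xi^{-2})\bigr)$, using \eqref{mp2b} and the large-$\xi$ expansion of \eqref{z1}, which gives $C_1(x)=\Theta_1+\frac{\i}{2\pi}\int_{\Sigma_R}R_-(u)\bigl(J_R(u)-\mathbf{I}\bigr)\d u$, and then takes $\i$ times the $(1,2)$-entry; your extra Liouville argument extending \eqref{z1} to all $x>0$ is sound but not needed, since the paper only uses the identity in the small-$x$ range of Lemma~\ref{integralrepresentation}.
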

\begin{proof}
Recall that in (\ref{36}),
\begin{equation}
v(x)=\i\,\left[C_1(x)\right]_{12},
\end{equation}
where $C_1(x)$ is defined in \eqref{2.2}. We record the following chain of equalities,
\begin{align*}
C_{1}(x)
&\stackrel{\eqref{2.2}}{=}
\lim_{\xi\to\infty}
\xi\Bigg(
\Psi(\xi,x;a)
\e^{-\xi^{\frac{1}{2}}\sigma_3}
\left(
\frac{\textbf{I}+\i\sigma_1}{\sqrt{2}}
\right)^{-1}
\xi^{\frac{\sigma_3}{4}}
-\textbf{I}
\Bigg)
\\
&\stackrel{\eqref{R0def}}{=}
\lim_{\xi\to\infty}
\xi\Bigg(
R(\xi)\Theta(\xi)
\e^{-\xi^{\frac{1}{2}}\sigma_3}
\left(
\frac{\textbf{I}+\i\sigma_1}{\sqrt{2}}
\right)^{-1}
\xi^{\frac{\sigma_3}{4}}
-\textbf{I}
\Bigg)
\\
&\stackrel{\eqref{mp2b}}{=}
\lim_{\xi\to\infty}
\xi\Bigg(
R(\xi)
\left(
\textbf{I}+
\frac{4a^2-1}{128\xi}
\begin{pmatrix}
4a^2-9 & 16\i\\
\frac{\i}{12}(4a^2-9)(4a^2-13) & 9-4a^2
\end{pmatrix}
+\mathcal{O}\!\left(\xi^{-2}\right)
\right)
-\textbf{I}
\Bigg)\\
&\stackrel{\eqref{z1}}{=}\frac{4a^2-1}{128}
\begin{pmatrix}
4a^2-9 & 16\i\\[0.3em]
\dfrac{\i}{12}(4a^2-9)(4a^2-13)
&
9-4a^2
\end{pmatrix}
+
\frac{\i}{2\pi}
\int_{\Sigma_{R}}
R_{-}(u)
\bigl(J_{R}(u)-\textbf{I}\bigr)\d u,
\end{align*}
and taking the $(12)$-entry therein results, after multiplication with $\i$, in \eqref{defforv2}.
\end{proof}

\begin{lem}\label{asyzero}
Let $a>-1$. Let $v(x)=v(x;a)$ be as in \eqref{36}. Then, as $x\downarrow 0$, 
\begin{align*}
v(x)
=
\begin{cases}
\displaystyle
\frac{1}{8}\big(1-4a^2\big)
+\frac{x}{a}
-\frac{\pi}{\sin(\pi a)}\frac{x^{a+1}}{2^a\Gamma^2(1+a)\Gamma(2+a)}
+\mathcal{O}\left(\max\big\{x^2,x^{2a+2}\big\}\right),
& a\notin\mathbb{Z}_{\geq 0},
\\[1.2em]
\displaystyle
\frac{1}{8}(1-4a^2)
+\frac{x}{a}
-
\frac{(-1)^a2^{-a}x^{a+1}\ln x}
{\Gamma^2(1+a)\Gamma(2+a)}
+\mathcal{O}\left(x^2\right),
& a\in\mathbb{Z}_{\geq1},
\\[1.2em]
\displaystyle
\frac{1}{8}
-
x\left[
\log\!\left(\frac{x}{2}\right)
+3\gamma_{\mathrm E}-1
\right]
+\mathcal{O}\left(x^{2}|\ln x|^2\right),
& a=0,
\end{cases}
\end{align*}
where $\gamma_{\mathrm E}$ is the Euler-Mascheroni constant.
\end{lem}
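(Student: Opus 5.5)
Starting from \eqref{defforv2}, I write
\[
v(x)=\tfrac18(1-4a^2)-\Bigl[\tfrac{1}{2\pi}\textstyle\int_{\Sigma_R}(J_R(u)-\mathbf I)\,\d u\Bigr]_{12}-\Bigl[\tfrac{1}{2\pi}\textstyle\int_{\Sigma_R}(R_-(u)-\mathbf I)(J_R(u)-\mathbf I)\,\d u\Bigr]_{12}.
\]
By Lemma~\ref{integralrepresentation} and Proposition~\ref{proponDelta}, Cauchy--Schwarz bounds the last term by $\mathcal O(\|J_R-\mathbf I\|^2)$. This is $\mathcal O(\max\{x^2,x^{2a+2}\})$ for $a\notin\mathbb Z_{\ge0}$, and $\mathcal O(\max\{x^2,x^{2a+2}|\ln x|^2\})$ for $a\in\mathbb Z_{\ge0}$. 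In each case this lies inside the claimed error: for $a\ge1$ it is $\mathcal O(x^2)$, and for $a=0$ it is $\mathcal O(x^2|\ln x|^2)$. So everything reduces to computing the first-order term $-\frac{1}{2\pi}\oint_{\Sigma_R}[J_R-\mathbf I]_{12}\,\d u$ up to errors of the same size.

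For the linear term I use the factorisation from the proof of Proposition~\ref{proponDelta}:
\[
J_R=\mathring\Theta\begin{pmatrix}1&\mathring f\\0&1\end{pmatrix}\e^{\frac{x}{\xi}\sigma_3}\begin{pmatrix}1&g_0\\0&1\end{pmatrix}\mathring\Theta^{-1},
\]
where $g_0=\frac{\i\xi^a}{2\sin\pi a}$, or the logarithmic analogue when $a\in\mathbb Z_{\ge0}$. Expanding to first order gives two contributions.

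The $\mathcal O(x)$ part comes from $\e^{\frac{x}{\xi}\sigma_3}=\mathbf I+\frac{x}{\xi}\sigma_3+\mathcal O(x^2)$ together with the $\mathcal O(x)$ correction of $\mathring f+g_0$. Since $\mathring\Theta$ is analytic at $0$, the residue at $\xi=0$ of $\frac{x}{\xi}\mathring\Theta N\sigma_3 N^{-1}\mathring\Theta^{-1}$ is computed explicitly from $\mathring\Theta(0)$, where $N$ denotes the unipotent factor. Terms of the form $\xi^{a-1}\times(\text{analytic})$ need care because of the branch cut. In fact the combination $\mathring f+g_0$ is single-valued near the circle up to an analytic function, by the choice of $\mathring f$, so only genuine Laurent residues survive. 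The $n=1$ term of the first series in Proposition~\ref{appendix lemma} supplies the remaining $\mathcal O(x)$ piece, whose leading part is $-2x\xi^{a-1}\Gamma(1-a,\xi)/\Gamma(1-a)$. Assembling these I expect to recover $x/a$, using $I_{11}(0)=2^{-a}/\Gamma(a+1)$ and the $\Gamma$ reflection identities.

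The $x^{a+1}$ part comes from the $m=0$ term of the second series, $\frac{\i}{2\sin\pi a}(2x)^{1+a}\xi^{-1}\Gamma(1,\xi)/\Gamma(2+a)$. It enters $J_R-\mathbf I$ as $\mathring\Theta\begin{pmatrix}0&\ast\\0&0\end{pmatrix}\mathring\Theta^{-1}$, whose $(12)$ entry is $-\mathring\Theta_{11}^2\cdot\ast$. The residue at $0$ gives $\mathring\Theta_{11}(0)^2=\pi\,I_{11}(0;a)^2=\pi\,4^{-a}/\Gamma(1+a)^2$. Multiplying by $2\pi\i$, by the prefactor $-\frac{1}{2\pi}$ and by $(2x)^{1+a}$ yields
\[
-\frac{\pi}{\sin\pi a}\,\frac{x^{a+1}}{2^a\Gamma^2(1+a)\Gamma(2+a)}.
\]
All higher terms ($n\ge2$, $m\ge1$) are $\mathcal O(x^2)$ or $\mathcal O(x^{a+2})$ uniformly on $|\xi|=\epsilon$, and are absorbed in the error.

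For integer $a\ge1$ I repeat the computation with the logarithmic formula of Proposition~\ref{appendix lemma}. There the $m=1$ term carries $-\ln(2x)$, which produces the $x^{a+1}\ln x$ coefficient $(-1)^a2^{-a}/(\Gamma^2(1+a)\Gamma(2+a))$, while the constants are $\mathcal O(x^{a+1})\subset\mathcal O(x^2)$. Alternatively, I would obtain this case as the limit $a\to$ integer of the non-integer formula, where the two singular terms $x/a$-type and $\frac{\pi}{\sin\pi a}x^{a+1}$ merge.

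The case $a=0$ is the main obstacle. Here $x$ and $x^{a+1}$ collide, and every contribution of order $x$ must be collected: the $\ln(2x)$, the digamma values $2\psi(2)-1$, and the $E_1$ terms. These are to be combined into $-x[\ln(x/2)+3\gamma_E-1]$. I would do this bookkeeping directly from the $a\in\mathbb Z_{\ge0}$ series with $m=1$, $n=0$, computing residues at $\xi=0$ of $\ln\xi$-free parts. Any $\ln\xi$ terms cancel against $g$ by construction of $\mathring f$, and I would check this cancellation carefully. The quadratic remainder $\mathcal O(x^2|\ln x|^2)$ then follows from the second-order bound above.
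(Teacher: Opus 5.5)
Your proposal follows the paper's proof: remove $\frac{1}{2\pi}\int_{\Sigma_R}(R_--\mathbf I)(J_R-\mathbf I)\,\d u$ by Cauchy--Schwarz with \eqref{estimateonR-} and \eqref{less1}, then evaluate $\frac{1}{2\pi}\oint_{\Sigma_R}(J_R-\mathbf I)\,\d u$ as a residue at $\xi=0$ of the first-order expansion of $J_R$ (obtained from Proposition~\ref{appendix lemma}), conjugated by the explicit $\mathring\Theta(0)$. Three small corrections for when you write it out: first, the single-valued object is $\mathring f+\e^{2x/\xi}g_0$, not $\mathring f+g_0$; the $\xi^{a-1}$ branches of $\frac{2x}{\xi}g_0$ and of the $n=1$ term cancel, leaving the entire function $\frac{\i x}{\sin(\pi a)}\gamma^{\ast}(1-a,\xi)$, which contributes no residue, so the $x/a$ term comes solely from the diagonal part, whose $(12)$-residue is $-2x\mathring\Theta_{11}(0)\mathring\Theta_{12}(0)=-\i x/a$. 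Second, for $a=0$ this same diagonal piece supplies the $-2\ln 2+2\gamma_{\mathrm E}$ that is missing from your list of order-$x$ contributions. Third, the $(12)$-entry of $\mathring\Theta\begin{pmatrix}0&\ast\\0&0\end{pmatrix}\mathring\Theta^{-1}$ is $+\mathring\Theta_{11}^2\ast$ and $\Sigma_R$ is oriented clockwise, so your two sign slips cancel and the final coefficient is right.
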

\begin{proof}
Firstly,
by \eqref{0827}, Proposition \ref{proponDelta} and Cauchy-Schwarz inequality,
\begin{align}\label{0827even}
\frac{1}{2\pi}
\int_{\Sigma_{R}}
R_{-}(u)
\big(J_{R}(u)-\textbf{I}\big)\d u 
=&\frac{1}{2\pi}
\oint_{\Sigma_{R}}
\big(J_{R}(u)-\textbf{I}\big)\d u
+
\begin{cases}
\displaystyle
\mathcal{O}\left(\max\big\{x^2,x^{2a+2}\big\}\right) & a\notin \mathbb{Z}_{\geq 0}\smallskip\\
\displaystyle
\mathcal{O}\left(\max\big\{x^2,x^{2a+2}|\ln x|^2\big\}\right)
& a \in \mathbb{Z}_{\geq 0}.
\end{cases}
\end{align}
Secondly, refining the workings that lead to \eqref{less1}, we have for $a>-1$,
\begin{equation*}
	J_R(\xi)=\mathring{\Theta}(\xi)\left\{\mathbf{I}+\frac{x}{\xi}\begin{pmatrix}1 & m(\xi;x,a)\\ 0 & -1\end{pmatrix}+\mathcal{O}\big(x^2\big)\right\}\mathring{\Theta}(\xi)^{-1},\ \ \ x\downarrow 0,
\end{equation*}
uniformly in $|\xi|=\epsilon\in(0,1)$. Here,
\begin{equation*}
	m(\xi;x,a)\stackrel{a\notin\mathbb{Z}_{\geq 0}}{=}\frac{\i}{\sin(\pi a)}\left[\gamma^{\ast}(-a,\xi)-\frac{\e^{-\xi}}{\Gamma(1-a)}+(2x)^a\frac{\e^{-\xi}}{\Gamma(2+a)}+\mathcal{O}(x)+\mathcal{O}\big(x^{1+a}\big)\right]
\end{equation*}
followed by ($z\mapsto\textnormal{Ein}(z)$ is the complementary exponential integral, an entire function),
\begin{align*}
	m(\xi;x,a)\stackrel{a=0}{=}&\,\,\frac{\e^{-\xi}}{\i\pi}\big(2\psi(2)-1-\ln(2x)\big)+\frac{\i}{\pi}\big(\textnormal{Ein}(\xi)-\gamma_E\big)+\mathcal{O}\big(x|\ln x|\big)+\mathcal{O}(x),\\
	m(\xi;x,a)\stackrel{a\in\mathbb{N}}{=}&\,\,\frac{\i}{\pi}\Bigg[(-\xi)^a\psi(a)+\xi\sum_{\substack{k=0\\ k\neq a-1}}^{\infty}\frac{(a-1)!(-\xi)^k}{k!(k+1-a)}+\ln(2x)\frac{(-2x)^a}{(a+1)!}\e^{-\xi}+\mathcal{O}(x)\Bigg].
\end{align*}
After that, by residue theorem,
\begin{equation*}
	\frac{1}{2\pi}\oint_{\Sigma_R}\big(J_R(u)-\mathbf{I}\big)\d u=-\i x\mathring{\Theta}(0)\left\{\begin{pmatrix}1 & m(0;x,a)\\ 0 & -1\end{pmatrix}+\mathcal{O}\big(x^2\big)\right\}\mathring{\Theta}(0)^{-1},
\end{equation*}
with the special value
\begin{align*}
	m(0;x,a)&\,\stackrel{a\notin\mathbb{Z}_{\geq 0}}{=}\frac{\i}{\sin(\pi a)}\Bigg[\frac{(2x)^a}{\Gamma(2+a)}+\mathcal{O}(x)+\mathcal{O}\big(x^{1+a}\big)\Bigg],\\
	m(0;x,0)=&\,\,\frac{\i}{\pi}\Bigg[\ln(2x)-2\psi(2)+1-\gamma_E+\mathcal{O}\big(x|\ln x|\big)\Bigg],\\
	 m(0;x,a)&\stackrel{a\in\mathbb{N}}{=}\frac{\i}{\pi}\Bigg[\ln(2x)\frac{(-2x)^a}{(a+1)!}+\mathcal{O}(x)\Bigg].
\end{align*}
Since $\mathring{\Theta}(0)$ is explicit, see right below \eqref{mp2b}, we obtain consequently
\begin{align*}
&-\mathrm{i}x
\left[
\mathring{\Theta}(0)
\left\{
\begin{pmatrix}
1 & m(0;x,a)\\
0 & -1
\end{pmatrix}
+\mathcal O(x^2)
\right\}
\mathring{\Theta}(0)^{-1}
\right]_{12}
\\
=&
\begin{cases}
\displaystyle
-\frac{x}{a}
+
\frac{\pi}{\sin(\pi a)}
\frac{x^{a+1}}
{2^a\Gamma^2(1+a)\Gamma(2+a)}
+
\mathcal O(x^2)
+
\mathcal O(x^{a+2}),
& a\notin\mathbb Z_{\geq0},
\\[1.2ex]
\displaystyle
x\left[
\ln\!\left(\frac{x}{2}\right)
+3\gamma_E-1
\right]
+
\mathcal O\!\left(x^2|\ln x|\right),
& a=0,
\\[1.2ex]
\displaystyle
-\frac{x}{a}
+
\frac{(-1)^a x^{a+1}\ln x}
{2^a\Gamma^2(1+a)\Gamma(2+a)}
+
\mathcal O(x^2),
& a\in\mathbb N.
\end{cases}
\end{align*}
and the conclusion of the present lemma thus follows from \eqref{0827even}.
\end{proof}
We now prove the full small-$x$ asymptotic expansion of
\(\sigma(x;a)\) stated in Proposition~\ref{smallasyanlysis}.
\begin{proof}[Proof of Proposition~\ref{smallasyanlysis}]
Firstly, we abbreviate
\[
H(x;a)=\frac{\i}{2\pi}\int_{\Sigma_R}R_-(\mu)\big(J_R(\mu)-\textbf{I}\big)\d\mu.
\]
By \eqref{defforv2} and \eqref{definitionofsigma}, we have
\begin{equation}\label{expressionofv}
\sigma(x;a)=-\frac{\i}{2}[H(x;a)]_{12}-x.
\end{equation} In the following, we only prove the conclusion for the case \(a>-1\) and \(a\notin \mathbb{Z}_{\ge 0}\); the non-generic case $a\in\mathbb{Z}_{\geq 0}$ follows from almost identical arguments. Choose \(x_{0}=x_{0}(a)\in (0,1)\) such that \(x_{0}<\epsilon/2\) and \eqref{less1} holds. First, we claim that
\begin{equation}\label{truncation}
H(x;a)=\sum_{n=0}^{\mathrm{M}}\frac{\i}{2\pi}\int_{\Sigma_R}\mathcal{C}_{J}^{n}[\textbf{I}](\mu)\big(J_{R}(\mu)-\textbf{I}\big)\d\mu+\mathcal{O}_{a}\left(\max\Big\{x^{\mathrm{M}+2},x^{(a+1)(\mathrm{M}+2)}\Big\}\right)
\end{equation}
holds for any \(x\in [0,x_{0})\) and \(\mathrm{M}\in\mathbb{Z}_{\geq 0}\). Indeed, by \eqref{normofcg}, \eqref{neumann}, and the Cauchy--Schwarz inequality,
\begin{align*}
&\Bigg|H(x;a)-\sum_{n=0}^{\mathrm{M}}\frac{\i}{2\pi}\int_{\Sigma_R}\mathcal{C}_{J}^{n}[\textbf{I}](\mu)\big(J_R(\mu)-\textbf{I}\big)\d\mu\Bigg| \\
\qquad\leq& \frac{1}{2\pi}\left\|\sum_{n=\mathrm{M}+1}^{\infty}\mathcal{C}_{J}^{n}[\textbf{I}](\cdot;x,a)\right\|_{L^2(\Sigma_R)}
\|J_R(\cdot;x,a)-\textbf{I}\|_{L^2(\Sigma_R)} \\
\qquad\leq &\,c\,\|J_{R}(\cdot;x,a)-\textbf{I}\|_{L^{\infty}(\Sigma_R)}^{\mathrm{M}+1}
\|J_R(\cdot;x,a)-\textbf{I}\|_{L^2(\Sigma_R)},
\qquad c=c(\Sigma_R)>0.
\end{align*}
Then \eqref{truncation} follows from \eqref{less1}. Next, we claim that for each \(n\in\mathbb{Z}_{\geq 0}\),
\begin{align}\label{seriesforM}
\frac{\i}{2\pi}\int_{\Sigma_R}\mathcal{C}_{J}^{n}[\textbf{I}](\mu)\big(J_{R}(\mu)-\textbf{I}\big)\d\mu
=
\sum_{k=n+1}^\infty \mathsf{b}_{k}^{(n)}(a)
x^k+\sum_{\ell=1}^{n+1}\sum_{k=n+1}^\infty \mathsf{b}_{k,\ell}^{(n)}(a)x^{a \ell+k},
\end{align}
where
$
\mathsf{b}_{k}^{(n)}(a), \mathsf{b}_{k,\ell}^{(n)}(a)
=\mathcal{O}_{n,a}\left(k^{2n}\epsilon^{-k}\right),
$
and, by the choice of \(x_{0}\), the series is absolutely and uniformly convergent for \(x \in [0, x_{0}]\). Indeed, by the workings in Proposition \ref{proponDelta} and by Proposition \ref{appendix lemma}, each entry of \(J_{R}(\xi;x,a)-\textbf{I}\) has the form
\begin{align}\label{entryform}
\sum_{k=1}^\infty \mathsf{c}_{k}(a)\frac{f(\xi)}{\xi^{k}}x^k
+x^{a}\sum_{k=1}^\infty \mathsf{d}_{k}(a)\frac{h(\xi)}{\xi^{k}}x^k,
\end{align}
where
$
\mathsf{c}_{k}(a),\mathsf{d}_{k}(a)=\mathcal{O}_{a}(1),
$
and \(f(\xi)\), \(h(\xi)\) are analytic in \(\mathbb{D}_{\epsilon}(0)\) and are given in terms of the entries of $\mathring{\Theta}(\xi;a)$ and $\mathring{\Theta}(\xi;a)^{-1}$. Moreover, the series in \eqref{entryform} converges uniformly for \(\xi \in \Sigma_{R}\) and \(x\in (0,x_{0}]\). Next, by the definition of \(\mathcal{C}_{J}\) in \eqref{defofcg}, for \(\zeta\in \Sigma_{R}\),
\begin{align}
&\lim_{\substack{w\to\xi\\ w\in\text{side}\,-}}
    \frac{1}{2\pi \i}\int_{\Sigma_R} \frac{\eqref{entryform}}{\mu -w}\d\mu\nonumber\\
=&\lim_{\substack{w\to\xi\\ w\in\text{side}\,-}}
\frac{1}{2\pi \i}\sum_{k=1}^{\infty}
\left(
\mathsf{c}_{k}(a)\int_{\Sigma_R} \frac{f(\mu)}{\mu^{k}(\mu-w)}\d\mu
+x^{a}\,\mathsf{d}_{k}(a)\int_{\Sigma_R} \frac{h(\mu)}{\mu^{k}(\mu -w)}\d\mu
\right)x^{k}.
\label{Cauchyintegral}
\end{align}
For fixed \(w\) inside \(\Sigma_{R}\), noting that \(\Sigma_{R}=\partial\mathbb{D}_{\epsilon}(0)\) is oriented clockwise, Cauchy's integral formula yields
\begin{align*}
\frac{1}{2\pi \i}\int_{\Sigma_R}\frac{f(\mu)}{\mu^{k}(\mu-w)}\d\mu
&=
-\frac{f(w)}{w^k}
-\frac{1}{(k-1)!}
\frac{\d^{k-1}}{\d\mu^{k-1}}
\left(\frac{f(\mu)}{\mu-w}\right)\Bigg|_{\mu=0} \\
&=
\sum_{j=0}^{k-1}
\frac{f^{(k-1-j)}(0)}{(k-1-j)!}\frac{1}{w^{j+1}}
-\frac{f(w)}{w^k},
\end{align*}
and consequently
\begin{align*}
\eqref{Cauchyintegral}
=
\lim_{\substack{w\to\xi\\ w\in\text{side}\,-}}
\sum_{k=1}^{\infty}
\Bigg(
&\mathsf{c}_k(a)
\Bigg[
\sum_{j=1}^{k}
\frac{f^{(k-j)}(0)}{(k-j)!}\frac{1}{w^{j}}
-\frac{f(w)}{w^k}
\Bigg] \\
&+
x^{a}\,\mathsf{d}_k(a)
\Bigg[
\sum_{j=1}^{k}
\frac{h^{(k-j)}(0)}{(k-j)!}\frac{1}{w^{j}}
-\frac{h(w)}{w^k}
\Bigg]
\Bigg)\,x^{k}.
\end{align*}
Since \(f(\mu)\) and \(h(\mu)\) are analytic in \(\mathbb{D}_{\epsilon}(0)\), we have
$
\left|f^{(j)}(0)/j!\right|,
\left|h^{(j)}(0)/j!\right|
=\mathcal{O}(\epsilon^{-j}), j\ge 0,
$
and
$
|f(w)|, |h(w)|=\mathcal{O}(1).
$
By our choice of \(x_{0}\), we have \(x/\epsilon < 1/2\) for all \(x \in (0, x_0]\). Hence, the limit with respect to \(w\) can be interchanged with the infinite series in \(k\). Therefore, each entry of \(\mathcal{C}_{J}[\textbf{I}](\xi;x,a)\) is of the form
\begin{align}\label{entryform1}
\sum_{k=1}^\infty
\left(
\sum_{j=1}^{k}\mathsf{c}_{k,j}^{(1)}(a)\frac{1}{\xi^{j}}
-\mathsf{c}_k(a)\frac{f(\xi)}{\xi^k}
\right)x^{k}
+x^{a}\sum_{k=1}^\infty
\left(
\sum_{j=1}^{k}\mathsf{d}_{k,1,j}^{(1)}(a)\frac{1}{\xi^{j}}
-\mathsf{d}_k(a)\frac{h(\xi)}{\xi^k}
\right)x^k,
\end{align}
where
$
\mathsf{c}_{k,j}^{(1)}(a),
\mathsf{d}_{k,1,j}^{(1)}(a)
=\mathcal{O}_{a}\left(\epsilon^{-(k-j)}\right).
$ Note that for each \(n\geq 2\),
\begin{align*}
\mathcal{C}_{J}^{n}[\textbf{I}](\xi;x,a)
=
\lim_{\substack{w\to\xi\\ w\in\text{side}\,-}}
\frac{1}{2\pi \i}
\int_{\Sigma_R}
\mathcal{C}_{J}^{n-1}[\textbf{I}](\mu;x,a)
\big(J_{R}(\mu;x,a)-\textbf{I}\big)
\frac{\d\mu}{\mu-w}.
\end{align*} Using an argument similar to that leading to \eqref{entryform1}, together with induction, we find that each entry of \(\mathcal{C}_{J}^{n}[\textbf{I}](\xi;x,a)\) can be written as a sum of \(\mathcal{O}_n(1)\) terms of the form
\begin{align}\label{entryformforgeneraln}
\sum_{k=n}^\infty \sum_{j=1}^{k}
\left(
\mathsf{c}_{k,j}^{(n)}(a)
-\tilde{\mathsf{c}}_{k,j}^{(n)}(a)f_n(\xi)
\right)\frac{x^k}{\xi^{j}}
+\sum_{\ell=1}^{n}\sum_{k=n}^\infty \sum_{j=1}^{k}
\left(
\mathsf{d}_{k,\ell,j}^{(n)}(a)
-\tilde{\mathsf{d}}_{k,\ell,j}^{(n)}(a)h_n(\xi)
\right)\frac{x^{k+\ell a}}{\xi^{j}},
\end{align}
where
$
\mathsf{c}_{k,j}^{(n)}(a),
\mathsf{d}_{k,\ell,j}^{(n)}(a),
\tilde{\mathsf{c}}_{k,j}^{(n)}(a),
\tilde{\mathsf{d}}_{k,\ell,j}^{(n)}(a)
=
\mathcal{O}_{n,a}\left(k^{2(n-1)}\epsilon^{-(k-j)}\right),
$
and \(f_n(\xi)\), \(h_n(\xi)\) are certain functions analytic in \(\mathbb{D}_{\epsilon}(0)\). By our choice of \(x_{0}\) satisfying \(x_{0}/\epsilon < 1/2\), the series expansion of \(\mathcal{C}_{J}^{n}[\textbf{I}](\xi;x,a)\) in powers of \(x\) converges uniformly with respect to \(\xi \in \Sigma_{R}\) for all \(x \in (0, x_{0}]\). After multiplying by \(J_{R}(\xi; x, a)-\textbf{I}\), whose entries are of the form \eqref{entryform}, each entry of
\[
\mathcal{C}_{J}^{n}[\textbf{I}](\xi; x, a)
\big(J_{R}(\xi; x, a) - \textbf{I}\big)
\]
can again be rewritten as a series of the same type as \eqref{entryformforgeneraln}, with \(n\) replaced by \(n+1\). Applying Cauchy's integral formula term-by-term then yields \eqref{seriesforM}. Thirdly, combining \eqref{truncation} and \eqref{seriesforM}, we obtain that for \(\mathrm{M}\in\mathbb{Z}_{\geq 0}\) and \(x\in (0, x_{0}]\),
\begin{align}\label{seriesexpansionforM}
H(x;a)
=&
\sum_{k=1}^{\mathrm{M}+1}\tilde{\mathsf{b}}_{k}(a)x^k
+\sum_{\ell=1}^{\mathrm{M}+1}\sum_{k=\ell}^{\mathrm{M}+1}
\tilde{\mathsf{b}}_{k,\ell}(a)x^{k+a \ell}+
\mathcal{O}_{\mathrm{M},a}
\left(
\max\Big\{
x^{(a+1)(\mathrm{M}+2)},
x^{\mathrm{M}+2}
\Big\}
\right),
\end{align}
where
$
\tilde{\mathsf{b}}_{k}(a)
=
\sum_{n=0}^{k-1}\mathsf{b}_{k}^{(n)}(a),
\tilde{\mathsf{b}}_{k,\ell}(a)
=
\sum_{n=\ell-1}^{k-1}\mathsf{b}_{k,\ell}^{(n)}(a),
$
with \(\mathsf{b}_{k}^{(n)}(a)\) and \(\mathsf{b}_{k,\ell}^{(n)}(a)\) defined in \eqref{seriesforM}. Finally, we return to the asymptotic expansion of \(\sigma(x;a)\) as \(x \downarrow 0\). By \eqref{expressionofv} and \eqref{seriesexpansionforM}, for \(a\notin \mathbb{Z}_{\geq 0}\), we obtain the claimed asymptotic formula for \(\sigma(x;a)\). This completes the proof.
 \end{proof}

In the rest of this section, we derive the small-\(x\) estimates
for the derivatives of \(v(x)\) stated in
Lemma~\ref{asympoticsforv(x)forsmallx}. We first state a preliminary
lemma used in the derivation.

\begin{lem}\label{estimateforCJn}
Let $a>-1$. Let $x_0(a)$ be given as in
Proposition \ref{proponDelta}. Let $\mathcal C_J$ be the bounded linear
operator on $L^2(\Sigma_R;\mathbb{C}^{2\times 2})$ defined in \eqref{defofcg}, where
$\Sigma_R$ denotes the fixed contour of the Riemann--Hilbert problem
\ref{SN3}. For any $\ell\in\mathbb Z_{\ge0}$ and $n\ge1$, there exists a constant
$c_{a,\ell}>0$ such that, for any $x\in (0,x_{0})$,

\begin{equation}\label{estimateforCJnderivative}
\begin{aligned}
\left\|
\frac{\d^{\ell}}{\d x^{\ell}}
\mathcal C_J^n[\mathbf I]
\right\|_{L^2(\Sigma_R)}
&\leq
n^{\ell} c_{a,\ell}^{\,n}
\begin{cases}
\displaystyle
\max\{x,x^{a+1}\}^{\,n-\ell}
\left(1+x^{a+1-\ell}\right),
& a\notin\mathbb Z_{\geq0},
\\[1.2ex]
\displaystyle
\max\{x,x^{a+1}|\ln x|\}^{\,n-\ell}
\left(1+x^{a+1-\ell}|\ln x|\right),
& a\in\mathbb Z_{\geq0}.
\end{cases}
\end{aligned}
\end{equation}
\end{lem}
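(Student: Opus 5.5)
The bound comes from expanding the $\ell$-th derivative of the $n$-fold composition by the Leibniz rule. Only the jump matrix depends on $x$, so each derivative lands on one of the $n$ factors $J_R-\mathbf I$. Write $\Delta(\mu;x):=J_R(\mu;x,a)-\mathbf I$ and $\varepsilon(x):=\max\{x,x^{a+1}\}$; in the integer case $a\in\mathbb Z_{\geq0}$, replace $\varepsilon(x)$ by $\max\{x,x^{a+1}|\ln x|\}$ throughout.

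Unwinding the definition \eqref{defofcg}, we get
\[
\mathcal C_J^n[\mathbf I]=\mathcal C_-\big[\cdots\mathcal C_-\big[\mathcal C_-[\Delta]\,\Delta\big]\cdots\Delta\big],
\]
an $n$-fold nested expression that is multilinear in $(\Delta,\dots,\Delta)$. The Cauchy operator $\mathcal C_-$ does not depend on $x$. Moreover, $J_R$ is smooth in $x$ uniformly on the compact circle $\Sigma_R$; this is visible from the series representation \eqref{entryform}, which can be differentiated termwise. Hence we may differentiate under the nested operators, and
\[
\frac{\d^\ell}{\d x^\ell}\mathcal C_J^n[\mathbf I]=\sum_{\ell_1+\cdots+\ell_n=\ell}\binom{\ell}{\ell_1,\dots,\ell_n}\,\mathcal T_n\big[\Delta^{(\ell_1)},\dots,\Delta^{(\ell_n)}\big],
\]
where $\mathcal T_n$ denotes the same nested composition with the $i$-th factor replaced by $\Delta^{(\ell_i)}=\partial_x^{\ell_i}\Delta$.

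Each term is estimated in $L^2(\Sigma_R)$ exactly as in \eqref{normofcg}. The innermost factor contributes its $L^2$ norm, and every other factor contributes $\|\mathcal C_-\|$ times its $L^\infty$ norm. This gives
\[
\|\mathcal T_n[\cdots]\|_{L^2}\le \|\mathcal C_-\|^n\prod_{i=1}^n\|\Delta^{(\ell_i)}\|_{L^2\cap L^\infty}.
\]
Factors with $\ell_i=0$ are bounded by $c_a\varepsilon(x)$ by \eqref{less1}. Factors with $\ell_i\ge1$ are bounded by $\widetilde c_{a,\ell_i}\max\{1,x^{a+1-\ell_i}\}$ by \eqref{less1b}. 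At most $\ell$ indices have $\ell_i\ge1$, so at least $n-\ell$ factors are undifferentiated, which gives the factor $\varepsilon(x)^{n-\ell}$. The number of ordered compositions, weighted by the multinomial coefficients, is $n^\ell$. All remaining constants go into $c_{a,\ell}^{\,n}$, after enlarging $c_{a,\ell}$ to absorb $\|\mathcal C_-\|$, $c_a$ and $\max_{k\le\ell}\widetilde c_{a,k}$.

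The main obstacle is controlling the product of the differentiated factors, $\prod_{i:\ell_i\ge1}\max\{1,x^{a+1-\ell_i}\}$, together with the leftover undifferentiated factors when fewer than $\ell$ indices are differentiated. The target is the single factor $(1+x^{a+1-\ell})$ times $\varepsilon^{n-\ell}$. I would handle this by a sharper bookkeeping of the differentiated factors.

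Concretely, for $0\le k\le\ell$ I would use
\[
\|\Delta^{(k)}\|\lesssim \varepsilon(x)\,\max\{1,x^{-k}\}\lesssim \varepsilon(x)\,x^{-k},
\]
which is \eqref{less1}/\eqref{less1b} rewritten: $x^{a+1-k}=x^{a+1}x^{-k}\le\varepsilon x^{-k}$ and $1\le x\cdot x^{-1}\le\varepsilon x^{-k}$ for $k\ge1$, $x<1$. Then every term is bounded by $\varepsilon^n x^{-\ell}$, and
\[
\varepsilon^{n}x^{-\ell}=\varepsilon^{n-\ell}\,(\varepsilon/x)^{\ell}=\varepsilon^{n-\ell}\max\{1,x^{a}\}^{\ell}.
\]
For $a\ge0$ this is at most $\varepsilon^{n-\ell}$. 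For $-1<a<0$ it equals $\varepsilon^{n-\ell}x^{a\ell}$, and $x^{a\ell}\le x^{a+1-\ell}$ because $a\ell\ge a+1-\ell$ whenever $\ell\ge1$ (equivalently $(a+1)(\ell-1)\ge0$).

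This yields \eqref{estimateforCJnderivative}. The logarithmic case is identical, since the extra $|\ln x|$ factors are absorbed by the bound for $\varepsilon$ in that case. If the uniform differentiability in $x$ of $\Delta$ turns out to need justification, I would obtain it from the absolutely convergent expansion \eqref{entryform}.
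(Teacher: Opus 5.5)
Your proof is correct and follows the paper's route. Both arguments use the Leibniz expansion of the $n$-linear nested Cauchy expression, the multilinear bound \eqref{estimateforTn}, Proposition \ref{proponDelta} for the individual factors, and the multinomial identity giving $n^{\ell}$. The only difference is bookkeeping. The paper counts the $r\le\ell$ differentiated factors and asserts without detail that the product of their bounds is $\lesssim 1+x^{a+1-\ell}$. You instead bound every factor uniformly by $\varepsilon(x)\,x^{-\ell_i}$ and convert $\varepsilon^{n}x^{-\ell}$ into the target, which supplies the justification the paper leaves implicit.
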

\begin{proof}
Let
\[
\Delta(\xi;x,a)=J_R(\xi;x,a)-\mathbf I.
\]
Recall that, see \eqref{defofcg},
\[
\mathcal{C}_{J}[f]
=
\mathcal{C}_{-}[f\Delta].
\]
Hence, $\mathcal{C}_{J}^{n}[\mathbf I]$ is an
$n$-linear expression in $\Delta$ involving $n$ Cauchy operators.
For $n\geq1$, define the $n$-linear operator $\mathcal{T}_n$ recursively by
\[
\mathcal{T}_1[F_1]=\mathcal{C}_{-}[F_1],
\qquad
\mathcal{T}_{n+1}[F_1,\ldots,F_{n+1}]
=
\mathcal{C}_{-}
\bigl[
\mathcal{T}_n[F_1,\ldots,F_n]F_{n+1}
\bigr],
\]
so that
\[
\mathcal{C}_{J}^{n}[\mathbf{I}]
=
\mathcal{T}_n[\Delta,\ldots,\Delta],
\]
and, by the boundedness of $\mathcal C_-$ on $L^2(\Sigma_R;\mathbb{C}^{2\times 2})$,
\bea\label{estimateforTn}
\|\mathcal T_n[f_1,\ldots,f_n]\|_{L^2(\Sigma_R)}
\leq
c^n
\prod_{j=2}^{n}\|f_j\|_{L^\infty(\Sigma_R)}
\|f_1\|_{L^2(\Sigma_R)} .
\eea
Here and below, the constant $c$ is independent of $x$. Taking the $l$-th derivative with respect to $x$ and applying the
Leibniz rule, we obtain
\[
\frac{\d^{\ell}}{\d x^{\ell}}
\mathcal C_J^n[\mathbf I]
=
\sum_{\substack{k_1+\cdots+k_n=\ell\\ k_j\geq0}}
\frac{\ell!}{k_1!\cdots k_n!}
\mathcal T_n
\left[
\frac{\d^{k_1}\Delta}{\d x^{k_1}},
\ldots,
\frac{\d^{k_n}\Delta}{\d x^{k_n}}
\right].
\]
For a fixed multi-index $(k_1,\ldots,k_n)$, let
$r$ be the number of positive integers among
$k_1,\ldots,k_n$. Since
$
k_1+\cdots+k_n=\ell
$,
we have
$
r\leq\ell
$.
Consequently, at least $n-r$ factors in the above multilinear
expression are not differentiated. Note that $0<x<x_0<1$ and $a+1>0$. Since $\mathbb{R}\ni\alpha\mapsto x^\alpha$ is a decreasing function, we have
$
\max\{x,x^{a+1}\}<1.$ Then, by combining the estimates for $J_R-\mathbf I$ and its derivatives in Proposition \ref{proponDelta} with the bound \eqref{estimateforTn}, we obtain that every term
in the above sum satisfies
\begin{equation}\label{0825eq1}
\begin{aligned}
\left\|
\mathcal T_n
\left[
\frac{\d^{k_1}\Delta}{\d x^{k_1}},
\ldots,
\frac{\d^{k_n}\Delta}{\d x^{k_n}}
\right]
\right\|_{L^2(\Sigma_R)}
&\leq
c_{a,\ell}^{\,n}
\begin{cases}
\displaystyle
\max\{x,x^{a+1}\}^{\,n-r}
\left(1+x^{a+1-\ell}\right),
& a\notin\mathbb Z_{\geq0},
\\[1.2ex]
\displaystyle
\max\{x,x^{a+1}|\ln x|\}^{\,n-r}
\left(1+x^{a+1-\ell}|\ln x|\right),
& a\in\mathbb Z_{\geq0}.
\end{cases}
\end{aligned}
\end{equation}
Since $r\leq \ell$, the above $r$ in (\ref{0825eq1}) can be replaced by $\ell$. Finally, using the multinomial identity
$
\sum_{k_1+\cdots+k_n=\ell}
\frac{\ell!}{k_1!\cdots k_n!}
=n^{\ell},
$
we obtain the desired estimate in this lemma.
\end{proof}

\begin{lem}\label{asympoticsforv(x)forsmallx}
Let \(a>-1\), and let \(v(x)\) be defined as in \eqref{36}.
Then for any $k\in\mathbb{Z}_{\geq 0}$, there exists \(x_{0}=x_{0}(a,k)\in(0,1)\) such that, for
\(0<x\leq x_{0}\),
\[
\frac{\mathrm{d}^{k}v(x)}{\mathrm{d}x^{k}}
=
\begin{cases}
\mathcal{O}_{a,k}(1)
+\mathcal{O}_{a,k}\!\left(x^{a+1-k}\right),
& a\notin\mathbb{Z}_{\geq 0},
\\[1ex]
\mathcal{O}_{a,k}(1)
+\mathcal{O}_{a,k}\!\left(
x^{a+1-k}|\ln x|
\right),
& a\in\mathbb{Z}_{\geq 0}.
\end{cases}
\]
\end{lem}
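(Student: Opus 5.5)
We start from the representation \eqref{defforv2},
\[
v(x)=\tfrac18(1-4a^2)-\tfrac{1}{2\pi}\Big[\int_{\Sigma_R}R_-(u)\bigl(J_R(u)-\mathbf I\bigr)\,\d u\Big]_{12},
\]
and insert the Neumann series \eqref{neumann}, $R_-=\sum_{n\ge0}\mathcal C_J^n[\mathbf I]$. This gives
\[
v(x)-\tfrac18(1-4a^2)=-\tfrac{1}{2\pi}\sum_{n\ge0}\Big[\int_{\Sigma_R}\mathcal C_J^n[\mathbf I](u)\,\Delta(u;x,a)\,\d u\Big]_{12},
\qquad \Delta=J_R-\mathbf I .
\]
We then differentiate $k$ times in $x$, term by term. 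By the Leibniz rule, the $k$-th derivative of the $n$-th summand is a sum over $\ell+m=k$ of $\binom{k}{\ell}\int \partial_x^\ell\mathcal C_J^n[\mathbf I]\,\partial_x^m\Delta$. By Cauchy--Schwarz on the fixed circle $\Sigma_R$, each piece is bounded by $\|\partial_x^\ell\mathcal C_J^n[\mathbf I]\|_{L^2}\,\|\partial_x^m\Delta\|_{L^2}$.

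For the factor in $\Delta$ we use Proposition~\ref{proponDelta}:
\begin{itemize}
\item for $m=0$, $\|\Delta\|_{L^2}\lesssim\max\{x,x^{a+1}\}$, with an extra $|\ln x|$ on the $x^{a+1}$ term in the integer case;
\item for $m\ge1$, $\|\partial_x^m\Delta\|_{L^2}\lesssim 1+x^{a+1-m}$, with the same logarithmic modification.
\end{itemize}
For the factor in $\mathcal C_J^n$ we use Lemma~\ref{estimateforCJn}: $\|\partial_x^\ell\mathcal C_J^n[\mathbf I]\|_{L^2}\le n^\ell c_{a,\ell}^n\,\max\{x,x^{a+1}\}^{n-\ell}(1+x^{a+1-\ell})$. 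For $n=0$ only $\ell=0$ contributes, and $\mathcal C_J^0[\mathbf I]=\mathbf I$.

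The main obstacle is that Lemma~\ref{estimateforCJn} as stated carries the factor $\max\{\cdot\}^{n-\ell}$. When $n<\ell$ this exponent is negative, so the bound is useless for small $n$. We would therefore go back into its proof and replace $n-\ell$ by $n-r$, where $r\le\min\{n,\ell\}$ is the number of differentiated slots. The cleaner bound to aim for is
\[
\|\partial_x^\ell\mathcal C_J^n[\mathbf I]\|\le n^\ell c^n\,\epsilon(x)^{n-\min(n,\ell)}\prod_{\text{differentiated slots}}\bigl(1+x^{a+1-k_j}\bigr),
\qquad \epsilon(x):=\max\{x,x^{a+1}\}.
\]
We then need to check that products of the form $\prod_j(1+x^{a+1-k_j})\cdot\epsilon(x)^{\#\text{undiff}}$ with $\sum_j k_j=k$ are $\mathcal O(1+x^{a+1-k})$. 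Multiplying two singular factors gives $x^{a+1-k_1}x^{a+1-k_2}=x^{2a+2-k}$. This is $\le x^{a+1-k}$ when $x<1$ and $a+1>0$. The same holds whenever any factor $x^{a+1}$ or $x$ is traded for $1$, since $\epsilon(x)\le 1$ and $x^{a+1}\le1$. This monotonicity in the exponent, for $x<1$, is the key elementary inequality, and the log factors are handled identically. It is the one place where care is needed, together with the fact that the bounds of Proposition~\ref{proponDelta} are uniform on $\Sigma_R$.

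Finally, we sum over $n$. Choose $x_0(a,k)$ so small that $c_{a,k}\,\epsilon(x)\le 1/2$. Then for $n>k$ the series $\sum_n n^k c^n\epsilon(x)^{n-k}$ converges and is bounded by a constant. The finitely many terms with $n\le k$ are each $\mathcal O(1+x^{a+1-k})$ by the combinatorial bound above. Uniform convergence of the differentiated series justifies the termwise differentiation; smoothness of $\Delta$ in $x$ follows from the explicit series of Proposition~\ref{appendix lemma}. The constant term $\tfrac18(1-4a^2)$ only affects $k=0$, where it is $\mathcal O(1)$. Altogether, $v^{(k)}(x)=\mathcal O(1)+\mathcal O(x^{a+1-k})$, with the $|\ln x|$ variant for $a\in\mathbb Z_{\ge0}$.
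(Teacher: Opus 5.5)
Your argument is correct. It differs from the paper in how it treats the low-order Neumann terms.

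\textbf{The paper's route.} The paper truncates the Neumann series at $M=k$. The tail $n\ge k+1$ is controlled by Lemma~\ref{estimateforCJn} exactly as stated; there $n-\ell\ge 1$, so the weakness you point out never arises. The finitely many terms $n\le k$ are handled through the explicit convergent expansion \eqref{seriesforM} in powers $x^m$ and $x^{m+a\ell}$ with $m\ge\ell\ge1$. This expansion is differentiated term by term, and it yields $\mathcal O(1)+\mathcal O(x^{a+1-k})$ because $m+a\ell=(m-\ell)+\ell(a+1)\ge a+1$.

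\textbf{Your route.} You sharpen Lemma~\ref{estimateforCJn} to carry the exponent $n-\min\{n,\ell\}$. This is in fact what its proof already gives: see \eqref{0825eq1}, before $r$ is replaced by $\ell$. You then combine this with the product rule $\rho_{a,p}\rho_{a,q}=\mathcal O(\rho_{a,p+q})$ from Remark~\ref{rem:product-estimate-rho2}, so every Neumann term, including $n<k$, is bounded by norm estimates alone.

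\textbf{Comparison.} Your version needs only the derivative bounds \eqref{less1b} and the multilinear structure of $\mathcal C_J^n[\mathbf I]$. It does not depend on the coefficient bounds or the termwise differentiability of \eqref{seriesforM}, which rest on the more delicate bookkeeping in the proof of Proposition~\ref{smallasyanlysis}. The paper's version reuses an expansion it has already built and applies the lemma only where it is sharp.

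\textbf{Two minor fixes.} First, your displayed ``cleaner bound'' should keep the product over differentiated slots inside the multinomial sum, or replace it by $C_{a,\ell}\,\rho_{a,\ell}(x)$ after applying the product rule. Second, $x_0(a,k)$ should be chosen so that $\max_{\ell\le k}c_{a,\ell}\cdot\max\{x,x^{a+1}\}\le\frac12$, with $x^{a+1}|\ln x|$ in place of $x^{a+1}$ when $a\in\mathbb Z_{\geq0}$, rather than using only $c_{a,k}$.
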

\begin{proof}
By \eqref{defforv2}, it suffices to study the small-$x$ behavior of
\[
\frac{\d^k}{\d x^k}
\left(
\int_{\Sigma_R}
R_-(u)
\bigl(J_R(u)-\mathbf I\bigr)\,\d u
\right).
\]
We first observe that, recall \eqref{neumann}, for any $\mathrm{M}\in\mathbb{Z}_{\geq 0}$,
\begin{align}
&\Bigg|
\frac{1}{2\pi}
\frac{\d^k}{\d x^k}
\int_{\Sigma_R}
R_-(u)\bigl(J_R(u)-\mathbf I\bigr)\,\d u
-
\sum_{n=0}^{M}
\frac{1}{2\pi}
\frac{\d^k}{\d x^k}
\left(
\int_{\Sigma_R}
\mathcal C_J^n[\mathbf I](\mu)
\bigl(J_R(\mu)-\mathbf I\bigr)\,\d\mu
\right)
\Bigg|
\nonumber\\
=&
\Bigg|
\frac{1}{2\pi}
\int_{\Sigma_R}
\frac{\d^k}{\d x^k}
\left[
\left(
R_-(\mu)
-
\sum_{n=0}^{M}
\mathcal C_J^n[\mathbf I](\mu)
\right)
\bigl(J_R(\mu)-\mathbf I\bigr)
\right]
\,\d\mu
\Bigg|
\nonumber\\
\leq&
\frac{1}{2\pi}
\sum_{\ell=0}^{k}
\binom{k}{\ell}
\left\|
\frac{\d^\ell}{\d x^\ell}
\left(
\sum_{n=M+1}^{\infty}
\mathcal C_J^n[\mathbf I](\cdot;x,a)
\right)
\right\|_{L^2(\Sigma_R)}
\left\|
\frac{\d^{k-\ell}}{\d x^{k-\ell}}
\bigl(J_R(\cdot;x,a)-\mathbf I\bigr)
\right\|_{L^2(\Sigma_R)}
\nonumber\\
=&
\frac{1}{2\pi}
\sum_{\ell=0}^{k}
\binom{k}{\ell}
\left\|
\sum_{n=M+1}^{\infty}
\frac{\d^\ell}{\d x^\ell}
\mathcal C_J^n[\mathbf I](\cdot;x,a)
\right\|_{L^2(\Sigma_R)}\left\|
\frac{\d^{k-\ell}}{\d x^{k-\ell}}
\bigl(J_R(\cdot;x,a)-\mathbf I\bigr)
\right\|_{L^2(\Sigma_R)}.
\label{0825estimate3}
\end{align}
In the above, the first inequality follows from Hölder's inequality, while the last equality follows from the standard theorem on term-by-term
differentiation of Banach-valued series, since the series
\begin{equation*}
\sum_{n=0}^{\infty}
\frac{\d^{\ell}}{\d x^{\ell}}\mathcal C_J^n[\mathbf I]
\end{equation*}
converges uniformly in $L^2(\Sigma_R;\mathbb{C}^{2\times 2})$ for $x\in(0,x_0]$ by Lemma
\ref{estimateforCJn},  where $x_0>0$ is sufficiently small. We next estimate \eqref{0825estimate3}. By
Lemma~\ref{estimateforCJn},
\begin{align}
&
\left\|
\sum_{n=M+1}^{\infty}
\frac{\mathrm d^\ell}{\mathrm d x^\ell}
\mathcal C_J^n[\mathbf I](\cdot;x,a)
\right\|_{L^2(\Sigma_R)}\nonumber
\\
\qquad\leq&
\sum_{n=M+1}^{\infty}
n^\ell c_{a,\ell}^{\,n}
\begin{cases}
\displaystyle
\max\{x,x^{a+1}\}^{\,n-\ell}
\left(1+x^{a+1-\ell}\right),
& a\notin\mathbb Z_{\geq0},
\\[1.2ex]
\displaystyle
\max\{x,x^{a+1}|\log x|\}^{\,n-\ell}
\left(1+x^{a+1-\ell}|\ln x|\right),
& a\in\mathbb Z_{\geq0}.
\end{cases}\label{0925}
\end{align}
Choosing $x_0>0$ sufficiently small so that
$
c_{a,\ell}
\max\{x,x^{a+1}|\ln x|\}
<\frac12$, $0<x\leq x_0$,
we obtain
\begin{align}
&
(\ref{0925})
\leq
C_{a,\ell,M}
\begin{cases}
\displaystyle
\max\{x,x^{a+1}\}^{\,M+1-\ell}
\left(1+x^{a+1-\ell}\right),
& a\notin\mathbb Z_{\geq0},
\\[1.2ex]
\displaystyle
\max\{x,x^{a+1}|\log x|\}^{\,M+1-\ell}
\left(1+x^{a+1-\ell}|\ln x|\right),
& a\in\mathbb Z_{\geq0}.
\end{cases}
\label{0825thetailestimate}
\end{align}
Combining this estimate with the estimate for
$J_R-\mathbf I$ and its derivatives in Proposition
\ref{proponDelta}, we further obtain
\begin{align}
(\ref{0825estimate3})
&\leq
b_{a,k,M}
\begin{cases}
\displaystyle
\max\{x,x^{a+1}\}^{\,M+2-k}
\left(1+x^{a+1-k}\right),
& a\notin\mathbb Z_{\geq0},
\\[1.2ex]
\displaystyle
\max\{x,x^{a+1}|\log x|\}^{\,M+2-k}
\left(1+x^{a+1-k}|\ln x|\right),
& a\in\mathbb Z_{\geq0}.
\end{cases}
\label{thefinaltail}
\end{align}
In fact, the proof of \eqref{thefinaltail} is divided into two
cases: For $0\leq \ell\leq k-1$, the estimate follows directly from the
estimate for the Neumann tail \eqref{0825thetailestimate} and Proposition
\ref{proponDelta}. For the case $\ell=k$, the non-differentiated term
$
\|J_R(\cdot;x,a)-\mathbf I\|_{L^2(\Sigma_R)}
$
provides an additional factor
$
\max\{x,x^{a+1}\}
$
when $a\notin\mathbb Z_{\ge0}$, or
$
\max\{x,x^{a+1}|\ln x|\}
$
when $a\in\mathbb Z_{\ge0}$, respectively. Hence, this contribution is dominated by the right-hand side of
\eqref{thefinaltail}. Moving ahead, by \eqref{seriesforM}, for $a>-1$ with
$a\notin\mathbb Z_{\geq 0}$, we have
\begin{align}\label{seriesexpansionforM2}
\sum_{n=0}^{M}\frac{\i}{2\pi}
\int_{\Sigma_R}
\mathcal C_J^n[\mathbf I](\mu)
\bigl(J_R(\mu)-\mathbf I\bigr)\,\d\mu
&=
\sum_{m=1}^{\infty}
\sum_{n=0}^{\min\{m-1,M\}}
\mathsf b_m^{(n)}(a)x^m
+
\sum_{\ell=1}^{M+1}
\sum_{m=\ell}^{\infty}
\sum_{n=\ell-1}^{\min\{m-1,M\}}
\mathsf b_{m,\ell}^{(n)}(a)x^{m+a\ell},
\end{align}
where
$
\mathsf b_m^{(n)}(a),\,
\mathsf b_{m,\ell}^{(n)}(a)
=
\mathcal O_{n,a}\!\left(m^{2n}\epsilon^{-m}\right)
$
for some $\epsilon>0$.
After differentiating $k$ times with respect to $x$ in \eqref{seriesexpansionforM2}, the infinite
series on the right-hand side of \eqref{seriesexpansionforM2} converges
uniformly for $x\in(0,x_0]$, provided that $x_0>0$ is sufficiently
small. Moreover, its $k$-th derivative is bounded by
\[
\mathcal O_{a,k,M}(1)
+
\mathcal O_{a,k,M}\!\left(x^{a+1-k}\right),\ \ \ a\notin\mathbb{Z}_{\geq 0}.
\]
Similarly, for $a\in\mathbb Z_{\ge0}$, the corresponding bound is
\[
\mathcal O_{a,k,M}(1)
+
\mathcal O_{a,k,M}\!\left(
x^{a+1-k}|\ln x|
\right).
\]
Finally, combining the last estimate with \eqref{thefinaltail} and setting $M=k$, we obtain the conclusion of the current Lemma.
\end{proof}

\section{Derivative Estimates for the Error Matrix at the Origin}\label{esonerrormatrix}

The aim of this section is to establish derivative estimates at the
origin for the error matrix $\widetilde R$ associated with
RHP~\ref{modelproblem2} below. Note that $\widetilde R$ is the final
error matrix arising from a large-$N$ steepest-descent analysis, with jump
contour $\Sigma_{\widetilde R}$ and jump matrix
$J_{\widetilde R}$ given in \eqref{eq:JR_definition}. These estimates are
summarized in Lemma~\ref{lem:derivative-Rtilde-at-zero} and will be
used in the proof of Proposition~\ref{representation}. Throughout, for a $2\times2$ matrix $A$, $\|A\|$ denotes
the operator norm induced by the Euclidean norm on $\mathbb C^2$.

The large-\(\xi\) asymptotic expansion in RHP~\ref{modelproblem}(c)
is uniform only in fixed closed subsectors staying a positive
angular distance away from the jump rays. In the matching analysis
below, however, the variable \(\xi\) ranges over the image of the
matching contour and may approach the jump rays arbitrarily closely.
Thus the sectorial estimate alone is not sufficient to obtain
uniform bounds for the jump perturbation \(\Delta_{\widetilde R}\).
Such uniform control, together with the corresponding estimates for
the \(x\)-derivatives of the model solution, is needed in deriving the
\(L^\infty\)- and \(L^2\)-estimates for
\(\Delta_{\widetilde R}\) and its \(x\)-derivatives; see
Lemma~\ref{lem:derivative-Delta-Rtilde} for details.

We therefore first upgrade the sectorial large-\(\xi\) estimate to
one that is uniform throughout the whole cut plane for each fixed
\(x>0\). The following lemma shows that the
\(\mathcal O(\xi^{-2})\) remainder, initially known only uniformly in
fixed closed subsectors, is in fact uniform up to the jump contours
outside a sufficiently large disk. Its proof uses the jump structure
of the normalized model solution, together with the \(\xi\)-part of
the Lax pair and the Phragm\'en--Lindel\"of principle.

\begin{lem}\label{lem:uniform-R2-fixed-x}
Let $a>-1$, and let $\Psi(\xi;x,a)$ be as defined in
RHP~\ref{modelproblem}. 
For
\[
\xi\in \mathbb{C}\setminus(\Sigma_1\cup\Sigma_2\cup\Sigma_3),
\qquad
-\pi<\arg\xi<\pi,
\]
define
\begin{equation}\label{eq:def-widehat-Psi}
\widehat{\Psi}(\xi;x,a)
:=
\Psi(\xi;x,a)
\e^{-\xi^{\frac{1}{2}}\sigma_3}
\left(
\frac{\textbf{I}+\mathrm{i}\sigma_1}{\sqrt2}
\right)^{-1}
\xi^{\sigma_3/4},
\end{equation}
and
\begin{equation}\label{eq:def-R2}
\mathcal R_2(\xi;x,a)
:=
\widehat{\Psi}(\xi;x,a)
-\textbf{I}-\frac{C_1(x)}{\xi},
\end{equation}
where $C_1(x)$ is the coefficient appearing in the large-$\xi$
expansion in RHP~\ref{modelproblem}(c). 
Let $x_*>0$ be fixed. Suppose that
\[
R_2(\xi;x_*,a)
=
\mathcal O\big(|\xi|^{-2}\big),
\qquad \xi\to\infty,
\]
uniformly in every fixed closed subsector of each connected
component of
\[
\big\{\xi\in\mathbb C\setminus(\Sigma_{1}\cup \Sigma_{2}\cup \Sigma_{3}):-\pi<\arg\xi<\pi\big\}.
\]
Then there exist constants $C>0$ and $\xi_*>0$ such that
\begin{equation}\label{eq:uniform-R2-fixed-x}
\|\mathcal R_2(\xi;x_*,a)\|
\leq
\frac{C}{|\xi|^2},
\qquad \forall\,|\xi|\geq \xi_*,
\end{equation}
uniformly for
\[
\xi\in\mathbb C\setminus(\Sigma_{1}\cup \Sigma_{2}\cup \Sigma_{3}),
\qquad
-\pi<\arg\xi<\pi.
\]
\end{lem}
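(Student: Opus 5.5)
The plan is to show that each jump ray can be covered by a sector on which some analytic continuation of $\Psi$ obeys the sectorial hypothesis, and that this continuation differs from $\Psi$ only by an exponentially small (or exactly trivial) factor once normalised. Compactness of $[-\pi,\pi]$ then gives \eqref{eq:uniform-R2-fixed-x}. Throughout, $x=x_*$ and $a$ are fixed.

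\emph{Step 1: jumps of $\widehat\Psi$ for $|\xi|$ large.} I will compute the jumps of $\widehat\Psi$ from \eqref{eq:def-widehat-Psi} and condition (b) of RHP~\ref{modelproblem}. Write $E(\xi):=\e^{-\xi^{1/2}\sigma_3}\big(\tfrac{\mathbf I+\i\sigma_1}{\sqrt2}\big)^{-1}\xi^{\sigma_3/4}$, so that $\widehat\Psi=\Psi E$.
\begin{itemize}
\item On $\Sigma_1$ the jump of $\widehat\Psi$ is $E^{-1}\begin{pmatrix}1&0\\ \e^{\pi\i a}&1\end{pmatrix}E$. Here $E$ has no jump, and the conjugation produces off-diagonal entries of size $|\xi|^{1/2}\e^{-2\,\mathrm{Re}\,\xi^{1/2}}$. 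Since $\arg\xi^{1/2}=\pi/3$ on $\Sigma_1$, we have $\mathrm{Re}\,\xi^{1/2}=|\xi|^{1/2}/2$. Hence the jump equals $\mathbf I+\mathcal O\big(|\xi|^{1/2}\e^{-|\xi|^{1/2}}\big)$, and the same holds on $\Sigma_3$.
\item On $\Sigma_2$ we have $\xi^{1/2}_+=-\xi^{1/2}_-$ and $\xi^{\sigma_3/4}_+=\xi^{\sigma_3/4}_-\e^{\frac{\pi\i}{2}\sigma_3}$. A direct $2\times2$ computation should show that $E_-^{-1}\begin{pmatrix}0&1\\-1&0\end{pmatrix}E_+=\mathbf I$. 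This is the standard Bessel-type normalisation, and it is consistent with the fact that \eqref{2.2} holds with a single matrix $C_1(x)$ on both sides of the cut.
\end{itemize}
Consequently, outside a large disk, $\widehat\Psi$ extends analytically across $\Sigma_2$ and has only exponentially small jumps on $\Sigma_1\cup\Sigma_3$.

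\emph{Step 2: sectorial continuation across each ray.} For $\Sigma_1$, define $\Psi^{(1)}$ as follows:
\begin{itemize}
\item $\Psi^{(1)}:=\Psi$ on $\Omega_1$;
\item $\Psi^{(1)}:=\Psi\begin{pmatrix}1&0\\ \e^{\pi\i a}&1\end{pmatrix}^{-1}$ on $\Omega_2$, with the orientation of $\Sigma_1$ fixed as in the figure.
\end{itemize}
This $\Psi^{(1)}$ is analytic across $\Sigma_1$. Fix a closed sector $S_1=\{\arg\xi\in[\pi/2,5\pi/6]\}$ straddling $\Sigma_1$. Split $S_1$ into $S_1\cap\overline{\Omega_2}$, where I use the hypothesis for $\Psi$ in the closed subsector $[2\pi/3-\delta',5\pi/6]$ of the component $\Omega_2$, and $S_1\cap\overline{\Omega_1}$. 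On the $\Omega_2$ side,
\[
\Psi^{(1)}E=(\mathbf I+C_1/\xi+\mathcal R_2)\,E^{-1}J_1^{-1}E,
\]
and the last factor is $\mathbf I+\mathcal O(|\xi|^{1/2}\e^{-c|\xi|^{1/2}})$ uniformly on $S_1\cap\Omega_2$, because $\mathrm{Re}\,\xi^{1/2}\ge c|\xi|^{1/2}$ there. So the remainder for $\Psi^{(1)}$ is $\mathcal O(|\xi|^{-2})$ uniformly up to and across $\Sigma_1$, approaching from the $\Omega_2$ side. The symmetric construction covers the $\Omega_1$ side.

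Comparing $\Psi^{(1)}$ with $\Psi$ on either side of $\Sigma_1$ then gives $\|\mathcal R_2(\xi;x_*,a)\|\le C|\xi|^{-2}$ on all of $S_1$, including the boundary values on the ray. The same argument handles $\Sigma_3$. For $\Sigma_2$, by Step 1 $\widehat\Psi$ is itself continuous across the negative axis. Hence the hypothesis on closed subsectors of $\Omega_2$ and of $\Omega_3$ (taking arguments up to $\pm\pi$) directly covers a full neighbourhood $\{|\arg\xi|\in[5\pi/6,\pi]\}$. Finally, finitely many closed subsectors together with $S_1$, $S_3$ and this neighbourhood of $\Sigma_2$ cover $\{-\pi<\arg\xi<\pi\}$. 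Taking the maximum of the finitely many constants and radii gives $C$ and $\xi_*$.

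\emph{Main obstacle.} The delicate point is the overlap bookkeeping. The hypothesis is stated only on closed subsectors of the open components, so I need each piece used in Step 2 to be a genuine closed subsector of one component, and the conjugated jump factor to be exponentially small uniformly there. This requires $\mathrm{Re}\,\xi^{1/2}$ to stay bounded below by $c|\xi|^{1/2}$, which holds as long as the sectors avoid $\arg\xi=\pm\pi$ on the $\Sigma_1,\Sigma_3$ pieces. The branch tracking on $\Sigma_2$ in Step 1 must also be done carefully.

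If the hypothesis is read as excluding the boundary ray $\arg\xi=\pi$ itself, I would instead argue as follows. The matrix $\xi^2\mathcal R_2$ is analytic in a thin sector around $\Sigma_2$ by Step 1, and it is bounded on the two bounding rays. It grows at most polynomially there, which follows from the $\xi$-part of the Lax pair since $\Psi$ solves a linear ODE with rational coefficients. The Phragm\'en--Lindel\"of principle in that sector of opening less than $\pi$ then yields the uniform bound.
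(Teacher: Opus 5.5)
\textbf{There is a genuine gap in Step 2 for $\Sigma_1$ and $\Sigma_3$: it assumes the uniformity it is meant to prove.} The hypothesis gives the $\mathcal O(|\xi|^{-2})$ bound only on closed subsectors of the \emph{open} components. These are sets $\{\alpha\le\arg\xi\le\beta\}$ with $2\pi/3<\alpha\le\beta<\pi$ for $\Omega_2$, or with $-2\pi/3<\alpha\le\beta<2\pi/3$ for $\Omega_1\cup\Omega_4$. Every such set stays a fixed angle away from the jump rays.

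The set $\{2\pi/3-\delta'\le\arg\xi\le5\pi/6\}$ you invoke is not a subsector of $\Omega_2$. Likewise $S_1\cap\overline{\Omega_2}$ contains the ray $\arg\xi=2\pi/3$ itself. Applying the hypothesis there is therefore circular.

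Without that, the continuation $\Psi^{(1)}$ only transports known bounds from one side of $\Sigma_1$ to the other. You get $\Psi^{(1)}E=\mathbf I+C_1/\xi+\mathcal O(\xi^{-2})$ on $\{\arg\xi\le2\pi/3-\delta\}$ and on $\{\arg\xi\ge2\pi/3+\delta\}$. You get nothing on the thin sector $\{|\arg\xi-2\pi/3|<\delta\}$, which is exactly where the lemma has content. (Incidentally, condition (d) gives $\Psi|_{\Omega_1}=\Psi|_{\Omega_2}\begin{pmatrix}1&0\\ \e^{\pi\i a}&1\end{pmatrix}$ on $\Sigma_1$, so the continuation into $\Omega_2$ is $\Psi J_1$, not $\Psi J_1^{-1}$.) The same objection applies to your primary argument at $\Sigma_2$: ``taking arguments up to $\pm\pi$'' is not licensed by the hypothesis.

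\textbf{The missing step is the Phragm\'en--Lindel\"of argument you offer only as a fallback at $\Sigma_2$; it is needed at all three rays.} This is how the paper proceeds. Around $\Sigma_j$, $j=1,3$, take the sector $S_j(\delta)=\{|\arg\xi-\theta_j|<\delta\}$.
\begin{enumerate}
\item Let $\mathcal J_j$ be the analytic extension to $S_j(\delta)$ of the conjugated jump $\mathbf I+\frac{c_j}{2}\e^{-2\xi^{1/2}}\begin{pmatrix}\i&\xi^{-1/2}\\ \xi^{1/2}&-\i\end{pmatrix}$. It is exponentially close to $\mathbf I$ on all of $S_j(\delta)$, because $\mathrm{Re}\,\xi^{1/2}\ge c_\delta|\xi|^{1/2}$ there; this is your Step 1 observation.
\item Set $\widetilde\Psi_j=\widehat\Psi$ on one side of $\Sigma_j$ and $\widetilde\Psi_j=\widehat\Psi\mathcal J_j^{-1}$ on the other. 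This function is analytic in $S_j(\delta)$.
\item Put $F_j=\xi^2(\widetilde\Psi_j-\mathbf I-C_1/\xi)$. It is bounded on the two boundary rays $\arg\xi=\theta_j\pm\delta$, which \emph{do} lie in closed subsectors of the components.
\item The $\xi$-part of the Lax pair, with Gronwall along rays, gives an a priori bound $\|F_j\|\le C\e^{C|\xi|}$ in $S_j(\delta)$.
\item Phragm\'en--Lindel\"of in a sector of opening $2\delta<\pi$ then gives $F_j=\mathcal O(1)$ throughout $S_j(\delta)$. The bound passes back to $\widehat\Psi$ because $\mathcal J_j-\mathbf I$ is exponentially small.
\end{enumerate}
At $\Sigma_2$ the same argument is applied directly to $\widehat\Psi$, since $\widehat J_2=\mathbf I$, exactly as in your fallback.

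One small correction to that fallback: the Lax pair alone yields growth of finite exponential order, not the polynomial growth you claim. This is harmless, since any order below $\pi/(2\delta)$ suffices for Phragm\'en--Lindel\"of.
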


\begin{proof}
We first consider $\Sigma_1$ and $\Sigma_3$. Since the principal
branches of $\xi^{1/2}$ and $\xi^{1/4}$ have no jump across these
two rays, we have
\[
\widehat\Psi_+(\xi;x,a)
=
\widehat\Psi_-(\xi;x,a)\widehat J_j(\xi;a),\quad j=1,3,
\]
where
\[
\widehat J_j(\xi;a)
:=
\xi^{-\sigma_3/4}
\left(
\frac{\textbf{I}+\mathrm{i}\sigma_1}{\sqrt2}
\right)\e^{\xi^{\frac{1}{2}}\sigma_3}
J_j(a)\,
\e^{-\xi^{\frac{1}{2}}\sigma_3}
\left(
\frac{\textbf{I}+\mathrm{i}\sigma_1}{\sqrt2}
\right)^{-1}\xi^{\sigma_3/4},
\] and for $j=1,3$, write
\[
J_j(a)=\textbf{I}+c_j\begin{pmatrix}
0 & 0\\
1 & 0
\end{pmatrix},
\qquad
c_1=\e^{\pi\mathrm{i}a},
\qquad
c_3=\e^{-\pi\mathrm{i}a}.
\]
A direct calculation gives
\begin{equation}\label{thb3}
\widehat J_j(\xi;a)
=
\textbf{I}+
\frac{c_j}{2}\e^{-2\sqrt{\xi}}
\begin{pmatrix}
\mathrm{i}&\xi^{-1/2}\\
\xi^{1/2}&-\mathrm{i}
\end{pmatrix}.
\end{equation}
Choose $\delta>0$ sufficiently small and let
\[
S_j(\delta)
=
\big\{\xi:|\arg\xi-\theta_j|<\delta\big\},
\qquad
\theta_1=\frac{2\pi}{3},
\quad
\theta_3=-\frac{2\pi}{3},
\]
so that $S_j(\delta)$ contains no jump ray other than $\Sigma_j$
and has opening $2\delta<\pi$. The right-hand side in \eqref{thb3} extends analytically to $S_j(\delta)$;
denote this extension by $\mathcal J_j$. Since
\[
\textnormal{Re}\big(\xi^{\frac{1}{2}}\big)\ge c_\delta\sqrt{|\xi|},
\qquad
\xi\in S_j(\delta),
\]
we have for large $|\xi|$
\[
\mathcal J_j^{\pm1}(\xi)
=
\textbf{I}+
\mathcal O_a\left(
\sqrt{|\xi|}\,
\e^{-2c_\delta\sqrt{|\xi|}}
\right),
\]
uniformly in $S_j(\delta)$. Define
\[
\widetilde\Psi_j(\xi;a)
=
\begin{cases}
\widehat\Psi(\xi;x_*,a),
& \xi \text{ on the $-$ side of }\Sigma_j,\\[1mm]
\widehat\Psi(\xi;x_*,a)\mathcal J_j(\xi;a)^{-1},
& \xi \text{ on the $+$ side of }\Sigma_j.
\end{cases}
\]
Then $\widetilde\Psi_j$ extends analytically across $\Sigma_j$.
Set
\[
F_j(\xi)
=
\xi^2
\left(
\widetilde\Psi_j(\xi;a)
-\textbf{I}-\frac{C_1(x_*)}{\xi}
\right).
\]
On the two boundary rays of $S_j(\delta)$, the hypothesis and the
exponential smallness of $\mathcal J_j^{\pm1}-\textbf{I}$ give
\[
F_j(\xi)=\mathcal O(1).
\]
Moreover, the $\xi$-part of the Lax pair \cite[$(1.14)$]{xu2014critical}, together with a standard
Gronwall estimate along radial segments, yields
\[
\|F_j(\xi)\|
\le C\e^{C|\xi|},
\qquad
\xi\in S_j(\delta),
\]
for sufficiently large $|\xi|$.
Since the opening of $S_j(\delta)$ is strictly smaller than $\pi$,
the Phragm\'en--Lindel\"of principle implies
\[
F_j(\xi)=\mathcal O(1)
\]
uniformly in $S_j(\delta)$. Hence
\[
\widetilde\Psi_j(\xi;a)
=
\textbf{I}+\frac{C_1(x_*)}{\xi}
+\mathcal O(\xi^{-2})
\]
uniformly in $S_j(\delta)$. Since
$\mathcal J_j-\textbf{I}$ is exponentially small, we conclude that
\[
\widehat\Psi(\xi;x_*,a)
=
\textbf{I}+\frac{C_1(x_*)}{\xi}
+\mathcal O(\xi^{-2})
\]
uniformly for
$
\xi\in S_j(\delta)\setminus\Sigma_j, j=1,3.
$ It remains to consider $\Sigma_2$. For $\xi=-r$, $r>0$, one has
\[
(\xi_+)^{\frac{1}{2}}=\mathrm{i}\sqrt r,
\qquad
(\xi_{-})^{\frac{1}{2}}=-\mathrm{i}\sqrt r,
\]
and
\[
\left(
\frac{\textbf{I}+\mathrm{i}\sigma_1}{\sqrt2}
\right)J_2\left(
\frac{\textbf{I}+\mathrm{i}\sigma_1}{\sqrt2}
\right)^{-1}=-\mathrm{i}\sigma_3,
\qquad
\xi_-^{-\sigma_3/4}\xi_+^{\sigma_3/4}
=\mathrm{i}\sigma_3.
\]
Consequently,
 \begin{align*}
\widehat J_2(\xi)
=
 \xi_-^{-\sigma_3/4}
\left(
\frac{\textbf{I}+\mathrm{i}\sigma_1}{\sqrt2}
\right)\e^{\sqrt{\xi}_-\sigma_3}
J_2
\e^{-\sqrt{\xi}_+\sigma_3}
\left(
\frac{\textbf{I}+\mathrm{i}\sigma_1}{\sqrt2}
\right)^{-1}\xi_+^{\sigma_3/4}
=\textbf{I}.
\end{align*}
Thus $\widehat\Psi$ extends analytically through $\Sigma_2$ away
from the origin. Choose $0<\delta<\frac{\pi}{3}$ and define
\[
S_2(\delta)
=
\big\{
\xi:
\pi-\delta<\arg\xi<\pi+\delta
\big\}.
\]
Its two boundary rays lie in fixed closed subsectors of the
original components of $\mathbb C\setminus\Sigma$. Therefore the
hypothesis gives
\[
\widehat\Psi(\xi;x_*,a)
=
\textbf{I}+\frac{C_1(x_*)}{\xi}
+\mathcal O(\xi^{-2})
\]
on both boundary rays. Applying the same Phragm\'en--Lindel\"of
argument to
\[
F_2(\xi)
=
\xi^2
\left(
\widehat\Psi(\xi;x_*,a)
-\textbf{I}-\frac{C_1(x_*)}{\xi}
\right)
\]
gives
\[
F_2(\xi)=\mathcal O(1)
\]
uniformly in $S_2(\delta)$. We have therefore obtained the required estimate in fixed
neighborhoods of all three jump rays. The complement of these
neighborhoods in
\[
-\pi<\arg\xi<\pi
\]
is a finite union of fixed closed subsectors, where the estimate
follows directly from the hypothesis. Hence
\[
\mathcal R_2(\xi;x_*,a)
=
\mathcal O(|\xi|^{-2})
\]
uniformly for
\[
\xi\in\mathbb C\setminus(\Sigma_{1}\cup\Sigma_2\cup\Sigma_3),
\qquad
-\pi<\arg\xi<\pi,
\]
as claimed.
\end{proof}
Lemma~\ref{lem:uniform-R2-fixed-x} shows that, for each fixed
$x>0$, the sectorial large-$\xi$ estimate can be extended uniformly
up to the jump rays. We next strengthen this result by making the
estimate uniform as $x\to0$ and by deriving the corresponding
bounds for the $x$-derivatives of the remainder.
Before stating our next result, we need the following preliminary statement. For ease of presentation, we introduce the
following notation. 
Let \(a>-1\) and $k\in\mathbb{Z}_{\geq 0}$. For $x>0$, define
\begin{equation}\label{definieofpho}
\rho_{a,k}(x)
:=
\begin{cases}
1+x^{a+1-k},
& a\notin\mathbb Z_{\geq0},
\\[1.2ex]
1+x^{a+1-k}|\ln x|,
& a\in\mathbb Z_{\geq0}.
\end{cases}
\end{equation}

\begin{rmk}\label{rem:product-estimate-rho2}
Let $p,q\in\mathbb{Z}_{\geq 1}$. Let $x\in (0,1)$. It is not hard to check that
\begin{equation}\label{eq:product-estimate-rho2}
\rho_{a,p}(x)\rho_{a,q}(x)
=
\mathcal O_{a,p,q}
\left(
\rho_{a,p+q}(x)
\right),
\qquad
a>-1.
\end{equation}
Moreover, if $a>0$, we have
\begin{equation}\label{eq:product-estimate-rho}
\rho_{a,p}(x)\rho_{a,q}(x)
=
\mathcal O_{a,p,q}
\left(
\rho_{a,p+q-1}(x)
\right).
\end{equation}
\end{rmk}
The proof of the following estimates uses the boundary condition for
$v(x)$ as $x\downarrow 0$ given in Lemma~\ref{asyzero}.
\begin{prop}\label{prop:q-t-derivative-estimates}
Let $a>-1$. Let $v(x)$ be defined as in \eqref{36}, and define
\begin{equation}\label{eq:def-q}
q(x)
:=
-xv'(x)
+\frac{1}{2}v(x)
+\frac{1}{2}v(x)^2.
\end{equation}
Moreover, let $\mathfrak t(x)$ be determined by
(cf. \cite[(5.20)]{xu2014critical})
\begin{equation}\label{eq:def-t-prime}
\mathfrak t'(x)
=
\frac{1-q'(x)^2}{v'(x)},
\qquad
\mathfrak t(0)
=
\frac{
(4a^2-1)(4a^2-9)(4a^2-13)
}{1536}.
\end{equation}
Then, for every fixed $m\in\mathbb Z_{\geq0}$, there exists
$x_0=x_0(a,m)\in(0,1)$ such that
\begin{equation}\label{eq:q-derivative-estimate}
q^{(m)}(x)
=
\mathcal O_{a,m}
\left(
\rho_{a,m}(x)
\right),
\end{equation}
and
\begin{equation}\label{eq:tprime-derivative-estimate}
\frac{\d^m}{\d x^m}\mathfrak t'(x)
=
\mathcal O_{a,m}
\left(
\rho_{a,m+1}(x)
\right),
\end{equation}
uniformly for $0<x\leq x_0$.
\end{prop}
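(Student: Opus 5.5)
The plan is to reduce both estimates to the derivative bounds for $v$ in Lemma~\ref{asympoticsforv(x)forsmallx}. In the notation \eqref{definieofpho}, that lemma says $v^{(k)}(x)=\mathcal O_{a,k}(\rho_{a,k}(x))$ for $0<x\le x_0(a,k)$, with $\rho_{a,0}=\mathcal O(1)$ on $(0,1)$ because $a+1>0$. We then combine these bounds using the product rule \eqref{eq:product-estimate-rho2}. For \eqref{eq:q-derivative-estimate}, differentiate \eqref{eq:def-q} $m$ times:
\begin{equation*}
q^{(m)}=-x\,v^{(m+1)}-m\,v^{(m)}+\tfrac12 v^{(m)}+\tfrac12\sum_{i=0}^{m}\binom{m}{i}v^{(i)}v^{(m-i)} .
\end{equation*}
The term $x\,v^{(m+1)}$ is $\mathcal O(x+x\cdot x^{a-m})=\mathcal O(\rho_{a,m})$, with the extra $|\ln x|$ when $a\in\mathbb Z_{\ge0}$. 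The terms $v^{(m)}$ are directly $\mathcal O(\rho_{a,m})$. In the quadratic sum, the terms with $i=0$ or $i=m$ are $\mathcal O(1)\cdot\mathcal O(\rho_{a,m})$. The terms with $1\le i\le m-1$ are $\mathcal O(\rho_{a,i}\rho_{a,m-i})=\mathcal O(\rho_{a,m})$ by \eqref{eq:product-estimate-rho2}. We take $x_0(a,m)=\min_{k\le m+1}x_0(a,k)$.

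For \eqref{eq:tprime-derivative-estimate}, write $\mathfrak t'=(1-(q')^2)\cdot (1/v')$. We need two inputs: a lower bound on $|v'|$ near $0$, and bounds on the derivatives of $1/v'$.

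\emph{Lower bound on $|v'|$.} Lemma~\ref{asyzero} controls $v$ but not $v'$. So we would instead use the convergent expansion behind Proposition~\ref{smallasyanlysis}: the series \eqref{seriesexpansionforM2} together with the Neumann-tail bound \eqref{thefinaltail}, both of which may be differentiated termwise as in the proof of Lemma~\ref{asympoticsforv(x)forsmallx}. Differentiating once, with the explicit leading coefficients of Lemma~\ref{asyzero}, should give the following leading behaviour of $v'$:
\begin{itemize}
\item for $a>0$, $v'(x)=\tfrac1a+o(1)$;
\item for $-1<a<0$, $v'(x)=c_a x^{a}(1+o(1))$ with $c_a\neq0$, since the $x^{a+1}$ term dominates $x/a$;
\item for $a=0$, $v'(x)=-\ln x\,(1+o(1))$.
\end{itemize}
In every case $|v'(x)|\ge c\,w_a(x)$ for small $x$, where $w_a(x)=1$, $x^{a}$, $|\ln x|$ in the three cases respectively.

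\emph{Derivatives of $1/v'$.} Apply Faà di Bruno:
\begin{equation*}
(1/v')^{(j)}=\sum c_{\cdot}\,\frac{\prod_{r} v^{(1+k_r)}}{(v')^{1+s}},\qquad \sum_r k_r=j,
\end{equation*}
where $s$ is the number of factors in the numerator. Each factor satisfies $v^{(1+k_r)}=\mathcal O(\rho_{a,1+k_r})$, and we bound the denominator using the lower bound above. The aim is to show $(1/v')^{(j)}=\mathcal O(\rho_{a,j})/w_a$. The numerator of $1-(q')^2$ is handled the same way: its derivatives are $\mathcal O(\rho_{a,j+1}\rho_{a,1})$ by the first part and the Leibniz rule.

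Finally, Leibniz on $(1-(q')^2)\cdot(1/v')$ gives $\mathfrak t'^{(m)}$. It remains to check that the total is $\mathcal O(\rho_{a,m+1})$.
\begin{itemize}
\item For $a>0$ this follows from the improved product rule \eqref{eq:product-estimate-rho}, since $w_a=1$ and the two-factor numerator must lose one order.
\item For $-1<a\le0$, the potentially large numerator $(q')^2\sim x^{2a}$ is compensated by $1/v'\sim x^{-a}$. The remaining powers then combine to $x^{a-m}\lesssim\rho_{a,m+1}$, up to logarithms in the case $a=0$.
\end{itemize}

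I expect the main obstacle to be this case analysis for $-1<a\le 0$. Here one must track exact powers of $x$ rather than the crude $\rho$-bounds, because \eqref{eq:product-estimate-rho2} alone would lose a factor $x^{a}$. The approach I would try first is to write $v'=w_a(x)\,u(x)$, where $u$ is bounded away from $0$ and each derivative of $u$ costs at most a factor $x^{-1}$. This structure is read off from the termwise-differentiated series \eqref{seriesexpansionforM2}. With it, $1/v'=w_a^{-1}u^{-1}$ inherits explicit symbol-type bounds, and the whole estimate reduces to bookkeeping in powers of $x$ and $|\ln x|$.
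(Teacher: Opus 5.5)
Your route is essentially the paper's. The $q^{(m)}$ bound is exactly Leibniz plus Lemma~\ref{asympoticsforv(x)forsmallx} and \eqref{eq:product-estimate-rho2}. For $\mathfrak t'$ the paper makes the same three-way split: $v'$ bounded away from $0$ for $a>0$, $v'\asymp x^a$ for $-1<a<0$, and $v'\asymp 1+|\ln x|$ for $a=0$. It then bounds derivatives of $1/v'$, applies Leibniz, and uses \eqref{eq:product-estimate-rho} for $a>0$ and exact power counting for $a\le 0$.

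The one real difference is the lower bound on $|v'|$. The paper simply reads it off from Lemma~\ref{asyzero} together with Lemma~\ref{asympoticsforv(x)forsmallx}. Your derivation from the termwise-differentiated expansion \eqref{seriesexpansionforM2} is legitimate and more explicit. A cheaper alternative is to interpolate between the expansion of $v$ and the bound on $v''$. Also, the symbol bounds for $u=v'/w_a$ ($u^{(k)}=\mathcal O(x^{-k})$) follow directly from Lemma~\ref{asympoticsforv(x)forsmallx} by Leibniz. Only the non-vanishing $|u|\ge c$ needs asymptotic input.

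One stated step is wrong as written: the aim $(1/v')^{(j)}=\mathcal O(\rho_{a,j})/w_a$ is false for $j\ge1$, and no Fa\`a di Bruno argument can prove it.
\begin{itemize}
\item Already $(1/v')'=-v''/(v')^2$, and the bound $v''=\mathcal O(\rho_{a,2})$ is sharp. For $0<a<1$, the $x^{a+1}$ term in Lemma~\ref{asyzero} has a nonzero coefficient, which forces $v''\asymp x^{a-1}$. That is unbounded, while $\rho_{a,1}$ is bounded.
\item For $-1<a<0$ one has $(1/v')'\asymp x^{-a-1}$, which is not $\mathcal O(x^{-a}+1)=\mathcal O(\rho_{a,1}/w_a)$.
\end{itemize}

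The correct bounds are as follows.
\begin{itemize}
\item For $a>0$, Fa\`a di Bruno with \eqref{eq:product-estimate-rho} gives $(1/v')^{(j)}=\mathcal O(\rho_{a,j+1})$. This is enough, since $\rho_{a,i+1}\rho_{a,m-i+1}=\mathcal O(\rho_{a,m+1})$.
\item For $-1<a\le 0$, even $\rho_{a,j+1}/w_a$ is too weak. It is exactly the bound that loses the factor $x^{a}$ you mention (a factor $|\ln x|$ when $a=0$).
\item What is needed there is the sharp bound $(1/v')^{(j)}=\mathcal O(x^{-a-j})$, or $\mathcal O\bigl(x^{-j}/(1+|\ln x|)\bigr)$ for $a=0$. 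This is what the paper uses, and your $v'=w_a u$ device delivers it.
\end{itemize}
With these bounds the Leibniz sum gives $x^{2a-i}\cdot x^{-a-(m-i)}=x^{a-m}$, respectively $x^{-m}(1+|\ln x|)$. Both are $\mathcal O(\rho_{a,m+1})$, so the proof closes once the intermediate claim is replaced.
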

\begin{proof}
Differentiating \eqref{eq:def-q}, we obtain
\begin{equation}\label{eq:qprime}
q'(x)
=
-xv''(x)
-\frac{1}{2}v'(x)
+v(x)v'(x).
\end{equation}
The estimate \eqref{eq:q-derivative-estimate} follows from
Lemma~\ref{asympoticsforv(x)forsmallx}, Leibniz's rule, and
Remark~\ref{rem:product-estimate-rho2}. It remains to estimate the derivatives of $\mathfrak t'(x)$. We
distinguish three cases. For $a>0$, Lemma~\ref{asyzero} gives
\[
v(x)
=
\frac{1-4a^2}{8}
+\frac{x}{a}
+\mathcal O(x^2),
\qquad x\downarrow0,
\]
so that
\[
v'(0)=\frac1a\neq0.
\]
Hence, after decreasing $x_0$ if necessary, $v'(x)$ is bounded away
from zero on $(0,x_0]$. Together with
Lemma~\ref{asympoticsforv(x)forsmallx}, repeated differentiation of
$(v')^{-1}$, Leibniz's rule, and
Remark~\ref{rem:product-estimate-rho2}, this yields
\[
\frac{\d^m}{\d x^m}
\mathfrak{t}'(x)
=
\mathcal O_{a,m,x_0}
\left(
\rho_{a,m+1}(x)
\right).
\]
For $a\in(-1,0)$, the asymptotic expansions of $v$ in
Lemma~\ref{asyzero} and Lemma~\ref{asympoticsforv(x)forsmallx}
yield, for $k\in\mathbb{Z}_{\geq0}$,
\[
v'(x)\asymp_a x^a,
\qquad
\left(\frac{1}{v'(x)}\right)^{(k)}
=
\mathcal{O}_{a,k,x_0}\!\left(x^{-a-k}\right),
\qquad
q^{(k+1)}(x)
=
\mathcal{O}_{a,k,x_0}\!\left(x^{a-k}\right)
=
\mathcal{O}_{a,k,x_0}\!\left(\rho_{a,k+1}(x)\right).
\]
Here, the notation
$
f(x)\asymp_a g(x)
$
means that there exist constants \(c_a,C_a>0\), depending only on
\(a\), such that
\[
c_a |g(x)|\le |f(x)|\le C_a |g(x)|
\]
throughout the range under consideration. Consequently,
\[
\frac{\d^j}{\d x^j}\bigl(1-q'(x)^2\bigr)
=
\mathcal{O}_{a,j,x_0}\!\left(x^{2a-j}\right),
\qquad j\in\mathbb{Z}_{\geq0}.
\]
Hence, by Leibniz's rule, for $m\in\mathbb{Z}_{\geq0}$,
\bea\label{0828equation2}
\frac{\d^m}{\d x^m}
\mathfrak t'(x)
=
\mathcal{O}_{a,m,x_0}\!\left(x^{a-m}\right)
=
\mathcal{O}_{a,m,x_0}\!\left(
\rho_{a,m+1}(x)
\right),
\eea
uniformly for $x\in(0,x_0]$, where $x_0>0$ is fixed and sufficiently small.
Lastly, for $a=0$, setting
$
L(x):=1+|\ln x|,
$
Lemma~\ref{asyzero} and Lemma~\ref{asympoticsforv(x)forsmallx} yield,
for $k\in\mathbb{Z}_{\geq0}$,
\[
v'(x)\asymp L(x),
\qquad
\left(\frac{1}{v'(x)}\right)^{(k)}
=
\mathcal{O}_{k,x_0}\!\left(\frac{x^{-k}}{L(x)}\right),
\qquad
q^{(k+1)}(x)
=
\mathcal{O}_{k,x_0}\!\left(x^{-k}L(x)\right)
=
\mathcal{O}_{k,x_0}\!\left(\rho_{0,k+1}(x)\right).
\]
Hence, by Leibniz's rule, for $m\in\mathbb{Z}_{\geq0}$,
\begin{equation}\label{0828eq3}
\frac{\d^m}{\d x^m}
\mathfrak t'(x)
=
\mathcal{O}_{m,x_0}\!\left(x^{-m}L(x)\right)
=
\mathcal{O}_{m,x_0}\!\left(
\rho_{0,m+1}(x)
\right),
\end{equation}
uniformly for $0<x\leq x_0$, with $x_0>0$ sufficiently small. By choosing $x_0=x_0(a,m)>0$ sufficiently small, all the above
estimates hold simultaneously on $(0,x_0]$. Since $x_0$ is fixed
in terms of $a$ and $m$, the implied constants may be regarded as
depending only on $a$ and $m$. This completes the proof.
\end{proof}

\begin{lem}\label{lem:uniform-R2-zero-order}
Let $a>-1$, and let $\Psi(\xi;x,a)$ be as defined in
RHP~\ref{modelproblem}. Let $\widehat{\Psi}(\xi;x,a)$ and $\mathcal R_2(\xi;x,a)$ be defined as in
\eqref{eq:def-widehat-Psi} and \eqref{eq:def-R2}, respectively.
Then, for any integer $k\geq0$, there exists a constant
$x_0=x_0(a,k)\in(0,1)$ such that, as $|\xi|\to\infty$,
the following estimates hold uniformly for
\[
0<x\leq x_0,
\qquad
\xi\in \mathbb{C}\setminus(\Sigma_1\cup\Sigma_2\cup\Sigma_3),
\qquad
-\pi<\arg\xi<\pi.
\]
For $k=0$,
\begin{equation}\label{k=0}
\mathcal R_2(\xi,x)
=
\mathcal O_a
\left(
\frac{\rho_{a,0}(x)}{|\xi|^2}
\right),
\end{equation}
and, for every $k\geq1$,
\begin{equation}\label{generalk}
\frac{\partial^k}{\partial x^k}
\mathcal R_2(\xi,x)
=
\mathcal O_{a,k}
\left(
\frac{\rho_{a,k}(x)}{|\xi|^2}
\right),
\end{equation}
where $\rho_{a,k}(x)$ is defined in
\eqref{definieofpho}.
\end{lem}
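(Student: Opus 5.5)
Since $\widehat\Psi$ solves the $x$-part of the Lax pair, $\partial_x\widehat\Psi=\xi^{-1}C_1'(x)\widehat\Psi$, the remainder satisfies the exact linear ODE
\[
\partial_x\mathcal R_2(\xi,x)=\frac{1}{\xi}C_1'(x)\mathcal R_2(\xi,x)+\frac{1}{\xi^2}C_1'(x)C_1(x),
\]
and the plan is to integrate this in $x$ from a fixed reference point. Fix $x_*=x_0\in(0,1)$. Lemma~\ref{lem:uniform-R2-fixed-x} gives $\|\mathcal R_2(\xi,x_*)\|\le C|\xi|^{-2}$ uniformly on the whole cut plane for $|\xi|\ge\xi_*$. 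We then need bounds on $C_1$ and its derivatives on $(0,x_0]$. The entry $[C_1]_{12}=-\i v$ is handled by Lemma~\ref{asympoticsforv(x)forsmallx}. The remaining entries are expressed through $v$, $q$ and $\mathfrak t$ as in \cite[(5.20)]{xu2014critical}: the diagonal entries are $\pm q$ up to constants, and the $(21)$ entry is $\mathfrak t$. By Proposition~\ref{prop:q-t-derivative-estimates} this gives $C_1^{(m)}(x)=\mathcal O(\rho_{a,m}(x))$ for $m\ge1$ and $C_1(x)=\mathcal O(1)$. The bound on $C_1$ itself uses $\mathfrak t(x)=\mathfrak t(0)+\int_0^x\mathfrak t'$ with $\mathfrak t'=\mathcal O(\rho_{a,1})$, which is integrable because $x^{a}$ is integrable for $a>-1$.

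For $k=0$, solve the ODE on $[x,x_*]$ by Duhamel/Gronwall for fixed $\xi$ with $|\xi|\ge\xi_*$:
\[
\|\mathcal R_2(\xi,x)\|\le\Big(\|\mathcal R_2(\xi,x_*)\|+|\xi|^{-2}\int_x^{x_*}\|C_1'\|\|C_1\|\,\d t\Big)\exp\Big(|\xi|^{-1}\int_x^{x_*}\|C_1'\|\,\d t\Big).
\]
Since $\int_0^{x_*}\|C_1'\|\,\d t\le\int_0^{x_*}\rho_{a,1}<\infty$, both integrals are bounded uniformly in $x$. This gives $\mathcal R_2=\mathcal O(|\xi|^{-2})=\mathcal O(\rho_{a,0}|\xi|^{-2})$ uniformly in $x$ and in $\arg\xi$, up to the jump rays, because the ODE holds pointwise in $\xi$ off the contour.

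For $k\ge1$, argue by induction and differentiate the ODE $k-1$ times:
\[
\partial_x^k\mathcal R_2=\frac1\xi\sum_{j=0}^{k-1}\binom{k-1}{j}C_1^{(j+1)}\partial_x^{k-1-j}\mathcal R_2+\frac1{\xi^2}\partial_x^{k-1}(C_1'C_1).
\]
This is an explicit formula, so no further integration is needed. By the induction hypothesis and the $C_1$ bounds, each term is $\mathcal O(\rho_{a,j+1}\rho_{a,k-1-j}|\xi|^{-2})$, respectively $\mathcal O(\rho_{a,k}|\xi|^{-2})$. The product estimate \eqref{eq:product-estimate-rho2} turns $\rho_{a,j+1}\rho_{a,k-1-j}$ into $\mathcal O(\rho_{a,k})$. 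The edge case where the second index is $0$ is not covered by Remark~\ref{rem:product-estimate-rho2} as stated ($p,q\ge1$), but it is immediate since $\rho_{a,0}$ is bounded on $(0,1)$. We also get an extra factor $|\xi|^{-1}\le1$.

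The main obstacle is establishing the derivative bounds on all entries of $C_1$, especially the $(21)$ entry via $\mathfrak t$, and confirming the identification of $C_1$'s entries with $v,q,\mathfrak t$ from the Lax pair compatibility in \cite{xu2014critical}. A second delicate point is justifying differentiation in $x$ of $\mathcal R_2$ up to the jump rays. This follows from analytic dependence of $\Psi$ on $x>0$ (isomonodromy) together with Lemma~\ref{lem:uniform-R2-fixed-x} at the single point $x_*$. All other uniformity then comes from the ODE.
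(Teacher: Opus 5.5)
Your proposal is correct and follows essentially the same route as the paper. Both arguments derive the exact linear ODE for $\mathcal R_2$ from the $x$-part of the Lax pair, integrate it from the reference point $x_0$ using Lemma~\ref{lem:uniform-R2-fixed-x}, use the explicit $q,v,\mathfrak t$ form of $C_1$ to get boundedness of $C_1$ and integrability of $\|C_1'\|$ on $(0,x_0]$, and then run an induction on the $(k-1)$-times differentiated ODE. The only difference is cosmetic: you close the $k=0$ case with the exponential Gronwall bound, whereas the paper uses a sup-and-absorb step that requires enlarging $|\xi|$ until $\widetilde B_{a,x_0}/|\xi|\le 1/2$.
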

\begin{proof}
Fix $k\in\mathbb Z_{\geq0}$. We choose
$x_0=x_0(a,k)>0$ sufficiently small so that all the estimates
required below up to order $k$ hold uniformly on $(0,x_0]$.
Since $x_0$ depends only on $a$ and $k$, any constant depending
on $a$, $k$, and $x_0$ will henceforth be regarded as a constant
depending only on $a$ and $k$. Note that the factors
\[
\e ^{-\xi^{\frac{1}{2}}\sigma_3},
\qquad
\Bigg(\frac{\textbf{I}+\mathrm{i}\sigma_1}{\sqrt2}\Bigg)^{-1},
\qquad
\xi^{\sigma_3/4},\ \ \ \ \ \ \ \ \ \xi\in\mathbb{C}\setminus(-\infty,0],
\]
in \eqref{eq:def-widehat-Psi} are independent of \(x\). By \cite[$(1.14)$]{xu2014critical}, with $C_1(x)$ as in condition (c) of RHP \ref{modelproblem},
\begin{equation}\label{eq:x-Lax-widehat-Psi}
	\frac{\partial}{\partial x}\widehat{\Psi}(\xi;x,a)=\frac{1}{\xi}C_1'(x)\widehat{\Psi}(\xi;x,a),
\end{equation}
and substituting \eqref{eq:def-R2} into
\eqref{eq:x-Lax-widehat-Psi}, we obtain
\[
	\frac{\partial}{\partial x}\mathcal{R}_2(\xi;x,a)+\frac{1}{\xi}C_1'(x)=\frac{1}{\xi}C_1'(x)\bigg(\mathbf{I}+\frac{C_1(x)}{\xi}+\mathcal{R}_2(\xi;x,a)\bigg),
%
%
\]
so that, after simplification,
\begin{equation}\label{eq:R2-exact-x-equation}
	\frac{\partial}{\partial x}\mathcal{R}_2(\xi;x,a)=\frac{1}{\xi}C_1'(x)\mathcal{R}_2(\xi;x,a)+\frac{1}{\xi^2}C_1'(x)C_1(x).
%
\end{equation}
Fix a sufficiently small $x_0>0$. By
Lemma~\ref{lem:uniform-R2-fixed-x}, there exist constants $C>0$ and
$\xi_0>0$ such that
\begin{equation}\label{eq:R2-reference-bound}
\|\mathcal R_2(\xi,x_0)\|
\leq
\frac{C}{|\xi|^2},
\qquad |\xi|\geq\xi_0,
\end{equation}
uniformly for
\[
-\pi<\arg\xi<\pi,
\qquad
\xi\notin\Sigma_1\cup\Sigma_2\cup\Sigma_3.
\]
Integrating \eqref{eq:R2-exact-x-equation} from \(x\in(0,x_0]\) to
\(x_0\), we find
\begin{align}
\mathcal R_2(\xi;x,a)
=
\mathcal R_2(\xi;x_0,a)
-
\frac{1}{\xi}
\int_x^{x_0}
C_1'(u)\mathcal R_2(\xi;u,a)\,\mathrm du
-
\frac{1}{\xi^2}
\int_x^{x_0}
C_1'(u)C_1(u)\,\mathrm du.
\label{eq:R2-integral-equation}
\end{align}
For fixed \(\xi\in\mathbb{C}\setminus(\Sigma_1\cup\Sigma_2\cup\Sigma_3)\), set
\[
F(x,a)
:=
|\xi|^2\|\mathcal R_2(\xi;x,a)\|.
\]
It follows from \eqref{eq:R2-reference-bound} and \eqref{eq:R2-integral-equation} that
\begin{align}
F(x,a)
\leq{}&
C
+
\frac{1}{|\xi|}
\int_x^{x_0}
\|C_1'(u)\|\,F(u,a)\,\mathrm du
+
\int_x^{x_0}
\|C_1'(u)\|\cdot\|C_1(u)\|\,\mathrm du.
\label{eq:F-Gronwall}
\end{align}
Next,
$
\sup_{0<u\leq x_0}\rho_{a,0}(u)<\infty,
$
for \(a+1>0\) and so both
$
u^{a+1}$ and $
u^{a+1}|\log u|$ are bounded on \((0,x_0]\).
Moreover, by \cite[Eq.~(2.5)]{xu2014critical}, with the function
\(v\) in \eqref{36} denoted by \(r\) in the cited reference,
\begin{equation}\label{explicitforC1}
C_1(x)
=
\begin{pmatrix}
q(x)&-\mathrm{i}v(x)\smallskip\\
\mathrm{i}\mathfrak t(x)&-q(x)
\end{pmatrix},
\end{equation}
where $v(x)$, $q(x)$, and $\mathfrak{t}(x)$ are as in
\eqref{36}, \eqref{eq:def-q}, and \eqref{eq:def-t-prime},
respectively.
Hence, by Lemma \ref{asympoticsforv(x)forsmallx} and Proposition \ref{prop:q-t-derivative-estimates}, 
\[
\|C_1(x)\|
=
\mathcal O_{a,x_{0}}\bigl(\rho_{a,0}(x)\bigr),
\qquad
\|C_1'(x)\|
=
\mathcal O_{a,x_{0}}\bigl(\rho_{a,1}(x)\bigr),
\qquad
\forall\,0<x\leq x_0.
\]
Therefore,
\begin{equation}\label{eq:C1-uniform-bound}
\sup_{0<u\leq x_0}\|C_1(u)\|
\leq B_{a,x_0},\ \ \ \ \ \ 
\int_0^{x_0}
\|C_1'(u)\|\,\mathrm du
\leq \widetilde{B}_{a,x_0},
\end{equation}
and consequently we obtain
\[
\int_x^{x_0}
\|C_1'(u)\|\cdot\|C_1(u)\|\,\mathrm du
\leq B_{a,x_0}\widetilde{B}_{a,x_0},
\qquad
\forall\,0<x\leq x_0.
\]
Returning to \eqref{eq:F-Gronwall} we obtain,
%
%
%
\[
F(x,a)
\leq
\widetilde{C}_{a,x_0}
+
\frac{1}{|\xi|}
\int_x^{x_0}
\|C_1'(u)\|\,F(u,a)\,\mathrm du;\ \ \ \ \ \ \widetilde{C}_{a,x_0}:=C+B_{a,x_0}\widetilde{B}_{a,x_0},
\]
and thus with $G(x,a):=\sup_{x\leq u \leq x_0}F(u,a)$,
\begin{equation*}
	F(y,a)\leq\widetilde{C}_{a,x_0}+\frac{1}{|\xi|}\widetilde{B}_{a,x_0}G(x,a)\ \ \ \ \forall\,y\in[x,x_0]
\end{equation*}
and taking the supremum over $y\in[x,x_0]$, therefore
\begin{equation*}
	G(x,a)\leq\widetilde{C}_{a,x_0}+\frac{1}{|\xi|}\widetilde{B}_{a,x_0}G(x,a).
\end{equation*}
Thus, after increasing $\xi_0$, if necessary, so that
$
\widetilde{B}_{a,x_0}\leq 1/(2\xi_0),
$
we have, for every $|\xi|\geq \xi_0$,
$
\widetilde{B}_{a,x_0}
\leq 1/(2|\xi|).
$
Then $G(x,a)\leq 2\widetilde{C}_{a,x_0}$ for all $0<x\leq x_0$ and consequently
\[
\|\mathcal R_2(\xi;x,a)\|
\leq
\frac{2\widetilde{C}_{a,x_{0}}}{|\xi|^2},
\qquad\forall\,
0<x\leq x_{0},
\]
uniformly for
\[
|\xi|\geq\xi_0,
\qquad
\xi\notin\Sigma_1\cup\Sigma_2\cup\Sigma_3,
\qquad
-\pi<\arg\xi<\pi.
\] Finally, since
$
\rho_{a,0}(x)\geq1,
$
we conclude that
\[
\|\mathcal R_2(\xi,x)\|
\leq
C_{a,x_0}
\frac{\rho_{a,0}(x)}{|\xi|^2},
\qquad
\forall\,0<x\leq x_0,
\]
which proves \eqref{k=0}. Since $x_0=x_0(a,0)$ has been fixed depending only on $a$,
the dependence on $x_0$ may be absorbed into the constant.
Hence, (\ref{k=0}) holds uniformly for $0<x\leq x_{0}$.

For fixed $k\in\mathbb{N}$ we now prove the estimates for $0\leq j\leq k$ by induction on $j$, noting that $j=0$ has just been established. Hence, assume that, for every
\(0\leq j\leq k-1\), 
\begin{equation}\label{eq:induction-remainder}
\frac{\partial^j}{\partial x^j}\mathcal R_{2}(\xi;x,a)
=
\mathcal O_{a,j,x_0}
\left(
\frac{\rho_{a,j}(x)}{|\xi|^{2}}
\right).
\end{equation}
Applying \(\partial_x^{k-1}\) to
\eqref{eq:R2-exact-x-equation} and using the Leibniz
rule gives
\begin{align}
\frac{\partial^k}{\partial x^k}\mathcal R_{2}(\xi;x,a)
={}&
\frac{1}{\xi}
\sum_{\ell=0}^{k-1}
\binom{k-1}{\ell}
C_1^{(\ell+1)}(x)
\frac{
\partial^{k-1-\ell}
}{
\partial x^{k-1-\ell}}\mathcal{R}_2(\xi;x,a)
\nonumber\\
&+
\frac{1}{\xi^{2}}
\sum_{\ell=0}^{k-1}
\binom{k-1}{\ell}
C_1^{(\ell+1)}(x)
C_1^{(k-1-\ell)}(x).
\label{eq:kth-remainder-identity}
\end{align}
For the second sum in
\eqref{eq:kth-remainder-identity}, Lemma \ref{asympoticsforv(x)forsmallx} implies that
\[
\begin{aligned}
&
C_1^{(\ell+1)}(x)
C_1^{(k-1-\ell)}(x)
=
\mathcal O_{a,L,k,x_0}
\left(
\rho_{a,\ell+1}(x)
\rho_{a,k-1-\ell}(x)
\right)=
\mathcal O_{a,k,x_0}
\bigl(
\rho_{a,k}(x)
\bigr).
\end{aligned}
\]
For the first sum, the induction hypothesis gives
\[
\begin{aligned}
&
C_1^{(\ell+1)}(x)
\frac{
\partial^{k-1-\ell}
}{
\partial x^{k-1-\ell}
}\mathcal{R}_2(\xi;x,a)=
\mathcal O_{a,k,x_0}
\left(
\frac{
\rho_{a,\ell+1}(x)
\rho_{a,k-1-\ell}(x)
}{
|\xi|^{2}
}\right)=
\mathcal O_{a,k,x_0}
\left(
\frac{\rho_{a,k}(x)}{|\xi|^{2}}
\right).
\end{aligned}
\]
Hence, we obtain \eqref{generalk} for $k$. The proof is complete.
\end{proof}

With the aid of a global parametrix
and two local parametrices
in the neighborhoods $z=0$ and $z=1$,
respectively, we now recall the ratio transformation \cite[$(98)$]{chen2019riemann}: let $S(z)$ be defined as in 
\cite[Equation~(52)]{chen2019riemann}, with its jump contour denoted
\[
\Sigma_S=\bigcup_{k=1}^{3}\Sigma_{S,k}
\]
as depicted in \cite[Figure~1]{chen2019riemann}. Take
\(P^{(\infty)}(z)\), \(P^{(1)}(z)\), and \(P^{(0)}(z)\)
as defined in \cite[Equations~(57), (66), and~(82)]
{chen2019riemann}, respectively. Throughout this section, \(r>0\) is fixed and sufficiently small
so that the disks \(U(0,r)\) and \(U(1,r)\) are disjoint and the
corresponding local parametrices are well defined. We define, see \cite[$(98)$]{chen2019riemann},
\begin{equation}\label{eq:def_R}
\widetilde R(z)
=S(z)
\begin{cases}
(P^{(\infty)}(z))^{-1},
&
z\in
\mathbb{C}
\setminus
\left(
U(1,r)\cup U(0,r)\cup\Sigma_S
\right),
\\[4pt]
(P^{(1)}(z))^{-1},
&
z\in U(1,r)\setminus\Sigma_S,
\\[4pt]
(P^{(0)}(z))^{-1},
&
z\in U(0,r)\setminus\Sigma_S.
\end{cases}
\end{equation}
and collect the relevant analytic and asymptotic properties of $z\mapsto\widetilde{R}(z)$ below.
\begin{problem}\label{modelproblem2}
The matrix-valued function \(\widetilde R(z)\) satisfies
the following Riemann--Hilbert problem.
\begin{enumerate}
\item[$(a)$]
\(\widetilde R(z)\) is analytic in $\mathbb{C}\setminus\Sigma_{\widetilde{R}}\ni z$, with $\Sigma_{\widetilde{R}}$ shown in \cite[Figure $5$]{chen2019riemann}
%
\item[$(b)$]
\(\widetilde R(z)\) satisfies the jump condition
\begin{equation}\label{eq:R_jump}
\widetilde R_+(z)
=
\widetilde R_-(z)J_{\widetilde R}(z),
\qquad
z\in\Sigma_{\widetilde R},
\end{equation}
with jump matrix \(J_{\widetilde R}(z)\) equal to
\begin{equation}\label{eq:JR_definition}
J_{\widetilde R}(z)
=
\begin{cases}
P^{(\infty)}(z)
J_S(z)
(P^{(\infty)}(z))^{-1},
&
z\in\Sigma_{S,1}\cup\Sigma_{S,3}\setminus(U(0,r)\cup U(1,r)),
\\[4pt]
P^{(0)}(z)
(P^{(\infty)}(z))^{-1},
&
z\in\partial U(0,r),
\\[4pt]
P^{(1)}(z)
(P^{(\infty)}(z))^{-1},
&
z\in\partial U(1,r).
\end{cases}
\end{equation}
Here, \(J_S(z)\) denotes the jump matrix for \(S(z)\)
given in \cite[Equations~(53) and~(54)]
{chen2019riemann}.
\item[$(c)$]
As \(z\to\infty\),
\[
\widetilde R(z)
=
\textbf{I}+\mathcal{O}(z^{-1}).
\]
\end{enumerate}
\end{problem}

\begin{rmk}
We remark that, when
\[
z\in\Sigma_{S,1}\cup\Sigma_{S,3}\setminus\big(U(0,r)\cup U(1,r)\big)\ \ \ 
\qquad\text{or}\qquad
z\in\partial U(1,r),
\]
the jump matrix \(J_{\widetilde R}(z)\) is independent of
\(x\). So, by
\cite[$(101)$]{chen2019riemann},
\begin{equation}\label{matcheasy}
J_{\widetilde R}(z)
=
\begin{cases}
\textbf{I}+\mathcal{O}(N^{-1}),
&
z\in\partial U(1,r),
\\[4pt]
\textbf{I}+\mathcal{O}(\e^{-cN}),
&
z\in\Sigma_{S,1}\cup\Sigma_{S,3},
\end{cases}
\end{equation}
where the \(\mathcal{O}\)-terms are independent of \(x\).
\end{rmk}
In order to investigate the $x$-dependency of $J_{\widetilde{R}}(z)$ on $\partial U(0,r)$ we recall the function $\varphi(z)=2z-1+2(z(z-1))^{\frac{1}{2}}$ in \cite[$(47)$]{chen2019riemann} together with, see \cite[$(75)$]{chen2019riemann},
%
%
\begin{equation}\label{defforf0}
f_0(z)
=
(g(z)-g(0))^2
=
\left(
\log\frac{\varphi(z)}{\varphi(0)}
\right)^2,\ \ \ \ g(z)=\ln\varphi(z),\ \ z\in\mathbb{C}\setminus(-\infty,1]
\end{equation}
In turn, 
\begin{equation}\label{esonf0z}
f_0(z)
=
-4z
\left(
1+\frac{1}{3}z+\mathcal{O}(z^2)
\right)
\end{equation}
as \(z\to0\).
Denote also
\begin{equation}\label{def:D0}
\mathcal D_0(z)
:=
(-1)^{N\sigma_3}
\widetilde W(z)^{\sigma_3}
\e^{\frac{\pi\mathrm{i}}{2}\sigma_3},\ \ \ z\in\mathbb{C}\setminus\mathbb{R},
\end{equation}
where
\begin{equation}\label{def:W_tilde}
\widetilde W(z)
=
\begin{cases}
\e^{-\frac{\alpha\pi\mathrm{i}}{2}}
\left(z^a(1-z)^b\right)^{\frac12},
&
\operatorname{Im}z>0,
\\[6pt]
\e^{\frac{\alpha\pi\mathrm{i}}{2}}
\left(z^a(1-z)^b\right)^{\frac12},
&
\operatorname{Im}z<0.
\end{cases}
\end{equation}
is defined in
\cite[Equation~(70)]{chen2019riemann}.
Lastly, denote
\begin{equation}\label{defforEP}
\mathcal E(\xi,2N^2t)
:=
\xi^{\frac14\sigma_3}
\Psi(\xi;2N^2t,a)
\e^{-\xi^{\frac{1}{2}}\sigma_3}
\left(
\frac{\textbf{I}+\mathrm{i}\sigma_1}{\sqrt2}
\right)^{-1},
\end{equation}
where \(\Psi(\xi;2N^2t,a)\) solves the model
Riemann--Hilbert problem \ref{modelproblem} with $x=2N^2t>0$.

\begin{prop}\label{aboutJR}
Let \(\mathcal D_0(z)\) and
\(\mathcal E(\xi,2N^2t)\) be defined as in
\eqref{def:D0} and \eqref{defforEP}, respectively.
For \(z\in\partial U(0,r)\), we have
\begin{align}\label{smallneighbourhood}
J_{\widetilde R}(z,2N^2t)
={}&
P^{(\infty)}(z)
\mathcal D_0(z)
\left(
\frac{\textbf{I}+\mathrm{i}\sigma_1}{\sqrt2}
\right)^{-1}
\nonumber\\
&\times
\mathcal E(N^2f_0(z),2N^2t)
\left(
\frac{\textbf{I}+\mathrm{i}\sigma_1}{\sqrt2}
\right)
\mathcal D_0(z)^{-1}
(P^{(\infty)}(z))^{-1},
\end{align}
where \(f_0(z)\) is defined in \eqref{defforf0}.
Moreover, \(P^{(\infty)}(z)\) in
\cite[Equation~(57)]{chen2019riemann} is independent
of \(N\) and \(t\).
\end{prop}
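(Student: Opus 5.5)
Since $\partial U(0,r)\subset U(0,r)^{c}$ and the jump there is $P^{(0)}(z)(P^{(\infty)}(z))^{-1}$ by \eqref{eq:JR_definition}, the plan is to substitute the explicit form of the local parametrix $P^{(0)}(z)$ from \cite[Equation~(82)]{chen2019riemann} and rewrite it in terms of $\mathcal E$. In \cite{chen2019riemann}, $P^{(0)}$ has the standard structure
\[
P^{(0)}(z)=E_0(z)\,\Psi\big(N^2f_0(z);2N^2t,a\big)\,\mathcal D_1(z),
\]
with $\Psi$ the solution of RHP~\ref{modelproblem} at $x=2N^2t$, $\mathcal D_1(z)$ a diagonal factor collecting $\e^{-N(g(z)-g(0))\sigma_3}$, the weight $\widetilde W(z)^{-\sigma_3}$ and the phase constants. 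The analytic prefactor is
\[
E_0(z)=P^{(\infty)}(z)\,\mathcal D_0(z)\left(\frac{\mathbf I+\i\sigma_1}{\sqrt2}\right)^{-1}\big(N^2f_0(z)\big)^{\frac14\sigma_3},
\]
cf.\ \cite[(80)--(82)]{chen2019riemann}. First I record this formula, matching the notation \eqref{def:D0}--\eqref{def:W_tilde} and the variable $\xi=N^2f_0(z)$. The relation $x=2N^2t$ comes from the scaling $z\mapsto N^2z$ of $\e^{-t/z}$ together with $f_0(z)\sim-4z$ in \eqref{esonf0z}.

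Next, I insert this into $P^{(0)}(P^{(\infty)})^{-1}$ and group the factors to the right of $\Psi$. On $\partial U(0,r)$ we have $\xi^{1/2}=N(g(z)-g(0))$ up to the branch choice built into $f_0$, so $\e^{-N(g(z)-g(0))\sigma_3}=\e^{-\xi^{1/2}\sigma_3}$. Hence the right diagonal factor becomes
\[
\e^{-\xi^{1/2}\sigma_3}\left(\frac{\mathbf I+\i\sigma_1}{\sqrt2}\right)^{-1}\left(\frac{\mathbf I+\i\sigma_1}{\sqrt2}\right)\mathcal D_0(z)^{-1}.
\]
Here I use that the $\widetilde W$- and $(-1)^N$-parts of $\mathcal D_1$ coincide with $\mathcal D_0^{-1}$, so that they are absorbed by inserting the identity. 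Combined with the left factor $\xi^{\frac14\sigma_3}$ coming from $E_0$, this produces exactly $\mathcal E(N^2f_0(z),2N^2t)$ as defined in \eqref{defforEP}, sandwiched as claimed in \eqref{smallneighbourhood}.

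The main obstacle is bookkeeping of branches and of the constant phases $\e^{\pm\frac{\pi\i}{2}\sigma_3}$ and $\e^{\pm\frac{\alpha\pi\i}{2}}$ in the upper and lower half planes. I would check them separately for $\operatorname{Im}z>0$ and $\operatorname{Im}z<0$, verifying that the piecewise definitions of $\widetilde W$ and of $\mathcal D_0$ make the commutation with $\frac{\mathbf I+\i\sigma_1}{\sqrt2}$ consistent on both sides. The independence claim is read off from \cite[Equation~(57)]{chen2019riemann}. There $P^{(\infty)}$ is built only from $\varphi(z)$ and the Szeg\H{o} function of the fixed Jacobi weight $z^a(1-z)^b$, normalized at infinity, with no $N$ or $t$ entering; in particular the $t$-dependence of the weight is placed entirely in the local problem at $0$.
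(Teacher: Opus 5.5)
Your proposal is correct and follows the same route as the paper. You substitute the local parametrix from \cite[(82)]{chen2019riemann} and the expression for $E_0(z)$, which is \cite[(83)]{chen2019riemann} rather than (80)--(82). You then use $\xi^{1/2}=N(\log\varphi(z)-\log\varphi(0))$ to rewrite the right diagonal factor as $\e^{-\xi^{1/2}\sigma_3}\mathcal D_0(z)^{-1}$, recognise $\mathcal E$, and, as in the paper, read the $N$- and $t$-independence of $P^{(\infty)}$ off \cite[(57)]{chen2019riemann}.
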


\begin{proof}
By \cite[Equation~(82)]{chen2019riemann}, the local
parametrix \(P^{(0)}(z)\) is given by
\begin{equation}\label{eq:P0-cited}
P^{(0)}(z)
=
E_0(z)
\Psi(N^2f_0(z);2N^2t,a)
\e^{-\frac{\pi\mathrm{i}}{2}\sigma_3}
\widetilde W(z)^{-\sigma_3}
\varphi(z)^{-N\sigma_3}.
\end{equation}
With $\xi=N^2f_0(z)$,  we choose the branches consistently so that
\[
\xi^{\frac{1}{2}}
\stackrel{\eqref{defforf0}}{=}
N\left(
\log\varphi(z)-\log\varphi(0)
\right).
\]
It follows that
\begin{align}
\e^{\xi^{\frac{1}{2}}\sigma_3}
\e^{-\frac{\pi\mathrm{i}}{2}\sigma_3}
\widetilde W(z)^{-\sigma_3}
\varphi(z)^{-N\sigma_3}
=
\e^{-\frac{\pi\mathrm{i}}{2}\sigma_3}
\widetilde W(z)^{-\sigma_3}
\e^{-N\log\varphi(0)\sigma_3}
=
\mathcal D_0(z)^{-1}.
\end{align}
Furthermore, by the definition of \(\mathcal E\),
\begin{equation}
\xi^{\frac14\sigma_3}
\Psi(\xi,2N^2t)
=
\mathcal E(\xi,2N^2t)
\left(
\frac{\textbf{I}+\mathrm{i}\sigma_1}{\sqrt2}
\right)
\e^{\xi^{\frac{1}{2}}\sigma_3}.
\end{equation}
Combining these identities with the expression for
\(P^{(0)}(z)\) in \eqref{eq:P0-cited} and the definition
of \(J_{\widetilde R}\) on \(\partial U(0,r)\), and with \cite[$(83)$]{chen2019riemann}, we obtain
\eqref{smallneighbourhood}.
\end{proof}

We now return to the jump perturbation
\[
\Delta_{\widetilde R}(z;x)
=
J_{\widetilde R}(z;x,a)-\textbf{I}.
\]
By Proposition~\ref{aboutJR}, the jump on $\partial U(0,r)$ is
expressed in terms of the model solution evaluated at
\[
\xi=N^2f_0(z),
\qquad
x=2N^2t.
\]
The uniform large-$\xi$ estimates established above, together with
their $x$-derivative counterparts, therefore yield corresponding
uniform bounds for $\Delta_{\widetilde R}$ and its $x$-derivatives.
Combining these with the standard matching estimate on
$\partial U(1,r)$ and the exponentially small jumps on
$\Sigma_{\mathrm{exp}}$, we obtain the following lemma.

\begin{lem}\label{lem:derivative-Delta-Rtilde}
Let \(a>-1\) be fixed. Abbreviate
\[
\Delta_{\widetilde R}(z;x)
:=
J_{\widetilde R}(z;x,a)-\textbf{I},
\qquad
z\in\Sigma_{\widetilde R}
\]
with $J_{\widetilde R}(z;x,a)\equiv J_{\widetilde{R}}(z)$ defined in \eqref{eq:JR_definition}. Then, for every fixed $j\in\mathbb{Z}_{\geq 0}$, there exists a constant
$x_0=x_0(a,j)\in(0,1)$ such that
\begin{equation}\label{eq:Delta-Rtilde-L2-Linfty}
\left\|
\frac{\partial^j\Delta_{\widetilde R}(\cdot;x)}
{\partial x^j}
\right\|_{
L^2(\Sigma_{\widetilde R})
\cap
L^\infty(\Sigma_{\widetilde R})
}
=
\mathcal O_{a,j}
\left(
\frac{\rho_{a,j}(x)}{N}
\left(
1+\frac1N
\right)
\right)
\end{equation}
holds uniformly for \(0<x\leq x_0\) and $N\in\mathbb{N}$. Here,  $\rho_{a,k}(x)$ is defined in \eqref{definieofpho}.
\end{lem}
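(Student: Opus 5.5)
The plan is to split $\Sigma_{\widetilde R}$ into its three pieces and handle each separately. On $\Sigma_{S,1}\cup\Sigma_{S,3}$ outside the disks, and on $\partial U(1,r)$, the jump is independent of $x$ by \eqref{matcheasy}. For $j\ge1$ the $x$-derivatives therefore vanish identically. For $j=0$ these pieces contribute $\mathcal O(N^{-1})$ and $\mathcal O(\e^{-cN})$ in $L^\infty$. Since $\Sigma_{\widetilde R}$ has finite length apart from the unbounded lens pieces, where the exponential decay in $|z|$ gives integrability, the same bounds hold in $L^2$. This is dominated by $\rho_{a,0}(x)/N$, since $\rho_{a,j}\ge 1$. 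The whole difficulty therefore sits on $\partial U(0,r)$.

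On $\partial U(0,r)$ I will use the representation \eqref{smallneighbourhood} of Proposition~\ref{aboutJR}. Here $x=2N^2t$, and differentiation is in $x$, the variable of the model problem. The prefactors $P^{(\infty)}(z)$, $\mathcal D_0(z)$ and $\frac{\mathbf I+\i\sigma_1}{\sqrt2}$ are independent of $x$. The factor $\mathcal D_0(z)$ contains $(-1)^{N\sigma_3}$, which is harmless, and $\widetilde W(z)^{\sigma_3}$, which is bounded above and below on $\partial U(0,r)$. Thus $\partial_x^j\Delta_{\widetilde R}$ equals a fixed bounded conjugation of $\partial_x^j\big(\mathcal E(\xi,x)-\mathbf I'\big)$, where $\mathbf I'$ is the constant that makes the conjugated identity equal $\mathbf I$ (cf. \cite[(83)]{chen2019riemann}), evaluated at $\xi=N^2f_0(z)$.

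By \eqref{eq:def-widehat-Psi} and \eqref{defforEP}, $\mathcal E(\xi,x)=\xi^{\frac14\sigma_3}\widehat\Psi(\xi;x,a)\xi^{-\frac14\sigma_3}$, with $\widehat\Psi=\mathbf I+C_1(x)/\xi+\mathcal R_2$. Conjugation by $\xi^{\pm\frac14\sigma_3}$ multiplies off-diagonal entries by $\xi^{\pm1/2}$. Hence
\[
\mathcal E-\mathbf I=\xi^{\frac14\sigma_3}\Big(\frac{C_1(x)}{\xi}+\mathcal R_2(\xi;x,a)\Big)\xi^{-\frac14\sigma_3}=\mathcal O\Big(\frac{\|C_1^{(\cdot)}\|}{|\xi|^{1/2}}+|\xi|^{1/2}\|\mathcal R_2^{(\cdot)}\|\Big),
\]
and the same holds after applying $\partial_x^j$. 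By \eqref{esonf0z}, $|f_0(z)|\asymp r$ on $\partial U(0,r)$, so $|\xi|\asymp N^2$ and $|\xi|^{-1/2}\asymp N^{-1}$. Since $\xi$ may lie on the jump rays, the required uniformity in $\arg\xi$ is supplied by Lemma~\ref{lem:uniform-R2-fixed-x} and Lemma~\ref{lem:uniform-R2-zero-order}. These give $\partial_x^j\mathcal R_2=\mathcal O(\rho_{a,j}(x)|\xi|^{-2})$ uniformly in the cut plane for $|\xi|\ge\xi_0$, so the $\mathcal R_2$ term is $\mathcal O(\rho_{a,j}(x)N^{-3})$. The $C_1$ term needs $\|C_1^{(j)}(x)\|=\mathcal O(\rho_{a,j}(x))$. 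By \eqref{explicitforC1}, this follows from Lemma~\ref{asympoticsforv(x)forsmallx} for $v$ and from Proposition~\ref{prop:q-t-derivative-estimates} for $q$ and $\mathfrak t$. For $\mathfrak t$ with $j\ge1$ one uses $\mathfrak t^{(j)}=(\mathfrak t')^{(j-1)}=\mathcal O(\rho_{a,j})$, and for $j=0$ one integrates $\mathfrak t'$ from $\mathfrak t(0)$, which is bounded since $\rho_{a,1}$ is integrable near $0$. Together these give $\mathcal O(\rho_{a,j}(x)/N)$ plus an $\mathcal O(\rho_{a,j}(x)/N^3)$ remainder, which is within the stated $\frac{\rho_{a,j}}{N}(1+\frac1N)$.

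The main obstacle is the range $|\xi|\ge\xi_0$ required by the uniform lemmas. This holds once $N^2 c r\ge\xi_0$, i.e. for $N$ large depending only on $a,j$ through $x_0(a,j)$. For the finitely many small $N$ the statement ``uniformly in $N$'' must be absorbed into the constant, which requires boundedness of $\partial_x^j\Psi$ for $|\xi|$ in a compact annulus uniformly in $x\in(0,x_0]$. I would try to obtain this by integrating the $x$-Lax equation \eqref{eq:x-Lax-widehat-Psi} from $x_0$ down to $x$ at fixed $\xi$, using the integrability $\int_0^{x_0}\|C_1'\|<\infty$ and Gronwall, exactly as in Lemma~\ref{lem:uniform-R2-zero-order}. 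Alternatively, I would simply restrict to $N\ge N_0(a)$, as in Proposition~\ref{representation}. Finally, since $\partial U(0,r)$ has finite length, the $L^\infty$ bound implies the $L^2$ bound, which completes \eqref{eq:Delta-Rtilde-L2-Linfty}.
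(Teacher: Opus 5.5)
Your proposal is correct and follows the paper's route: the same three-way split of $\Sigma_{\widetilde R}$, the representation \eqref{smallneighbourhood} with $\widehat\Psi=\mathbf I+C_1(x)/\xi+\mathcal R_2$, the derivative bounds from Lemma~\ref{asympoticsforv(x)forsmallx}, Proposition~\ref{prop:q-t-derivative-estimates} and Lemma~\ref{lem:uniform-R2-zero-order}, and absorbing the finitely many small $N$ into the constant. Your explicit tracking of the $\xi^{\pm\frac14\sigma_3}$ conjugation is more careful than the paper. The paper hides this factor in $E_0(z)$ and treats $E_0$ as bounded uniformly in $N$, which gives it $\mathcal O(\rho_{a,j}(x)/N^2)$ on $\partial U(0,r)$; but $E_0$ carries $(N^2f_0(z))^{\sigma_3/4}$ (cf.\ the $(2N)^{\frac12\sigma_3}$ in \eqref{E0}), so the correct rate there is your $\mathcal O(\rho_{a,j}(x)/N)$, which is still exactly what the lemma asserts.
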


\begin{proof}
Recall that the jump contour for \(\widetilde R\) can be
decomposed as
\[
\Sigma_{\widetilde R}
=
\partial U(0,r)
\cup
\partial U(1,r)
\cup
\Sigma_{\mathrm{exp}};\ \ \ \ \ \Sigma_{\mathrm{exp}}
:=
(\Sigma_{S,1}\cup\Sigma_{S,3})
\setminus
\left(
U(0,r)\cup U(1,r)
\right).
\]
The contour \(\Sigma_{\widetilde R}\) has finite length and is
independent of \(N\) and \(x\). We first consider the jump on \(\partial U(0,r)\).
Let
\[
\xi=N^2f_0(z),
\qquad
z\in\partial U(0,r),
\]
where \(f_0\) is the locally analytic mapping defined in \eqref{defforf0}.
Since \(\partial U(0,r)\) is fixed and \(f_0\) does not
vanish on this circle by (\ref{esonf0z}), there exist constants
\(c_0,C_0>0\) such that
\begin{equation}\label{eq:xi-comparable-N2}
c_0N^2
\leq
|\xi|
\leq
C_0N^2,
\qquad
z\in\partial U(0,r).
\end{equation}
By the definition of the normalized model function in \eqref{eq:def-R2}, we have
\[
\widehat{\Psi}(\xi;x,a)
=
\textbf{I}+\frac{C_1(x)}{\xi}
+\mathcal R_2(\xi;x,a),
\]
and substituting that into \eqref{smallneighbourhood}, while using also \cite[$(83)$]{chen2019riemann}, we obtain
\begin{align}
	\Delta_{\widetilde{R}}(z;x)=&\,\,E_0(z)\Bigg\{\textbf{I}+\frac{C_1(x)}{\xi}+\mathcal{R}_2(\xi;x,a)\Bigg\}E_0(z)^{-1}-\textbf{I}\nonumber\\
	=&\,\,E_0(z)\Bigg\{\frac{\sigma_3}{\xi}q(x)-\frac{\i E_{12}}{\xi}v(x)+\frac{\i E_{21}}{\xi}\mathfrak{t}(x)+\mathcal{R}_2(\xi;x,a)\Bigg\}E_0(z)^{-1},
%
%
%
\label{eq:Delta-Rtilde-expansion}
\end{align}
where $\xi=N^2f_0(z)$ and $x=2N^2t$, as well as
\[
E_{12}
=
\begin{pmatrix}
0&1\\
0&0
\end{pmatrix},
\qquad
E_{21}
=
\begin{pmatrix}
0&0\\
1&0
\end{pmatrix},
\]
and \(E_0(z)\) is analytic and invertible in a
neighborhood of \(\partial U(0,r)\) (see \cite[Equation~(83)]{chen2019riemann}). In particular,
\begin{equation}\label{eq:E0-bounded}
\sup_{z\in\partial U(0,r)}
\left(
\|E_0(z)\|+\|E_0(z)^{-1}\|
\right)
<\infty.
\end{equation}
Since \(E_0\), \(f_0\), and \(\xi\) are independent of
\(x\), differentiating
\eqref{eq:Delta-Rtilde-expansion} with
respect to \(x\) gives
\begin{equation*}
\frac{\partial^j}
{\partial x^j}\Delta_{\widetilde R}(z;x)
=
E_0(z)\bigg\{\frac{\sigma_3}{\xi}q^{(j)}(x)-\frac{\i E_{12}}{\xi}v^{(j)}(x)+\frac{\i E_{21}}{\xi}\mathfrak{t}^{(j)}(x)+\frac{\partial^j}{\partial x^j}\mathcal{R}_2(\xi;x,a)\bigg\}E_0(z)^{-1}.
%
%
%
%
%
%
\end{equation*}
where $v(x)$, $q(x)$, and $\mathfrak{t}(x)$ are as in
\eqref{36}, \eqref{eq:def-q}, and \eqref{eq:def-t-prime},
respectively.
Hence, by Lemma \ref{asympoticsforv(x)forsmallx} and Proposition \ref{prop:q-t-derivative-estimates}, we have for $j\in \mathbb{Z}_{\geq 0}$,
\begin{equation}\label{eq:C1-entry-derivatives}
|q^{(j)}(x)|
+
|v^{(j)}(x)|
+
|\mathfrak t^{(j)}(x)|
=
\mathcal O_{a,j,x_0}
\bigl(
\rho_{a,j}(x)
\bigr),\ \ \ \forall\,0<x\leq x_0.
\end{equation}
Moreover, by \eqref{generalk},
\begin{equation}\label{eq:R2-derivative-used}
\frac{\partial^j}{\partial x^j}\mathcal R_2(\xi;x,a)
=
\mathcal O_{a,j}
\left(
\frac{\rho_{a,j}(x)}{|\xi|^2}
\right),
\qquad
|\xi|\geq \xi_0,\quad 0<x\leq x_0.
\end{equation}
Combining this with \eqref{eq:xi-comparable-N2},
we see that \eqref{eq:R2-derivative-used} applies directly for all
sufficiently large $N$. For the finitely many remaining values of
$N$, the corresponding values of $\xi=N^2f_0(z)$,
$z\in\partial U(0,r)$, range over a fixed compact set bounded away
from the origin. The corresponding bounds can therefore be absorbed
into the constant in \eqref{eq:R2-derivative-used}. Consequently,
\eqref{eq:R2-derivative-used} remains valid for
\[
\xi=N^2f_0(z),
\qquad
z\in\partial U(0,r),
\qquad
N\in\mathbb N,
\]
uniformly for $0<x\leq x_0$. Combining
\eqref{eq:xi-comparable-N2},
\eqref{eq:E0-bounded},
\eqref{eq:C1-entry-derivatives}, and
\eqref{eq:R2-derivative-used}, we obtain
\begin{align}
\left\|
\frac{\partial^j\Delta_{\widetilde R}(z;x)}
{\partial x^j}
\right\|_{L^\infty(\partial U(0,r))}
&=\mathcal O_{a,j,x_0}
\left(
\frac{\rho_{a,j}(x)}{N^2}
+
\frac{\rho_{a,j}(x)}{N^4}
\right)=
\mathcal O_{a,j,x_0}
\left(
\frac{\rho_{a,j}(x)}{N^2}
\left(
1+\frac{1}{N^2}
\right)
\right).
\label{eq:Delta-bound-origin-circle}
\end{align}
We next consider \(\partial U(1,r)\). The local
parametrix at \(z=1\) is independent of \(x\). Therefore,
for \(j\geq1\),
\[
\frac{\partial^j\Delta_{\widetilde R}(z;x)}
{\partial x^j}
=0,
\qquad
z\in\partial U(1,r).
\]
For \(j=0\), the standard matching condition in \eqref{matcheasy} gives
\[
\Delta_{\widetilde R}(z;x)
=
\mathcal O\left(\frac1N\right),
\qquad
z\in\partial U(1,r).
\]
Since \(\rho_{a,0}(x)\geq1\), this estimate is also
consistent with the right-hand side of
\eqref{eq:Delta-Rtilde-L2-Linfty}.
Finally, on \(\Sigma_{\mathrm{exp}}\), the jump matrix is
independent of \(x\), and
\[
\Delta_{\widetilde R}(z;x)
=
\mathcal O\big(\e^{-cN}\big)
\]
for some \(c>0\). Consequently,
\[
\frac{\partial^j\Delta_{\widetilde R}(z;x)}
{\partial x^j}
=0,
\qquad
j\geq1,
\quad
z\in\Sigma_{\mathrm{exp}},
\]
whereas, for \(j=0\),
\[
\|\Delta_{\widetilde R}(\cdot;x)\|_
{L^\infty(\Sigma_{\mathrm{exp}})}
=
\mathcal O\big(\e^{-cN}\big)
=
\mathcal O\big(N^{-1}\big).
\]
Combining the estimates on the three components of
\(\Sigma_{\widetilde R}\), we conclude that
\begin{equation}\label{eq:Delta-Rtilde-Linfty}
\left\|
\frac{\partial^j\Delta_{\widetilde R}(\cdot;x)}
{\partial x^j}
\right\|_{L^\infty(\Sigma_{\widetilde R})}
=
\mathcal O_{a,j,x_0}
\left(
\frac{\rho_{a,j}(x)}{N}
\left(
1+\frac1N
\right)
\right).
\end{equation}
Since \(\Sigma_{\widetilde R}\) has finite length, and is 
independent of \(N\) and \(x\), estimate \eqref{eq:Delta-Rtilde-Linfty} implies the analogous estimate in $L^2(\Sigma_{\widetilde{R}};\mathbb{C}^{2\times 2})$ and thus \eqref{eq:Delta-Rtilde-L2-Linfty} follows. Now, choose $x_0=x_0(a,j)\in(0,1)$ sufficiently small so that
all the small-$x$ estimates used above hold uniformly for
$0<x\leq x_0$. Since $x_0$ depends only on $a$ and $j$, all
constants depending on $a$, $j$, and $x_0$ may be absorbed into
constants depending only on $a$ and $j$. Thus, we obtain the conclusion in this lemma.
\end{proof}
Lemma~\ref{lem:derivative-Delta-Rtilde} provides the required $L^2$- and $L^\infty$-bounds
for the jump perturbation $\Delta_{\widetilde R}$ and its
$x$-derivatives. We now use the standard singular-integral
formulation of the small-norm Riemann--Hilbert problem to transfer
these estimates to the boundary value $\widetilde R_-$. In
particular, the zero-order estimate in Lemma~\ref{lem:derivative-Delta-Rtilde} implies that,
for $N$ sufficiently large, the associated singular-integral
operator is uniformly invertible. Differentiating the resulting
integral equation with respect to $x$ and arguing inductively then
gives the corresponding estimates for the $x$-derivatives of
$\widetilde R_-$.

\begin{lem}\label{lem:derivative-Rtilde-minus}
Let \(a>-1\). Let \(\widetilde R(z;x,a)\equiv\widetilde R(z;x)\) be the
solution of RHP~\ref{modelproblem2}, and let
\(\widetilde R_\pm(\cdot;x)\) denote its non-tangential boundary values
on the oriented contour \(\Sigma_{\widetilde R}\).
Then there exists \(N_0=N_0(a)\in\mathbb N\) such that, for any
\(j\in\mathbb Z_{\geq0}\), there exists
\(x_0=x_0(a,j)\in(0,1)\) for which the following estimates hold
uniformly for \(0<x\leq x_0\) and \(N\geq N_0\).
For $j=0$,
\begin{equation}\label{eq:Rtilde-minus-zero}
\left\|
\widetilde R_-(\cdot;x,a)-\textbf{I}
\right\|_{L^2(\Sigma_{\widetilde R})}
=
\mathcal O_{a}
\left(
\frac{\rho_{a,0}(x)}{N}
\left(
1+\frac1N
\right)
\right),
\end{equation}
and for every $j\in\mathbb{N}$,
\begin{equation}\label{eq:Rtilde-minus-derivative}
\left\|
\frac{\partial^j
\widetilde R_-(\cdot;x,a)}
{\partial x^j}
\right\|_{L^2(\Sigma_{\widetilde R})}
=
\mathcal O_{a,j}
\left(
\frac{\rho_{a,j}(x)}{N}
\left(
1+\frac1N
\right)
\right).
\end{equation}
Here,  $\rho_{a,k}(x)$ is defined in (\ref{definieofpho}).
\end{lem}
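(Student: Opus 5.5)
The plan is to follow the same singular-integral scheme as in Lemma~\ref{integralrepresentation} and Lemma~\ref{estimateforCJn}, now on the fixed contour \(\Sigma_{\widetilde R}\). Let \(\mathcal C_-\) denote the boundary Cauchy operator on \(L^2(\Sigma_{\widetilde R};\mathbb C^{2\times2})\), and set \(\mathcal C_{\widetilde\Delta}[f]:=\mathcal C_-[f\Delta_{\widetilde R}]\). The boundary value then satisfies
\[
\widetilde R_-=\mathbf I+\mathcal C_{\widetilde\Delta}[\widetilde R_-].
\]
The operator norm obeys \(\|\mathcal C_{\widetilde\Delta}\|\le\|\mathcal C_-\|\,\|\Delta_{\widetilde R}\|_{L^\infty}\). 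By Lemma~\ref{lem:derivative-Delta-Rtilde} with \(j=0\), this is \(\mathcal O_a(\rho_{a,0}(x)/N)\). Since \(a+1>0\), the function \(\rho_{a,0}\) is bounded on \((0,x_0]\), so we can choose \(N_0=N_0(a)\) with \(\|\mathcal C_{\widetilde\Delta}\|\le\frac12\) for all \(N\ge N_0\) and \(0<x\le x_0\).

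With that choice, \(I-\mathcal C_{\widetilde\Delta}\) is invertible with inverse norm at most \(2\). Writing
\[
\widetilde R_--\mathbf I=(I-\mathcal C_{\widetilde\Delta})^{-1}\mathcal C_-[\Delta_{\widetilde R}]
\]
gives \(\|\widetilde R_--\mathbf I\|_{L^2}\le 2\|\mathcal C_-\|\,\|\Delta_{\widetilde R}\|_{L^2}\). Estimate \eqref{eq:Rtilde-minus-zero} then follows from \eqref{eq:Delta-Rtilde-L2-Linfty} with \(j=0\).

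For the derivatives, first justify that \(x\mapsto\widetilde R_-\in L^2\) is \(C^\infty\). The Neumann series \(\sum_n\mathcal C_{\widetilde\Delta}^n[\mathbf I]\) converges uniformly, and its termwise \(x\)-derivatives are controlled exactly as in Lemma~\ref{estimateforCJn}. Here one uses that \(\Delta_{\widetilde R}\) is smooth in \(x\), because \(C_1\) and \(\mathcal R_2\) are. One can then differentiate the integral equation \(j\) times with Leibniz's rule:
\[
(I-\mathcal C_{\widetilde\Delta})\,\partial_x^j\widetilde R_-=\sum_{\ell=1}^{j}\binom{j}{\ell}\,\mathcal C_-\big[\partial_x^{j-\ell}\widetilde R_-\,\partial_x^{\ell}\Delta_{\widetilde R}\big].
\]
The term with \(\ell=j\) involves \(\widetilde R_-=\mathbf I+(\widetilde R_--\mathbf I)\), so we split it as \(\mathcal C_-[\partial_x^j\Delta_{\widetilde R}]+\mathcal C_-[(\widetilde R_--\mathbf I)\partial_x^j\Delta_{\widetilde R}]\).

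The argument proceeds by induction on \(j\), assuming \eqref{eq:Rtilde-minus-derivative} for all orders below \(j\). Each term on the right is bounded using \(\|\mathcal C_-[fg]\|_{L^2}\le c\,\|f\|_{L^2}\|g\|_{L^\infty}\). For \(1\le\ell\le j-1\), the induction hypothesis and \eqref{eq:Delta-Rtilde-L2-Linfty} bound the term by
\[
\frac{\rho_{a,j-\ell}(x)\,\rho_{a,\ell}(x)}{N^2}\Big(1+\frac1N\Big)^2.
\]
By Remark~\ref{rem:product-estimate-rho2}, \eqref{eq:product-estimate-rho2}, this is \(\mathcal O(\rho_{a,j}(x)/N)\Big(1+\frac1N\Big)\), and the extra factor \(1/N\) is simply discarded. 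The term \(\mathcal C_-[\partial_x^j\Delta_{\widetilde R}]\) is \(\mathcal O(\rho_{a,j}/N)\) directly. The cross term is \(\mathcal O(\rho_{a,0}\rho_{a,j}/N^2)\), which is \(\mathcal O(\rho_{a,j}/N)\) since \(\rho_{a,0}\) is bounded. Inverting \(I-\mathcal C_{\widetilde\Delta}\) with norm at most \(2\) then closes the induction.

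The main obstacle is the multiplicativity of the \(\rho\)-bounds. For \(-1<a\le 0\), products like \(\rho_{a,p}\rho_{a,q}\) blow up faster than either factor, and only \eqref{eq:product-estimate-rho2}, not the sharper \eqref{eq:product-estimate-rho}, is available. One has to check that every term in the Leibniz expansion has total derivative order \(\ell+(j-\ell)=j\), so it lands in \(\rho_{a,j}\) rather than something worse. A second point to check is that \(x_0(a,j)\) can be taken as the minimum of the thresholds from Lemma~\ref{lem:derivative-Delta-Rtilde} up to order \(j\). The choice of \(N_0\) must be independent of \(j\), and it is, because it only uses the \(j=0\) bound together with boundedness of \(\rho_{a,0}\). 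Throughout we use that the \(L^\infty\)- and \(L^2\)-norms are interchangeable up to constants, since \(\Sigma_{\widetilde R}\) has finite length independent of \(N\) and \(x\).
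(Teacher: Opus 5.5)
Your proposal is correct and is essentially the paper's proof: you use the same operator $f\mapsto\mathcal C_-[f\Delta_{\widetilde R}]$ with norm at most $\tfrac12$ for $N\ge N_0(a)$ (via boundedness of $\rho_{a,0}$), the same $j=0$ bound from $(I-\mathcal C_{\widetilde\Delta})(\widetilde R_--\mathbf I)=\mathcal C_-[\Delta_{\widetilde R}]$, and the same Leibniz-differentiated integral equation closed by induction with \eqref{eq:product-estimate-rho2}; your Neumann-series justification of $x$-differentiability is somewhat more explicit than the paper's one-line assertion. One small slip in your last sentence: finite length of $\Sigma_{\widetilde R}$ only gives $\|\cdot\|_{L^2}\le C\|\cdot\|_{L^\infty}$, not the converse, but your argument never needs the converse, since Lemma~\ref{lem:derivative-Delta-Rtilde} supplies the $L^\infty$ bounds on $\partial_x^\ell\Delta_{\widetilde R}$ directly.
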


\begin{proof}
For convenience, abbreviate
\begin{equation}\label{eq:def-epsilon-N}
\varepsilon_N
:=
\frac1N\left(1+\frac1N\right).
\end{equation}
For \(j=0\), the quantity that is small in
\(L^2(\Sigma_{\widetilde R};\mathbb{C}^{2\times 2})\) is
\(\widetilde R_- - \textbf{I}\), rather than
\(\widetilde R_-\) itself. For \(j\geq1\), the identity
matrix disappears after differentiation, and therefore
one may estimate
\begin{equation*}
	\frac{\partial^j}{\partial x^j}\widetilde R_- (\xi;x,a),\ \ \ \ \xi\in\Sigma_{\widetilde{R}}
\end{equation*}
directly.  Let \(C_-\) denote the minus-side Cauchy projection on
\(\Sigma_{\widetilde R}\), defined analogously as in
\eqref{eq:Cauchy-projections}. Define the singular integral
operator
\begin{equation}\label{eq:def-C-Delta-Rtilde}
\mathcal C_{\Delta_{\widetilde R}}f
:=
C_-\!\left(f\Delta_{\widetilde R}\right),
\qquad
f\in L^2\!\left(
\Sigma_{\widetilde R};\mathbb C^{2\times2}
\right).
\end{equation}
Since \(C_-\) is bounded on
\(L^2(\Sigma_{\widetilde R};\mathbb C^{2\times2})\), it follows from
Lemma~\ref{lem:derivative-Delta-Rtilde} that
\begin{equation}\label{eq:C-Delta-small}
\big\|\mathcal C_{\Delta_{\widetilde R}}\big\|_{L^2\to L^2}
\leq
\|C_-\|_{L^2\to L^2}
\|\Delta_{\widetilde R}\|_{L^\infty(\Sigma_{\widetilde R})}
=
\mathcal O_{a}\big(\rho_{a,0}(x)\epsilon_N\big),
\end{equation}
uniformly for \(0<x\leq x_0\), for some
\(x_0=x_0(a)\in(0,1)\).
Since $\sup_{0<x\leq x_0}\rho_{a,0}(x)<\infty$, there exists $N_0=N_{0}(a)>0$ such that
\begin{equation*}
	\sup_{0<x\leq x_0}\big\|\mathcal{C}_{\Delta_{\widetilde{R}}}\big\|_{L^2\rightarrow L^2}\leq\frac{1}{2}\ \ \ \ \forall\,N\geq N_0.
\end{equation*}
Consequently, $I-\mathcal{C}_{\Delta_{\widetilde{R}}}$ is invertible on $L^2(\Sigma_{\widetilde{R}};\mathbb{C}^{2\times 2})$ for large $N\geq N_0$, with the operator norm bound
\begin{equation}\label{eq:inverse-C-Delta}
\left\|
\left(
\textbf{I}-\mathcal C_{\Delta_{\widetilde R}}
\right)^{-1}
\right\|
\leq2,
\qquad
\forall\,N\geq N_0,
\end{equation}
uniformly for \(0<x\leq x_0\). By general theory, compare also \eqref{z1}, the solution of the RHP for \(\widetilde R\)
satisfies the singular integral equation
\begin{equation}\label{eq:Rtilde-minus-SIE}
\widetilde R_-
=
\textbf{I}+
C_-\left(
\widetilde R_-\Delta_{\widetilde R}
\right).
\end{equation}
Abbreviating $\eta(\xi;x):=\widetilde{R}_-(\xi;x)-\textbf{I}$ and substituting \(\widetilde R_-=\textbf{I}+\eta\) into
\eqref{eq:Rtilde-minus-SIE}, we obtain
\[
\eta
=
C_-\left(\Delta_{\widetilde R}\right)
+
C_-\left(
\eta\Delta_{\widetilde R}
\right)=C_-\left(\Delta_{\widetilde R}\right)+\mathcal{C}_{\Delta_{\widetilde{R}}}\eta
\]
or equivalently,
\begin{equation}\label{eq:eta-SIE}
\left(
\textbf{I}-\mathcal C_{\Delta_{\widetilde R}}
\right)\eta
=
C_-\left(\Delta_{\widetilde R}\right)
\end{equation}
Using \eqref{eq:inverse-C-Delta} and
Lemma \ref{lem:derivative-Delta-Rtilde}, we find
\begin{align}
\|\eta(\cdot;x)\|_{L^2(\Sigma_{\widetilde R})}
\leq
2\|C_-\|\,
\|\Delta_{\widetilde R}\|_
{L^2(\Sigma_{\widetilde R})}
=
\mathcal O_{a}
\left(
\rho_{a,0}(x)\varepsilon_N
\right)
\label{eq:eta-L2-estimate}
\end{align}
uniformly for $0<x\leq x_{0}$.
This proves \eqref{eq:Rtilde-minus-zero}. We next prove \eqref{eq:Rtilde-minus-derivative} by
induction on \(j\in\mathbb{N}\). Since
\(\Delta_{\widetilde R}(\cdot,x)\) is smooth with respect
to \(x\) and
\(\textbf{I}-\mathcal C_{\Delta_{\widetilde R}}\) is uniformly
invertible, the singular integral equation
\eqref{eq:Rtilde-minus-SIE} may be differentiated with
respect to \(x\). Differentiating
\eqref{eq:Rtilde-minus-SIE} while using the Leibniz rule
gives
\begin{equation}\label{eq:differentiate-Rtilde-SIE}
\frac{\partial^j\widetilde R_-}{\partial x^j}
=
\sum_{\ell=0}^{j}
\binom{j}{\ell}\,
C_-\left(
\frac{\partial^\ell\widetilde R_-}{\partial x^\ell}
\frac{\partial^{j-\ell}\Delta_{\widetilde R}}
{\partial x^{j-\ell}}
\right).
\end{equation}
The summand in \eqref{eq:differentiate-Rtilde-SIE} corresponding to \(\ell=j\) equals
\[
\mathcal C_{\Delta_{\widetilde R}}
\left(
\frac{\partial^j\widetilde R_-}{\partial x^j}
\right),
\]
and therefore, with the understanding $\sum_{\ell=1}^0\ldots=0$,
\begin{align}
&
\left(
\textbf{I}-\mathcal C_{\Delta_{\widetilde R}}
\right)
\frac{\partial^j\widetilde R_-}{\partial x^j}=
C_-\left(
\widetilde R_-
\frac{\partial^j\Delta_{\widetilde R}}
{\partial x^j}
\right)
+
\sum_{\ell=1}^{j-1}
\binom{j}{\ell}
C_-\left(
\frac{\partial^\ell\widetilde R_-}{\partial x^\ell}
\frac{\partial^{j-\ell}\Delta_{\widetilde R}}
{\partial x^{j-\ell}}
\right).
\label{eq:Rtilde-derivative-SIE}
\end{align}
We begin by estimating the first term on the right-hand side
of \eqref{eq:Rtilde-derivative-SIE}. Since
\(\widetilde R_-=\textbf{I}+\eta\),
\begin{align}
&
\left\|
C_-\left(
\widetilde R_-
\frac{\partial^j\Delta_{\widetilde R}}
{\partial x^j}
\right)
\right\|_{L^2}
\leq
\|C_-\|\,
\left\|
\frac{\partial^j\Delta_{\widetilde R}}
{\partial x^j}
\right\|_{L^2}+
\|C_-\|\,
\|\eta\|_{L^2}
\left\|
\frac{\partial^j\Delta_{\widetilde R}}
{\partial x^j}
\right\|_{L^\infty}.
\label{eq:first-Rtilde-term}
\end{align}
By Lemma \ref{lem:derivative-Delta-Rtilde} and
\eqref{eq:eta-L2-estimate}, together with the product
estimate
\begin{equation}\label{eq:rho-product-in-Rtilde}
\rho_{a,p}(x)\rho_{a,q}(x)
=
\mathcal O_{a,p,q,x_0}
\bigl(
\rho_{a,p+q}(x)
\bigr),
\qquad\forall\,
0<x\leq x_0,
\end{equation}
we obtain
\begin{align}
&
\left\|
C_-\left(
\widetilde R_-
\frac{\partial^j\Delta_{\widetilde R}}
{\partial x^j}
\right)
\right\|_{L^2}=
\mathcal O_{a,j,x_0}
\left(
\rho_{a,j}(x)\varepsilon_N
\right)
+
\mathcal O_{a,j,x_0}
\left(
\rho_{a,0}(x)\rho_{a,j}(x)
\varepsilon_N^2
\right)=
\mathcal O_{a,j,x_0}
\left(
\rho_{a,j}(x)\varepsilon_N
\right).
\label{eq:first-Rtilde-term-final}
\end{align}
Here we have also used
\(\varepsilon_N^2=\mathcal O(\varepsilon_N)\). For \(j=1\), equations
\eqref{eq:Rtilde-derivative-SIE},
\eqref{eq:first-Rtilde-term-final}, and
\eqref{eq:inverse-C-Delta} immediately imply that
\[
\left\|
\frac{\partial\widetilde R_-}{\partial x}
\right\|_{L^2}
=
\mathcal O_{a,x_0}
\left(
\rho_{a,1}(x)\varepsilon_N
\right).
\]
This establishes the initial step of the induction. Now let \(j\geq2\), and suppose that, for every
\(1\leq\ell\leq j-1\),
\begin{equation}\label{eq:Rtilde-induction-hypothesis}
\left\|
\frac{\partial^\ell\widetilde R_-}
{\partial x^\ell}
\right\|_{L^2(\Sigma_{\widetilde R})}
=
\mathcal O_{a,\ell,x_0}
\left(
\rho_{a,\ell}(x)\varepsilon_N
\right).
\end{equation}
For \(1\leq\ell\leq j-1\), we have by \eqref{eq:rho-product-in-Rtilde} and \eqref{eq:Rtilde-induction-hypothesis},
\begin{align}
&
\left\|
C_-\left(
\frac{\partial^\ell\widetilde R_-}{\partial x^\ell}
\frac{\partial^{j-\ell}\Delta_{\widetilde R}}
{\partial x^{j-\ell}}
\right)
\right\|_{L^2}
\leq
\|C_-\|\,
\left\|
\frac{\partial^\ell\widetilde R_-}{\partial x^\ell}
\right\|_{L^2}
\left\|
\frac{\partial^{j-\ell}\Delta_{\widetilde R}}
{\partial x^{j-\ell}}
\right\|_{L^\infty}
=
\mathcal O_{a,j,x_0}
\left(
\rho_{a,j}(x)\varepsilon_N
\right).
\label{eq:intermediate-Rtilde-terms}
\end{align}
It follows from
\eqref{eq:first-Rtilde-term-final} and
\eqref{eq:intermediate-Rtilde-terms} that the right-hand
side of \eqref{eq:Rtilde-derivative-SIE} is of order
\[
\mathcal O_{a,j,x_0}
\left(
\rho_{a,j}(x)\varepsilon_N
\right).
\]
Applying the uniformly
bounded inverse in \eqref{eq:inverse-C-Delta}, we conclude
that
\[
\left\|
\frac{\partial^j\widetilde R_-(\cdot;x,a)}
{\partial x^j}
\right\|_{L^2(\Sigma_{\widetilde R})}
=
\mathcal O_{a,j,x_0}
\left(
\rho_{a,j}(x)\varepsilon_N
\right),
\]
which is precisely \eqref{eq:Rtilde-minus-derivative}, i.e. the induction is complete.
Now, choose $x_0=x_0(a,j)\in(0,1)$ sufficiently small so that
all the small-$x$ estimates used above, including those for
derivative orders up to $j$, hold uniformly for $0<x\leq x_0$.
Since $x_0$ depends only on $a$ and $j$, all constants depending
on $a$, $j$, and $x_0$ may be absorbed into constants depending
only on $a$ and $j$. Hence the conclusion of the lemma follows.
\end{proof}

We next transfer the $L^2$-estimates for the boundary values
$\widetilde R_-$ obtained in
Lemma~\ref{lem:derivative-Rtilde-minus} to pointwise estimates
for $\widetilde R$ at the origin. Since $0$ stays a positive
distance away from $\Sigma_{\widetilde R}$, the Cauchy
representation of the solution, together with
Lemma~\ref{lem:derivative-Delta-Rtilde} and
Lemma~\ref{lem:derivative-Rtilde-minus}, yields the corresponding
estimates for $\widetilde R(0;x,a)$ and its $x$-derivatives.

\begin{lem}\label{lem:derivative-Rtilde-at-zero}
Let \(a>-1\), and let
\(\widetilde R(z;x,a)\equiv\widetilde R(z;x)\)
be the solution of RHP~\ref{modelproblem2}.
Then there exists \(N_0=N_0(a)\in\mathbb N\) such that, for any
\(j\in\mathbb Z_{\geq0}\), there exists a constant
\(x_0=x_0(a,j)\in(0,1)\) for which the following estimates hold
uniformly for \(0<x\leq x_0\) and \(N\geq N_0\). For \(j=0\),
\begin{equation}\label{eq:Rtilde-at-zero-zero-order}
\widetilde R(0;x)-\mathbf I
=
\mathcal O_a
\left(
\frac{\rho_{a,0}(x)}{N}
\left(1+\frac{1}{N}\right)
\right),
\end{equation}
and, for every fixed \(j\in\mathbb N\),
\begin{equation}\label{eq:Rtilde-at-zero-derivative}
\frac{\partial^j}{\partial x^j}\widetilde R(0;x)
=
\mathcal O_{a,j}
\left(
\frac{\rho_{a,j}(x)}{N}
\left(1+\frac{1}{N}\right)
\right).
\end{equation}
Here, \(\rho_{a,k}(x)\) is defined in
\eqref{definieofpho}.
\end{lem}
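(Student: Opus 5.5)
The plan is to represent $\widetilde R$ at the origin through the Cauchy integral of the solution of RHP~\ref{modelproblem2}, in the same way as \eqref{z1}:
\begin{equation*}
\widetilde R(z;x)=\mathbf I+\frac{1}{2\pi\i}\int_{\Sigma_{\widetilde R}}\widetilde R_-(\mu;x)\,\Delta_{\widetilde R}(\mu;x)\,\frac{\d\mu}{\mu-z},\qquad z\notin\Sigma_{\widetilde R}.
\end{equation*}
We then evaluate this at $z=0$. The point $0$ lies inside $U(0,r)$ and is therefore not on $\Sigma_{\widetilde R}$. The kernel $\mu\mapsto(\mu-0)^{-1}$ is bounded on the fixed contour: $|\mu|\ge r$ on $\partial U(0,r)$, $|\mu|\ge 1-r$ on $\partial U(1,r)$, and $\Sigma_{\mathrm{exp}}$ lies outside $U(0,r)$. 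Since $\Sigma_{\widetilde R}$ is independent of $N$ and $x$ and has finite length, the kernel belongs to $L^2(\Sigma_{\widetilde R})$ with a norm depending only on $r$.

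For $j=0$, we split $\widetilde R_-=\mathbf I+\eta$ and apply Cauchy--Schwarz:
\begin{equation*}
\|\widetilde R(0;x)-\mathbf I\|\le C_r\Big(\|\Delta_{\widetilde R}\|_{L^1(\Sigma_{\widetilde R})}+\|\eta\|_{L^2}\|\Delta_{\widetilde R}\|_{L^2}\Big).
\end{equation*}
Finite length gives $\|\Delta_{\widetilde R}\|_{L^1}\le C\|\Delta_{\widetilde R}\|_{L^2}$. Lemma~\ref{lem:derivative-Delta-Rtilde} with $j=0$ and \eqref{eq:Rtilde-minus-zero} then bound the right-hand side by $\rho_{a,0}(x)\varepsilon_N+\rho_{a,0}(x)^2\varepsilon_N^2$. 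Since $\rho_{a,0}$ is bounded on $(0,x_0]$ and $\varepsilon_N\le 2$, this is $\mathcal O_a(\rho_{a,0}(x)\varepsilon_N)$, which is \eqref{eq:Rtilde-at-zero-zero-order}.

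For $j\ge1$, we differentiate under the integral sign in $x$ and apply the Leibniz rule:
\begin{equation*}
\partial_x^j\widetilde R(0;x)=\frac{1}{2\pi\i}\sum_{\ell=0}^{j}\binom{j}{\ell}\int_{\Sigma_{\widetilde R}}\partial_x^\ell\widetilde R_-(\mu;x)\,\partial_x^{j-\ell}\Delta_{\widetilde R}(\mu;x)\,\frac{\d\mu}{\mu}.
\end{equation*}
Differentiating under the integral needs justification. The integrand is smooth in $x$ because $\Delta_{\widetilde R}$ is smooth in $x$, and $\partial_x^\ell\widetilde R_-$ exists in $L^2$ by Lemma~\ref{lem:derivative-Rtilde-minus}. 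On the compact $x$-intervals where we work, the $L^2$ bounds of Lemmas~\ref{lem:derivative-Delta-Rtilde} and \ref{lem:derivative-Rtilde-minus} are locally uniform, which lets us use difference quotients together with dominated convergence in $L^2$. This justification is the step I expect to need the most care, since the derivatives of $\widetilde R_-$ were obtained by formally differentiating the singular integral equation; I would rely on the uniform invertibility \eqref{eq:inverse-C-Delta} to make that differentiation rigorous as well.

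We then bound each term of the sum. The $\ell=0$ term is handled like the $j=0$ case, using $\widetilde R_-=\mathbf I+\eta$:
\begin{equation*}
\mathcal O\big(\|\partial^j_x\Delta_{\widetilde R}\|_{L^2}+\|\eta\|_{L^2}\|\partial^j_x\Delta_{\widetilde R}\|_{L^\infty}\big)=\mathcal O\big(\rho_{a,j}\varepsilon_N+\rho_{a,0}\rho_{a,j}\varepsilon_N^2\big)=\mathcal O(\rho_{a,j}\varepsilon_N).
\end{equation*}
For $1\le \ell\le j$ we use Cauchy--Schwarz, Lemma~\ref{lem:derivative-Rtilde-minus}, Lemma~\ref{lem:derivative-Delta-Rtilde}, and the product estimate \eqref{eq:product-estimate-rho2}. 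When $\ell=j$ the factor $\rho_{a,0}$ is bounded on $(0,x_0]$. These terms give $\mathcal O(\rho_{a,\ell}\rho_{a,j-\ell}\varepsilon_N^2)=\mathcal O(\rho_{a,j}\varepsilon_N)$.

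Finally, as in the preceding lemmas, we choose $x_0=x_0(a,j)$ as the minimum of the thresholds from Lemmas~\ref{lem:derivative-Delta-Rtilde} and \ref{lem:derivative-Rtilde-minus} up to order $j$, and take $N_0(a)$ from Lemma~\ref{lem:derivative-Rtilde-minus}. All constants then depend only on $a$ and $j$, which gives \eqref{eq:Rtilde-at-zero-derivative}.
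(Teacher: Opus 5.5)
Your proposal is correct and follows the paper's proof essentially step for step. The common steps are: the Cauchy representation evaluated at $z=0$, the split $\widetilde R_-=\mathbf I+\eta$, Cauchy--Schwarz combined with Lemmas~\ref{lem:derivative-Delta-Rtilde} and \ref{lem:derivative-Rtilde-minus}, and the Leibniz rule with the $\rho$-product estimate for the $x$-derivatives. The differences are cosmetic: you bound the kernel $1/\mu$ in $L^\infty$ where the paper puts it in $L^2$ and $\Delta_{\widetilde R}$ in $L^\infty$, and you treat the $\ell=0$ term more carefully and justify the differentiation under the integral more explicitly than the paper does.
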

\begin{proof}
Since \(0\notin\Sigma_{\widetilde R}\), the Cauchy
representation for \(\widetilde R(z;x)\) can be evaluated at
\(z=0\), giving
\begin{equation}\label{eq:Rtilde-Cauchy-at-zero}
\widetilde R(0;x)
=
\textbf{I}+
\frac{1}{2\pi\mathrm i}
\int_{\Sigma_{\widetilde R}}
\widetilde R_-(\tau;x)
\Delta_{\widetilde R}(\tau;x)
\,\frac{\mathrm{d}\tau}{\tau}.
\end{equation}
As before, set $\eta(\tau;x):=\widetilde{R}_-(\tau;x)-\textbf{I}$ so that
\[
\widetilde R_-\Delta_{\widetilde R}
=
\Delta_{\widetilde R}
+
\eta\Delta_{\widetilde R},
\]
and hence
\begin{align}
\widetilde R(0;x)-\textbf{I}
={}&
\frac{1}{2\pi\mathrm i}
\int_{\Sigma_{\widetilde R}}
\Delta_{\widetilde R}(\tau;x)\frac{\mathrm{d}\tau}{\tau}
+
\frac{1}{2\pi\mathrm i}
\int_{\Sigma_{\widetilde R}}
\eta(\tau;x)
\Delta_{\widetilde R}(\tau;x)
\frac{\mathrm{d}\tau}{\tau}.
\label{eq:Rtilde-zero-split}
\end{align}
Since
$
\inf_{\tau\in\Sigma_{\widetilde R}}|\tau|>0
$
and \(\Sigma_{\widetilde R}\) has finite arclength, we have
\begin{equation}\label{eq:kernel-at-zero-L2}
\left\|
\frac{1}{\,\cdot\,}
\right\|_{L^2(\Sigma_{\widetilde R})}
<\infty.
\end{equation}
For $j\in \mathbb{Z}_{\geq 0}$,
let
\[
x_0=x_{0}(a,j)
:=
\min\left\{
x_0^{(1)}(a,j),
x_0^{(2)}(a,j)
\right\},
\]
where $x_0^{(1)}(a,j)$ and $x_0^{(2)}(a,j)$ are the constants
appearing in Lemmas~\ref{lem:derivative-Delta-Rtilde} and
\ref{lem:derivative-Rtilde-minus}, respectively.
Then both lemmas hold uniformly for $0<x\leq x_0$. With $\epsilon_N$ as in \eqref{eq:def-epsilon-N},  Cauchy-Schwarz inequality and  Lemma \ref{lem:derivative-Delta-Rtilde} yield
\begin{align}
\left\|
\int_{\Sigma_{\widetilde R}}
\Delta_{\widetilde R}(\tau;x)
\frac{\mathrm{d}\tau}{\tau}
\right\|
&\leq
\left\|
\frac{1}{\,\cdot\,}
\right\|_{L^2(\Sigma_{\widetilde R})}
\left\|
\Delta_{\widetilde R}(\cdot,x)
\right\|_{L^2(\Sigma_{\widetilde R})}=
\mathcal O_{a}
\left(
\rho_{a,0}(x)\varepsilon_N
\right).
\label{eq:linear-zero-estimate}
\end{align}
Similarly, using also \eqref{eq:eta-L2-estimate},
\begin{align}
&
\left\|
\int_{\Sigma_{\widetilde R}}
\eta(\tau;x)
\Delta_{\widetilde R}(\tau;x)
\frac{\mathrm{d}\tau}{\tau}
\right\|
\leq
\left\|
\frac{1}{\,\cdot\,}
\right\|_{L^2(\Sigma_{\widetilde R})}
\|\eta(\cdot;x)\|_{L^2(\Sigma_{\widetilde R})}
\|\Delta_{\widetilde R}(\cdot;x)\|_
{L^\infty(\Sigma_{\widetilde R})}
=
\mathcal O_{a}
\left(
\rho_{a,0}(x)^2\varepsilon_N^2
\right).
\label{eq:nonlinear-zero-estimate}
\end{align}
Since $\rho_{a,0}(x)^2=\mathcal{O}_{a}(\rho_{a,0}(x))$ by (\ref{eq:rho-product-in-Rtilde}) and since $\epsilon_N^2=\mathcal{O}(\epsilon_N)$ for all $N\in\mathbb{N}$,
equations \eqref{eq:Rtilde-zero-split}--%
\eqref{eq:nonlinear-zero-estimate} imply
\[
\widetilde R(0;x)-\textbf{I}
=
\mathcal O_{a}
\left(
\rho_{a,0}(x)\varepsilon_N
\right),
\]
and this proves \eqref{eq:Rtilde-at-zero-zero-order}. We now consider derivatives with respect to \(x\).
Differentiating \eqref{eq:Rtilde-Cauchy-at-zero} \(j\)
times under the integral sign and using the Leibniz rule,
we obtain
\begin{equation}\label{eq:Rtilde-j-raw-at-zero}
\frac{\partial^j}{\partial x^j}\widetilde R(0;x)
=
\frac{1}{2\pi\mathrm i}
\sum_{\ell=0}^{j}
\binom{j}{\ell}
\int_{\Sigma_{\widetilde R}}
\left[\frac{\partial^{\ell}}{\partial x^{\ell}}\widetilde R_-(\tau;x)\right]
\left[
\frac{\partial^{j-\ell}}{\partial x^{j-\ell}}
\Delta_{\widetilde R}(\tau;x)\right]
\frac{\mathrm{d}\tau}{\tau}
.
\end{equation}
We note that 
\begin{align*}
\left\|
\int_{\Sigma_{\widetilde R}}
\left[\frac{\partial^{\ell}}{\partial x^{\ell}}\widetilde R_-(\tau;x)\right]
\left[
\frac{\partial^{j-\ell}}{\partial x^{j-\ell}}
\Delta_{\widetilde R}(\tau;x)\right]
\frac{\mathrm{d}\tau}{\tau}
\right\|
\leq
\left\|
\frac{1}{\,\cdot\,}
\right\|_{L^2(\Sigma_{\widetilde R})}
\left\|\frac{\partial^{\ell}}{\partial x^{\ell}}
\widetilde R_-(\cdot;x)
\right\|_{L^2(\Sigma_{\widetilde{R}})}
\left\|\frac{\partial^{j-\ell}}{\partial x^{j-\ell}}
\Delta_{\widetilde R}(\cdot;x)
\right\|_{L^\infty(\Sigma_{\widetilde{R}})}.
\label{eq:mixed-j-at-zero}
\end{align*}
Using Lemmas~\ref{lem:derivative-Delta-Rtilde} and
\ref{lem:derivative-Rtilde-minus}, together with
\eqref{eq:kernel-at-zero-L2} and
\eqref{eq:rho-product-in-Rtilde}, we immediately obtain
\eqref{eq:Rtilde-at-zero-derivative}. This completes the proof.
\end{proof}

\section{Distributional Convergence in the Full Range \(a>-1\)}\label{distconv}

Before turning to the final section of the paper,
Section~\ref{proofs}, where we prove the main results,
Theorems~\ref{limitcase2} and~\ref{maintheorem2}, we first complement
the preceding analysis with a proposition on the convergence in
distribution of \(\mathsf Y_N^{a,b}\), defined in
\eqref{defofYN}. This result will be used in the proof of the
probabilistic representation of the limiting value in
Theorem~\ref{limitcase2}.

We first record the kernel estimates that will be used to control
the tails of the inverse power sums. Let \(K_N(x,y)\) denote the
correlation kernel, with respect to Lebesgue measure on \((0,1)\),
of the Jacobi unitary ensemble with parameters \(a,b>-1\). Thus,
\[
K_N(x,y)
=
\sum_{n=0}^{N-1}\psi_n(x)\psi_n(y),
\]
where
\begin{equation}\label{defofpsi}
\psi_n(x)
=
\frac{p_n^{(a,b)}(x)}{\sqrt{h_n}}\,
x^{a/2}(1-x)^{b/2},
\qquad
p_n^{(a,b)}(x)
=
P_n^{(a,b)}(1-2x),
\end{equation}
and
\[
h_n
=
\frac{
\Gamma(n+a+1)\Gamma(n+b+1)
}{
(2n+a+b+1)\Gamma(n+1)\Gamma(n+a+b+1)
}.
\]
In particular, \(K_N\) is the kernel of a rank-\(N\) orthogonal
projection and therefore
\begin{equation}\label{eq:trace-KN}
\int_0^1K_N(x,x)\,\d x=N.
\end{equation}

\begin{lem}\label{lem:kernel-tail-estimates}
Let \(a,b>-1\). There exist constants
\(c_0=c_0(a,b)\in(0,1]\), \(N_0=N_0(a,b)\in\mathbb N\), and
\(C=C(a,b)>0\) such that, for every \(N\geq N_0\),
\begin{equation}\label{eq:kernel-hard-edge-bound}
K_N(x,x)
\leq
C N^2(N^2x)^a,
\qquad
0<x\leq\frac{c_0}{N^2},
\end{equation}
and
\begin{equation}\label{eq:kernel-intermediate-bound}
K_N(x,x)
\leq
C\frac{N^2}{\sqrt{N^2x}}
=
C\frac{N}{\sqrt{x}},
\qquad
\frac{c_0}{N^2}\leq x\leq\frac14.
\end{equation}
Moreover, there exists \(C_a>0\) such that
\begin{equation}\label{eq:Bessel-kernel-bounds}
\mathsf K_a^{\mathrm{Bes}}(u,u)
\leq
\begin{cases}
C_a u^a, & 0<u\leq1,\\[1mm]
C_a u^{-1/2}, & u\geq1.
\end{cases}
\end{equation}
Here, $\mathsf K_a^{\mathrm{Bes}}(\cdot,\cdot)$ is defined in
Definition~\ref{def:Bessel}.
\end{lem}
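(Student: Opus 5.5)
Both kernel bounds will come from estimates on the normalized Jacobi functions $\psi_n$ near the hard edge $x=0$, summed over $0\le n\le N-1$. For the Bessel bound \eqref{eq:Bessel-kernel-bounds}, I would use the diagonal formula
\[
\mathsf K_a^{\mathrm{Bes}}(u,u)=\tfrac14\bigl(J_a(\sqrt u)^2-J_{a+1}(\sqrt u)J_{a-1}(\sqrt u)\bigr).
\]
Near $u=0$ I would invoke the series $J_\nu(y)\sim (y/2)^\nu/\Gamma(\nu+1)$. The apparent $u^{a-1/2}\cdot u^{(a+1)/2}$ singularity from $J_{a-1}$ cancels only if one uses the series of the whole expression; the leading term is $u^a$ times a constant, which gives $C_au^a$ on $(0,1]$. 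For $u\ge1$ I would use $|J_\nu(y)|\le C_\nu y^{-1/2}$ for $y\ge1$, so every product is $O(u^{-1/2})$. This part is routine.

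For the Jacobi kernel, the plan is the following.

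\textbf{Step 1.} Write $1-2x=\cos\theta$, so $x=\sin^2(\theta/2)$ and $\theta\asymp\sqrt x$ on $(0,1/4]$. Then use the classical uniform bounds for Jacobi polynomials (Szeg\H{o}, Thm.~7.32.2 / (7.32.5)) for $0\le\theta\le\pi/2$:
\[
|P_n^{(a,b)}(\cos\theta)|\le C
\begin{cases}
n^{a}, & 0\le\theta\le 1/n,\\
\theta^{-a-1/2}n^{-1/2}, & 1/n\le\theta\le\pi/2.
\end{cases}
\]
For $a>-1/2$ both bounds hold as stated. For $-1<a\le -1/2$, Szeg\H{o}'s bound $n^{q}$ with $q=\max(a,-1/2)$ in the first regime is too crude. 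There I would instead use the Mehler--Heine/Hilb type asymptotic $P_n^{(a,b)}(\cos\theta)=c\,n^a(\theta/\sin\theta)^{1/2}\dots J_a(N'\theta)/(N'\theta)^a+$ error, uniformly, which gives $|P_n|\le Cn^a$ for $n\theta\le1$ for every $a>-1$. Note also $h_n\asymp n^{-1}$.

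\textbf{Step 2 (hard edge, $x\le c_0/N^2$).} Here $n\theta\lesssim n\sqrt x\le \sqrt{c_0}$ for all $n<N$, so $\psi_n(x)^2\le C n\cdot n^{2a}x^a$ (for $n\ge1$, and trivially for $n=0$). Summing gives $K_N(x,x)\le Cx^a\sum_{n<N} n^{2a+1}\le C x^aN^{2a+2}=CN^2(N^2x)^a$, using $2a+1>-1$.

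\textbf{Step 3 (intermediate, $c_0/N^2\le x\le 1/4$).} Split the sum at $n\approx 1/\sqrt x$, which is $\le N/\sqrt{c_0}$. For $n\le 1/\sqrt x$, Step 1's first regime gives a contribution $\le Cx^a\sum_{n\le x^{-1/2}}n^{2a+1}\le Cx^{a}x^{-a-1}=Cx^{-1}$. Since $x\ge c_0N^{-2}$, we have $x^{-1}=x^{-1/2}x^{-1/2}\le c_0^{-1/2}N x^{-1/2}$. For $n\ge1/\sqrt x$, the second regime gives $\psi_n^2\le C n\cdot x^{-a-1/2}n^{-1}x^a=Cx^{-1/2}$, and summing at most $N$ terms yields $CNx^{-1/2}$. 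Together these give \eqref{eq:kernel-intermediate-bound}. Choosing $c_0\le1$ so that the two regimes are correctly assigned fixes the constants, and $N_0$ absorbs small $n$ and the factor $(1-x)^{b/2}\asymp1$.

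The main obstacle is Step 1 for $-1<a\le-1/2$, that is, getting the hard-edge polynomial bound $|P_n^{(a,b)}(\cos\theta)|\le Cn^a$ uniformly in $n$ and $n\theta\le 1$ (or $\le\sqrt{c_0}$). I would try Hilb's type formula with its uniform error, or equivalently the Riemann--Hilbert hard-edge Bessel parametrix for the Jacobi weight, which is valid for all $a>-1$. The remaining steps are bookkeeping.
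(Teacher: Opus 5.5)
Your proposal is correct and follows the paper's proof essentially step for step. Both use Szeg\H{o}'s two-regime bounds with $h_n^{-1}=\mathcal O(n+1)$ and $\theta\asymp\sqrt x$, summation giving $x^aN^{2a+2}$ at the hard edge and $x^{-1}+Nx^{-1/2}\le C\,Nx^{-1/2}$ in the intermediate range, and the diagonal Bessel formula with small- and large-argument asymptotics.

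The obstacle you flag for $-1<a\le-1/2$ is not real. The bound $n^{\max(a,b,-1/2)}$ you have in mind is Szeg\H{o}'s Theorem~7.32.1, which concerns the maximum over all of $[-1,1]$. The paper instead cites Theorem~7.32.2, eq.~(7.32.5), which holds for arbitrary real $a,b$ and already gives $P_n^{(a,b)}(\cos\theta)=\mathcal O(n^{a})$ for $0\le\theta\le c/n$, for any fixed $c>0$. So no Hilb-type or Riemann--Hilbert input is needed.
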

\begin{proof}
By Stirling's formula,
\begin{equation}\label{eq:hn-bound}
h_n^{-1}
=
\mathcal O_{a,b}(n+1),
\qquad
n\in\mathbb Z_{\geq0}.
\end{equation}
For \(0<x\leq1/4\), set
\[
\theta=\arccos(1-2x).
\]
Then
\begin{equation}\label{eq:theta-x}
\theta\asymp\sqrt{x},
\qquad
0<x\leq\frac14.
\end{equation}
By Theorem~7.32.2 and the estimates following it in
\cite{Szego1975}, there exist constants
\(c_1>0\), \(n_0\in\mathbb N\), and \(C=C(a,b)>0\) such that,
for every \(n\geq n_0\),
\begin{equation}\label{eq:Szego-two-regimes}
\left|p_n^{(a,b)}(x)\right|
\leq
C
\begin{cases}
(n+1)^a,
& (n+1)\theta\leq c_1,\\[1mm]
(n+1)^{-1/2}\theta^{-a-\frac12},
& (n+1)\theta>c_1,
\end{cases}
\qquad
0<x\leq\frac14.
\end{equation}
By increasing \(C\), if necessary, the same estimates hold for
all \(n\in\mathbb Z_{\geq0}\). We first consider
\[
0<x\leq\frac{c_0}{N^2},
\]
where \(c_0>0\) is chosen sufficiently small. By
\eqref{eq:theta-x}, after decreasing \(c_0\) if necessary,
\[
(n+1)\theta\leq c_1,
\qquad
0\leq n\leq N-1.
\]
Hence, by \eqref{eq:hn-bound} and the first estimate in
\eqref{eq:Szego-two-regimes},
\[
|\psi_n(x)|^2
\leq
C_{a,b}(n+1)^{2a+1}x^a,
\]
where we have also used \(1-x\asymp1\) for \(0<x\leq1/4\).
Since \(a>-1\), we have \(2a+1>-1\), and therefore
\begin{align*}
K_N(x,x)=
\sum_{n=0}^{N-1}|\psi_n(x)|^2\leq
C_{a,b}x^a
\sum_{n=0}^{N-1}(n+1)^{2a+1}\leq
C_{a,b}N^{2a+2}x^a.
\end{align*}
This proves \eqref{eq:kernel-hard-edge-bound}.
We next consider
\[
\frac{c_0}{N^2}\leq x\leq\frac14.
\]
We split the sum defining \(K_N(x,x)\) according to whether
\((n+1)\theta\leq c_1\) or \((n+1)\theta>c_1\).
For the first part, \eqref{eq:hn-bound} and the first estimate in
\eqref{eq:Szego-two-regimes} give
\[
|\psi_n(x)|^2
\leq
C_{a,b}(n+1)^{2a+1}x^a.
\]
Consequently,
\begin{align*}
\sum_{\substack{0\leq n\leq N-1\\
(n+1)\theta\leq c_1}}
|\psi_n(x)|^2
&\leq
C_{a,b}x^a
\sum_{n+1\leq c_1/\theta}
(n+1)^{2a+1}\leq
C_{a,b}x^a\theta^{-2a-2}\leq
C_{a,b}x^{-1},
\end{align*}
where \eqref{eq:theta-x} was used in the last step.

For the second part, \eqref{eq:hn-bound} and the second estimate
in \eqref{eq:Szego-two-regimes} imply
\[
|\psi_n(x)|^2
\leq
C_{a,b}x^a\theta^{-2a-1}.
\]
Using \eqref{eq:theta-x}, we obtain
\[
x^a\theta^{-2a-1}
\leq
C_{a,b}x^{-1/2}.
\]
Thus,
\[
\sum_{\substack{0\leq n\leq N-1\\
(n+1)\theta>c_1}}
|\psi_n(x)|^2
\leq
C_{a,b}Nx^{-1/2}.
\]
Combining the two contributions gives
\[
K_N(x,x)
\leq
C_{a,b}\left(
x^{-1}
+
Nx^{-1/2}
\right).
\]
Since \(x\geq c_0N^{-2}\),
\[
x^{-1}
\leq
c_0^{-1/2}Nx^{-1/2}.
\]
Therefore
\[
K_N(x,x)
\leq
C_{a,b}Nx^{-1/2}
=
C_{a,b}
\frac{N^2}{\sqrt{N^2x}},
\]
which proves \eqref{eq:kernel-intermediate-bound}.

Finally, by taking the diagonal limit in the definition of the
Bessel kernel, we obtain
\[
\mathsf K_a^{\mathrm{Bes}}(u,u)
=
\frac14
\left(
J_a(\sqrt u)^2
-
J_{a-1}(\sqrt u)J_{a+1}(\sqrt u)
\right).
\]
The standard small- and large-argument asymptotics of the Bessel
functions therefore give
\[
\mathsf K_a^{\mathrm{Bes}}(u,u)
=
\mathcal O_a(u^a),
\qquad u\downarrow0,
\]
and
\[
\mathsf K_a^{\mathrm{Bes}}(u,u)
=
\mathcal O_a(u^{-1/2}),
\qquad u\to\infty.
\]
Since \(u\mapsto\mathsf K_a^{\mathrm{Bes}}(u,u)\) is continuous
on compact subsets of \((0,\infty)\), these asymptotics imply
\eqref{eq:Bessel-kernel-bounds}.
\end{proof}
\begin{proof}[Proof of Proposition \ref{lem:YN-convergence-all-a}]
Fix \(c\geq1\).
We decompose
\[
\mathsf Y_N^{a,b}
=
\widetilde{\mathsf Y}_{N,c}^{a,b}
+
E_{N,c}^{a,b},
\]
where
\begin{equation}\label{eq:finiteN-truncation}
\widetilde{\mathsf Y}_{N,c}^{a,b}
=
\frac{1}{N^2}\sum_{j=1}^{N}
\frac{1}{x_j^{(N)}}
\mathbf 1_{\{x_j^{(N)}<c/N^2\}},
\qquad
E_{N,c}^{a,b}
=
\frac{1}{N^2}\sum_{j=1}^{N}
\frac{1}{x_j^{(N)}}
\mathbf 1_{\{x_j^{(N)}\geq c/N^2\}}.
\end{equation}
Similarly, we write
\[
\mathfrak e(a)
=
\mathfrak e_c(a)
+
E_c^{(\mathrm{Bes})},
\]
where
\begin{equation}\label{eq:Bessel-truncation}
\mathfrak e_c(a)
:=
\sum_{\mathfrak B_a^i<c}
\frac{1}{\mathfrak B_a^i},
\qquad
E_c^{(\mathrm{Bes})}
:=
\sum_{\mathfrak B_a^i\geq c}
\frac{1}{\mathfrak B_a^i}.
\end{equation}
For any \(z\geq0\), the triangle inequality gives
\begin{align}
&
\left|
\mathbb E_N^{(a,b)}
\left[
\e^{-z\mathsf Y_N^{a,b}}
\right]
-
\mathbb E
\left[
\e^{-z\mathfrak e(a)}
\right]
\right|
\nonumber\\
\leq&
\left|
\mathbb E_N^{(a,b)}
\left[
\e^{-z\widetilde{\mathsf Y}_{N,c}^{a,b}}
\right]
-
\mathbb E
\left[
\e^{-z\mathfrak e_c(a)}
\right]
\right|
+
\left|
\mathbb E
\left[
\e^{-z\mathfrak e_c(a)}
\right]
-
\mathbb E
\left[
\e^{-z\mathfrak e(a)}
\right]
\right|
+
\left|
\mathbb E_N^{(a,b)}
\left[
\e^{-z\mathsf Y_N^{a,b}}
\right]
-
\mathbb E_N^{(a,b)}
\left[
\e^{-z\widetilde{\mathsf Y}_{N,c}^{a,b}}
\right]
\right|.
\label{tri}
\end{align}
For the last term on the right-hand side of
\eqref{tri}, using
\[
\mathsf Y_N^{a,b}
=
\widetilde{\mathsf Y}_{N,c}^{a,b}
+
E_{N,c}^{a,b},
\]
together with non-negativity and
\[
|\e^{-u}-1|
\leq u,
\qquad
u\geq0,
\]
we obtain
\begin{align*}
\left|
\mathbb E_N^{(a,b)}
\left[
\e^{-z\mathsf Y_N^{a,b}}
-
\e^{-z\widetilde{\mathsf Y}_{N,c}^{a,b}}
\right]
\right|
=
\left|
\mathbb E_N^{(a,b)}
\left[
\e^{-z\widetilde{\mathsf Y}_{N,c}^{a,b}}
\left(
\e^{-zE_{N,c}^{a,b}}-1
\right)
\right]
\right|
\leq
z\,
\mathbb E_N^{(a,b)}
\left[
E_{N,c}^{a,b}
\right].
\end{align*}
The same argument for the Bessel point process gives
\[
\left|
\mathbb E
\left[
\e^{-z\mathfrak e_c(a)}
\right]
-
\mathbb E
\left[
\e^{-z\mathfrak e(a)}
\right]
\right|
\leq
z\,
\mathbb E
\left[
E_c^{(\mathrm{Bes})}
\right].
\]
Hence \eqref{tri} yields
\begin{align}
\left|
\mathbb E_N^{(a,b)}
\left[
\e^{-z\mathsf Y_N^{a,b}}
\right]
-
\mathbb E
\left[
\e^{-z\mathfrak e(a)}
\right]
\right|
\leq
\left|
\mathbb E_N^{(a,b)}
\left[
\e^{-z\widetilde{\mathsf Y}_{N,c}^{a,b}}
\right]
-
\mathbb E
\left[
\e^{-z\mathfrak e_c(a)}
\right]
\right|+
z\,
\mathbb E_N^{(a,b)}
\left[
E_{N,c}^{a,b}
\right]
+
z\,
\mathbb E
\left[
E_c^{(\mathrm{Bes})}
\right].
\label{1stmom}
\end{align}

We first consider the first term on the right-hand side of
\eqref{1stmom}. Recall that, for every fixed
\(\ell\in\mathbb N\), the standard hard-edge convergence for the
Jacobi ensemble gives
\begin{equation}\label{eq:finite-hard-edge-convergence}
\left(
N^2x_1^{(N)},\ldots,N^2x_\ell^{(N)}
\right)
\overset{\mathrm d}{\longrightarrow}
\left(
\mathfrak B_a^1,\ldots,\mathfrak B_a^\ell
\right),
\qquad N\to\infty.
\end{equation}
Fix \(0<\varepsilon<c\), and define
\[
\widetilde{\mathsf Y}_{N,c,\varepsilon}^{a,b}
:=
\frac{1}{N^2}
\sum_{j=1}^{N}
\frac{1}{x_j^{(N)}}
\mathbf 1_{\{
\varepsilon\leq N^2x_j^{(N)}<c
\}}
\]
and
\[
\mathfrak e_{c,\varepsilon}(a)
:=
\sum_{\varepsilon\leq\mathfrak B_a^i<c}
\frac{1}{\mathfrak B_a^i}.
\]
For \(L\in\mathbb N\) and \(N\geq L\), let
\[
\widetilde{\mathsf Y}_{N,c,\varepsilon}^{a,b;(L)}
:=
\sum_{j=1}^{L}
\frac{1}{N^2x_j^{(N)}}
\mathbf 1_{\{
\varepsilon\leq N^2x_j^{(N)}<c
\}}
\]
and
\[
\mathfrak e_{c,\varepsilon}^{(L)}(a)
:=
\sum_{j=1}^{L}
\frac{1}{\mathfrak B_a^j}
\mathbf 1_{\{
\varepsilon\leq\mathfrak B_a^j<c
\}}.
\]
Since the intensity measure of the Bessel point process is
absolutely continuous with respect to Lebesgue measure, the
limiting process almost surely has no point at either of the
fixed locations \(\varepsilon\) and \(c\). Hence
\eqref{eq:finite-hard-edge-convergence} and the continuous mapping
theorem give, for every fixed \(L\),
\[
\widetilde{\mathsf Y}_{N,c,\varepsilon}^{a,b;(L)}
\overset{\mathrm d}{\longrightarrow}
\mathfrak e_{c,\varepsilon}^{(L)}(a),
\qquad N\to\infty.
\]
Since \(y\mapsto\e^{-zy}\) is bounded and continuous on
\([0,\infty)\), it follows that
\begin{equation}\label{eq:finite-L-convergence}
\lim_{N\to\infty}
\mathbb E_N^{(a,b)}
\left[
\e^{-z\widetilde{\mathsf Y}_{N,c,\varepsilon}^{a,b;(L)}}
\right]
=
\mathbb E
\left[
\e^{-z\mathfrak e_{c,\varepsilon}^{(L)}(a)}
\right].
\end{equation}
We now remove the truncation in the number of particles. Let
\[
\mathcal N_{N,c}
:=
\sum_{j=1}^{N}
\mathbf 1_{\{N^2x_j^{(N)}<c\}}.
\]
For fixed \(c\geq1\), and all sufficiently large \(N\), the
kernel bounds in Lemma~\ref{lem:kernel-tail-estimates} give
\begin{align*}
\mathbb E_N^{(a,b)}
\left[
\mathcal N_{N,c}
\right]
&=
\int_0^{c/N^2}K_N(x,x)\,\d x\\
&=
\int_0^{c_0/N^2}K_N(x,x)\,\d x
+
\int_{c_0/N^2}^{c/N^2}K_N(x,x)\,\d x\\
&\leq
C
\int_0^{c_0}u^a\,\d u
+
C
\int_{c_0}^{c}u^{-1/2}\,\d u\leq
C_{a,b,c}.
\end{align*}
Here, in both integrals, we have used the change of variables
\(u=N^2x\). Consequently, Markov's inequality gives
\begin{equation}\label{eq:number-tail-finiteN}
\limsup_{N\to\infty}
\mathbb E_N^{(a,b)}
\left[
\mathbf 1_{\{\mathcal N_{N,c}>L\}}
\right]
\leq
\frac{C_{a,b,c}}{L}.
\end{equation}
Whenever \(\mathcal N_{N,c}\leq L\), we have
\[
\widetilde{\mathsf Y}_{N,c,\varepsilon}^{a,b}
=
\widetilde{\mathsf Y}_{N,c,\varepsilon}^{a,b;(L)}.
\]
Since the corresponding exponentials take values in \([0,1]\),
\begin{equation}\label{eq:finite-L-error}
\limsup_{N\to\infty}
\left|
\mathbb E_N^{(a,b)}
\left[
\e^{-z\widetilde{\mathsf Y}_{N,c,\varepsilon}^{a,b}}
\right]
-
\mathbb E_N^{(a,b)}
\left[
\e^{-z\widetilde{\mathsf Y}_{N,c,\varepsilon}^{a,b;(L)}}
\right]
\right|
\leq
\frac{C_{a,b,c}}{L}.
\end{equation}
Similarly, let
\[
\mathcal N_c^{(\mathrm{Bes})}
:=
\sum_{i\geq1}
\mathbf 1_{\{\mathfrak B_a^i<c\}}.
\]
By \eqref{eq:Bessel-kernel-bounds},
\begin{align*}
\mathbb E
\left[
\mathcal N_c^{(\mathrm{Bes})}
\right]
&=
\int_0^c
\mathsf K_a^{\mathrm{Bes}}(u,u)\,\d u\\
&\leq
C_a\int_0^1u^a\,\d u
+
C_a\int_1^c u^{-1/2}\,\d u
<\infty.
\end{align*}
Whenever
\[
\mathcal N_c^{(\mathrm{Bes})}\leq L,
\]
the ordered nature of the Bessel points implies that
\[
\mathfrak e_{c,\varepsilon}(a)
=
\mathfrak e_{c,\varepsilon}^{(L)}(a).
\]
Since the corresponding exponentials take values in \([0,1]\),
Markov's inequality gives
\begin{align}
&
\left|
\mathbb E
\left[
\e^{-z\mathfrak e_{c,\varepsilon}(a)}
\right]
-
\mathbb E
\left[
\e^{-z\mathfrak e_{c,\varepsilon}^{(L)}(a)}
\right]
\right|
\nonumber\\
&\leq
\mathbb E
\left[
\mathbf 1_{\{
\mathcal N_c^{(\mathrm{Bes})}>L
\}}
\right]
\leq
\frac{
\mathbb E[
\mathcal N_c^{(\mathrm{Bes})}
]
}{L}
\leq
\frac{C_{a,c}}{L}.
\label{eq:Bessel-finite-L-error}
\end{align}
Combining
\eqref{eq:finite-L-convergence},
\eqref{eq:finite-L-error}, and
\eqref{eq:Bessel-finite-L-error}, and then letting
\(L\to\infty\), we obtain
\begin{equation}\label{eq:truncated-eps-convergence}
\lim_{N\to\infty}
\mathbb E_N^{(a,b)}
\left[
\e^{-z\widetilde{\mathsf Y}_{N,c,\varepsilon}^{a,b}}
\right]
=
\mathbb E
\left[
\e^{-z\mathfrak e_{c,\varepsilon}(a)}
\right].
\end{equation}
It remains to remove the lower cutoff \(\varepsilon\). Take
\[
0<\varepsilon\leq c_0.
\]
Since
\(\widetilde{\mathsf Y}_{N,c}^{a,b}\) and
\(\widetilde{\mathsf Y}_{N,c,\varepsilon}^{a,b}\)
coincide whenever there is no rescaled Jacobi eigenvalue in
\((0,\varepsilon)\), we have
\begin{align*}
&\left|
\mathbb E_N^{(a,b)}
\left[
\e^{-z\widetilde{\mathsf Y}_{N,c}^{a,b}}
\right]
-
\mathbb E_N^{(a,b)}
\left[
\e^{-z\widetilde{\mathsf Y}_{N,c,\varepsilon}^{a,b}}
\right]
\right|\\
&\leq
\mathbb E_N^{(a,b)}
\Bigg[
\mathbf 1_{
\Big\{
\sum_{j=1}^{N}
\mathbf 1_{\{N^2x_j^{(N)}<\varepsilon\}}
\geq1
\Big\}}
\Bigg]\\
&\leq
\int_0^{\varepsilon/N^2}
K_N(x,x)\,\d x\leq
C N^2
\int_0^{\varepsilon/N^2}
(N^2x)^a\,\d x\\
&
=
\mathcal O_{a,b}\!\left(\varepsilon^{a+1}\right),
\end{align*}
uniformly for all sufficiently large \(N\). Similarly,
\begin{align*}
&
\left|
\mathbb E
\left[
\e^{-z\mathfrak e_c(a)}
\right]
-
\mathbb E
\left[
\e^{-z\mathfrak e_{c,\varepsilon}(a)}
\right]
\right|\leq
\int_0^\varepsilon
\mathsf K_a^{\mathrm{Bes}}(u,u)\,\d u
=
\mathcal O_a\!\left(\varepsilon^{a+1}\right).
\end{align*}
Combining these estimates with
\eqref{eq:truncated-eps-convergence}, and then letting
\(\varepsilon\downarrow0\), gives, for every fixed \(c\geq1\),
\begin{equation}\label{eq:truncated-convergence}
\lim_{N\to\infty}
\left|
\mathbb E_N^{(a,b)}
\left[
\e^{-z\widetilde{\mathsf Y}_{N,c}^{a,b}}
\right]
-
\mathbb E
\left[
\e^{-z\mathfrak e_c(a)}
\right]
\right|
=
0.
\end{equation}
It remains to control the two tail terms in \eqref{1stmom}.
By the determinantal structure,
\[
\mathbb E_N^{(a,b)}
\left[
E_{N,c}^{a,b}
\right]
=
\int_{c/N^2}^{1}
\frac{1}{N^2x}
K_N(x,x)\,\d x.
\]
For fixed \(c\geq1\) and all sufficiently large \(N\), we have
\(c/N^2\leq1/4\). We therefore split the integral at
\(x=1/4\). Since \(c\geq1\geq c_0\), the intermediate kernel
bound in Lemma~\ref{lem:kernel-tail-estimates} gives
\begin{align*}
\int_{c/N^2}^{1/4}
\frac{1}{N^2x}
K_N(x,x)\,\d x
&\leq
C
\int_c^{N^2/4}
u^{-3/2}\,\d u\leq
C
\int_c^\infty
u^{-3/2}\,\d u,
\end{align*}
where we used \(u=N^2x\). For the remaining interval, using \(x^{-1}\leq4\) on
\([1/4,1]\) and
\[
\int_0^1K_N(x,x)\,\d x=N,
\]
we obtain
\[
\int_{1/4}^1
\frac{1}{N^2x}
K_N(x,x)\,\d x
\leq
\frac4{N^2}
\int_0^1K_N(x,x)\,\d x
=
\frac4N.
\]
Consequently,
\begin{equation}\label{tailbnd}
\mathbb E_N^{(a,b)}
\left[
E_{N,c}^{a,b}
\right]
\leq
C
\int_c^\infty
u^{-3/2}\,\d u
+
\frac4N.
\end{equation}
For the Bessel point process, Tonelli's theorem and
\eqref{eq:Bessel-kernel-bounds} give
\begin{align}
\mathbb E
\left[
E_c^{(\mathrm{Bes})}
\right]
&=
\int_c^\infty
\frac{1}{u}
\mathsf K_a^{\mathrm{Bes}}(u,u)\,\d u
\leq
C_a
\int_c^\infty
u^{-3/2}\,\d u.
\label{eq:Bessel-tail-bound}
\end{align}
Returning to \eqref{1stmom}, for fixed \(c\geq1\), we let
\(N\to\infty\). Using
\eqref{eq:truncated-convergence},
\eqref{tailbnd}, and
\eqref{eq:Bessel-tail-bound}, we obtain
\[
\limsup_{N\to\infty}
\left|
\mathbb E_N^{(a,b)}
\left[
\e^{-z\mathsf Y_N^{a,b}}
\right]
-
\mathbb E
\left[
\e^{-z\mathfrak e(a)}
\right]
\right|
\leq
C_{a,b}z
\int_c^\infty
u^{-3/2}\,\d u.
\]
Letting \(c\to\infty\), the right-hand side tends to zero. Hence,
for every \(z\geq0\),
\[
\lim_{N\to\infty}
\mathbb E_N^{(a,b)}
\left[
\e^{-z\mathsf Y_N^{a,b}}
\right]
=
\mathbb E
\left[
\e^{-z\mathfrak e(a)}
\right].
\]
Since both \(\mathsf Y_N^{a,b}\) and \(\mathfrak e(a)\) are
non-negative random variables, convergence of their Laplace
transforms implies
\[
\mathsf Y_N^{a,b}
\overset{\mathrm d}{\longrightarrow}
\mathfrak e(a).
\]
This completes the proof.
\end{proof}

\section{Proofs of Proposition \ref{representation}, Theorem \ref{limitcase2} and Theorem \ref{maintheorem2}}\label{proofs}

Recall that $\sigma_{N}(z)$ is defined as in (\ref{pathforrealz}), that is 
\begin{equation}\label{solutionandY}
\sigma_N(z)
=
z\frac{\mathrm{d}}{\mathrm{d}z}
\log \mathbb{E}_{N}^{(a,b)}\left[\mathrm{e}^{-z\sum_{j=1}^{N}\frac{1}{x_{j}^{(N)}}}\right],\ \ \ a, b>-1.
\end{equation}
Then by \cite[equation (124)]{chen2019riemann}, for $z>0$,
\begin{equation}\label{August1}
\frac{\d \sigma_{N}(z)}{\d z}
=
-(2N+a+b)\,Y_{12}(0;z)Y_{21}(0;z)-N,
\end{equation}
where \(Y_{ij}(w;z)\) denotes the \((i,j)\)-entry of the unique
\(2\times2\) matrix-valued solution \(Y(w;z)\) of the
Fokas--Its--Kitaev Riemann--Hilbert problem associated with the
orthogonal polynomials for the corresponding Pollaczek-Jacobi-type
weight $w^{(a,b)}(x;z)$ as in (\ref{defofw}); see \cite[Section 2.1]{chen2019riemann}.  Although
\cite{chen2019riemann} works under the stronger assumptions
\(a>0\) and \(b>0\), the derivation of the identity (\ref{August1}) remains
valid for \(a>-1\) and \(b>-1\) when \(z>0\), since the factor
\(\mathrm{e}^{-z/x}\) in  $w^{(a,b)}(x;z)$ ensures integrability at the origin.

We now combine the results of the preceding Riemann--Hilbert
analysis to obtain a representation for the Laplace transform
of the inverse power sum, as stated in
Proposition~\ref{representation}. 

\begin{proof}[Proof of Proposition \ref{representation}]
By \eqref{August1}, it suffices to study $Y(0;t)$. By \cite[$(127)$]{chen2019riemann}, for sufficiently large, but fixed $N\geq N_1$,
\begin{equation}\label{Ypositive}
\begin{aligned}
Y_{+}(w;t)
={}&
4^{-N\sigma_{3}}
\widetilde{R}(w,2N^2t)
E_{0}(w)
\Psi_{-}\!\left(
N^{2}f_{0}(w),2N^{2}t
\right)
\e^{\frac{\mathrm{i}\pi}{2}(a-1)\sigma_{3}}
\\
&\times
\begin{pmatrix}
1 & 0 \\
1 & 1
\end{pmatrix}
\left(
w^{a}(1-w)^{b}\e^{-\frac{t}{w}}
\right)^{-\frac{1}{2}\sigma_{3}},\ \ \ w\in\left(0,\frac{1}{4}\right)\subset\mathbb{R},
\end{aligned}
\end{equation}
and 
\begin{equation}\label{y0}
Y(0;t)=\lim_{w\downarrow 0}Y_{+}(w;t).
\end{equation}
Here, \(\widetilde{R}(z)=\widetilde{R}(z,x)\) denotes the unique solution of the
model Riemann--Hilbert problem \ref{modelproblem2}  and \(\Psi(\xi,x)\) denotes the unique solution of the
model Riemann--Hilbert problem \ref{modelproblem}, with
\[
\xi=N^{2}f_{0}(w),
\qquad
x=2N^{2}t,
\]
and the locally analytic $w\mapsto f_0(w)$ as in (\ref{defforf0}). We emphasize that $\xi<0$ for $w\in(0,\frac{1}{4})\subset\mathbb{R}$, see right below \cite[$(75)$]{chen2019riemann}. Moreover, \(\Psi_{-}(\xi,x)\) denotes the non-tangential
boundary value of \(\Psi(\cdot,x)\) on $\Sigma_2$, taken from the negative
side of its oriented jump contour. Note that
\begin{equation}\label{E0}
E_0(0)
=
2^{-(a+b)\sigma_3}
\frac{\mathbf{I}-\mathrm{i}\sigma_1}{\sqrt{2}}
(-1)^{N\sigma_3}
\begin{pmatrix}
0 & 1 \\
-1 & a+b
\end{pmatrix}
(2N)^{\frac{1}{2}\sigma_3}
\end{equation}
by \cite[(128)]{chen2019riemann}.
By RHP \ref{modelproblem} condition (d) and \eqref{mp2}, with $\xi=N^2f_0(w),x=2N^2t$,
\begin{align}
	\Psi_-(\xi,x)&\,\e^{\frac{\i\pi}{2}(a-1)\sigma_3}\begin{pmatrix}1 & 0\\ 1 & 1\end{pmatrix}\Big(w^a(1-w)^b\e^{-\frac{t}{w}}\Big)^{-\frac{1}{2}\sigma_3}\nonumber\\
	=&\,\widehat{\Psi}(\xi;x,a)\begin{pmatrix}1 & f_-(\xi;x,a)\\ 0 & 1\end{pmatrix}\e^{\frac{x}{\xi}\sigma_3}\xi_-^{\frac{a}{2}\sigma_3}\begin{pmatrix}1&0\\ \e^{\i\pi a} & 1\end{pmatrix}\e^{\frac{\i\pi}{2}(a-1)\sigma_3}\begin{pmatrix}1 & 0\\ 1 & 1\end{pmatrix}\Big(w^a(1-w)^b\e^{-\frac{t}{w}}\Big)^{-\frac{1}{2}\sigma_3}\nonumber\\
	=&\,\widehat{\Psi}(\xi;x,a)\begin{pmatrix}1 & f_-(\xi;x,a)\\ 0 & 1\end{pmatrix}\e^{\frac{x}{\xi}\sigma_3}\xi_-^{\frac{a}{2}\sigma_3}\e^{\frac{\i\pi}{2}(a-1)\sigma_3}\Big(w^a(1-w)^b\e^{-\frac{t}{w}}\Big)^{-\frac{1}{2}\sigma_3}\nonumber\\
	=&\,\Big(Q(x)+\mathcal{O}_-(\xi)\Big)\e^{\frac{x}{\xi}\sigma_3}\xi_-^{\frac{a}{2}\sigma_3}\e^{\frac{\i\pi}{2}(a-1)\sigma_3}\Big(w^a(1-w)^b\e^{-\frac{t}{w}}\Big)^{-\frac{1}{2}\sigma_3},\ \ \ \ \xi\uparrow 0,\label{Ksinegative}
\end{align}
%
%
%
%
%
%
%
where, see \cite[$(4.4)$]{xu2015painleve},
\begin{equation}\label{def:Q}
Q(x)
=
\sqrt{\frac{1+q'(x)}{2}}
\begin{pmatrix}
1
&
\dfrac{\mathrm{i}v'(x)}{1+q'(x)}
\\[8pt]
\dfrac{\mathrm{i}(1-q'(x))}{v'(x)}
&
1
\end{pmatrix}
\chi^{\sigma_3}
\end{equation}
with $\chi$ being an arbitrary nonzero constant, and $v(x)$, $q(x)$ as in
\eqref{36}, \eqref{eq:def-q}, respectively.
Substituting \eqref{Ksinegative} into \eqref{Ypositive} and taking the limit $w\downarrow 0$ (so $\xi\uparrow 0$), while using
\[
f_{0}(w)
=
-4w\left(1+\frac{w}{3}+\mathcal{O}(w^2)\right),
\qquad w\to 0,
\]
we obtain
\begin{equation}\label{finalY0}
Y(0;t)
=
4^{-N\sigma_3}
\widetilde{R}(0,x)E_0(0)Q(x)d_N(t)^{\sigma_3};\ \ \ \ d_N(t)
=
-\i(2N)^a\e^{\frac{t}{6}},\ \ \ x=2N^2t.
\end{equation}
Next, we parametrize, compare also \eqref{eq:Rtilde-Cauchy-at-zero},
\begin{equation}\label{theassumptionofR}
\widetilde{R}(0,x)-\mathbf{I}
=
\begin{pmatrix}
R_{11}(x)
&
R_{12}(x)
\\[8pt]
R_{21}(x)
&
R_{22}(x)
\end{pmatrix}.
\end{equation}
Then by \eqref{finalY0} and \eqref{E0},
\begin{equation}\label{cool}
Y_{12}(0;t)Y_{21}(0;t)=\frac{N}{2} v'(x)
+\widetilde{f}_{N,a,b}(x),\ \ \ \ x=2N^2t,
\end{equation}
where 
\begin{align}
\widetilde f_{N,a,b}(x)
={}&
\mathcal O_{a,b}\left(
1+|q'(x)|+\frac{|\mathfrak t'(x)|}{N}
\right)
\nonumber\\
&+
\mathcal O_{a,b}\left(
\bigl(\|\widetilde{R}(0,x)-\mathbf{I}\|+\|\widetilde{R}(0,x)-\mathbf{I}\|^2\bigr)
\left(
N|v'(x)|
+|q'(x)|
+\frac{|\mathfrak t'(x)|}{N}
+1
\right)
\right),
\label{eq:f-rough-estimate}
\end{align}
and $\mathfrak{t}(x)$ as in \eqref{eq:def-t-prime}. Moreover, the functions
$v'(x)$, $q'(x)$, and $\mathfrak t'(x)$ are smooth on
$(0,\infty)$. By Lemma~\ref{lem:derivative-Rtilde-at-zero}, there exists
\(N_2=N_2(a)\in\mathbb N\) such that, for every \(N\geq N_2\),
the \(N\)-dependent map
$
x\longmapsto \widetilde R(0;x)
$
is \(C^\infty\) on \((0,\infty)\). Therefore, from the above
representation of $\widetilde f_{N,a,b}(x)$, we conclude that
$
\widetilde f_{N,a,b}(x)\in C^\infty((0,\infty))$ whenever $N\geq N_{2}$. Consequently, by Lemma \ref{asympoticsforv(x)forsmallx} and Proposition \ref{prop:q-t-derivative-estimates},
together with another application of Leibniz's rule and
Remark~\ref{rem:product-estimate-rho2}, we obtain, for every
fixed $j\in\mathbb{Z}_{\geq 0}$,
\begin{equation}\label{eq:Rtilde-at-zero-derivative2}
\frac{\d^j}{\d x^j}\widetilde {f}_{N,a,b}(x)
=
\mathcal O_{a,b,j}
\left(
\frac{\rho_{a,j+1}(x)}{N}
\left(
1+\frac{1}{N}
\right)
\right)+\mathcal O_{a,b,j}
\left(\rho_{a,j+1}(x)\right),
\end{equation}
uniformly for \(0<x\leq x_0=x_{0}(a,j)\) and \(N\geq N_0\), with
\(N_0=\max\{N_1,N_2\}\).
By \eqref{August1},
\begin{equation*}
\sigma_{N}(z)=-\int_{0}^{z}(2N+a+b)Y_{12}(0;t)Y_{21}(0;t)\d t-Nz,
\end{equation*}
So
\begin{align}\label{thelimitofsigmaNz}
\sigma_{N}\bigg(\frac{z}{2N^2}\bigg)=&-\int_{0}^{\frac{z}{2N^2}}(2N+a+b)Y_{12}(0;t)Y_{21}(0;t)\d t-\frac{z}{2N}\nonumber\\
\stackrel{\eqref{cool}}{=}&\,-\frac{1}{2}\left(v(z)-\frac{1-4a^2}{8}\right)\left(1+\mathcal{O}_{a,b}\big(N^{-1}\big)\right)+\int_{0}^{z}f_{N,a,b}(x)\d x,
\end{align}
where
\[
f_{N,a,b}(x):=-\frac{1}{N}\left(1+\frac{a+b}{2N}\right)\widetilde {f}_{N,a,b}(x)-\frac{1}{2N}-\frac{a+b}{4N}v'(x),\ \ \ x>0.
\]
By (\ref{eq:Rtilde-at-zero-derivative2}) and 
\begin{equation}
\mathbb{E}_{N}^{(a,b)}\left[\mathrm{e}^{-\frac{z}{2N^2}\sum_{j=1}^{N}\frac{1}{x_{j}^{(N)}}}\right]
=\exp\left(\int_{0}^{z}\sigma_{N}\bigg(\frac{t}{2N^2}\bigg)\frac{\d t}{t}\right),
\end{equation} 
we now obtain the claim in the current proposition.
\end{proof}

\begin{rmk}\label{rem:average-integral-estimate32}
Let \(f_{N,a,b}(x)\) be as defined in Proposition~\ref{representation}. Let
\[
g_{N,a,b}(x)
:=
\frac{1}{x}\int_0^x f_{N,a,b}(t)\,\d t=\int_0^1f_{N,a,b}(xy)\,\d y,\ \ \ x>0.
\]
Hence, for every $k\in\mathbb{N}$,
\[
g_{N,a,b}^{(k)}(x)
=
\int_0^1 y^k f_{N,a,b}^{(k)}(xy)\,\d y.
\]
Since by definition of \(\rho_{a,k}\), 
\[
\int_0^1
y^k\rho_{a,k}(xy)\,\d y
=
\mathcal O_{a,k}
\left(
\rho_{a,k}(x)
\right),\ \ \ \ a>-1,\ k\in\mathbb{Z}_{\geq 0},
\]
uniformly for $0<x\leq 1$, estimate \eqref{eq:Rtilde-at-zero-derivative4} yields
\begin{equation}\label{deriforgNab}
\frac{\d^k}{\d x^k}
\left(
\frac{1}{x}\int_0^x f_{N,a,b}(t)\,\d t
\right)
=g^{(k)}_{N,a,b}(x)=
\mathcal O_{a,b,k}
\left(
\frac{\rho_{a,k+1}(x)}{N}
\left(
1+\frac{1}{N}
\right)
\right),
\end{equation}
uniformly for $0<x\leq x_0$ and $N\geq N_0$, where $x_0$ and $N_0$
are chosen as in Proposition~\ref{representation}. Additionally,
observe that
\begin{equation*}
\frac{1-4a^2-8v(x)}{16x}=-\frac{1}{2x}\int_{0}^{x}v'(t)\d t
\end{equation*}
Let $v(x)$ be as in (\ref{36}).
By Lemma \ref{asympoticsforv(x)forsmallx} and  a similar argument to that of \eqref{deriforgNab}, 
\begin{equation}\label{deriforvdividedbyx}
\frac{\d^k}{\d x^k}\frac{1-4a^2-8v(x)}{16x}=\mathcal{O}_{a,k}\left(\rho_{a,k+1}(x)\right).
\end{equation}
uniformly for \(0<x\leq x_0\), where \(x_0\) is chosen as in
Lemma~\ref{asympoticsforv(x)forsmallx}.
\end{rmk}
The following result relates the moments of a non-negative random variable to its Laplace transform.
\begin{lem}\label{expressionformoments}
Let $h\in\mathbb{C}$, and let $Y$ be a positive random variable. Let $n\in \mathbb{Z}_{\geq 0}$ be an integer such that $n>\textnormal{Re}(h)$, and set $\alpha=n-h$. Then
\begin{equation}\label{integralrep}
\mathbb{E}\!\left[Y^h\right]
=
\frac{(-1)^n}{\Gamma(\alpha)}
\int_{0}^{\infty}
z^{\alpha-1}
\frac{\mathrm{d}^{n}}{\mathrm{d}z^{n}}
\mathbb{E}\!\left[\mathrm{e}^{-zY}\right]
\,\mathrm{d}z<\infty,
\end{equation}
provided that
\begin{equation*}
\int_{0}^{\infty}
z^{\textnormal{Re}(\alpha)-1}
\left|
\frac{\mathrm{d}^{n}}{\mathrm{d}z^{n}}
\mathbb{E}\!\left[\mathrm{e}^{-zY}\right]
\right|
\,\mathrm{d}z
<\infty.
\end{equation*}
\end{lem}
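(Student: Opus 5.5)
We prove Lemma~\ref{expressionformoments} by combining the Gamma-function integral for negative powers with differentiation under the expectation and Fubini's theorem. Write $L(z):=\mathbb{E}[\e^{-zY}]$, $z>0$. For $z>0$ we have $|\partial_z^n\e^{-zY}|=Y^n\e^{-zY}\le C_{n,z_1}$ uniformly for $z\ge z_1>0$, because $y\mapsto y^n\e^{-z_1y}$ is bounded on $[0,\infty)$. Dominated convergence therefore allows differentiation under the expectation:
\[
\frac{\d^n}{\d z^n}L(z)=(-1)^n\mathbb{E}\big[Y^n\e^{-zY}\big],\qquad z>0.
\]
Consequently, $(-1)^n\frac{\d^n}{\d z^n}L(z)\ge 0$.

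Next we use the Euler integral. For $y>0$ and $\operatorname{Re}(\alpha)>0$, which holds since $\alpha=n-h$ and $n>\operatorname{Re}(h)$, the substitution $u=zy$ gives
\[
\int_0^\infty z^{\alpha-1}\e^{-zy}\,\d z=\Gamma(\alpha)\,y^{-\alpha}.
\]
Multiplying by $y^n$ yields the pointwise identity
\[
y^h=y^{n}y^{-\alpha}=\frac{1}{\Gamma(\alpha)}\int_0^\infty z^{\alpha-1}y^n\e^{-zy}\,\d z,
\]
with $y^h$ defined through the real logarithm. This is where positivity of $Y$ is used; almost sure positivity suffices.

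Now take expectations and interchange the order of integration. The integrand $z^{\alpha-1}Y^n\e^{-zY}$ has modulus $z^{\operatorname{Re}\alpha-1}Y^n\e^{-zY}\ge0$, so Tonelli gives
\[
\mathbb{E}\int_0^\infty\big|z^{\alpha-1}Y^n\e^{-zY}\big|\,\d z=\int_0^\infty z^{\operatorname{Re}\alpha-1}\,\mathbb{E}\big[Y^n\e^{-zY}\big]\,\d z=\int_0^\infty z^{\operatorname{Re}\alpha-1}\Big|\frac{\d^n}{\d z^n}L(z)\Big|\,\d z,
\]
which is finite by hypothesis. The last equality uses the sign observation above. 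Fubini's theorem then justifies swapping $\mathbb{E}$ and $\int\d z$ in the complex-valued identity, giving
\[
\mathbb{E}[Y^h]=\frac{1}{\Gamma(\alpha)}\int_0^\infty z^{\alpha-1}\,\mathbb{E}\big[Y^n\e^{-zY}\big]\,\d z=\frac{(-1)^n}{\Gamma(\alpha)}\int_0^\infty z^{\alpha-1}\frac{\d^n}{\d z^n}L(z)\,\d z.
\]
The same absolute bound, divided by $|\Gamma(\alpha)|$, shows $\mathbb{E}|Y^h|=\mathbb{E}[Y^{\operatorname{Re}h}]<\infty$.

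The argument is routine; the only point requiring care is the Fubini justification. There the key observation is that $|z^{\alpha-1}|=z^{\operatorname{Re}\alpha-1}$ together with the sign of $(-1)^n L^{(n)}$, which makes the stated hypothesis exactly the integrability condition Fubini requires. The case $n=0$, $\operatorname{Re}(h)<0$ is included without change.
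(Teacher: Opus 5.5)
Your proof is correct and follows essentially the same route as the paper: the Euler Gamma integral $y^h=\frac{1}{\Gamma(\alpha)}\int_0^\infty z^{\alpha-1}y^n\mathrm{e}^{-zy}\,\mathrm{d}z$, then Tonelli on the modulus $z^{\operatorname{Re}\alpha-1}Y^n\mathrm{e}^{-zY}$, then Fubini. You also spell out two steps the paper uses only implicitly when it invokes the hypothesis, namely differentiation under the expectation and the sign property $(-1)^n\frac{\mathrm{d}^n}{\mathrm{d}z^n}\mathbb{E}[\mathrm{e}^{-zY}]\geq 0$.
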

\begin{proof}
For any $y>0,\beta\in\mathbb{C}:\textnormal{Re}(\beta)>0$ and any $m\in\mathbb{Z}_{\geq 0}$, we have the identity
\begin{equation}
y^{m-\beta}
=
\frac{(-1)^{m}}{\Gamma(\beta)}
\int_{0}^{\infty}
z^{\beta-1}
\frac{\mathrm{d}^{m}}{\mathrm{d}z^{m}}
\mathrm{e}^{-zy}
\,\mathrm{d}z,
\label{exact2}
\end{equation}
which follows by directly computing the derivatives and then evaluating the integral on the right-hand side of \eqref{exact2}. Let $\mathbb{P}$ denote the probability distribution of $Y$ on $[0,\infty)$. Then
\begin{equation}
\mathbb{E}\big[Y^{h}\big]
=
\int_{0}^{\infty}
y^{h}\,\mathrm{d}\mathbb{P}(y).
\label{leb2}
\end{equation}
Substituting \eqref{exact2} with $\beta=n-h$ and $m=n$ therein into \eqref{leb2}, so that $\textnormal{Re}(\beta)>0$ by choice of $n$, we obtain
\begin{equation}
\mathbb{E}\big[Y^{h}\big]
=
\int_{0}^{\infty}
\left[\int_{0}^{\infty}
f(y,z)\,
\mathrm{d}z\right]\,\mathrm{d}\mathbb{P}(y),
\label{expression}
\end{equation}
where
\begin{equation}
	(y,z)\mapsto f(y,z)
=
\frac{(-1)^{n}}{\Gamma(n-h)}
z^{n-h-1}
\frac{\mathrm{d}^{n}}{\mathrm{d}z^{n}}
\mathrm{e}^{-zy}
=
\frac{1}{\Gamma(n-h)}
z^{n-h-1}
y^{n}
\mathrm{e}^{-zy}
\end{equation}
is measurable on $(0,\infty)\times(0,\infty)$. Moreover, for $y,z>0$,
\begin{equation*}
|f(y,z)|
\le
\frac{1}{|\Gamma(n-h)|}
z^{n-\mathrm{Re}(h)-1}
y^{n}
\mathrm{e}^{-zy}.
\end{equation*}
Applying Tonelli's theorem, we obtain
\begin{align}
\label{Tonellitheorem}
&\int_{0}^{\infty}
\left[\int_{0}^{\infty}
z^{n-\mathrm{Re}(h)-1}
y^{n}
\mathrm{e}^{-zy}
\,\mathrm{d}z\right]\,\mathrm{d}\mathbb{P}(y)
=
\int_{0}^{\infty}\left[
\int_{0}^{\infty}
z^{n-\mathrm{Re}(h)-1}
y^{n}
\mathrm{e}^{-zy}
\,\mathrm{d}\mathbb{P}(y)\right]\,\mathrm{d}z
\nonumber\\
=&\,\,
(-1)^{n}
\int_{0}^{\infty}
z^{n-\mathrm{Re}(h)-1}
\mathbb{E}
\!\left[
\frac{\mathrm{d}^{n}}{\mathrm{d}z^{n}}
\mathrm{e}^{-zY}
\right]
\mathrm{d}z
=
(-1)^{n}
\int_{0}^{\infty}
z^{n-\mathrm{Re}(h)-1}
\frac{\mathrm{d}^{n}}{\mathrm{d}z^{n}}
\mathbb{E}
\!\left[
\mathrm{e}^{-zY}
\right]
\mathrm{d}z.
\end{align}
By the assumption of the Lemma and \eqref{Tonellitheorem},
\begin{equation*}
\int_{0}^{\infty}
\int_{0}^{\infty}
|f(y,z)|
\,\mathrm{d}z\,\mathrm{d}\mathbb{P}(y)
<\infty,
\end{equation*}
and hence Fubini's theorem implies that the order of integration in \eqref{expression} can be interchanged, i.e. \eqref{integralrep} follows. This completes the proof.
\end{proof}
Now we are ready to prove
Theorem \ref{limitcase2}.

\begin{proof}[Proof of Theorem \ref{limitcase2}]
By Lemma \ref{explicitformulaforfiniteN}, for
$s>-\frac{3}{2}$ and $\textnormal{Re}(h)<s+\frac{3}{2}$,
\begin{align*}
\frac{F_{N}^{\mathrm{USp}}(s,h)}
     {N^{\frac{s(s+1)}{2}+2h}}
&=
\frac{F_N^{\mathrm{USp}}(s,0)}
{N^{\frac{s(s+1)}{2}}}
\mathbb{E}_{N}^{(a,b)}
\left[\left(\mathsf{X}_N^{a,b}\right)^h\right],\ \ \ \ \ a=s+\frac{1}{2},\ b=\frac{1}{2},
\end{align*}
whereas, for $s>-\frac{1}{2}$ and $\operatorname{Re}(h)<s+\frac{1}{2}$,
\begin{align*}
\frac{F_{N}^{\mathrm{SO}}(s,h)}
     {N^{\frac{s(s-1)}{2}+2h}}
&=
\frac{F_N^{\mathrm{SO}}(s,0)}
{N^{\frac{s(s-1)}{2}}}
\mathbb{E}_{N}^{(a,b)}
\left[\left(\mathsf{X}_N^{a,b}\right)^h\right],\ \ \ \ \ a=s-\frac{1}{2},\ b=-\frac{1}{2},
\end{align*}
with $\mathsf{X}_N^{a,b}$ defined in \eqref{defofXN}. By the explicit asymptotic formul\ae\,in \cite{keating2000random}, as $N\rightarrow\infty$,
\begin{align*}
F_N^{\mathrm{USp}}(s,0)=\int_{\mathrm{USp}(2N)}
|\varphi_{\mathbf A}(0)|^{s}
\,\mathrm{d}\mu_{\mathrm{Haar}}(\mathbf A)
&\sim
N^{\frac{s(s+1)}{2}}
\frac{2^{\frac{s^2}{2}}G(1+s)\sqrt{\Gamma(1+s)}}
{\sqrt{G(1+2s)\Gamma(1+2s)}},\\
F_N^{\mathrm{SO}}(s,0)=\int_{\mathrm{SO}(2N)}
|\varphi_{\mathbf A}(0)|^{s}
\,\mathrm{d}\mu_{\mathrm{Haar}}(\mathbf A)
&\sim
N^{\frac{s(s-1)}{2}}
\frac{2^{\frac{s^2}{2}}G(1+s)\sqrt{\Gamma(1+2s)}}
{\sqrt{G(1+2s)\Gamma(1+s)}}.
\end{align*}
Thus, it remains to prove that, for $a>-1$, $b>-1$, and
$\textnormal{Re}(h)<a+1$,
\begin{equation}\label{goal1}
\mathbb{E}_{N}^{(a,b)}
\left[\left(\mathsf{X}_N^{a,b}\right)^h\right]
\longrightarrow
\mathbb{E}
\left[\left(1+\frac{\mathfrak{e}(a)}{2}\right)^h\right].
\end{equation}
From Proposition~\ref{lem:YN-convergence-all-a} and \eqref{defofXN}, we have
\begin{equation}\label{08041}
\mathsf X_N^{a,b}
\overset{\mathrm d}{\longrightarrow}
1+\frac{\mathfrak e(a)}{2}.
\end{equation}
Hence, by \(\mathsf X_N^{a,b}>0\) a.s. and the continuous mapping theorem,
\[
\left(\mathsf X_N^{a,b}\right)^h
\overset{\mathrm d}{\longrightarrow}
\left(1+\frac{\mathfrak e(a)}{2}\right)^h.
\]
Therefore, to prove \eqref{goal1}, it remains to establish the
uniform integrability of
$
\left\{
\left(\mathsf X_N^{a,b}\right)^h
\right\}_{N\geq1}.
$
For this purpose, it is enough to show that, for every real
\(r<a+1\),
\begin{equation}\label{08043}
\sup_{N\geq1}
\mathbb E_N^{(a,b)}
\left[
\left(\mathsf X_N^{a,b}\right)^r
\right]
<\infty.
\end{equation}
Indeed, for any fixed \(h\in\mathbb C\) with
\(\operatorname{Re}(h)<a+1\), one can choose \(p>1\) such that
\[
p\textnormal{Re}(h)<a+1.
\]
Then \eqref{08043} gives
\[
\sup_{N\geq1}
\mathbb E_N^{(a,b)}
\left[
\left|
\left(\mathsf X_N^{a,b}\right)^h
\right|^p
\right]
=
\sup_{N\geq1}
\mathbb E_N^{(a,b)}
\left[
\left(\mathsf X_N^{a,b}\right)^{p\operatorname{Re}(h)}
\right]
<\infty,
\]
which implies uniform integrability. It remains to prove \eqref{08043}. Note that \(\mathsf X_N^{a,b}\) is non-negative almost surely. So by Lemma \ref{expressionformoments}, it suffices to prove that
\begin{equation}\label{0927formula1}
\sup_{N\geq 1}\int_{0}^{\infty}
z^{n-r-1}
\left|\frac{\mathrm{d}^{n}}{\mathrm{d}z^{n}}
\mathbb{E}_N^{(a,b)}\!\left[\mathrm{e}^{-z\mathsf{X}_N^{a,b}}\right]
\right|\,\mathrm{d}z<\infty,
\end{equation}
with 
\[
n=
\begin{cases}
0, & r<0,\\[4pt]
\min\{m\in \mathbb{N}:m>r\}, & r\geq0.
\end{cases}
\]
Note that the differentiation order $n$ in \eqref{0927formula1} is bounded by $a+2$.
Hence, we may choose
\begin{equation*}
x_0=x_0(a)
:=
\min\bigl\{
x_0(a,j):j=0,\ldots,\lfloor a+2\rfloor
\bigr\}>0,
\end{equation*}
where $x_0(a,j)$ is given in Proposition \ref{representation}, right above \eqref{eq:Rtilde-at-zero-derivative4}. When $s\in \mathbb{Z}_{\geq 0}-1/2$, we set $a=s+1/2$ (unitary symplectic case) or $a=s-1/2$ (orthogonal case). Then $a\in \mathbb{Z}_{\geq 0}$. By Remark \ref{rem:average-integral-estimate32} and Proposition \ref{representation}, we obtain
\begin{align}\label{0927}
\frac{\mathrm{d}^{n}}{\mathrm{d}z^{n}}
\mathbb{E}_N^{(a,b)}\!\left[\mathrm{e}^{-z\mathsf{X}_N^{a,b}}\right]
=
\mathcal{O}_{a}(1)
+
\mathcal{O}_{a}\Big(z^{a+1-n}|\ln z|\Big).
\end{align}
Thus,
\begin{align}\label{0927formula2}
z^{n-r-1}
\left|
\frac{\mathrm{d}^{n}}{\mathrm{d}z^{n}}
\mathbb{E}_N^{(a,b)}
\left[
\mathrm{e}^{-z\mathsf{X}_N^{a,b}}
\right]
\right|
=
\mathcal{O}_{a}\!\left(z^{n-r-1}\right)
+
\mathcal{O}_{a}\!\left(z^{a-r}|\ln z|\right),
\end{align}
uniformly for \(0<z\leq x_0\) and \(N\geq N_0\), where
\(N_0=N_0(a,b)\).
When $s\notin \mathbb{Z}_{\geq 0}-1/2$, we set $a=s+1/2$ (unitary symplectic case) or $a=s-1/2$ (orthogonal case). Then $a\notin \mathbb{Z}_{\geq 0}$. Using Remark \ref{rem:average-integral-estimate32} and Proposition \ref{representation} again, we obtain
\begin{align}\label{0927formula21}
\frac{\mathrm{d}^{n}}{\mathrm{d}z^{n}}
\mathbb{E}_N^{(a,b)}\!\left[\mathrm{e}^{-z\mathsf{X}_N^{a,b}}\right]
=
\mathcal{O}_{a}(1)
+
\mathcal{O}_{a}\big(z^{a+1-n}\big),
\end{align}
and thus
\begin{align}\label{0927formula3}
z^{n-r-1}
\left|\frac{\mathrm{d}^{n}}{\mathrm{d}z^{n}}
\mathbb{E}_N^{(a,b)}\!\left[\mathrm{e}^{-z\mathsf{X}_N^{a,b}}\right]
\right|
=
\mathcal{O}_{a}(z^{n-r-1})
+
\mathcal{O}_{a}\Big(z^{a-r}\Big),
\end{align}
holds uniformly for $N\geq N_{0}$ and $0<z\leq x_{0}$. Note that \(n>r\), \(r<a+1\) and $z\mapsto z^{n-r-1}
\left|\frac{\mathrm{d}^{n}}{\mathrm{d}z^{n}}
\mathbb{E}_N^{(a,b)}\!\left[
\mathrm{e}^{-z\mathsf{X}_N^{a,b}}
\right]
\right|$ is continuous on $(0,\infty)$. Hence, by \eqref{0927formula2} and \eqref{0927formula3}, 

\begin{equation}\label{0927formula4}
\sup_{N\geq N_{0}}
\int_{0}^{x_{0}}
z^{n-r-1}
\left|\frac{\mathrm{d}^{n}}{\mathrm{d}z^{n}}
\mathbb{E}_N^{(a,b)}\!\left[
\mathrm{e}^{-z\mathsf{X}_N^{a,b}}
\right]
\right|\,\mathrm{d}z
<\infty,
\end{equation}
Since \(\mathsf X_N^{a,b}>1\) a.s., it is not hard to see that
there exists a constant \(C>0\), depending at most on \(x_0\) and
\(n\) (and hence on \(a\) and \(r\)), such that, for \(z\geq x_0\),
\begin{equation}\label{0927formula5}
z^{n-r-1}
\left|
\frac{\mathrm d^n}{\mathrm dz^n}
\mathbb E_N^{(a,b)}
\left[
\mathrm e^{-z\mathsf X_N^{a,b}}
\right]
\right|
\leq
C z^{n-r-1}\mathrm e^{-z/2}.
\end{equation}
So
\begin{equation}\label{0927formula6}
\sup_{N\geq N_{0}}
\int_{x_{0}}^{\infty}
z^{n-r-1}
\left|\frac{\mathrm{d}^{n}}{\mathrm{d}z^{n}}
\mathbb{E}_N^{(a,b)}\!\left[
\mathrm{e}^{-z\mathsf{X}_N^{a,b}}
\right]
\right|\,\mathrm{d}z
<\infty.
\end{equation}
Then, by (\ref{0927formula4}) and (\ref{0927formula6}), 
\begin{equation}\label{0927formula7}
\sup_{N\geq N_{0}}
\int_{0}^{\infty}
z^{n-r-1}
\left|\frac{\mathrm{d}^{n}}{\mathrm{d}z^{n}}
\mathbb{E}_N^{(a,b)}\!\left[
\mathrm{e}^{-z\mathsf{X}_N^{a,b}}
\right]
\right|\,\mathrm{d}z
<\infty.
\end{equation}
For each fixed \(1\leq N<N_0\), the integral in
\eqref{0927formula1} is finite. Hence, since there are only finitely
many such values of \(N\), the maximum over \(1\leq N<N_0\) is finite.
Together with \eqref{0927formula7}, this proves \eqref{0927formula1}.
Hence, the proof of the theorem is complete.
\end{proof}

\begin{rmk}\label{summarize}
From the above proof, a key ingredient is the uniform small-\(z\)
estimate for the Laplace transform \eqref{1005laplace}, obtained from
Remark~\ref{rem:average-integral-estimate32} and
Proposition~\ref{representation}. We summarize this estimate below.
Let \(a,b>-1\). For every fixed \(n\in\mathbb Z_{\geq0}\), as
\(z\to0^+\),
\begin{equation*}\label{0927combined}
\frac{\mathrm{d}^{n}}{\mathrm{d}z^{n}}
\mathbb{E}_N^{(a,b)}
\left[
\mathrm{e}^{-z\mathsf{X}_N^{a,b}}
\right]
=
\begin{cases}
\mathcal{O}_{a,b,n}(1)
+\mathcal{O}_{a,b,n}\!\left(z^{a+1-n}\right),
& a\notin\mathbb Z_{\geq0},\\[4pt]
\mathcal{O}_{a,b,n}(1)
+\mathcal{O}_{a,b,n}\!\left(
z^{a+1-n}|\ln z|
\right),
& a\in\mathbb Z_{\geq0}.
\end{cases}
\end{equation*}
The estimates in \eqref{0927combined} hold uniformly for all
sufficiently large \(N\).
\end{rmk}

Using Theorem~\ref{limitcase2}, we are now ready to prove
Theorem~\ref{maintheorem2}.
 \begin{proof}[Proof of Theorem \ref{maintheorem2} ]
We set
$a=s+\frac12$in the unitary symplectic case,
and $a=s-\frac12$ in the orthogonal case.
For \(a>-1\) and \(b>-1\), by \eqref{goal1},
\begin{equation}\label{goal0928}
\mathbb{E}_{N}^{(a,b)}
\left[
\left(\mathsf{X}_N^{a,b}\right)^h
\right]
\longrightarrow
\mathbb{E}
\left[
\left(1+\frac{\mathfrak e(a)}{2}\right)^h
\right],
\end{equation}
for \(\operatorname{Re}(h)<a+1\). By \eqref{0927formula1} and Lemma~\ref{expressionformoments},
\begin{equation}\label{0928noon}
\mathbb{E}_{N}^{(a,b)}
\left[
(\mathsf{X}_N^{a,b})^h
\right]
=
\frac{(-1)^n}{\Gamma(\alpha)}
\int_0^\infty
z^{\alpha-1}
\frac{\mathrm d^n}{\mathrm d z^n}
\mathbb{E}_{N}^{(a,b)}
\left[
\e^{-z\mathsf{X}_N^{a,b}}
\right]
\,\mathrm dz,
\end{equation}
where
\[
n=
\begin{cases}
0, & \operatorname{Re}(h)<0,\\[4pt]
\min\{m\in\mathbb N:m>\operatorname{Re}(h)\},
& \operatorname{Re}(h)\geq0,
\end{cases}
\]
and
$
\alpha=n-h.
$
Recall that
\[
\mathsf{X}_N^{a,b}
=
1+\frac{1}{2N^2}
\sum_{j=1}^N\frac{1}{x_j^{(N)}}.
\]
Since
\[
\mathsf{X}_N^{a,b}
\overset{\mathrm d}{\longrightarrow}
1+\frac{\mathfrak e(a)}{2},
\]
for every fixed \(z>0\) and \(m\in\mathbb Z_{\geq0}\), the bounded
continuity of the function
\[
y\longmapsto y^m\e^{-zy},
\qquad y\in[0,\infty),
\]
implies that
\begin{equation}\label{smooth1001}
\frac{\mathrm d^m}{\mathrm dz^m}
\mathbb E_N^{(a,b)}
\left[
\e^{-z\mathsf{X}_N^{a,b}}
\right]
\longrightarrow
\frac{\mathrm d^m}{\mathrm dz^m}
\mathbb E
\left[
\e^{-z(1+\frac{\mathfrak e(a)}{2})}
\right],
\qquad N\to\infty.
\end{equation}
In particular,
\begin{equation}\label{1002}
z\frac{\mathrm d}{\mathrm dz}
\mathbb E_N^{(a,b)}
\left[
\e^{-z\mathsf{X}_N^{a,b}}
\right]
\longrightarrow
z\frac{\mathrm d}{\mathrm dz}
\mathbb E
\left[
\e^{-z(1+\frac{\mathfrak e(a)}{2})}
\right],
\qquad N\to\infty.
\end{equation}
Combining this with \eqref{solutionandY}, \eqref{defofsigma2} and
\eqref{thelimitofsigmaNz}, we obtain
\[
z\frac{\mathrm d}{\mathrm dz}
\log
\mathbb E
\left[
\e^{-z(1+\frac{\mathfrak e(a)}{2})}
\right]
=
\sigma(z;a),
\qquad z\in(0,\infty).
\]
Since \(\sigma(z;a)/z\) is integrable at \(z=0\) by Proposition \ref{prop:sigma-small-x}, and since the Laplace transform equals \(1\) at
\(z=0\), it follows that
\begin{equation}\label{exporep}
\mathbb E
\left[
\e^{-z(1+\frac{\mathfrak e(a)}{2})}
\right]
=
\exp\left[
\int_0^z
\frac{\sigma(t;a)}{t}\,\mathrm dt
\right].
\end{equation}
Hence, by \eqref{smooth1001} and \eqref{exporep}, for every \(a>-1\)
and \(n\in\mathbb Z_{\geq0}\), we have the pointwise convergence
\begin{equation}\label{0928formula21}
\lim_{N\to\infty}
\frac{\mathrm d^n}{\mathrm dz^n}
\mathbb E_N^{(a,b)}
\left[
\e^{-z\mathsf{X}_N^{a,b}}
\right]
=
\frac{\mathrm d^n}{\mathrm dz^n}
\exp\left[
\int_0^z
\frac{\sigma(t;a)}{t}\,\mathrm dt
\right],
\qquad z>0.
\end{equation}
By \eqref{0927formula2}, \eqref{0927formula3} and (\ref{0927formula5}), with \(r\) there
replaced by \(\operatorname{Re}(h)\), the absolute value of the integrand in \eqref{0928noon} is bounded by
an integrable function on \((0,\infty)\), uniformly for all
sufficiently large \(N\). Therefore, using
\eqref{0928formula21} and applying the dominated convergence theorem
to \eqref{0928noon}, we obtain
\[
\lim_{N\to\infty}
\mathbb{E}_{N}^{(a,b)}
\left[
(\mathsf{X}_N^{a,b})^h
\right]
=
\frac{(-1)^n}{\Gamma(\alpha)}
\int_0^\infty
z^{\alpha-1}
\frac{\mathrm d^n}{\mathrm d z^n}
\exp\left[
\int_0^z
\frac{\sigma(t;a)}{t}\,\mathrm dt
\right]
\,\mathrm dz.
\]
Comparing this identity with \eqref{goal0928} yields the Painlev\'{e} representation as claimed in  this theorem.

Moreover, using \eqref{1002}, \eqref{pathforrealz}, and
\eqref{defofsigma2}, making the change of variables
\[
z\mapsto \frac{t}{2N^2}
\]
in \eqref{equationforfiniteN}, and then subtracting \(t\) according to
the definition of \(\sigma(t;a)\) as in (\ref{defofsigma2}), we obtain, by letting \(N\to\infty\),
the \(\sigma\)-Painlev\'e \textnormal{III}$'$ equation for
\(\sigma(t;a)\), as stated in \eqref{e42}.
Finally, the claim (\ref{definitionofsigma}) follows from (\ref{defofsigma2}) and (\ref{36}). We have now completed the proof of the theorem.
\end{proof}

\bibliographystyle{plain}
\bibliography{main}

\noindent{\sc School of Mathematics, University of Edinburgh, James Clerk Maxwell Building, Peter Guthrie Tait Rd, Edinburgh EH9 3FD, U.K.}\newline
\href{mailto:theo.assiotis@ed.ac.uk}{\small theo.assiotis@ed.ac.uk}

\bigskip
\noindent
{\sc School of Mathematics, University of Bristol, 
Fry Building, Woodland Road, Bristol, BS8 1UG, United Kingdom.}\newline
\href{mailto:thomas.bothner@bristol.ac.uk}{\small
thomas.bothner@bristol.ac.uk}

\bigskip
\noindent
{\sc Mathematical Institute, University of Oxford, Andrew Wiles Building, Woodstock Road, Oxford, OX2 6GG, UK. }\newline
\href{keating@maths.ox.ac.uk}{\small keating@maths.ox.ac.uk}

\bigskip
\noindent
{\sc Department of Mathematics, University of Sussex, Brighton, BN1 9RH, UK. }\newline
\href{N.J.Simm@sussex.ac.uk}{\small 
N.J.Simm@sussex.ac.uk}

\bigskip
\noindent
{\sc Key Laboratory of Mathematics and Information Networks (Beijing University of Posts and Telecommunications), Ministry of Education,
Science Building, No. 1 Nanfeng Road, Beijing 102206, China. }\newline
\href{mailto:weif0831@gmail.com}{\small weif0831@gmail.com}

\end{document}